\documentclass[12pt]{amsart}

\usepackage{setspace}   

\textwidth15.6cm

\topmargin2.5cm

\oddsidemargin0.1cm

\evensidemargin0.1cm


\usepackage[usenames]{color}
\usepackage{fullpage,url,mathrsfs,stmaryrd}
\usepackage{amsmath,amsthm,amssymb,mathrsfs,stmaryrd,color}
\usepackage[utf8]{inputenc}
\usepackage[T1]{fontenc}

\usepackage{relsize}
\usepackage[bbgreekl]{mathbbol}
\usepackage{amsfonts}

\DeclareSymbolFontAlphabet{\mathbb}{AMSb}
\DeclareSymbolFontAlphabet{\mathbbl}{bbold}
\newcommand{\prism}{{\mathlarger{\mathbbl{\Delta}}}}

\newcommand{\hatsharp}{{\hat\sharp}}

\newcommand{\SW}{{{}^s W}}
\newcommand{\SR}{{{}^s \sR}}

\newcommand{\BBT}{\overline\BTst}
\newcommand{\BTst}{\mathscr{B}\!\mathscr{T}}

\newcommand{\BA}{{\mathbb{A}}}

\newcommand{\BG}{{\mathbb{G}}}

\usepackage{amssymb}
\usepackage[all]{xy}
\usepackage{mathrsfs}
\usepackage{enumerate}
\usepackage{amscd}

\usepackage{bbm}

\DeclareMathOperator{\bbig}{{big}}

\DeclareMathOperator{\all}{{all}}
\DeclareMathOperator{\ec}{{ec}}
\DeclareMathOperator{\strong}{{strong}}
\DeclareMathOperator{\weak}{{weak}}

\DeclareMathOperator{\Sm}{{Sm}}

\DeclareMathOperator{\Ob}{{Ob}}

\DeclareMathOperator{\Fil}{{Fil}}

\DeclareMathOperator{\EEnd}{\underline{\on{End}}}
\DeclareMathOperator{\AAut}{\underline{\on{Aut}}}
\DeclareMathOperator{\HHom}{\underline{\on{Hom}}}

\DeclareMathOperator{\PPoly-add}{\underline{\on{Poly-add}}}

\DeclareMathOperator{\MMor}{\underline{\on{Mor}}}

\DeclareMathOperator{\Ab}{{Ab}}

\DeclareMathOperator{\rank}{{rank}}

\newcommand{\bA}{{\mathbb A}}

\newcommand{\cA}{{\mathcal A}}
\newcommand{\cB}{{\mathcal B}}
\newcommand{\cC}{{\mathcal C}}

\newcommand{\cF}{{\mathcal F}}

\newcommand{\cI}{{\mathcal I}}

\newcommand{\cN}{{\mathcal N}}
\newcommand{\cO}{{\mathcal O}}
\newcommand{\cP}{{\mathcal P}}
\newcommand{\cQ}{{\mathcal Q}}

\newcommand{\sG}{{\mathscr G}}

\newcommand{\sP}{{\mathscr P}}

\newcommand{\sR}{{\mathscr R}}

\newcommand{\sX}{{\mathscr X}}
\newcommand{\sY}{{\mathscr Y}}

\newcommand{\fD}{{\mathfrak D}}

\newcommand{\fL}{{\mathfrak L}}

\newcommand{\fZ}{{\mathfrak Z}}

\newcommand{\fg}{{\mathfrak g}}

\newcommand{\nc}{\newcommand}

\nc\wh{\widehat}

\nc\on{\operatorname}

\nc\Gr{\on{Gr}}

\nc\Fl{\on{Fl}}

\newtheorem{cor}[subsubsection]{Corollary}
\newtheorem{lem}[subsubsection]{Lemma}
\newtheorem{prop}[subsubsection]{Proposition}

\newtheorem{conj}[subsubsection]{Conjecture}
\newtheorem{thm}[subsubsection]{Theorem}

\theoremstyle{remark}

\newcommand{\BF}{{\mathbb{F}}}
\newcommand{\BN}{{\mathbb{N}}}
\newcommand{\BQ}{{\mathbb{Q}}}

\newcommand{\BV}{{\mathbb{V}}}
\newcommand{\BZ}{{\mathbb{Z}}}

\DeclareMathOperator{\Isom}{{Isom}}
\DeclareMathOperator{\Lie}{{Lie}}
\DeclareMathOperator{\coLie}{{coLie}}

 \DeclareMathOperator{\Spf}{{Spf}}  

\DeclareMathOperator{\Sym}{{Sym}}
\DeclareMathOperator{\univ}{{univ}}

\DeclareMathOperator{\Cone}{{Cone}}

\DeclareMathOperator{\Ad}{{Ad}}
\DeclareMathOperator{\BP}{{BP}}
\DeclareMathOperator{\BT}{{BT}}
\DeclareMathOperator{\Disp}{{Disp}}
\DeclareMathOperator{\sDisp}{{sDisp}}
\DeclareMathOperator{\DISP}{{DISP}}
\DeclareMathOperator{\preDISP}{{preDISP}}
\DeclareMathOperator{\PREDISP}{{\bf preDISP}}
\DeclareMathOperator{\prr}{{pr}}
\DeclareMathOperator{\Syn}{{Syn}}
\DeclareMathOperator{\Lau}{{Lau}}
\DeclareMathOperator{\diag}{{diag}}

\newcommand{\limto}{{\displaystyle\lim_{\longrightarrow}}}
\newcommand{\rightlim}{\mathop{\limto}}


\newcommand{\leftlim}{\mathop{\displaystyle\lim_{\longleftarrow}}}
\newcommand{\limfromn}{\leftlim\limits_{\raise3pt\hbox{$n$}}}
\newcommand{\limton}{\rightlim\limits_{\raise3pt\hbox{$n$}}}


\newcommand{\rightlimit}[1]{\mathop{\lim\limits_{\longrightarrow}}\limits%
                    _{\raise3pt\hbox{$\scriptstyle #1$}}}

\newcommand{\leftlimit}[1]{\mathop{\lim\limits_{\longleftarrow}}\limits%
                    _{\raise3pt\hbox{$\scriptstyle #1$}}}

\newcommand{\epi}{\twoheadrightarrow}
\newcommand{\iso}{\buildrel{\sim}\over{\longrightarrow}}
\newcommand{\mono}{\hookrightarrow}

\newcommand{\bcdot}{{\textstyle\cdot}}

\DeclareMathOperator{\rig}{{rig}}

\DeclareMathOperator{\Aut}{{Aut}}
\DeclareMathOperator{\Coker}{{Coker}}

\DeclareMathOperator{\End}{{End}}

\DeclareMathOperator{\gr}{{gr}} 
\DeclareMathOperator{\Hom}{{Hom}} 
 
\DeclareMathOperator{\Poly-add}{{Poly-add}}

\DeclareMathOperator{\Ker}{{Ker}} \DeclareMathOperator{\id}{{id}}
\DeclareMathOperator{\im}{{Im}}

\DeclareMathOperator{\Mat}{{Mat}}
\DeclareMathOperator{\Mor}{{Mor}}
 \DeclareMathOperator{\op}{{op}}

\DeclareMathOperator{\Spec}{{Spec}}

\DeclareMathOperator{\Vect}{{Vec}}

\theoremstyle{definition}

\numberwithin{equation}{section}

\newcommand{\Fr}{\operatorname{Fr}}

\newcommand{\hV}{\tilde V}


\newcommand{\bone}{{\boldsymbol{1}}}

\begin{document}
\title[On the Lau group scheme]{On the Lau group scheme}
\author{Vladimir Drinfeld}
\address{University of Chicago, Department of Mathematics, Chicago, IL 60637}

\dedicatory{To Maxim Kontsevich with deepest admiration}

\begin{abstract}
In a 2013 article, Eike Lau constructed a canonical morphism from the stack of $n$-truncated Barsotti-Tate groups over $\BF_p$ to the stack of $n$-truncated displays. He also proved that this morphism is a gerbe banded by a commutative group scheme.
In this paper we describe the group scheme explicitly.

The stack of $n$-truncated Barsotti-Tate groups over $\BF_p$ has a generalization related to any pair $(G,\mu)$, where $G$ is a smooth group scheme over $\BZ/p^n\BZ$ and $\mu$ is a 1-bounded cocharacter of $G$. The same is true for the stack of $n$-truncated displays. We conjecture that in this more general situation the first stack is a gerbe over the second one banded by a commutative group scheme, and we give a conjectural description of this group scheme.

We also give a conjectural description of the stack of $n$-truncated Barsotti-Tate groups over $\Spf\BZ_p$ and of its $(G,\mu)$-generalization.
\end{abstract}

\keywords{Barsotti-Tate group, display, gerbe, inertia stack, formal group}

\subjclass[2020]{14L05, 14F30}


\maketitle

\tableofcontents

\section{Introduction}
Throughout this article, we fix a prime $p$.

Let $d,d'$ be integers such that $0\le d'\le d$. Let $n\in\BN$. 

\subsection{A theorem of E.~Lau}
The notion of $n$-truncated Barsotti-Tate group was introduced by Grothen\-dieck \cite{Gr}. We recall it in \S\ref{s:BT_n groups}.

$n$-truncated Barsotti-Tate groups of height $d$ and dimension $d'\le d$ form an algebraic stack over $\BZ$, denoted by $\BTst_n^{d,d'}$ (see \S\ref{sss:definition of BBT_n} for more details).
This stack is rather mysterious. 

Let $\BBT_n^{d,d'}:=\BTst_n^{d,d'}\otimes\BF_p$. Using  (a covariant version of) crystalline Dieudonn\'e theory,  E.~Lau defined in \cite{Lau13} a canonical morphism $\phi_n:\BBT_n^{d,d'}\to\Disp_n^{d,d'}$, where $\Disp_n^{d,d'}$ is a certain explicit algebraic stack\footnote{$\Disp_n^{d,d'}$ was defined in \cite{Lau13} using Th.~Zink's ideas.} over $\BF_p$ (which is called the stack of $n$-truncated displays of height $d$ and dimension $d'$). 
Moreover, he proved the following

\begin{thm}  \label{t:Lau's theorem}
(i) The morphism $\phi_n:\BBT_n^{d,d'}\to\Disp_n^{d,d'}$ is a gerbe banded by a commutative locally free finite group scheme over $\Disp_n^{d,d'}$, which we denote by $\Lau_n^{d,d'}$. 

(ii) The group scheme $\Lau_n^{d,d'}$ has order $p^{nd'(d-d')}$ and is killed by $F^n$, where $F$ is the geometric Frobenius.
\end{thm}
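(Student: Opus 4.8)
The plan is to verify the two defining properties of a gerbe for $\phi_n$ in the fppf topology and then to compute the band via deformation theory. For essential surjectivity, I would first note that over a perfect field every ($n$-truncated) display arises from an $n$-truncated Barsotti-Tate group, which is classical Dieudonn\'e theory; together with faithful flatness of $\phi_n$ (to be extracted from the comparison of deformation theories below, using that $\BBT_n^{d,d'}$ and $\Disp_n^{d,d'}$ are both smooth over $\BF_p$ --- the former by Illusie, the latter from the explicit presentation of the display stack) this yields that $\phi_n$ is an fppf epimorphism. For the second property --- that any two Barsotti-Tate groups with isomorphic $n$-truncated displays become isomorphic fppf-locally over $\Disp_n^{d,d'}$ --- I would lift an isomorphism of displays to a comparison of the associated truncated crystalline Dieudonn\'e data after a faithfully flat cover. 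The band is then the relative automorphism sheaf $\underline{\Aut}_{\phi_n}(X)$ of a Barsotti-Tate group $X$ inducing the identity on $\phi_n(X)$; being the automorphism sheaf of a gerbe it is canonically a group scheme $\Lau_n^{d,d'}$ over $\Disp_n^{d,d'}$, independent of $X$.

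Commutativity I would obtain from the additive nature of Dieudonn\'e theory: an automorphism of $X$ trivial on $\phi_n(X)$ is the datum of a morphism in the additive category of truncated crystals --- concretely, an infinitesimal automorphism of the form $\id + N$ with $N$ in the relevant deformation module --- and the group law on such data is the addition of that module, hence abelian. Finiteness and local freeness of $\Lau_n^{d,d'}$ I would then deduce by exhibiting it as the kernel of a morphism of finite locally free group schemes coming from the comparison of the deformation theories of $\BBT_n^{d,d'}$ and $\Disp_n^{d,d'}$. Note that the band is \emph{infinitesimal} (it is annihilated by a power of Frobenius, hence has trivial points over any field), so $\phi_n$ is an isomorphism on reduced points and is detected only on fat test schemes; this is exactly why both stacks can be smooth while $\phi_n$ itself is flat but not smooth.

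For part (ii) I would introduce the Frobenius filtration $L_i := \ker\big(F^i : \Lau_n^{d,d'} \to \Lau_n^{d,d'}\big)$, which automatically satisfies $F(L_i)\subseteq L_{i-1}$. The two assertions of (ii) then amount to: the filtration exhausts the group at step $n$, i.e. $L_n = \Lau_n^{d,d'}$ (which is precisely the annihilation by $F^n$), and each graded piece $L_i/L_{i-1}$ for $1 \le i \le n$ is, fppf-locally on $\Disp_n^{d,d'}$, a Frobenius kernel of the vector group attached to the Grothendieck-Messing deformation module $\underline{\Hom}(\omega_X, \Lie X^\vee)$. Granting this, each graded piece is finite locally free of order $p^{d'(d-d')}$, since that module is locally free of rank $d'(d-d')$ by the classical count (dimension $d'$ times codimension $d-d'$); multiplying the $n$ graded pieces gives $|\Lau_n^{d,d'}| = p^{nd'(d-d')}$, and in particular the band is trivial exactly when $d'=0$ or $d'=d$. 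Both the exhaustion at step $n$ and the shape of the graded pieces I would read off by matching the $n$-level deformation theory of $n$-truncated Barsotti-Tate groups against that of $n$-truncated displays.

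The \emph{main obstacle} is the crystalline comparison underlying the second gerbe property together with the identification of the graded pieces: one must control exactly how much of the crystalline Dieudonn\'e data of $X$ is forgotten by passing to its $n$-truncated display, showing the discrepancy is precisely the Frobenius-twisted deformation module at each of the $n$ levels and nothing more. This is a genuine crystalline input --- not merely a comparison up to isogeny --- and I would expect to need the structure theory of (truncated) displays together with the Grothendieck-Messing crystal in an essential way, with the most delicate analysis occurring away from the étale and multiplicative loci.
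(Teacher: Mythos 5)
First, note that the paper does not prove this theorem: it is recalled as background, with the attribution that it is ``a combination of \cite[Thm.~B]{Lau13} and \cite[Rem.~4.8]{Lau13}'', so the only proof yours can be measured against is Lau's. Your outline does track the broad strategy of that work (crystalline Dieudonn\'e theory plus a comparison of the two deformation theories), but it is a strategy map rather than a proof: every step that carries real weight --- faithful flatness of $\phi_n$, the second gerbe axiom, and the identification of the graded pieces of your Frobenius filtration, hence the order count in (ii) --- is deferred to the ``crystalline comparison'' that you yourself flag as the main obstacle. That comparison is not a routine verification to be filled in later; it is the entire content of \cite{Lau13}. Two further gaps. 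Your commutativity argument is incomplete as stated: the composite $(\id+N)(\id+N')=\id+N+N'+NN'$ agrees with addition of the module only once one knows $NN'=0$; this is exactly what the present paper supplies by showing that any endomorphism killed by $(\Fr^n)^*$ factors as $G\epi G/G^{(F^n)}\to G^{(F^n)}\mono G$ (Lemma~\ref{l:F^n-killed endomorphisms}), so that the product of two such endomorphisms vanishes. And your plan for (ii) --- that all $n$ graded pieces of the Frobenius filtration are, locally, Frobenius kernels of a rank-$d'(d-d')$ vector group --- amounts to the $n$-smoothness of $\Lau_n^{d,d'}$, which is not formal for a finite group scheme killed by $F^n$ and is in fact one of the main new theorems of this paper (Theorem~\ref{t:main}(ii)), proved there by a different route (a Lie-algebra dimension count for the inertia stack of $\BBT_n^{d,d'}$ together with a reduction to the case $n=1$).

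Second, a concrete error: your parenthetical claim that ``$\phi_n$ is flat but not smooth'', and the heuristic built on it, are wrong. Smoothness of $\phi_n$ is Lau's main theorem (it is the title of \cite{Lau13}), and it even follows formally from the statement you are trying to prove: for a finite locally free group scheme $G$ over a base $Y$, the morphism $BG\to Y$ is smooth (over a field, embed $G$ into a smooth affine group scheme $H$ and write $BG=[(H/G)/H]$, which has the smooth atlas $H/G$; in general combine flatness with fibrewise smoothness), a gerbe banded by $G$ is fppf-locally of this form, and smoothness is fppf-local on the target. Relatedly, ``$\phi_n$ is an isomorphism on reduced points'' is too strong: an infinitesimal band has only the trivial section over a reduced scheme, but torsors under it over a reduced non-perfect base need not be trivial (e.g.\ $\alpha_p$-torsors over a non-perfect field), so $\phi_n$ induces an equivalence on geometric points only, not on $S$-points for all reduced $S$.
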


This theorem of E.~Lau is a combination of \cite[Thm.~B]{Lau13} and \cite[Rem.~4.8]{Lau13}.

\subsection{Main results}
In this article we describe $\Lau_n^{d,d'}$ explictly. We also prove a property of $\Lau_n^{d,d'}$, which we call \emph{$n$-smoothness\footnote{In \cite[\S VI.2]{Gr} Grothendieck used a different name for this property.}.} A group scheme $G$ over an $\BF_p$-scheme $S$ is said to be 
\emph{$n$-smooth} if locally on $S$, there exists an isomorphism of pointed $S$-schemes
\[
G\iso S\times\Spec A_{n,r}, \quad A_{n,r}:=\BF_p[x_1,\ldots, x_r]/(x_1^{p^n}, \ldots , x_r^{p^n})
\]
for some $r\in\BZ_+$ (here $\Spec A_{n,r}$ is viewed as a pointed $\BF_p$-scheme). If $n>1$ then $n$-smoothness is stronger than being a finite locally free group scheme killed by $F^n$.

\subsection{The description of $\Lau_n^{d,d'}$}
The stack $\Disp_n^{d,d'}$ is very explicit: it is the quotient of the $\BF_p$-scheme $GL(d,W_n)$ by an explicit action of a certain group scheme, 
see \S\ref{sss:Disp_n & sDisp_n as quotients} of Appendix~\ref{s:Explicit presentations} (which goes back to \cite{LZ}). In this article we give an explicit description of the group scheme $\Lau_n^{d,d'}$, see Theorem~\ref{t:Lau_n explicitly}.
Namely, $\Lau_n^{d,d'}$ is obtained by applying the \emph{Zink functor} (see \S\ref{s:Zink's functor}) to a certain $n$-truncated \emph{semidisplay} on $\Disp_n^{d,d'}$ (in the sense of \S\ref{s:n-truncated semidisplays}).

Moreover, in \S\ref{ss:the Cartier dual explicitly} of Appendix~\ref{s:Disp_n and Lau_n} we describe the Cartier dual $(\Lau_n^{d,d'})^*$ as an explicit closed subgroup of a very simple smooth group scheme over $\Disp_n^{d,d'}$.
In fact, we describe there the Cartier duals of more general group schemes $\Lau_n^{G,\mu}$ discussed in the next subsection.

\subsection{A ``Shimurian'' generalization of $\Lau_n^{d,d'}$ and two conjectures}    \label{ss:Shimurization} 
For any smooth affine group scheme $G$ over $\BZ/p^n\BZ$ and any 1-bounded\footnote{$1$-boundedness means that all weights of the action of $\BG_m$ on $\Lie (G)$ are $\le 1$.} homomorphism $\mu :\BG_m\to G$, one has certain stacks\footnote{$\Disp_n^{G,\mu}$ was defined in \cite{BP,Lau21}; we recall the definition in Appendix~\ref{s:Disp_n and Lau_n}. For $\BT_n^{G,\mu}$ see \cite{GMM}.}  $\Disp_n^{G,\mu}$ and $\BT_n^{G,\mu}$ (hopefully, they are related to Shimura varieties); in the case $G=GL(d)$ these are the stacks $\Disp_n^{d,d'}$ and $\BTst_n^{d,d'}$, where $d'$ depends on $\mu$.
In Appendix~\ref{s:Disp_n and Lau_n} we formulate Conjecture~\ref{conjecture}, which says that $\BT_n^{G,\mu}\otimes\BF_p$ is a gerbe over $\Disp_n^{G,\mu}$
banded by a certain commutative group scheme $\Lau_n^{G,\mu}$. The definition of $\Lau_n^{G,\mu}$ given in Appendix~\ref{s:Disp_n and Lau_n} is inspired by our description of 
$\Lau_n^{d,d'}$; in fact, $\Lau_n^{d,d'}$ is a particular case of $\Lau_n^{G,\mu}$. Our description of $\Lau_n^{d,d'}$ ``almost'' implies that Conjecture~\ref{conjecture} is true for $G=GL(d)$; for a precise statement, see \S\ref{sss:conjecture}(ii).

We also give a conjectural description of the stack $\BT_n^{G,\mu}$ for any 1-bounded $\mu$ and any $n$, see Conjecture~\ref{conjecture in characteristic p}.

\subsection{Organization}
In \S\ref{s:n-smoothness} we discuss the notion of $n$-smoothness and the Cartier-dual notion of $n$-cosmoothness. In \S\ref{sss:tensor product setting}-\ref{sss:the case m=2} we study the natural tensor structure on the category of 1-cosmooth group schemes; Corollary~\ref{c:Cartier dual of tensor product} is used in the proof of Theorems~\ref{t:main} and \ref{t:Lie of Lau}. 

In \S\ref{ss:BT_n recollections} we recall the notion of $n$-truncated Barsotti-Tate group. In \S\ref{ss:A subgroup of  Aut G} we discuss automorphisms of such groups.

In \S\ref{s:Formulation of main results} we formulate Theorems~\ref{t:main} and \ref{t:Lie of Lau}, which provide some information about $\Lau_n^{d,d'}$ (including $n$-smoothness). To transform Theorem~\ref{t:Lie of Lau} into an explicit description of $\Lau_n^{d,d'}$, we need further steps, which are briefly discussed in \S\ref{sss:Improving}.

In \S\ref{s:proof of main theorem} we prove Theorems~\ref{t:main} and \ref{t:Lie of Lau}.

In \S\ref{s:n-truncated semidisplays} we introduce the stack of $n$-truncated semidisplays and two related stacks (weak and strong $n$-truncated semidisplays).

In \S\ref{s:Zink's functor}  we discuss the Zink functor $\fZ$.

In \S\ref{s:n-cosmooth ones as tensor category} we prove Proposition~\ref{p:Zink functor of a tensor product}, which plays a key role in the proof of Theorem~\ref{t:Lau_n explicitly}.

In \S\ref{s:Explicit description} we formulate and prove Theorem~\ref{t:Lau_n explicitly}, which gives an explicit description of $\Lau_n^{d,d'}$.
The proof uses \cite[Lemma~3.12]{LZ}.

In \S\ref{s:higher displays} we discuss the category of $n$-truncated higher displays from \cite{Lau21}.
We use it to reformulate a certain construction from \S\ref{sss:the semidisplay on Disp} in a way convenient for Appendix~\ref{s:Disp_n and Lau_n}.

In Appendix~\ref{s:hat W} we recall the description of the Cartier dual of the group of Witt vectors ($p$-typical or ``big'' ones).

In Appendix~\ref{s:Explicit presentations} we describe the stacks from \S\ref{s:n-truncated semidisplays} in very explicit terms.

In Appendix~\ref{s:Disp_n and Lau_n} we define the group scheme $\Lau_n^{G,\mu}$ mentioned in \S\ref{ss:Shimurization} and formulate Conjecture~\ref{conjecture}. Using \S\ref{s:higher displays}, we show that
$\Lau_n^{d,d'}$ is a particular case of $\Lau_n^{G,\mu}$. We also describe $(\Lau_n^{G,\mu})^*$ very explicitly.

In Appendix~\ref{s:conjectural description of BT_n}  we formulate Conjecture~\ref{conjecture in characteristic p} describing the stack $\BT_n^{G,\mu}$ for any 1-bounded $\mu$ and any $n$.

\subsection{Simplifications if $n=1$}
A complete description of $\Lau_1^{d,d'}$ is given already by Theorem~\ref{t:Lie of Lau}(ii) combined with \S\ref{sss:shortcut for n=1}.
Moreover, in the $n=1$ case \S\ref{s:n-cosmooth ones as tensor category} and \S\ref{s:higher displays} are unnecessary and \S\ref{s:Zink's functor} is almost unnecessary
(because if $n=1$ then the Zink functor $\fZ$ from \S\ref{s:Zink's functor}  is just the classical functor from restricted Lie algebras to group schemes of height $1$).

\subsection{Acknowledgements}
This paper is based on the theory of displays developed by Th.~Zink and E.~Lau. I also benefited from discussions with them.

On the other hand, Conjecture~\ref{conjecture in characteristic p} is strongly influenced by the theory of sheared prismatization~\cite{BMVZ, Sheared} and by my discussions with D.~Arinkin and N.~Rozenblyum.

The author's work on this project was partially supported by NSF grant DMS-2001425.

\section{$n$-smoothness and $n$-cosmoothness}  \label{s:n-smoothness}
\subsection{Recollections on exact sequences of group schemes}
\subsubsection{Definition of exactness}
Let $S$ be a scheme. A sequence 
\begin{equation}   \label{e:G' to G to G'' bis}
G'\overset{f}\longrightarrow G\overset{h}\longrightarrow G''
\end{equation}
 of affine group $S$-schemes is said to be exact if the corresponding sequence of fpqc-sheaves on the category of $S$-schemes is exact. If $S$ is the spectrum of a field, exactness just means that $\Ker h=\im f$.

\subsubsection{How to check exactness}  \label{sss:2How to check exactness}
Exactness of \eqref{e:G' to G to G'' bis} clearly implies fiberwise exactness. Now suppose we have a diagram \eqref{e:G' to G to G'' bis} such that $h\circ f=0$ and $G',G,G''$ are flat schemes of finite presentation over $S$.  In this situation it is well known (e.g., see \cite[Prop.~1.1]{dJ} and its proof) that if the sequence \eqref{e:G' to G to G'' bis} is fiberwise exact then it is exact, $\Ker h$ is flat (and finitely presented) over $S$ and the morphism $G'\to\Ker h$ is faithfully flat. The latter implies that $\Ker f$ is flat (and finitely presented) over $S$.

From now on, let us assume that the group schemes $G',G,G''$ from the exact sequence~\eqref{e:G' to G to G'' bis} are
finite and locally free. Then $\Ker h$ and $\Ker f$ are also finite and locally free. 
Moreover, the morphism $G/\Ker h\to G''$ is a closed immersion, so if $G''$ is commutative we also have the group scheme $\Coker h$, which is finite and locally free.

Note that exactness of the fiber of \eqref{e:G' to G to G'' bis} over $s\in S$ is equivalent to the condition 
$$|\Ker h_s|=|\im f_s|;$$
this condition is open because $|\Ker h_s|$ is upper-semicontinuous and $|\im f_s|$ is lower-semi\-con\-tinuous (since $|\im f_s|\cdot |\Ker f_s|=|G'_s|$).

If the groups $G,G',G''$ in a complex \eqref{e:G' to G to G'' bis} are commutative then exactness of \eqref{e:G' to G to G'' bis} is equivalent to exactness of the Cartier-dual complex.

\subsection{$n$-smooth group schemes}     \label{ss:n-smooth group schemes bis}
From now on,  \emph{we assume that $S$ is an $\BF_p$-scheme.}

\subsubsection{Pointed $S$-schemes}
By a pointed $S$-scheme we mean an $S$-scheme equipped with a section. We have a forgetful functor from the category of group $S$-schemes to that of pointed $S$-schemes (forget multiplication but remember the unit).

\subsubsection{Definition}   \label{sss:n-smooth group scheme bis}
Let $n\in\BN$. A group $S$-scheme $G$ is said to be \emph{$n$-smooth\footnote{In \cite[\S VI.2]{Gr} Grothendieck used a different name for the class of $n$-smooth  group schemes.}} if Zariski-locally on $S$, there exists an isomorphism of pointed $S$-schemes
\begin{equation}   \label{e:pointed isomorphism}
G\iso S\times\Spec A_{n,r}, \quad A_{n,r}:=\BF_p[x_1,\ldots, x_r]/(x_1^{p^n}, \ldots , x_r^{p^n})
\end{equation}
for some $r\in\BZ_+$ (here $\Spec A_{n,r}$ is viewed as a pointed $\BF_p$-scheme).

If $G$ is an $n$-smooth group $S$-scheme then the $\cO_S$-module $\coLie (G)$ is a vector bundle. Its rank is a locally constant function on $S$, which is called the \emph{rank} of $G$. (This is the number $r$ from~\eqref{e:pointed isomorphism}). It is clear that an $n$-smooth group $S$-scheme is finite and locally free; moreover, its order equals $p^{nr}$, where $r$ is the rank.

The next lemma implies that the property of $n$-smoothness is fpqc-local with respect to~$S$.

\begin{lem}  \label{l:reformulation of n-smoothness}
Let $r\in\BZ_+$ and $G$ a group scheme over an $\BF_p$-scheme $S$. The following are equivalent:

(a) $G$ is $n$-smooth of rank $r$;

(b) $G$ is a finite locally free group scheme of order $p^{nr}$ killed by $F^n$, and $\dim\Lie (G_s)=r$ for all $s\in S$. \qed
\end{lem}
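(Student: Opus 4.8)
The plan is to prove the two implications separately, with (b)$\Rightarrow$(a) carrying essentially all of the content.

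For (a)$\Rightarrow$(b), I would work Zariski-locally and assume $G\iso S\times\Spec A_{n,r}$ as pointed $S$-schemes. The order $p^{nr}$ is immediate from $\dim_{\BF_p}A_{n,r}=p^{nr}$, and $\coLie(G)=I/I^2$ (with $I$ the augmentation ideal) is computed from $A_{n,r}$ to be free of rank $r$, because $p^n\ge 2$ forces each relation $x_i^{p^n}=x_i\cdot x_i^{p^n-1}$ into $I^2$; since $\coLie$ commutes with base change this yields $\dim\Lie(G_s)=r$ for all $s$. For the vanishing of $F^n$ I would observe that ``$G$ is killed by $F^n$'' means the relative Frobenius $F^n_{G/S}\colon G\to G^{(p^n)}$ is the trivial homomorphism, i.e.\ factors through the unit section; this is a condition on the pointed $S$-scheme $G$ alone (the relative Frobenius and the unit section are preserved by pointed isomorphisms), and for the model $\Spec A_{n,r}$ it holds since every element of the augmentation ideal has vanishing $p^n$-th power.

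For (b)$\Rightarrow$(a) the first step is the ring-theoretic translation of ``killed by $F^n$''. Writing $G=\Spec B$ locally, with counit $\epsilon\colon B\to\cO_S$ and augmentation ideal $I=\Ker\epsilon$, triviality of $F^n_{G/S}$ is exactly the identity $b^{p^n}=\epsilon(b)\cdot 1$ in $B$; in particular $x^{p^n}=0$ for every $x\in I$. Fix $s_0\in S$ and put $k=\kappa(s_0)$. On the fiber, $B_{s_0}=k\oplus I_{s_0}$ with every element of $I_{s_0}$ nilpotent, so $B_{s_0}$ is a local Artinian $k$-algebra with maximal ideal $I_{s_0}$, and by hypothesis $\dim_k I_{s_0}/I_{s_0}^2=\dim_k\Lie(G_{s_0})=r$.

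Next I would lift a $k$-basis of $I_{s_0}/I_{s_0}^2$ to elements $x_1,\dots,x_r$ of $I$, spread them out over an affine neighborhood $U\ni s_0$, and use $x_i^{p^n}=0$ to define a pointed $U$-morphism $\psi\colon G|_U\to U\times\Spec A_{n,r}$ by $X_i\mapsto x_i$. Both sides are finite locally free over $U$ of the same rank $p^{nr}$ (using the order hypothesis), so it suffices to check that $\psi$ is an isomorphism on the fiber over $s_0$. There $\psi^*_{s_0}\colon A_{n,r}\otimes k\to B_{s_0}$ sends the $X_i$ to generators of $I_{s_0}/I_{s_0}^2$; since $B_{s_0}$ is local Artinian with maximal ideal $I_{s_0}$, by Nakayama these lift to generators of the ideal $I_{s_0}$, so the subalgebra they generate is all of $B_{s_0}$ and $\psi^*_{s_0}$ is surjective. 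Being a surjection of $k$-vector spaces of equal dimension $p^{nr}$, it is an isomorphism. Finally I would spread this out: $\Coker\psi^*$ is a finite $\cO_U$-module whose fiber at $s_0$ vanishes, hence it vanishes on a neighborhood by Nakayama, and a surjection between locally free $\cO_U$-modules of equal finite rank is an isomorphism. This gives the required pointed isomorphism near $s_0$.

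The main obstacle is less a genuine difficulty than a point demanding care: the passage from the single fiber over $s_0$ to a neighborhood. The argument must be arranged so that only surjectivity on one fiber, together with the equal-rank bookkeeping furnished by the order hypothesis, is used — thereby sidestepping any attempt to prove directly that $\coLie(G)$ is locally free, which does \emph{not} follow from constancy of fiber dimension alone. I note that commutativity of $G$ is nowhere needed.
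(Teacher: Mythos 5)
The paper offers no proof of Lemma~\ref{l:reformulation of n-smoothness} at all (the statement simply ends with a QED box, the argument being left to the reader), so there is no "paper proof" to compare against. Your argument correctly supplies the omitted proof, and its structure is the natural one: (a)$\Rightarrow$(b) by direct computation with the model $A_{n,r}$, and (b)$\Rightarrow$(a) by lifting a basis of $I_{s_0}/I_{s_0}^2$, mapping in $A_{n,r}$, checking the map is an isomorphism on one fiber via Nakayama plus the equal-order count, and spreading out. The point you flag at the end is indeed the one requiring care, and your arrangement handles it properly: you never need $\coLie(G)$ to be locally free a priori, only surjectivity of $\psi^*$ on a single fiber together with the rank bookkeeping.

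One small correction to the ring-theoretic translation: triviality of $F^n_{G/S}$ says that $F^{n,*}_{G/S}\colon B^{(p^n)}\to B$ factors through the counit of $B^{(p^n)}$, which on elements reads $b^{p^n}=\epsilon(b)^{p^n}\cdot 1$ for all $b\in B$, not $b^{p^n}=\epsilon(b)\cdot 1$. The latter fails already for $b=c\cdot 1$ with $c\in\cO_S$ not fixed by Frobenius (the base need not be perfect). This slip is harmless for your proof: the only consequence you use is $x^{p^n}=0$ for $x\in I$, which is correct, and conversely this consequence is equivalent to the correct identity (write $b=\epsilon(b)\cdot 1+(b-\epsilon(b)\cdot 1)$ and use additivity of $p^n$-th powers in characteristic $p$), so both the verification for the model in (a)$\Rightarrow$(b) and the deduction in (b)$\Rightarrow$(a) go through verbatim.
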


The following variant of Lemma~\ref{l:reformulation of n-smoothness} will be used in \S\ref{sss:deducing main theorem}.

\begin{lem}  \label{l:technical lemma}
Let $r\in\BZ_+$. Let $H$ be a finite group scheme over an $\BF_p$-scheme $S$. Let $H'\subset H$ be a closed subgroup. Suppose that

(i) $H$ is killed by $F^n$;

(ii) $H'$ is a finite locally free group $S$-scheme of order $p^{nr}$;

(iii) $\dim\Lie (H_s)\le r$ for all $s\in S$. 

Then $H'=H$ and $H$ is $n$-smooth of rank $r$. \qed
\end{lem}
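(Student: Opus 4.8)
The plan is to reduce Lemma~\ref{l:technical lemma} to Lemma~\ref{l:reformulation of n-smoothness} by showing that the inclusion $H'\subset H$ is an equality. To that end, first I would replace $S$ by an arbitrary geometric point $s$ and argue fiberwise, since the equality $H'=H$ can be checked on fibers: $H'$ is already finite locally free of constant order $p^{nr}$, and if we show $H'_s=H_s$ for every $s$ then the closed immersion $H'\hookrightarrow H$ is an isomorphism (the two finite schemes have equal, locally constant order, so the surjection $\cO_H\to\cO_{H'}$ of locally free sheaves of the same rank is an isomorphism). So it suffices to treat the case where $S=\Spec k$ for a field $k$ (algebraically closed, after base change, which only enlarges the groups).

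Over a field, the key point is to bound the order of $H$ from above by $p^{nr}$, matching the known order of $H'$. I would use hypotheses (i) and (iii): $H$ is a finite $k$-group scheme killed by $F^n$ with $\dim\Lie(H)\le r$. The standard structure theory of finite group schemes of this type gives exactly such a bound. Concretely, for a finite commutative (or, after passing to the connected component and using the infinitesimal part, suitably reduced) group scheme killed by $F^n$, one has a filtration whose successive Frobenius kernels are controlled by the cotangent space; since $\dim\Lie(H)\le r$, the coordinate ring of the infinitesimal part is a quotient of $A_{n,r}=\BF_p[x_1,\dots,x_r]/(x_1^{p^n},\dots,x_r^{p^n})$, whence $|H^\circ|\le p^{nr}$. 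Because $H$ is killed by $F^n$ it is infinitesimal (its reduced part, being smooth and killed by Frobenius over a perfect field, is trivial), so $H=H^\circ$ and $|H|\le p^{nr}$.

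Combining the bound $|H|\le p^{nr}$ with $|H'|=p^{nr}$ and $H'\subset H$ forces $H'_s=H_s$ fiberwise, hence $H'=H$ as argued above. At this point $H=H'$ is a finite locally free group $S$-scheme of order $p^{nr}$, killed by $F^n$ (by (i)), and by (iii) together with the now-available lower bound $\dim\Lie(H_s)\ge r$ (which follows from $|H_s|=p^{nr}$ and the order formula $|H_s|\le p^{n\dim\Lie(H_s)}$ used above) we get $\dim\Lie(H_s)=r$ for all $s$. These are precisely conditions (b) of Lemma~\ref{l:reformulation of n-smoothness}, so $H$ is $n$-smooth of rank~$r$, as claimed.

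The main obstacle I anticipate is the order bound $|H^\circ|\le p^{n\dim\Lie(H^\circ)}$ for an infinitesimal group scheme killed by $F^n$: one must show that having at most $r$ independent tangent directions and being annihilated by the $n$-th Frobenius power caps the coordinate ring by a quotient of $A_{n,r}$. The clean way is to choose generators of the maximal ideal of $\cO_{H,e}$ lifting a basis of the cotangent space $\Lie(H)^\vee$ (so $\le r$ of them), and to check that each generator $x_i$ satisfies $x_i^{p^n}=0$ because $F^n$ kills $H$; this yields a surjection $A_{n,r}\twoheadrightarrow\cO_H$ and the desired inequality. Care is needed because $H$ is not assumed commutative, so one works with the local ring at the identity and the Frobenius kernel filtration rather than a global Cartier-dual argument.
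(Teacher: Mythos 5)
The paper gives no proof of this lemma to compare against: the statement ends with a QED box and is treated as routine, so your proposal supplies the omitted argument. Its core — the fiberwise order bound — is correct and is surely the intended one: over an algebraically closed field, being killed by $F^n$ means exactly that $a^{p^n}=0$ for every $a$ in the augmentation ideal of $\cO_{H_s}$, so $H_s$ is infinitesimal and its coordinate ring is local at the identity; lifting a basis of the cotangent space to generators of the maximal ideal yields a surjection $k[x_1,\dots,x_r]/(x_1^{p^n},\dots,x_r^{p^n})\epi\cO_{H_s}$, whence $|H_s|\le p^{n\dim\Lie (H_s)}\le p^{nr}$. Comparing with $|H'_s|=p^{nr}$ then forces $H'_s=H_s$ and $\dim\Lie (H_s)=r$, and Lemma~\ref{l:reformulation of n-smoothness} finishes the proof. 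One remark: your caution about non-commutativity is unnecessary at the point where you raise it, since the coordinate ring of \emph{any} group scheme is a commutative ring (only the comultiplication can fail to be cocommutative), so running the argument on $\cO_{H_s}$ requires no special care.

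One written justification is circular, although the step itself is true. In the reduction to fibers you assert that once $H'_s=H_s$ for all $s$, the closed immersion $H'\mono H$ is an isomorphism because $\cO_H\epi\cO_{H'}$ is a surjection of ``locally free sheaves of the same rank''; but $\cO_H$ is not known to be locally free at that stage — that is essentially what the lemma is proving. The correct justification uses precisely the flatness of $H'$ that you single out as the relevant hypothesis. Let $I=\Ker (\cO_H\epi\cO_{H'})$ and pass to stalks at $s\in S$, writing $\fm_s\subset\cO_{S,s}$ for the maximal ideal. Since $(\cO_{H'})_s$ is a free $\cO_{S,s}$-module, the exact sequence $0\to I_s\to (\cO_H)_s\to (\cO_{H'})_s\to 0$ splits, so $I_s$ is a direct summand of the finite module $(\cO_H)_s$ and in particular finitely generated. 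Equality of fibers (which descends from geometric fibers, as $k(s)\to\overline{k(s)}$ is faithfully flat) says that $I_s$ maps to zero in $(\cO_H)_s/\fm_s(\cO_H)_s$, i.e. $I_s\subset \fm_s(\cO_H)_s=\fm_s I_s\oplus\fm_s(\cO_{H'})_s$ under the chosen splitting; since $I_s$ is a summand, this forces $I_s=\fm_s I_s$, hence $I_s=0$ by Nakayama. As all stalks of $I$ vanish, $I=0$ and $H'=H$. With this repair your proof is complete.
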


\begin{prop}  \label{p:Messing}
A group $S$-scheme $G$ is $n$-smooth if and only if it is finite, locally free, killed by $F^n:G\to (\Fr^n_S)^*G$, and satisfies the following condition: the complex
\begin{equation}   \label{e:F^m followed by F^n-m}
G\overset{F^m}\longrightarrow (\Fr^m_S)^*G\overset{F^{n-m}}\longrightarrow (\Fr^n_S)^*G
\end{equation}
is an exact sequence for every $m\in\{1,\ldots, n-1\}$.
\end{prop}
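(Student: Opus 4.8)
The plan is to prove equivalence of two characterizations of $n$-smoothness by reducing, as much as possible, to the already-established Lemma~\ref{l:reformulation of n-smoothness}, which identifies $n$-smoothness (of some rank) with being finite locally free, killed by $F^n$, and having fiberwise Lie-algebra dimension equal to the rank. The ``only if'' direction should be easy: if $G$ is $n$-smooth, then by \eqref{e:pointed isomorphism} it is Zariski-locally isomorphic as a pointed scheme to $S\times\Spec A_{n,r}$, and on the standard model $A_{n,r}$ the Frobenius maps $x_i\mapsto x_i^p$, so the complex \eqref{e:F^m followed by F^n-m} can be checked directly. I would verify that on $A_{n,r}$ the kernel of $F^{n-m}$ is exactly the image of $F^m$ by a monomial/basis computation, and that the relevant maps are fiberwise exact; then invoke \S\ref{sss:2How to check exactness} (flatness of the terms, fiberwise exactness implies exactness) to conclude exactness of \eqref{e:F^m followed by F^n-m} over $S$ rather than merely fiberwise.

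\textbf{The converse.} For the ``if'' direction, suppose $G$ is finite locally free, killed by $F^n$, and \eqref{e:F^m followed by F^n-m} is exact for all $m\in\{1,\ldots,n-1\}$. By Lemma~\ref{l:reformulation of n-smoothness} it suffices to show $\dim\Lie(G_s)$ is the ``correct'' constant $r$ with $|G_s|=p^{nr}$, i.e. that the fiber at each $s$ is $n$-smooth; since $n$-smoothness is fpqc-local and the order is locally constant, one may work fiber by fiber and assume $S=\Spec k$ for a field $k$. The heart of the matter is thus the following purely local statement: a finite group scheme $G$ over a field $k$ that is killed by $F^n$ and for which each \eqref{e:F^m followed by F^n-m} is exact is isomorphic, as a pointed scheme, to $\Spec A_{n,r}$. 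I would set $r:=\dim\Lie(G)=\dim\coLie(G)$ and aim to show $|G|=p^{nr}$, because together with being killed by $F^n$ and having Lie dimension $r$ this is exactly condition (b) of Lemma~\ref{l:reformulation of n-smoothness}.

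\textbf{Counting via the Frobenius filtration.} The key idea is to extract the orders of the successive Frobenius kernels from the exactness hypotheses. Writing $G_m:=\Ker(F^m\colon G\to (\Fr^m)^*G)$, the exactness of \eqref{e:F^m followed by F^n-m} at the middle term says that $\Ker(F^{n-m})$ applied to $(\Fr^m)^*G$ equals the image of $F^m$, i.e. $(\Fr^m)^*G/G_m \xrightarrow{\ \sim\ } \Ker\bigl(F^{n-m}\colon (\Fr^m)^*G\to(\Fr^n)^*G\bigr)$. Using that $\Fr^m$ is flat on the field and that Frobenius pullback preserves orders and Lie dimensions, I expect these exact sequences to force each graded piece $G_{m+1}/G_m$ to have order exactly $p^{r}$, so that $|G|=|G_n|=p^{nr}$. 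Concretely, the first Frobenius kernel $G_1$ is a height-one group scheme with $\dim\Lie = r$, hence $|G_1|=p^r$; exactness then propagates this to each layer by identifying $(\Fr^m)^*G/G_m$ with a subgroup whose order can be compared across the filtration. Once $|G|=p^{nr}$ is established with $\dim\Lie(G)=r$, Lemma~\ref{l:reformulation of n-smoothness}(b) gives $n$-smoothness of rank $r$.

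\textbf{Main obstacle.} I expect the main difficulty to lie precisely in the counting step of the converse: turning the exactness of the individual complexes \eqref{e:F^m followed by F^n-m} into multiplicativity of orders along the Frobenius filtration, and in particular in controlling how the image of $F^m$ sits inside $(\Fr^m)^*G$. The subtlety is that the exactness is imposed for each separate $m$ rather than as a single statement about a filtered object, so I would need to organize the bookkeeping carefully — perhaps by induction on $n$, peeling off $G_1$ (a height-one group scheme of order $p^r$ by the classical correspondence with restricted Lie algebras) and showing $G/G_1$ is killed by $F^{n-1}$ with the inherited exact sequences, so that the inductive hypothesis yields $|G/G_1|=p^{(n-1)r}$ and $\dim\Lie(G/G_1)=r$. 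Verifying that the quotient genuinely inherits the hypotheses of the proposition, and that its Lie dimension stays equal to $r$, is the delicate point where the exactness conditions must be used in full force.
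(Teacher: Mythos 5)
The first thing to say is that the paper does not prove Proposition~\ref{p:Messing} at all: its entire ``proof'' is the attribution to Messing \cite[Prop.~II.2.1.2]{Me72} and to Grothendieck (Prop.~2.1 of \cite[Ch.~VI]{Gr}); only the subsequent improvement (Proposition~\ref{p:improved Messing}) gets a one-line indication of how to modify Messing's argument. So there is no in-paper proof to compare yours against, and what you propose is a genuinely independent argument. In outline it is correct, and its flavor is different from the cited sources: Messing's proof is Hopf-algebraic (lift a basis of $I/I^2$ to generators of the augmentation ideal and show the resulting surjection from a truncated polynomial algebra is an isomorphism by a count), whereas yours is a sheaf-theoretic induction with order counting, funneled through Lemma~\ref{l:reformulation of n-smoothness}. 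Your ``only if'' direction is fine as sketched: on the model $A_{n,r}$ one gets $|\Ker F^m|=p^{mr}$, hence $|\im F^m|=p^{(n-m)r}=|\Ker F^{n-m}|$, so the complex is fiberwise exact, and \S\ref{sss:2How to check exactness} upgrades this to exactness over $S$ because all three terms are finite locally free. The reduction of the converse to fibers is also fine, since exactness over $S$ implies fiberwise exactness and the rank is Zariski-locally constant.

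For the converse itself, the induction closes, but only if you run it through $G_{n-1}:=\Ker F^{n-1}$ rather than through $G/G_1$ naively; this is precisely the ``delicate point'' you flag, so let me make it concrete. Exactness at $m=1$ identifies $G/G_1\cong\im F=\Ker\bigl(F^{n-1}\colon\Fr^*G\to(\Fr^n)^*G\bigr)=\Fr^*G_{n-1}$ (relative Frobenius commutes with base change). The subgroup $G_{n-1}$ inherits the hypotheses for $n-1$ by a short chase: if $x\in(\Fr^{m})^*G_{n-1}$ satisfies $F^{n-1-m}(x)=0$, then a fortiori $F^{n-m}(x)=0$, so fppf-locally $x=F^{m}(y)$ with $y\in G$, and then $F^{n-1}(y)=F^{n-1-m}(x)=0$, i.e.\ $y\in G_{n-1}$. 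The point your sketch glosses over is the claim $\dim\Lie(G/G_1)=r$: the Lie algebra of a quotient is \emph{not} computed naively (e.g.\ $\alpha_{p^2}/\alpha_p\cong\alpha_p$ has nonzero Lie algebra while $\Lie(\alpha_{p^2})/\Lie(\alpha_p)=0$), so this must be read off from the identification $G/G_1\cong\Fr^*G_{n-1}$ together with $\Lie(G_m)=\Lie(G)$ for all $m\geq 1$ (Frobenius vanishes on tangent spaces). With that, induction gives that $G_{n-1}$ is $(n-1)$-smooth of rank $r$, so $|G|=|G_1|\cdot|\Fr^*G_{n-1}|=p^{r}\cdot p^{(n-1)r}=p^{nr}$, and Lemma~\ref{l:reformulation of n-smoothness}(b) finishes. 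Note also that the bare order relations $|G|=|G_m|\cdot|G_{n-m}|$ which you extract from exactness do \emph{not} by themselves force $|G_m|=p^{mr}$; it is the inheritance step above that makes the induction work, so your instinct about where the real content lies was exactly right.
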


This is due to W.~Messing \cite[Prop.~II.2.1.2]{Me72} and Grothendieck (see Proposition~2.1 of \cite[Ch.~VI]{Gr}). Here is a slight improvement of the above proposition.

\begin{prop}  \label{p:improved Messing}
Let $G$ be a finite locally free $S$-scheme killed by $F^n:G\to (\Fr^n_S)^*G$. If the complex \eqref{e:F^m followed by F^n-m}
is exact for at least one $m\in\{1,\ldots, n-1\}$ then $G$ is $n$-smooth.
\end{prop}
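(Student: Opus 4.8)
The plan is to bootstrap from Messing's criterion (Proposition~\ref{p:Messing}), which requires exactness of the complex \eqref{e:F^m followed by F^n-m} for \emph{all} $m\in\{1,\ldots,n-1\}$, to the improved statement requiring exactness for just \emph{one} such $m$. The key idea is that knowing exactness at a single stage, together with the fact that $G$ is killed by $F^n$, forces exactness at the other stages, after which Proposition~\ref{p:Messing} applies directly. So the entire task reduces to a propagation argument among the maps $G\overset{F}\to(\Fr_S)^*G\overset{F}\to\cdots\overset{F}\to(\Fr^n_S)^*G=0$.

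First I would reduce to the case where $S$ is the spectrum of a field, or even an algebraically closed field, using the fact (recorded in \S\ref{sss:2How to check exactness}) that for a complex of finite flat group schemes fiberwise exactness implies exactness, and that exactness of the fibers is an open condition. Since $n$-smoothness is fpqc-local by Lemma~\ref{l:reformulation of n-smoothness}, it suffices to check $n$-smoothness after base change to geometric points; and over a field the notions simplify considerably. The upshot is that I may assume $S=\Spec k$ with $k$ algebraically closed, where exactness of \eqref{e:F^m followed by F^n-m} just means $\Ker(F^{n-m})=\im(F^m)$.

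Next, over a field, I would analyze the structure of $G$ via its Frobenius filtration. Write $G_i:=\im(F^i\colon G\to(\Fr^i_S)^*G)$, so $G=G_0\supseteq G_1\supseteq\cdots\supseteq G_n=0$, where the last equality uses that $F^n$ kills $G$. The hypothesis gives exactness at one stage $m$, i.e.\ $\Ker(F^{n-m})=G_m$ (identifying along the Frobenius twists). The core computation is to compare orders: since Frobenius on a finite group scheme over a perfect field has $|\im F^i|\cdot|\Ker F^i|=|G|$, and the quotients $G_i/G_{i+1}$ govern the successive cokernels, the single exactness hypothesis pins down enough of these orders to conclude that \emph{each} complex \eqref{e:F^m followed by F^n-m} is exact — essentially because once one has the "middle" relation $|\Ker F^{n-m}|=|\im F^m|$, the submultiplicativity of kernel-orders and supermultiplicativity of image-orders along the chain are forced to be equalities everywhere. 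Then Proposition~\ref{p:Messing} gives $n$-smoothness over $k$, and the reduction step upgrades this to general $S$.

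The main obstacle I anticipate is the propagation step: controlling all the intermediate kernel and image orders from the single hypothesis. The clean way to organize it is to observe that $F\colon G_i\to G_{i+1}$ is always faithfully flat (being a Frobenius) and to track the "infinitesimal" quotients; the exactness at one stage essentially says that $G$ looks like $\Spec A_{n,r}$ in a neighborhood of that stage, and one must show this forces the uniform exponent-$p^n$ behavior in every variable rather than permitting some variables to be truncated at a lower or higher power of $p$. I expect that the cleanest route avoids delicate index-chasing by instead reducing, via Lemma~\ref{l:reformulation of n-smoothness}, to verifying the two numerical conditions there — that $G$ has order $p^{nr}$ with $r=\dim\Lie(G_s)$ — both of which should follow from the single exactness hypothesis by the same order count. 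If that reduction goes through, the argument is short; the risk is that extracting the order equality $|G|=p^{n\dim\Lie G}$ from exactness at a single $m$ still requires essentially reconstructing the full Messing chain, in which case one genuinely has to iterate the exactness one stage at a time.
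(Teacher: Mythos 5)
Your reduction to geometric fibers is fine: exactness of \eqref{e:F^m followed by F^n-m} passes to fibers by \S\ref{sss:2How to check exactness}, and once every geometric fiber is $n$-smooth, local constancy of the order pins down $r$ and Lemma~\ref{l:reformulation of n-smoothness} gives $n$-smoothness over $S$. The gap is in your propagation step, and it is a real one. Over a field, set $f(j):=\log_p|\Ker F^j|$. Since $\im F^{m'}\subseteq\Ker F^{n-m'}$ automatically and $|\im F^{m'}|=|G|/|\Ker F^{m'}|$, exactness at stage $m'$ is precisely the identity $f(m')+f(n-m')=f(n)$. The only general constraints your bookkeeping produces are $f(0)=0$, monotonicity, and subadditivity $f(a+b)\le f(a)+f(b)$ (any ``supermultiplicativity of image orders'' is the same inequality in disguise, because image orders equal $|G|/|\Ker F^j|$). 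These constraints do \emph{not} force the remaining equalities: for $n=4$ the profile $(f(0),\dots,f(4))=(0,3,4,7,8)$ is monotone and subadditive and satisfies $f(2)+f(2)=f(4)$, yet $f(1)+f(3)\ne f(4)$. So ``forced to be equalities everywhere'' is not a consequence of submultiplicativity; your argument as written cannot exclude such a profile.

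The missing ingredient is log-concavity of the kernel orders, $f(j+1)-f(j)\le f(j)-f(j-1)$, which requires a genuinely geometric argument: $F$ induces a homomorphism $\Ker F^{j+1}/\Ker F^{j}\to \Fr^*\bigl(\Ker F^{j}/\Ker F^{j-1}\bigr)$ with trivial kernel, hence a closed immersion, whence the inequality of orders. Granting this, $f$ is concave with $f(0)=0$, so $f(j)\ge (j/n)f(n)$ for all $j$; the single equality $f(m)+f(n-m)=f(n)$ then forces $f$ to be linear, i.e.\ $f(j)=jr$ with $r=f(1)=\dim\Lie(G)$ (here $\Ker F$ is $1$-smooth by \S\ref{sss:n=1 case bis}). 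In particular $|G|=p^{nr}$, and either Lemma~\ref{l:reformulation of n-smoothness} or ``exactness at every stage plus Proposition~\ref{p:Messing}'' finishes the proof. Alternatively, over a perfect field you may quote the structure theorem that a finite connected group scheme has coordinate ring $k[x_1,\ldots,x_r]/(x_1^{p^{e_1}},\ldots,x_r^{p^{e_r}})$; then the hypothesis reads $\sum_i\bigl(\min(m,e_i)+\min(n-m,e_i)\bigr)=\sum_i e_i$, which forces $e_i=n$ for all $i$. Either ingredient closes your argument; without one of them it is incomplete. Note finally that the paper itself gives no self-contained proof: it cites Grothendieck \cite[Ch.~VI, Prop.~2.1]{Gr} and remarks that Messing's proof of Proposition~\ref{p:Messing} goes through after a one-line modification. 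So your route, once repaired as above, would be a genuinely different, self-contained alternative rather than a reconstruction of the paper's argument.
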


\begin{proof}
This was proved by Grothendieck (see Proposition~2.1 of \cite[Ch.~VI]{Gr}). On the other hand, the argument from \cite{Me72} works with the following modification: in line 10 of p.29 of \cite{Me72} replace $1\otimes T_1$ by $1\otimes T_1^{p^i}$, where $i=\max (0,n_1-m)$.
\end{proof}

\subsubsection{The case $n=1$}   \label{sss:n=1 case bis}
A group scheme $G$ over $S$ is 1-smooth if and only if it is finite, locally free, and killed by Frobenius. This is a part of Theorem~7.4 of \cite[Exp.VIIA]{SGA3} (more precisely, the equivalence (ii)$\Leftrightarrow$(iv) of the theorem). On the other hand, this follows from Proposition~\ref{p:Messing}.

\subsubsection{The kernel of $F^n$} For a group scheme (or ind-scheme) $G$ over $S$, we set 
\[
G^{(F^n)}:=\Ker (G\overset{F^n}\longrightarrow (\Fr_S^n)^*G).
\]
It is easy to see that $G^{(F^n)}$ is $n$-smooth if $G$ is either smooth or $N$-smooth for some $N\ge n$.

\subsubsection{Notation}   
Let $\Sm_n^{\all} (S)$ be the category of $n$-smooth group schemes over $S$. Let $\Sm_n (S)\subset\Sm_n^{\all} (S)$ be the full subcategory of commutative group schemes. The categories $\Sm_n^{\all} (S)$ form a projective system: for $N\ge n$ the functor $\Sm_N^{\all}  (S)\to \Sm_n^{\all}  (S)$ is $G\mapsto G^{(F^n)}$.
The same is true for the categories $\Sm_n (S)$.

\subsubsection{Relation to formal Lie groups}  \label{sss:2formal groups&n-smooth ones}
By a \emph{formal Lie group} over $S$ we mean a group ind-scheme $G$ over $S$ such that Zarsiski-locally on $S$, there exists an isomorphism of pointed $S$-ind-schemes
$G\iso (\hat\BA^r_S, 0)$ (here $\hat\BA^r_S$ is the formal completion of $\BA^r_S$ along the zero section).

Let $\Sm_\infty^{\all} (S)$ (resp.~$\Sm_\infty^{\all} (S)$) be the category of all (resp.~commutative) formal Lie groups over $S$. If $G\in\Sm_\infty^{\all} (S)$ then $G^{(F^n)}\in\Sm_n^{\all} (S)$.
 As noted in \cite[Ch.~II]{Me72}, this construction defines equivalences
\begin{equation}    \label{e:2formal groups&n-smooth ones}
\Sm_\infty^{\all} (S)\iso  \underset{n}{\underset{\longleftarrow}{\lim}}\,\Sm_n^{\all} (S),\quad \Sm_\infty (S)\iso  \underset{n}{\underset{\longleftarrow}{\lim}}\,\Sm_n (S).
\end{equation}

\subsection{$n$-cosmooth group schemes}     \label{ss:n-cosmooth group schemes}
Recall that every $n$-smooth group scheme over $S$ is finite and locally free.

\subsubsection{Definitions}   \label{sss:n-cosmooth group scheme}
Let $n\in\BN$. A group $S$-scheme $G$ is said to be \emph{$n$-cosmooth} if it is Cartier dual to a commutative $n$-smooth group $S$-scheme. The \emph{rank} of $G$ is defined to be the rank of $G^*$.

\subsubsection{Notation}
The category of $n$-cosmooth group schemes over $S$ is denoted by $\Sm_n^*(S)$. By definition, it is anti-equivalent to $\Sm_n (S)$.

If $S=\Spec R$ we write $\Sm_n(R), \Sm_n^*(R)$ instead of $\Sm_n(S), \Sm_n^*(S)$.

\subsubsection{Remarks}   \label{sss:remarks on n-cosmooth groups}
By definition, any $n$-cosmooth group $S$-scheme $G$ is commutative, finite, and locally free; moreover, its order equals $p^{nr}$, where $r$ is the rank of $G$ (this follows from a similar statement about $n$-smooth group schemes).

\subsubsection{Example} 
Let $m,n\in\BN$. The group $S$-scheme $W_{n,S}^{(F^m)}:=\Ker (W_{n,S}\overset{F^m}\longrightarrow W_{n,S})$ is clearly $m$-smooth. It is also $n$-cosmooth because its Cartier dual is isomorphic to $W_{m,S}^{(F^n)}$.

\subsection{The category $\Sm_1^*(R)$}  \label{ss:1-cosmooth group schemes}
Let $R$ be an $\BF_p$-algebra.

\subsubsection{}  \label{sss:A_Q,phi}
Let $\cB (R)$ be the category of pairs $(P,\varphi )$, where $P$ is a finitely generated projective $R$-module and $\varphi :P\to P$ is a $p$-linear map. 
If $(P,\varphi )\in\cB (R)$ and $\tilde R$ is an $R$-algebra, let
$A_{P,\varphi}(\tilde R)$ be the group of $R$-linear maps $\xi:P\to\tilde R$ such that $\xi(\varphi (x))=\xi (x)^p$ for all $x\in P$. For any $(P,\varphi )\in\cB (R)$ the functor
$A_{P,\varphi}$ is an affine group $R$-scheme. The following theorem is well known (see \S 2 of \cite{dJ}, which refers to \cite[Exp.VIIA]{SGA3}).

\begin{thm}  \label{t:well known}
(i) For any $(P,\varphi )\in\cB (R)$, the group scheme $A_{P,\varphi}$ is 1-cosmooth. Its rank equals the rank of $P$.

(ii) The functor
\[
\cB (R)^{\op}\to\Sm_1^*(R), \quad (P,\varphi )\mapsto A_{P,\varphi}
\]
is an equivalence.

(iii) The inverse functor takes $A\in\Sm_1^*(R)$ to $(P,\varphi)$, where $P=\Hom (A, (\BG_a)_R)$ and $\varphi:P\to P$ is given by composition with 
$F:(\BG_a)_R\to (\BG_a)_R\,$. Equivalently, $(P,\varphi )$ is the restricted Lie algebra of $A^*\in\Sm_1(R)$.
\qed
\end{thm}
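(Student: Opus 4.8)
The plan is to reduce everything to the classical correspondence between commutative group schemes of height $1$ and abelian restricted Lie algebras, transported across Cartier duality. First I would analyze $A_{P,\varphi}$ locally. Working over an open where $P$ is free with basis $e_1,\dots,e_r$ and writing $\varphi(e_j)=\sum_i a_{ij}e_i$, a $\tilde R$-point $\xi$ of $A_{P,\varphi}$ is recorded by $x_j:=\xi(e_j)$, and the defining condition $\xi\circ\varphi=\Fr\circ\xi$ becomes $x_j^p=\sum_i a_{ij}x_i$. Hence
\[
A_{P,\varphi}\cong\Spec R[x_1,\dots,x_r]/(x_j^p-\textstyle\sum_i a_{ij}x_i)_{j=1,\dots,r},
\]
which is finite and locally free of rank $p^r$. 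Because we are in characteristic $p$ the Frobenius is additive, so $\xi_1+\xi_2$ again satisfies the defining condition; thus $A_{P,\varphi}$ is a commutative group scheme with the $x_j$ primitive, and it is killed by the relative Frobenius. The key observation is that this presentation is literally that of the restricted enveloping algebra: as a Hopf algebra $\mathcal O(A_{P,\varphi})\cong U^{[p]}(P,\varphi)$, where $(P,\varphi)$ is viewed as an abelian restricted Lie algebra with $p$-operation $\varphi$ (that $\varphi$ is $p$-linear is exactly the compatibility required of a $p$-operation, and abelianness kills the correction terms in its additivity).

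For (i) I would now dualize. For a locally free abelian restricted Lie algebra $\mathfrak g=(P,\varphi)$ the classical theory (\cite[Exp.~VII$_A$, Thm.~7.4]{SGA3}, which is what \cite{dJ} invokes) produces a commutative height-$1$ group scheme $G_{\mathfrak g}$ with distribution algebra $U^{[p]}(\mathfrak g)$ and $\Lie(G_{\mathfrak g})=\mathfrak g$. Since the distribution algebra is the linear dual of $\mathcal O(G_{\mathfrak g})$, the identification $\mathcal O(A_{P,\varphi})\cong U^{[p]}(P,\varphi)$ from the previous step says precisely that $A_{P,\varphi}$ is the Cartier dual of $G_{(P,\varphi)}$. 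As $G_{(P,\varphi)}$ is finite locally free, killed by Frobenius, with $\Lie$ locally free of rank $r$, it is $1$-smooth of rank $r$ by \S\ref{sss:n=1 case bis} and Lemma~\ref{l:reformulation of n-smoothness}; hence $A_{P,\varphi}=G_{(P,\varphi)}^{*}$ is $1$-cosmooth of rank $r$, proving (i).

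For (ii) and (iii) I would build the inverse functor and check it is inverse. Given $A\in\Sm_1^*(R)$, set $P:=\Hom(A,(\BG_a)_R)$, an $R$-module via the linear structure on $(\BG_a)_R$, with $\varphi$ given by post-composition with $F$. The crucial identification is $\Hom(A,(\BG_a)_R)\cong\Lie(A^{*})$: a homomorphism $A\to\BG_a$ is a primitive element of $\mathcal O(A)$, and computing $\Lie(A^{*})$ as $\ker\bigl(A^{*}(R[\epsilon])\to A^{*}(R)\bigr)$ via $A^{*}=\HHom(A,\BGm)$ identifies it with $\Hom(A,(\BG_a)_R)$; under this identification post-composition with $F$ becomes the $p$-operation, so $(P,\varphi)$ is the restricted Lie algebra of $A^{*}$. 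This gives (iii) and shows $(P,\varphi)\in\cB(R)$ with $P$ locally free of rank $r$ (here one uses that $A^{*}$ is $1$-smooth). There is a tautological evaluation morphism $A\to A_{P,\varphi}$, sending $a\in A(\tilde R)$ to $\xi_a$ with $\xi_a(\eta):=\eta(a)$; one checks $\xi_a\circ\varphi=\Fr\circ\xi_a$ and that this is a homomorphism. To finish I would show this map is an isomorphism and conclude that the two functors are mutually quasi-inverse. Both source and target are finite locally free of the same rank $p^r$, so it suffices to check the evaluation map is an isomorphism, which — being compatible with base change and involving only flat objects — can be verified on geometric fibers, where it is the classical statement that a commutative height-$1$ group scheme is reconstructed from its restricted Lie algebra.

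\textbf{Main obstacle.} The genuine content is the classical equivalence of the second paragraph between commutative $1$-smooth group schemes and locally free abelian restricted Lie algebras, carried out over a general $\BF_p$-algebra $R$ rather than a field. The delicate points are essential surjectivity — that $\Lie(A^{*})$ is locally free (supplied by $1$-smoothness via Lemma~\ref{l:reformulation of n-smoothness}) and that $A$ is faithfully reconstructed from its primitives — and the verification that the evaluation morphism is an isomorphism, for which I would reduce to fibers by flatness. Everything else is formal manipulation of Hopf algebras and Cartier duality, and the statement is in any case available from \cite{dJ} and \cite[Exp.~VII$_A$]{SGA3}.
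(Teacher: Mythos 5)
The paper offers no proof of this theorem: it is stated as well known, with a pointer to \S 2 of \cite{dJ}, which in turn rests on Theorem~7.4 of \cite[Exp.VIIA]{SGA3}. Your argument is essentially a faithful unpacking of the classical proof behind those citations: identify $\cO(A_{P,\varphi})$ as a Hopf algebra with the restricted enveloping algebra $U^{[p]}(P,\varphi)$, invoke the SGA3 equivalence between finite locally free group schemes killed by Frobenius and locally free restricted Lie algebras, and transport it across Cartier duality. So in substance your route and the paper's (cited) route coincide, and the argument is sound.

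Two points to tighten. First, a factual slip, harmless only because you never use it: $A_{P,\varphi}$ is \emph{not} killed by its relative Frobenius in general; being the Cartier dual of the height-$1$ group $G_{(P,\varphi)}$, it is killed by the \emph{Verschiebung}. For instance, $(P,\varphi)=(R,\,a\mapsto a^p)$ gives $A_{P,\varphi}=\Spec R[t]/(t^p-t)$, the constant \'etale group $(\BZ/p\BZ)_R$, whose Frobenius is an isomorphism. What your dualization step correctly uses is that the \emph{dual} group $G_{(P,\varphi)}$ is killed by Frobenius. Second, in part (ii) you conclude that the two functors are "mutually quasi-inverse" from a single natural isomorphism $\Id\iso A_{(\cdot)}\circ H$ (the evaluation map); formally this alone does not suffice. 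But the repair is already contained in your second paragraph: the identification $A_{P,\varphi}\cong G_{(P,\varphi)}^*$ is natural in $(P,\varphi)$, so the functor $(P,\varphi)\mapsto A_{P,\varphi}$ is the composition of two equivalences (the SGA3 functor $\fg\mapsto G_{\fg}$ and Cartier duality), hence itself an equivalence; the evaluation map together with $\Lie(G_{\fg})=\fg$ then serves only to identify the quasi-inverse as described in (iii).
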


\subsubsection{$\Sm_1^*(R)$ as a tensor category}   \label{sss:tensor structure on 1-cosmooth ones}
$\cB (R)$ is clearly a tensor category (i.e., a symmetric monoidal additive category).
So Theorem~\ref{t:well known}(ii) provides a structure of tensor category on $\Sm_1^*(R)$.
We are going to describe this structure directly (without using $\cB (R)$), see Propositions~\ref{p:tensor structure}(ii) and \ref{p:Cartier dual of tensor product}.
A similar tensor structure on $\Sm_n^*(R)$ is briefly mentioned in \S\ref{sss:tensor remarks}(ii-iii) below.

\subsubsection{}   \label{sss:tensor product setting}
Let $(P_i,\varphi_i)\in\cB (R)$, where $1\le i\le m$. Let $(P,\varphi):=\bigotimes\limits_i (P_i,\varphi_i)$. Then we have a poly-additive morphism
\begin{equation}  \label{e:obvious m-additive map}
A_{P_1,\varphi_1}\times\ldots \times A_{P_m,\varphi_m}\to A_{P,\varphi}\, ,
\end{equation}
where the Cartesian product is over $R$: namely, $(\xi_1,\ldots ,\xi_m)\in A_{P_1,\varphi_1}(\tilde R)\times\ldots \times A_{P_m,\varphi_m}(\tilde R)$ goes to the $R$-linear map
\[
P_1\otimes\ldots\otimes P_m\to\tilde R, \quad x_1\otimes\ldots\otimes x_m\mapsto\prod_{i=1}^m \xi_i(x_i).
\]

\begin{prop}   \label{p:tensor structure}
(i) The map
\begin{equation}   \label{e:Lie (f)}
\Hom (A_{P,\varphi}, (\BG_a)_R)\to\Poly-add (A_{P_1,\varphi_1}\times\ldots \times A_{P_m,\varphi_m}, (\BG_a)_R))
\end{equation}
induced by \eqref{e:obvious m-additive map} is an isomorphism (here $\Poly-add$ stands for the group of poly-additive maps).

(ii) For any $A\in\Sm_1^*(R)$, the map
\[
\Hom (A_{P,\varphi}, A)\to\Poly-add (A_{P_1,\varphi_1}\times\ldots \times A_{P_m,\varphi_m}, A)
\]
induced by \eqref{e:obvious m-additive map} is an isomorphism 
\end{prop}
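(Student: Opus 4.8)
The plan is to prove part (i) by induction on $m$ via an exponential (currying) law for poly-additive morphisms, and then to deduce part (ii) from part (i). The base case $m=1$ is precisely Theorem~\ref{t:well known}(iii), which identifies $\Hom(A_{P_1,\varphi_1},(\BG_a)_R)$ with $P_1$; note that the same statement holds after any base change, since $A_{P_1,\varphi_1}\times_R\tilde R$ is again of the form $A_{-,-}$ and Theorem~\ref{t:well known} applies over $\tilde R$.

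For the inductive step I would curry the last variable. Giving a poly-additive morphism $\prod_i A_{P_i,\varphi_i}\to(\BG_a)_R$ is the same as giving a poly-additive morphism $\prod_{i<m}A_{P_i,\varphi_i}\to\underline{\Hom}(A_{P_m,\varphi_m},(\BG_a)_R)$, where $\underline{\Hom}$ is the internal group scheme of additive morphisms. By the base-changed form of Theorem~\ref{t:well known}(iii), this internal $\underline{\Hom}$ is represented by the vector group $\underline{P_m}$ attached to the finite projective module $P_m$. Since poly-additive morphisms are additive in the target and $-\otimes_R P_m$ commutes with $\Poly-add (-,(\BG_a)_R)$ for $P_m$ finite projective, I obtain $\Poly-add (\prod_{i<m}A_{P_i,\varphi_i},\underline{P_m})\iso \Poly-add (\prod_{i<m}A_{P_i,\varphi_i},(\BG_a)_R)\otimes_R P_m$, which by the induction hypothesis equals $(P_1\otimes\cdots\otimes P_{m-1})\otimes_R P_m=P$. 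Unwinding the currying shows this composite isomorphism is the one induced by \eqref{e:obvious m-additive map}, proving (i).

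To deduce (ii), write $A=A_{Q,\psi}$ for the unique $(Q,\psi)\in\cB (R)$ supplied by Theorem~\ref{t:well known}(ii), and recall that $A_{Q,\psi}$ is the closed subgroup of the vector group $\underline{Q^\vee}$ cut out by $\eta(\psi q)=\eta(q)^p$. Composing a poly-additive morphism to $A_{Q,\psi}$ with this closed immersion and applying the vector-group-valued form of (i), I identify $\Poly-add (\prod_i A_{P_i,\varphi_i},\underline{Q^\vee})$ with $\Hom_R(Q,P)$; let $f$ be the $R$-linear map attached to a given morphism. The crucial point is the identity $\langle x,-\rangle^p=\langle\varphi(x),-\rangle$ for the pairing $\langle\,,\rangle$ of (i): it holds on decomposable tensors because $\xi_i(x_i)^p=\xi_i(\varphi_i(x_i))$ by the defining relation of $A_{P_i,\varphi_i}$, and extends to all of $P$ by additivity. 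Consequently the condition that the morphism factor through $A_{Q,\psi}$, namely $\langle f(\psi q),-\rangle=\langle f(q),-\rangle^p$ for all $q$, becomes $f\circ\psi=\varphi\circ f$ by the injectivity in (i); this is exactly the condition for $f$ to define a morphism $(Q,\psi)\to(P,\varphi)$ in $\cB (R)$. By the anti-equivalence of Theorem~\ref{t:well known}(ii) this set is $\Hom(A_{P,\varphi},A_{Q,\psi})$, and one checks the bijection is the one induced by \eqref{e:obvious m-additive map}.

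The main obstacle will be the inductive step of (i): formulating the exponential law for poly-additive morphisms precisely and, above all, verifying that $\underline{\Hom}(A_{P_m,\varphi_m},(\BG_a)_R)$ is represented by the vector group $\underline{P_m}$ compatibly with base change. This rests on the functoriality of Theorem~\ref{t:well known}(iii) in the base ring together with the finite projectivity of $P_m$ (used both for representability and for the compatibility of $\Poly-add (-,(\BG_a)_R)$ with $-\otimes_R P_m$). Once these formal points are secured, the remaining work—including the verification that the abstract isomorphisms coincide with the explicit map \eqref{e:obvious m-additive map}—is routine bookkeeping.
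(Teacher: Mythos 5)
Your proposal is correct, and part (i) takes a genuinely different route from the paper, while part (ii) is essentially the paper's argument in more explicit clothing. For (i), the paper argues directly: after localizing it assumes the $P_i$ free, notes that the monomials of degree $<p$ in each coordinate form a basis of the coordinate ring of $A_{P_i,\varphi_i}$, and concludes that a poly-additive morphism to $(\BG_a)_R$ is a polynomial which is additive in each block of variables and of degree $<p$ in each variable, hence multilinear; this identifies both sides of \eqref{e:Lie (f)} with $P$. You instead induct on $m$, currying one factor at a time into the internal Hom of additive morphisms, which you identify with the vector group $\BV(P_m)$ via the base-changed form of Theorem~\ref{t:well known}(iii), and then pull the finite projective module $P_m$ out of the poly-additive Hom. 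Both arguments are sound: the paper's is shorter and more elementary (its only inputs are the monomial basis and the classical fact that an additive polynomial of degree $<p$ is linear), while yours is basis-free but leans harder on Theorem~\ref{t:well known} --- specifically on the naturality of part (iii) in the base ring, which does hold since the construction $(P,\varphi)\mapsto A_{P,\varphi}$ visibly commutes with base change --- plus the representability and currying bookkeeping you flag yourself. For (ii), the paper phrases morphisms into $A_{P',\varphi'}$ as $R[F]$-module homomorphisms from $P'$ into morphisms-to-$\BG_a$ (with $F$ acting through $\varphi'$); this is exactly the factorization condition $f\circ\psi=\varphi\circ f$ that you extract by hand from the closed embedding $A_{Q,\psi}\subset\BV(Q)$, so the content is the same. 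Note also that your ``crucial identity'' $\langle x,\cdot\rangle^p=\langle\varphi(x),\cdot\rangle$ is nothing more than the statement that \eqref{e:obvious m-additive map} lands in $A_{P,\varphi}$, which is already implicit in \S\ref{sss:tensor product setting}, so no new verification is hiding there. A modest payoff of your currying argument is that it isolates exactly which structural input is needed (additive internal Homs out of $1$-cosmooth groups are vector groups), the kind of statement one would want when extending the tensor structure to $\Sm_n^*(R)$ for $n>1$; the paper instead handles $n>1$ by entirely different means (the Zink functor and Proposition~\ref{p:Zink functor of a tensor product}).
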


Note that (i) would become false if $\BG_a$ is replaced by $\BG_m$, see \S\ref{sss:the warning} below.

\begin{proof}
To simplify the notation, we assume that $m=2$.

(i) We can assume that the $R$-modules $P_1$ and $P_2$ are free. Let $x_1,\ldots ,x_r$ (resp.~$y_1,\ldots ,y_s$) be a basis in $P_1$ (resp.~in $P_2$). Then the monomials
$x_1^{a_1}\ldots x_l^{a_r}$ with $a_1,\ldots a_r\le p-1$ form a basis in the $R$-module of regular functions on $A_{P_1,\varphi_1}$. Using this fact and a similar fact for $(P_2,\varphi_2)$, 
we see that a bi-additive morphism $A_{P_1,\varphi_1}\times A_{P_2,\varphi_2}\to (\BG_a)_R$ is the same as a polynomial $f\in R[x_1,\ldots, x_r,y_1,\ldots ,y_s]$ which has degree $<p$ with respect to each variable and is bi-additive in the usual sense. Such $f$ is bilinear.

(ii) If $A=A_{P',\varphi'}$ then
\[
\Poly-add (A_{P_1,\varphi_1}\times A_{P_2,\varphi_2}, A_{P',\varphi'})=
\Hom_{R[F]} (P', \Poly-add (A_{P_1,\varphi_1}\times A_{P_2,\varphi_2}, (\BG_a)_R)),
\] 
where  $R[F]:=\End (\BG_a)_R$ acts on $P'$ via $\varphi'$. Similarly,
\[
\Hom (A_{P,\varphi}, A_{P',\varphi'})=
\Hom_{R[F]} (P', \Hom (A_{P,\varphi}, (\BG_a)_R)).
\] 
So statement (ii) follows from (i).
\end{proof}

\subsubsection{}    \label{sss:the warning}
In the situation of \S\ref{sss:tensor product setting}, the map \eqref{e:obvious m-additive map} induces a homomorphism
\begin{equation}  \label{e:not an isomorphism}
A_{P,\varphi}^*=\HHom (A_{P,\varphi}, (\BG_m)_R)\to\PPoly-add (A_{P_1,\varphi_1}\times\ldots \times A_{P_m,\varphi_m}, (\BG_m)_R)),
\end{equation}
where $\PPoly-add$ stands for the group scheme of poly-additive maps. Note that \eqref{e:not an isomorphism} is \emph{not an isomorphism}, in general.
E.g., let $(P_1,\varphi_1)=(P_2,\varphi_2)=(P,\varphi)=(R,0)$. Then
\[
A_{P_1,\varphi_1}=A_{P_2,\varphi_2}=A_{P,\varphi}=(\alpha_p)_R \, , \quad
\HHom ((\alpha_p)_R, (\BG_m)_R)=(\alpha_p)_R, 
\]
\[
\quad \PPoly-add ((\alpha_p)_R\times (\alpha_p)_R, (\BG_m)_R))=\HHom ( (\alpha_p)_R, ( (\alpha_p)_R)^*)=\HHom ( (\alpha_p)_R, (\alpha_p)_R)=(\BG_a)_R\, .
\]

\begin{prop}    \label{p:Cartier dual of tensor product}
The homomorphism \eqref{e:not an isomorphism} induces an isomorphism
\begin{equation}
A_{P,\varphi}^*\iso \PPoly-add (A_{P_1,\varphi_1}\times\ldots \times A_{P_m,\varphi_m}, (\BG_m)_R)^{(F)}.
\end{equation}
As usual, $\PPoly-add (A_{P_1,\varphi_1}\times\ldots \times A_{P_m,\varphi_m}, (\BG_m)_R)^{(F)}$ stands for the kernel of Frobenius in
the group scheme $\PPoly-add (A_{P_1,\varphi_1}\times\ldots \times A_{P_m,\varphi_m}, (\BG_m)_R)$.
\end{prop}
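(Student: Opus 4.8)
The plan is to show that the map in the statement is an isomorphism by combining a soft ``generation'' argument with the technical criterion for $1$-smoothness, \emph{without} computing the restricted Lie algebra of the target. Write $\mathcal{H}:=\PPoly-add (A_{P_1,\varphi_1}\times\ldots \times A_{P_m,\varphi_m}, (\BG_m)_R)$ and let $\mu$ denote the poly-additive morphism \eqref{e:obvious m-additive map}, so that \eqref{e:not an isomorphism} is the homomorphism $\lambda:A_{P,\varphi}^*\to\mathcal H$, $\chi\mapsto\chi\circ\mu$. By Theorem~\ref{t:well known}(i) the group scheme $A_{P,\varphi}$ is $1$-cosmooth, hence $A_{P,\varphi}^*$ is $1$-smooth and in particular killed by $F$ (see \S\ref{sss:n=1 case bis}); therefore $\lambda$ kills $F$ and factors through $\mathcal H^{(F)}$, giving the map $\bar\lambda:A_{P,\varphi}^*\to\mathcal H^{(F)}$ whose bijectivity I must establish. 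The idea is that it suffices to bound $\dim\Lie(\mathcal H_s)$ and to know that $\bar\lambda$ is a closed immersion, after which Lemma~\ref{l:technical lemma} does the rest.

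First I would check that $\bar\lambda$ is a closed immersion, equivalently that $\lambda$ is a monomorphism. This amounts to showing that a character $\chi:A_{P,\varphi}\to (\BG_m)_R$ with $\chi\circ\mu=1$ is trivial, i.e.\ that the subgroup of $A_{P,\varphi}$ generated by the image of $\mu$ is everything. This is where Proposition~\ref{p:tensor structure}(i) enters: if the generated subgroup were proper, its nonzero quotient — a finite commutative group scheme killed by $F$ — would admit a nonzero homomorphism to $(\BG_a)_R$ (over each residue field this is Theorem~\ref{t:well known} applied to the quotient), and this homomorphism, viewed on $A_{P,\varphi}$, would pull back to $0$ along $\mu$, contradicting the injectivity of the map \eqref{e:Lie (f)}. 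Carrying this out fiberwise shows $\lambda$ is fiberwise injective; since $A_{P,\varphi}^*$ is finite over $R$ and $\mathcal H$ is separated, $\bar\lambda$ is then a proper monomorphism, hence a closed immersion. I expect this generation/injectivity step to be the main obstacle, since it is the only place where one must argue rather than compute.

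It remains to feed the data into Lemma~\ref{l:technical lemma} with $n=1$. Put $r:=\rank P$. A tangent vector of $\mathcal H$ is a poly-additive map into $1+\epsilon(\BG_a)_R\cong(\BG_a)_R$, so $\Lie\mathcal H\cong\Poly-add (A_{P_1,\varphi_1}\times\ldots\times A_{P_m,\varphi_m}, (\BG_a)_R)$, which by Proposition~\ref{p:tensor structure}(i) is identified with $\Hom (A_{P,\varphi},(\BG_a)_R)=P$; since this holds over every residue field and $\Lie(\mathcal H^{(F)})=\Lie\mathcal H$ (the relative Frobenius has zero differential), I get $\dim\Lie(\mathcal H^{(F)}_s)=r$ for all $s$. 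On the other hand $A_{P,\varphi}^*$ is $1$-smooth of rank $r$, hence finite locally free of order $p^{r}$, while $\mathcal H^{(F)}$ is killed by $F$. Applying Lemma~\ref{l:technical lemma} to the closed subgroup $\bar\lambda(A_{P,\varphi}^*)\subset\mathcal H^{(F)}$ yields $\bar\lambda(A_{P,\varphi}^*)=\mathcal H^{(F)}$ (and that $\mathcal H^{(F)}$ is $1$-smooth of rank $r$); combined with $\bar\lambda$ being a closed immersion, this shows $\bar\lambda$ is an isomorphism, as desired.
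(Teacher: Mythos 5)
Your overall strategy is genuinely different from the paper's — you get injectivity from a generation argument and surjectivity from the order/Lie count of Lemma~\ref{l:technical lemma}, whereas the paper works fiberwise, invoking the equivalence between finite group schemes of height $\le 1$ and restricted Lie algebras (so that Proposition~\ref{p:tensor structure}(i) yields the isomorphism over every field in one stroke), and then globalizes using projectivity of the coordinate ring of $A_{P,\varphi}^*$. But as written your proof has a genuine gap, and it sits exactly in the step you flag as the crux. You claim that a nonzero quotient of $A_{P,\varphi}$ is ``killed by $F$'' and that Theorem~\ref{t:well known} therefore gives a nonzero homomorphism from it to $(\BG_a)_R$. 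Both halves are wrong as stated: $A_{P,\varphi}$ is $1$-cosmooth, hence killed by $V$ and in general \emph{not} by $F$ (take $P=R$, $\varphi (a)=a^p$; then $A_{P,\varphi}$ is the constant group scheme $\underline{\BZ/p\BZ}$, on which Frobenius is an isomorphism); and even for a finite commutative group scheme that \emph{is} killed by $F$ the conclusion fails, since over a field such a group is $1$-\emph{smooth} (\S\ref{sss:n=1 case bis}) — the wrong side of the duality for Theorem~\ref{t:well known} — and, e.g., $\Hom (\mu_p,\BG_a)=0$. The step is repairable: over a field, a quotient of $A_{P,\varphi}$ by a closed subgroup is again $1$-cosmooth, because its Cartier dual is a closed subgroup of the $1$-smooth group $A_{P,\varphi}^*$, hence finite and killed by $F$, hence $1$-smooth; applying Theorem~\ref{t:well known} to this $1$-cosmooth quotient then produces the nonzero map to $(\BG_a)_R$, and your contradiction with the injectivity of \eqref{e:Lie (f)} goes through. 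But that is not the argument you wrote; what you wrote confuses the two dual classes of group schemes.

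There is a second gap: Lemma~\ref{l:technical lemma} requires the ambient group $H$ to be a \emph{finite} group scheme, and you never verify that $\PPoly-add (A_{P_1,\varphi_1}\times\ldots \times A_{P_m,\varphi_m}, (\BG_m)_R)^{(F)}$ is finite over $R$ — being killed by $F$ is not enough by itself. This is exactly what step (i) of the paper's proof supplies: the group scheme of poly-additive maps is a closed subscheme of the scheme of \emph{all} invertible functions on $A_{P_1,\varphi_1}\times\ldots \times A_{P_m,\varphi_m}$, which has finite type over $R$, and finite type together with killed-by-$F$ forces finiteness. A smaller omission of the same kind: ``fiberwise injective'' does not by itself make \eqref{e:not an isomorphism} a monomorphism; you should note that its kernel is a closed subgroup of the finite scheme $A_{P,\varphi}^*$ all of whose fibers are trivial, and conclude that it is trivial by Nakayama. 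With these repairs your proof becomes correct, and it reproves Corollary~\ref{c:Cartier dual of tensor product} along the way; still, note that the paper's fiberwise route avoids the generation argument — the most delicate part of your proposal — entirely, because over a field the Lie-algebra isomorphism already forces injectivity and surjectivity simultaneously.
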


\begin{proof}
Let $H:=\PPoly-add (A_{P_1,\varphi_1}\times\ldots \times A_{P_m,\varphi_m}, (\BG_m)_R)^{(F)}$. Since $A_{P,\varphi}^*$ is killed by $F$, the map \eqref{e:not an isomorphism} induces a homomorphism $f:A_{P,\varphi}^*\to H$. The problem is to show that $f$ is an isomorphism.

(i) By definition, $H$ is killed by $F$. Moreover, $H$ has finite type over $R$ because the scheme parametrizing \emph{all} invertible functions on $A_{P_1,\varphi_1}\times\ldots \times A_{P_m,\varphi_m}$ has finite type over $R$. So $H$ is finite over $R$.

(ii) Assume that $R$ is a field. Then the theory of height $1$ group schemes tells us that to prove that $f:A_{P,\varphi}^*\to H$ is an isomorphism it suffices to show that $\Lie (f)$ is an isomorphism. But $\Lie (f)$ is the map \eqref{e:Lie (f)}, which is an isomorphism by Proposition~\ref{p:tensor structure}(i).

(iii) Let $R$ be any ring. Let $C$ and $C'$ be the coordinate rings of the finite schemes $A_{P,\varphi}^*$ and $H$. Then $f^*:C'\to C$ is a homomorphism of finitely generated $R$-modules. By (i), $f^*$ is a fiberwise isomorphism. But $C$ is projective, so $f^*$ is an isomorphism.
\end{proof}

\begin{cor}   \label{c:Cartier dual of tensor product}
For any $A_1,\ldots ,A_m\in\Sm_1^*(R)$ the group scheme 
$$\PPoly-add (A_1\times\ldots \times A_m, (\BG_m)_R)^{(F)}$$ 
is $1$-smooth. Its rank equals the product of the ranks of $A_1,\ldots ,A_m$.
One has a canonical isomorphism of restricted Lie $R$-algebras\footnote{The commutator in these restricted Lie algebras is, of course, zero.}
$\Lie\PPoly-add (A_1\times\ldots \times A_m, (\BG_m)_R)^{(F)}\iso\bigotimes\limits_i\Lie (A_i^*)$.
\end{cor}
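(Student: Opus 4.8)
The plan is to deduce the statement directly from Proposition~\ref{p:Cartier dual of tensor product} and the equivalence of Theorem~\ref{t:well known}, so that the corollary becomes a matter of transporting known properties across a chain of canonical isomorphisms. First I would use Theorem~\ref{t:well known}(ii) to write each $A_i$ as $A_{P_i,\varphi_i}$ for a pair $(P_i,\varphi_i)\in\cB (R)$; by Theorem~\ref{t:well known}(iii) this pair is canonically recovered from $A_i$ (namely $P_i=\Hom (A_i,(\BG_a)_R)$ with $\varphi_i$ induced by $F$), so no noncanonical choices enter. Setting $(P,\varphi):=\bigotimes_i(P_i,\varphi_i)$ in $\cB (R)$, so that $P=\bigotimes_i P_i$ and $\varphi=\bigotimes_i\varphi_i$, I am exactly in the situation of \S\ref{sss:tensor product setting}.

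Next I would invoke Proposition~\ref{p:Cartier dual of tensor product}, which gives a canonical isomorphism
\[
A_{P,\varphi}^*\iso\PPoly-add (A_1\times\ldots \times A_m, (\BG_m)_R)^{(F)}.
\]
Since $A_{P,\varphi}\in\Sm_1^*(R)$ by Theorem~\ref{t:well known}(i), its Cartier dual $A_{P,\varphi}^*$ is by definition a commutative $1$-smooth group scheme; transporting this property across the displayed isomorphism proves $1$-smoothness of $\PPoly-add (A_1\times\ldots \times A_m, (\BG_m)_R)^{(F)}$.

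It then remains to read off the rank and the Lie algebra. For the rank, Theorem~\ref{t:well known}(i) gives $\rank A_{P,\varphi}=\rank P$ and $\rank A_i=\rank P_i$; since the rank of a $1$-cosmooth group scheme is by definition the rank of its smooth dual and $P=\bigotimes_i P_i$, I obtain $\rank A_{P,\varphi}^*=\rank P=\prod_i\rank P_i=\prod_i\rank A_i$. For the Lie algebra, Theorem~\ref{t:well known}(iii) identifies the restricted Lie algebra of $A^*$ with the pair attached to $A\in\Sm_1^*(R)$; applied to $A=A_{P,\varphi}$ this yields $\Lie (A_{P,\varphi}^*)\iso (P,\varphi)=\bigotimes_i(P_i,\varphi_i)=\bigotimes_i\Lie (A_i^*)$ as restricted Lie algebras (all brackets being zero), and composing with the isomorphism above delivers the claimed canonical identification.

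The real content of the corollary lies entirely in the preceding Proposition~\ref{p:Cartier dual of tensor product} (and ultimately in Proposition~\ref{p:tensor structure}(i), which is what forces the ``$(F)$''-truncation on the poly-additive side), so I do not expect a genuine obstacle here; the only point demanding care is canonicity, i.e.\ checking that the chosen representatives $(P_i,\varphi_i)$ may be replaced by the functorially defined ones of Theorem~\ref{t:well known}(iii), so that the final isomorphism depends only on $A_1,\ldots,A_m$ and not on any presentation.
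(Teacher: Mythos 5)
Your proposal is correct and follows exactly the route the paper intends: the corollary is stated without proof precisely because it is the translation of Proposition~\ref{p:Cartier dual of tensor product} through the equivalence of Theorem~\ref{t:well known}, which is what you carry out (including the right use of part (iii) to make the identification canonical and to read off the rank and the restricted Lie algebra).
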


\subsubsection{Remarks}    \label{sss:the case m=2}
(i) In the case $m=2$ one has $$\PPoly-add (A_1\times A_2, (\BG_m)_R)^{(F)}=\HHom (A_1,A_2^*)^{(F)}.$$

(ii) If $A_1, A_2\in\Sm_1^*(R)$ then the group scheme $\HHom (A_1,A_2^*)$ does not have to be either finite or flat, see \S\ref{sss:neither finite nor flat nor commutative}-\ref{sss:neither finite nor flat} below.

\subsection{A general principle}   \label{ss:n-cosmooth easier than n-smooth}
Objects of $\Sm_n^*(R)$ (i.e., $n$-cosmooth group schemes) are easier to handle than objects of the dual category $\Sm_n(R)$.

E.g., the 1-cosmooth group scheme $A_{P,\varphi}$ from \S\ref{sss:A_Q,phi} is a subgroup of the very simple group scheme $\BV (P):=\Spec\Sym (P)$, and this subgroup is defined by very simple equations.
On the other hand, $A_{P,\varphi}^*$ is a \emph{quotient} of the group \emph{ind}-scheme $\BV (P)^*$; this is less elementary. Note that $\BV (P)^*$ is the PD-nilpotent PD neighborhood of zero in $\BV (P^*)$.

\section{$n$-truncated Barsotti-Tate groups}     \label{s:BT_n groups}
\subsection{Recollections}   \label{ss:BT_n recollections}

\subsubsection{Definition of $n$-truncated Barsotti-Tate group}
The notion of $n$-truncated Barsotti-Tate group was introduced by Grothendieck \cite{Gr}. It is reviewed in \cite{Me72,Il,dJ}.

Let $n\in\BN$. By definition, an $n$-truncated Barsotti-Tate group (in short, a $\BT_n$ group) over a scheme $S$ is a finite locally free commutative group scheme $G$ over $S$ which is killed by $p^n$, is $(\BZ/p^n\BZ)$-flat (as an fpqc sheaf) and satisfies an extra condition in the case $n=1$. If  $n=1$ and $S$ is an $\BF_p$-scheme we have complexes
\begin{equation}   \label{e:the complexes for n=1}
G\overset{F}\longrightarrow\Fr_S^*G\overset{V}\longrightarrow G, \quad \Fr_S^*G\overset{V}\longrightarrow G\overset{F}\longrightarrow \Fr_S^*G,
\end{equation}
and these complexes are required to be exact sequences; as noted by Grothendieck, exactness of one of them implies exactness of the other.\footnote{See \cite[\S 1.3 (b)]{Il}. The idea is to use \S\ref{sss:2How to check exactness} and look at the orders of the kernels and images.} If $n=1$ and $S$ is any scheme then the above condition is required for the restriction of $G$ to $S\otimes\BF_p\,$.

The above-mentioned $(\BZ/p^n\BZ)$-flatness condition is equivalent to exactness of the sequences
\[
G\overset{p^{n-m}}\longrightarrow G\overset{p^m}\longrightarrow G
\]
for all $m<n$. By \S\ref{sss:2How to check exactness}, the group scheme $\Ker (G\overset{p^m}\longrightarrow G)$ is finite and locally free over $S$. Moreover, it is a $\BT_m$ group (even if $m=1$), see \cite[III.3.3]{Gr} or \cite[II.3.3.11]{Me72}.

If $G$ is a $\BT_n$ group then so is its Cartier dual $G^*$.

The groupoid of $\BT_n$ groups over $S$ will be denoted by $\BT_n(S)$. If $S=\Spec R$ we write $\BT_n(R)$ instead of $\BT_n(S)$.

\subsubsection{Height and dimension}   \label{sss:Height and dimension}
From now on, \emph{we assume that $S$ is an $\BF_p$-scheme.} If $G$ is a $\BT_n$ group over $S$ then the group schemes $\Ker (G\overset{p}\longrightarrow G)$ and 
$\Ker (G\overset{F}\longrightarrow \Fr_S^*G)$ are finite and locally free by \S\ref{sss:2How to check exactness}, and they are killed by $p$. So there exist  locally constant functions $d:S\to\BZ_+$ and $d':S\to\BZ_+$ such that the orders of these groups equal $p^d$ and $p^{d'}$, respectively; clearly $d'\le d$. Names: $d$ is the \emph{height} of $G$, and $d'$ is the \emph{dimension} of $G$.

The group scheme $G^{(F)}:=\Ker (G\overset{F}\longrightarrow \Fr_S^*G)$ is 1-smooth (see \S\ref{sss:n=1 case bis}), so its Lie algebra is  a vector bundle of rank $d'$. The Lie algebra of $G$ is the same.

\subsubsection{Some results of Messing}  \label{sss:results of Messing}
Let $G\in\BT_n(S)$. Similarly to~\eqref{e:the complexes for n=1}, we have a complex
\begin{equation}   \label{e:the complexes for any n}
G\overset{F^n}\longrightarrow (\Fr^n_S)^*G\overset{V^n}\longrightarrow G\overset{F^n}\longrightarrow (\Fr^n_S)^*G.
\end{equation}
By \cite[II.3.3.11 (b)]{Me72}, this complex is exact. So it gives rise to exact sequences of finite locally free group $S$-schemes
\begin{equation}  \label{e:extension of n-cosmooth by n-smooth}
0\to G^{(F^n)}\to G\to G/G^{(F^n)}\to 0,
\end{equation}
\begin{equation}   \label{e:extension of n-smooth by n-cosmooth}
0\to G/G^{(F^n)}\to (\Fr^n_S)^*G\to G^{(F^n)}\to 0.
\end{equation}
Moreover, $G^{(F^n)}$ is $n$-smooth (see \cite[II.3.3.11 (a)]{Me72} and \cite[II.2.1.2]{Me72}). On the other hand, $(G/G^{(F^n)})^*=(G^*)^{(F^n)}$, so $G/G^{(F^n)}$ is $n$-cosmooth.

Thus \eqref{e:extension of n-cosmooth by n-smooth} exhibits $G$ as an extension of an $n$-cosmooth group scheme by an $n$-smooth one, and \eqref{e:extension of n-smooth by n-cosmooth}
exhibits $(\Fr^n_S)^*G$ as an extension of an $n$-smooth group scheme by an $n$-cosmooth one.

\subsubsection{The stacks $\BTst_n^{d,d'}$ and $\BBT_n^{d,d'}$}   \label{sss:definition of BBT_n}
For any scheme $S$, let $\BTst_n^{d,d'}(S)$ be the groupoid of $\BT_n$ groups over $S$ whose restriction to $S\otimes\BF_p$ has height $d$ and dimension $d'$. The assignment $S\mapsto\BTst_n^{d,d'}(S)$ is an algebraic stack of finite type over $\BZ$ with affine diagonal, see \cite[Prop.~1.8]{Wedhorn}.
Moreover, the stack $\BTst_n^{d,d'}$ is \emph{smooth} over $\BZ$ by a deep theorem of Grothendieck, whose proof is given in Illusie's article~\cite{Il}.

\subsection{A subgroup of  $\Aut G$, where $G\in \BT_n (S)$ and $S$ is over $\BF_p$}   \label{ss:A subgroup of  Aut G}
\subsubsection{}    \label{sss:A subgroup of  Aut G}
As before, let $S$ be an $\BF_p$-scheme and $G\in\BT_n (S)$. Then one has a monomorphism $\Hom (G/G^{(F^n)}, G^{(F^n)})\mono\Aut G$ defined by $f\mapsto 1+\tilde f$, where $\tilde f$ is the composite morphism
\[
G\epi G/G^{(F^n)}\overset{f}\longrightarrow G^{(F^n)}\mono G.
\]
Thus we get a homomorphism 
\[
\HHom (G/G^{(F^n)}, G^{(F^n)})\mono\AAut G,
\]
which is a closed immersion.

\subsubsection{Example}  \label{sss:neither finite nor flat nor commutative}
Suppose that $S=\Spec \bar\BF_p$ and $G=\Ker (E\overset{p}\longrightarrow E)$, where $E$ is an elliptic curve over $S$. If $E$ is not supersingular then 
\[
\HHom (G/G^{(F)}, G^{(F)})\simeq\mu_p, \quad \AAut G=(\BF_p^\times\times\BF_p^\times)\ltimes\HHom (G/G^{(F)}, G^{(F)}),
\]
where the action of $\BF_p^\times\times\BF_p^\times$ on $\HHom (G/G^{(F)}, G^{(F)})$ is nontrivial for $p>2$.

Now suppose that $E$ is supersingular. Then 
\[
\HHom (G/G^{(F)}, G^{(F)})\simeq \HHom (\alpha_p, \alpha_p)=\BG_a ,
\]
\begin{equation}   \label{e:supersingular}
\quad \AAut G=\BF_{p^2}^\times\ltimes \HHom (G/G^{(F)}, G^{(F)}),
\end{equation}
where the action of $\BF_{p^2}^\times$ on $\HHom (G/G^{(F)}, G^{(F)})$ is nontrivial. Here is a way to prove \eqref{e:supersingular}. First, 
$\AAut G\supset \HHom (G/G^{(F)}, G^{(F)})$. Second, classical Dieudonn\'e theory describes the $\bar\BF_p$-points of $\AAut G$. So to prove \eqref{e:supersingular} it remains to show that
$\dim\Lie(\AAut G)=1$. Indeed, it is well known and easy to prove that for any commutative finite locally free group scheme $G$ over any ring one has
\[
\Lie(\AAut G)=\Hom (G, \Lie (G))=\Hom (G, \BG_a)\otimes \Lie (G)= \Lie (G^*)\otimes\Lie (G).
\]

\subsubsection{Remark}  \label{sss:neither finite nor flat}
The above example implies that $\HHom (G/G^{(F^n)}, G^{(F^n)})$ is neither finite nor flat, in general.

\section{Formulating Theorems~\ref{t:main} and \ref{t:Lie of Lau}}  \label{s:Formulation of main results}
\subsection{The group scheme $\cI_n^{d,d'}$}
Let $\cI_n^{d,d'}$ be the inertia stack of $\BBT_n^{d,d'}$. Thus an $S$-point of $\cI_n^{d,d'}$ is a pair consisting of an $S$-point of $\BBT_n^{d,d'}$ and an automorphism of this point.

It is clear that $\cI_n^{d,d'}$ is an affine group scheme of finite type over the stack $\BBT_n^{d,d'}$.

\subsection{The first main theorem}
\subsubsection{The group scheme $\Lau_n^{d,d'}$} \label{sss:Lau_n}
E.~Lau defined in \cite[\S 4]{Lau13} a canonical morphism 
\[
\phi_n:\BBT_n^{d,d'}\to\Disp_n^{d,d'}.
\]
He proved that $\phi_n$ is a gerbe banded by a commutative locally free finite group scheme over $\Disp_n^{d,d'}$, which we denote by $\Lau_n^{d,d'}$. He also proved some properties of $\Lau_n^{d,d'}$; we formulated them in Theorem~\ref{t:Lau's theorem}.

Note that $\phi_n^*\Lau_n$ is a subgroup of $\cI_n^{d,d'}$. This subgroup is closed because $\Lau_n$ is finite over $\Disp_n^{d,d'}$ and the morphism 
$\cI_n^{d,d'}\to\BBT_n^{d,d'}$ is separated.

\begin{thm} \label{t:main}
(i) $\phi_n^*\Lau_n^{d,d'}=(\cI_n^{d,d'})^{(F^n)}$, where 
$$(\cI_n^{d,d'})^{(F^n)}:=\Ker (\cI_n^{d,d'}\overset{F^n}\longrightarrow (\Fr^n)^*\cI_n^{d,d'}).$$

(ii) The group scheme $\Lau_n^{d,d'}$ is $n$-smooth of rank $d'(d-d')$.
\end{thm}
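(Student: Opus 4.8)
The plan is to deduce both parts at once from Lemma~\ref{l:technical lemma}. I would set $H:=(\cI_n^{d,d'})^{(F^n)}$ and $H':=\phi_n^*\Lau_n^{d,d'}$, viewed as group schemes over $\BBT_n^{d,d'}$; to apply the lemma one passes to a smooth atlas $S\to\BBT_n^{d,d'}$, all relevant notions being fpqc-local by Lemma~\ref{l:reformulation of n-smoothness}. Three of the hypotheses are immediate. First, $H$ is killed by $F^n$ by definition, and being of finite type and killed by $F^n$ its fibers are infinitesimal, so $H$ is finite. Second, by Theorem~\ref{t:Lau's theorem} the group scheme $\Lau_n^{d,d'}$ is finite locally free of order $p^{nd'(d-d')}$ and killed by $F^n$; pulling back along $\phi_n$, the closed subgroup $H'\subseteq\cI_n^{d,d'}$ of \S\ref{sss:Lau_n} is finite locally free of order $p^{nd'(d-d')}$ and, being killed by $F^n$, satisfies $H'\subseteq H$. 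Thus with $r:=d'(d-d')$, hypotheses (i) and (ii) of Lemma~\ref{l:technical lemma} hold.

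The one substantial point is hypothesis (iii), the bound $\dim\Lie(H_s)\le d'(d-d')$ at every geometric point $s$, say corresponding to $G\in\BT_n(k)$. Since relative Frobenius induces the zero map on Lie algebras, $\Lie(H_s)=\Lie(\cI_n^{d,d'})_s=\Lie\AAut G$, so I must bound the infinitesimal automorphisms of $G$; I expect this to be the heart of the argument.

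The key observation I would use is that an infinitesimal automorphism acts trivially on both graded pieces of the canonical filtration~\eqref{e:extension of n-cosmooth by n-smooth}. Indeed, let $\alpha=\id+\epsilon D$ be a $k[\epsilon]/(\epsilon^2)$-point of $\AAut G$ lying over $\id$. Because the $p$-power map on $k[\epsilon]/(\epsilon^2)$ sends $\epsilon$ to $\epsilon^p=0$, the first Frobenius twist already kills the $\epsilon$-part, so $\alpha^{(p)}=\id$ and a fortiori $\alpha^{(p^n)}=\id$. Naturality of Frobenius and Verschiebung then gives $F^n\circ\alpha=\alpha^{(p^n)}\circ F^n=F^n$ and $\alpha\circ V^n=V^n\circ\alpha^{(p^n)}=V^n$. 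By exactness of~\eqref{e:the complexes for any n} one has $G^{(F^n)}=\im V^n$ and $G/G^{(F^n)}\iso\im F^n$, so these two identities say precisely that $\alpha$ restricts to the identity on $G^{(F^n)}$ and induces the identity on $G/G^{(F^n)}$. Hence $D$ lies in the Lie algebra of the subgroup $N:=\HHom(G/G^{(F^n)},G^{(F^n)})$ of \S\ref{sss:A subgroup of  Aut G}; combined with the reverse inclusion coming from the closed immersion $N\hookrightarrow\AAut G$, this gives $\Lie\AAut G=\Lie N$.

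It then remains to compute $\dim\Lie N$. Using $\Lie\HHom(A,B)=\Hom(A,\Lie B)$ for commutative group schemes, together with $\Lie B\cong\BG_a^{\,d'}$ for $B=G^{(F^n)}$ and the identity $\Hom(A,\BG_a)=\Lie(A^*)$ (itself the case $B=\BG_m$ of the same formula, since $\Lie\BG_m=\BG_a$), I find
\[
\Lie N=\Hom\big(G/G^{(F^n)},\BG_a\big)\otimes_k\Lie(G^{(F^n)})=\Lie\big((G^*)^{(F^n)}\big)\otimes_k\Lie(G^{(F^n)}),
\]
whose dimension is $(d-d')\cdot d'$ because $G$ has dimension $d'$ and $G^*$ has dimension $d-d'$. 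This verifies hypothesis (iii) (with equality). Lemma~\ref{l:technical lemma} now yields $H'=H$, which is assertion~(i), and shows that $H=\phi_n^*\Lau_n^{d,d'}$ is $n$-smooth of rank $d'(d-d')$. Finally, since $\phi_n$ is a gerbe, hence an fppf cover, and $n$-smoothness descends along fpqc covers of the base, $\Lau_n^{d,d'}$ is itself $n$-smooth of rank $d'(d-d')$, giving assertion~(ii). The genuine obstacle is the triviality-on-graded-pieces step together with the resulting identification $\Lie\AAut G=\Lie N$; once these are in hand, the dimension count and the bookkeeping with Lemma~\ref{l:technical lemma} are routine.
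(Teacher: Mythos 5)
Your proposal is correct, and its outer skeleton coincides with the paper's: both reduce Theorem~\ref{t:main} to Lemma~\ref{l:technical lemma}, verify hypotheses (i)--(ii) from Theorem~\ref{t:Lau's theorem} exactly as you do, and then descend $n$-smoothness along the gerbe $\phi_n$. The genuine difference is in how the fiberwise bound $\dim\Lie\bigl((\cI_n^{d,d'})^{(F^n)}\bigr)_s\le d'(d-d')$ (the paper's Lemma~\ref{l:key lemma}) is established. The paper first proves the group-scheme-level statement $(\AAut G)^{(F^n)}\iso\HHom(G/G^{(F^n)},G^{(F^n)})^{(F^n)}$ (Lemma~\ref{l:F^n-killed endomorphisms} and Corollary~\ref{c:a more understandable description}), then reduces to $n=1$ via the truncation isomorphism \eqref{e:AAut G_m as subgroup of AAut G} applied to $G_1=\Ker(p)$, and in the $1$-truncated case invokes the tensor-category machinery of \S\ref{ss:1-cosmooth group schemes} (Corollary~\ref{c:Cartier dual of tensor product} together with \S\ref{sss:the case m=2}(i)) to identify $(\AAut G)^{(F)}$ as a $1$-smooth group with Lie algebra $\Lie(G^{(F)})\otimes\Lie((G^*)^{(F)})$. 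You instead work with all $n$ at once: your observation that the Frobenius twist of an infinitesimal automorphism is automatically trivial (so that $F^n\circ\alpha=F^n$ and $\alpha\circ V^n=V^n$, whence $\alpha-1$ factors through $G/G^{(F^n)}\to G^{(F^n)}$ by exactness of \eqref{e:the complexes for any n}) is precisely the infinitesimal avatar of Lemma~\ref{l:F^n-killed endomorphisms}, and it yields $\Lie\AAut G=\Lie\HHom(G/G^{(F^n)},G^{(F^n)})$ directly; you then compute this Lie algebra by the standard identities $\Lie\HHom(A,B)=\Hom(A,\underline{\Lie(B)})$ and $\Hom(A,\BG_a)=\Lie(A^*)$, with no reduction to $n=1$ and no use of \S\ref{sss:tensor product setting}--\ref{sss:the case m=2}. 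What each approach buys: yours is shorter and more self-contained for Theorem~\ref{t:main} alone; the paper's longer route produces stronger intermediate statements at the level of group schemes rather than Lie algebras (Corollary~\ref{c:a more understandable description} and Lemma~\ref{l:The case n=1}), which are then reused in the proofs of Theorem~\ref{t:Lie of Lau} and ultimately Theorem~\ref{t:Lau_n explicitly}, so the machinery you bypass is not wasted in the paper's economy. Two small points of care in your write-up: the factorization of $\alpha-1$ through the fppf quotient $G/G^{(F^n)}$ should be cited as the universal property of the quotient (which commutes with the base change to $k[\epsilon]$), and the formula $\Lie\HHom(A,B)=\Hom(A,\underline{\Lie(B)})$ deserves a reference (it is standard, e.g.\ SGA3, Exp.~II) since it carries the whole dimension count.
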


The proof will be given in \S\ref{s:proof of main theorem}. 

The above theorem and Theorem~\ref{t:Lie of Lau} below are steps towards an explicit description of $\Lau_n^{d,d'}$.

\subsubsection{Remarks}
(i) Theorem~\ref{t:main}(i) improves Proposition 4.2 of \cite{LZ}.

(ii) Theorem~\ref{t:main} implies that the group scheme $(\cI_n^{d,d'})^{(F^n)}$ is finite, locally free, and commutative.
On the other hand, by \S\ref{sss:neither finite nor flat nor commutative}, the group scheme $\cI_1^{2,1}$ is neither finite nor flat, nor commutative.

\subsection{Reconstructing $\Lau_n^{d,d'}$ from $\phi_n^*\Lau_n^{d,d'}$} \label{ss:Reconstructing}
\begin{lem}   \label{l:fully faithful}
Let $\phi :\sX\to\sY$ be a morphism of algebraic stacks. If $\phi$ is an fppf gerbe then the functor
\begin{equation} \label{e:fully faithful}
\phi^*:\{\mbox{Group schemes over }\sY\}\to\{\mbox{Group schemes over }\sX\}
\end{equation}
 is fully faithful.
\end{lem}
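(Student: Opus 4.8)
The plan is to deduce full faithfulness from fppf descent, by reducing to the case where the gerbe is \emph{neutral} (admits a section), where the statement becomes an elementary computation. Throughout I use that for group schemes $H_1,H_2$ over $\sY$ the rule $T\mapsto\Hom_T((H_1)_T,(H_2)_T)$ defines an fppf sheaf $\HHom_\sY(H_1,H_2)$ on the category of $\sY$-schemes, whose sections over the $\sY$-stack $(\sX\xrightarrow{\phi}\sY)$ are, by definition of the pullback, exactly $\Hom_\sX(\phi^*H_1,\phi^*H_2)$. Thus the map induced by $\phi^*$ on Hom-sets is the restriction $\HHom_\sY(H_1,H_2)(\sY)\to\HHom_\sY(H_1,H_2)(\sX)$, and, after pushing forward along $\phi$, full faithfulness of $\phi^*$ will follow once I show that the unit map
\[
\HHom_\sY(H_1,H_2)\longrightarrow \phi_*\phi^*\HHom_\sY(H_1,H_2)
\]
is an isomorphism of fppf sheaves on $\sY$, for all $H_1,H_2$ (taking global sections then yields the bijectivity of $\phi^*$ on Hom-sets for each pair).

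First I would observe that this isomorphism may be tested fppf-locally on $\sY$. Both sides are fppf sheaves, and the formation of each commutes with base change along any $U\to\sY$: indeed $\sX\times_\sY U\to U$ is again a gerbe and the relevant pulled-back group schemes are the evident ones, so $(\phi_*\phi^*\HHom_\sY(H_1,H_2))|_U$ is computed from $\sX\times_\sY U\to U$ in the same way. Since $\phi$ is a gerbe, I may then choose an fppf cover $U\to\sY$ over which $\phi$ becomes neutral, i.e.\ $\sX_U:=\sX\times_\sY U$ admits a section $s\colon U\to\sX_U$; choosing such a section identifies $\sX_U$ with the classifying stack $B_U(\mathcal B)$, where $\mathcal B:=\AAut(s)$ is the band.

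It remains to treat the neutral case, which is the concrete heart of the argument. Over $B_U(\mathcal B)$, a group scheme is the same datum as a group scheme over $U$ equipped with a $\mathcal B$-action, and a morphism of group schemes over $B_U(\mathcal B)$ is the same as a $\mathcal B$-equivariant morphism over $U$. Under this dictionary $\phi^*H_i$ corresponds to $H_i$ carrying the \emph{trivial} $\mathcal B$-action; and for trivial actions the equivariance condition on a morphism $H_1\to H_2$ over $U$ is vacuous. Hence $\Hom_{\sX_U}(\phi^*H_1,\phi^*H_2)=\Hom_U(H_1,H_2)$, and the same computation over every $U$-scheme $T$ (for which $\sX_T$ is again neutral, with section $s_T$) shows that the unit map is an isomorphism of sheaves over $U$. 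Descending back to $\sY$ gives the desired isomorphism, and full faithfulness follows on global sections.

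The step requiring the most care is the reduction of the second paragraph: one must verify that the formation of both $\HHom_\sY(H_1,H_2)$ and $\phi_*\phi^*\HHom_\sY(H_1,H_2)$ commutes with base change on $\sY$, so that the isomorphism can be checked fppf-locally, and one must correctly identify $\Hom_\sX(\phi^*H_1,\phi^*H_2)$ with the sections over $\sX$ of $\HHom_\sY(H_1,H_2)$. Both gerbe axioms enter precisely here: local non-emptiness produces the cover $U\to\sY$ over which $\phi$ acquires a section, while the local-transitivity axiom (any two local objects with identified images are locally isomorphic) is exactly what yields the identification $\sX_U\iso B_U(\mathcal B)$. Once one is in the neutral situation the remaining input is purely formal, since the trivial $\mathcal B$-actions on $\phi^*H_1$ and $\phi^*H_2$ make every morphism automatically equivariant.
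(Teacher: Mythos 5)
Your proposal is correct and follows essentially the same route as the paper's proof: reduce fppf-locally to the neutral case $\sX\simeq B_U(\mathcal B)$, then use the dictionary that group schemes over the classifying stack are group schemes over the base with a $\mathcal B$-action, under which $\phi^*$ equips a group scheme with the trivial action, making equivariance of morphisms vacuous. The only difference is presentational: you spell out, via the sheaf $\HHom_\sY(H_1,H_2)$ and the unit map to $\phi_*\phi^*\HHom_\sY(H_1,H_2)$, the descent argument that the paper compresses into the single sentence ``we can assume that the gerbe is trivial.''
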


(In fact, the lemma and its proof given below remain valid if the words ``group scheme" are replaced by ``scheme'', ``vector bundle'', etc.)

\begin{proof}
We can assume that the gerbe is trivial, so $\sX$ is the classifying stack of a flat group scheme $H$ over $\sY$. Then a group scheme over $\sX$ is the same as a group
scheme over $\sY$ equipped with an action of $H$, and the functor $\phi^*$ takes a group scheme over $\sY$ to the same group scheme equipped with the trivial action of $H$.
\end{proof}

By Lemma~\ref{l:fully faithful}, $\Lau_n^{d,d'}$ can be reconstructed from $\phi_n^*\Lau_n^{d,d'}$, in principle. In practice, this requires some work.

\subsubsection{The essential image of the functor \eqref{e:fully faithful}}   \label{sss:essential image}
In the situation of Lemma~\ref{l:fully faithful},
let $\cI_{\sX/\sY}$ be the relative inertia stack; this is the group scheme over $\sX$ defined by
$$\cI_{\sX/\sY}:=\Ker (\cI_{\sX}\to\phi^*\cI_{\sY}),$$
 where $\cI_{\sX}$ and $\cI_{\sY}$ are the inertia stacks of $\sX$ and $\sY$.  The proof of Lemma~\ref{l:fully faithful} implies the following description of the essential image of the functor \eqref{e:fully faithful} (assuming that $\phi :\sX\to\sY$ is a gerbe):
a group scheme $H$ over $\sX$ belongs to the essential image of \eqref{e:fully faithful} if and only if the canonical action of $\cI_{\sX/\sY}$ on $H$ is trivial.

\subsection{A complement to Theorem~\ref{t:main}}  \label{ss:Lie of Lau}
\subsubsection{The group schemes $\tilde\cA _n^{d,d'}, \tilde\cB_n^{d,d'}$}
Let $S$ be an $\BF_p$-scheme and $G$ a $\BT_n$ group scheme over $S$ of height $d$ and dimension $d'$.
Then $G^{(F^n)}$ and $(G/G^{(F^n)})^*=(G^*)^{(F^n)}$ are $n$-smooth group schemes over $S$ (see \S\ref{sss:results of Messing}); their ranks equal $d'$ and $d-d'$, respectively. As $S$ and $G$ vary, $G^{(F^n)}$ and $(G^*)^{(F^n)}$ determine commutative $n$-smooth group schemes $\tilde\cA _n^{d,d'}$ and $\tilde\cB_n^{d,d'}$ over 
$\BBT_n^{d,d'}$, whose ranks equal $d'$ and $d-d'$, respectively.

\begin{thm}   \label{t:Lie of Lau}
(i) $\tilde\cA _n^{d,d'}$ and $\tilde\cB_n^{d,d'}$ descend\,\footnote{By Lemma~\ref{l:fully faithful}, the descent is unique up to unique isomorphism.} to commutative $n$-smooth group schemes $\cA _n^{d,d'}$ and $\cB_n^{d,d'}$ over $\Disp_n^{d,d'}$.

(ii) One has a canonical isomorphism of (commutative) restricted Lie algebras
\[
\Lie (\Lau_n^{d,d'})\iso \Lie (\cA _n^{d,d'})\otimes \Lie (\cB_n^{d,d'}).
\]

(iii) One has a canonical isomorphism
\begin{equation}    \label{e:preliminary description of Lau_n}
\Lau_n^{d,d'}\iso \HHom((\cB _n^{d,d'})^*, \cA _n^{d,d'})^{(F^n)},
\end{equation}
where as usual, 
$$\HHom(((\cB _n^{d,d'})^*, \cA _n^{d,d'})^{(F^n)}:=\Ker (F^n:\HHom(((\cB _n^{d,d'})^*, \cA _n^{d,d'})\longrightarrow (\Fr^n)^*\HHom(\cB _n^*, \cA _n^{d,d'})).$$ 
\end{thm}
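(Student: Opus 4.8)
The plan is to prove all three parts by descent along the gerbe $\phi_n$, reducing each to an explicit identity of closed subgroup schemes of the inertia (relative automorphism) group scheme $\cI_n^{d,d'}$ over $\BBT_n^{d,d'}$. Recall from \S\ref{sss:A subgroup of  Aut G} that for $G\in\BT_n(S)$ there is a closed immersion $\HHom(G/G^{(F^n)},G^{(F^n)})\mono\AAut G$, $f\mapsto 1+\tilde f$, and that $\AAut G$ is the fiber of $\cI_n^{d,d'}$. The heart of the argument is the upstairs identity
\[
\phi_n^*\Lau_n^{d,d'}=\HHom(G/G^{(F^n)},G^{(F^n)})^{(F^n)}
\]
of subgroups of $\cI_n^{d,d'}$. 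By Theorem~\ref{t:main}(i) the left-hand side is $(\cI_n^{d,d'})^{(F^n)}$, so taking $F^n$-kernels in the closed immersion above yields a closed immersion $\HHom(G/G^{(F^n)},G^{(F^n)})^{(F^n)}\mono\phi_n^*\Lau_n^{d,d'}$. By Theorem~\ref{t:main}(ii) the target is $n$-smooth of rank $d'(d-d')$, hence of order $p^{nd'(d-d')}$; once I know the source is finite, locally free of the same order, the map is an isomorphism (a closed immersion of finite locally free group schemes of equal order is one), proving the identity. Equivalently one can feed the two groups into Lemma~\ref{l:technical lemma}.

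Granting the identity, part (i) is immediate. An element $1+\tilde f$ acts on the subgroup $\tilde\cA_n^{d,d'}=G^{(F^n)}$ by $\id+\tilde f|_{G^{(F^n)}}$, and $\tilde f|_{G^{(F^n)}}=0$ since $\tilde f$ factors through $G\epi G/G^{(F^n)}$, whose restriction to $G^{(F^n)}=\Ker(G\epi G/G^{(F^n)})$ vanishes. Cartier duality carries $\HHom(G/G^{(F^n)},G^{(F^n)})$ to $\HHom(G^*/(G^*)^{(F^n)},(G^*)^{(F^n)})$ compatibly, so the same computation gives the trivial action on $\tilde\cB_n^{d,d'}=(G^*)^{(F^n)}$. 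Thus the relative inertia $\cI_{\BBT_n^{d,d'}/\Disp_n^{d,d'}}=\phi_n^*\Lau_n^{d,d'}$ acts trivially on $\tilde\cA_n^{d,d'}$ and $\tilde\cB_n^{d,d'}$, and the descent criterion of \S\ref{sss:essential image} produces the descended group schemes $\cA_n^{d,d'}$ and $\cB_n^{d,d'}$ over $\Disp_n^{d,d'}$, of ranks $d'$ and $d-d'$.

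For (iii) I use that $\phi_n^*$ is fully faithful (Lemma~\ref{l:fully faithful}), so it suffices to check the asserted isomorphism after applying $\phi_n^*$. Since $\phi_n^*$ commutes with internal $\HHom$, Cartier duality, and $F^n$-kernels (all compatible with base change on functors of points), the right-hand side pulls back to $\HHom(\tilde\cB_n^{*},\tilde\cA_n)^{(F^n)}$; using $\tilde\cB_n^{*}=\big((G^*)^{(F^n)}\big)^{*}=G/G^{(F^n)}$ and $\tilde\cA_n=G^{(F^n)}$ (see \S\ref{sss:results of Messing}), this is exactly $\HHom(G/G^{(F^n)},G^{(F^n)})^{(F^n)}=\phi_n^*\Lau_n^{d,d'}$ by the identity above. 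Finally, (ii) follows by applying $\Lie$ to (iii): the Lie algebra of an $n$-smooth (resp.\ $n$-cosmooth) group coincides with that of its Frobenius kernel, so passing to Frobenius kernels reduces the computation to the $m=2$, $n=1$ case, where Corollary~\ref{c:Cartier dual of tensor product} and Remark~\ref{sss:the case m=2}(i) give $\Lie\HHom(\cB_n^{*},\cA_n)^{(F)}\iso\Lie(\cB_n)\otimes\Lie(\cA_n)=\Lie(\cA_n)\otimes\Lie(\cB_n)$.

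The main obstacle is establishing the order — equivalently, the $n$-smoothness of rank $d'(d-d')$ — of $H':=\HHom(G/G^{(F^n)},G^{(F^n)})^{(F^n)}$ for general $n$. One cannot invoke Corollary~\ref{c:Cartier dual of tensor product} directly, as it concerns $1$-cosmooth inputs, and by Remark~\ref{sss:neither finite nor flat} the group scheme $\HHom(G/G^{(F^n)},G^{(F^n)})$ is in general neither finite nor flat; finiteness and flatness are recovered only after taking the $F^n$-kernel. I would resolve this by dévissage along the Frobenius filtrations of the $n$-cosmooth group $G/G^{(F^n)}$ and the $n$-smooth group $G^{(F^n)}$, whose graded pieces are $1$-cosmooth and $1$-smooth, so that the rank contribution of each layer is computed by the $n=1$ case of Corollary~\ref{c:Cartier dual of tensor product}; alternatively, once finiteness and flatness of $H'$ are in hand, its $n$-smoothness can be verified through Messing's criterion (Proposition~\ref{p:improved Messing}).
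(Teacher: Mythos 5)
Your overall architecture — reducing all three parts to the single upstairs identity $\phi_n^*\Lau_n^{d,d'}=\HHom(G/G^{(F^n)},G^{(F^n)})^{(F^n)}$ inside $\cI_n^{d,d'}$, then descending via Lemma~\ref{l:fully faithful} and \S\ref{sss:essential image}, and reducing the Lie-algebra computation to the $1$-smooth case via Corollary~\ref{c:Cartier dual of tensor product} — is exactly the paper's. But there is a genuine gap at the linchpin: you only construct the closed immersion $H':=\HHom(G/G^{(F^n)},G^{(F^n)})^{(F^n)}\mono(\cI_n^{d,d'})^{(F^n)}=\phi_n^*\Lau_n^{d,d'}$ and propose to get surjectivity by comparing orders. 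That requires knowing a priori that $H'$ is finite, locally free of order $p^{nd'(d-d')}$, which you yourself flag as ``the main obstacle'' and address only by an unexecuted d\'evissage sketch. The sketch faces real problems: the ambient $\HHom(G/G^{(F^n)},G^{(F^n)})$ is neither finite nor flat (\S\ref{sss:neither finite nor flat}), and $\HHom(A,-)$ is only left exact, so Frobenius filtrations of the source and target do not straightforwardly compute the order of the $F^n$-kernel (Ext-type obstructions intervene). Note also that part (i) cannot be quarantined from this issue: to show the relative inertia acts trivially on $\tilde\cA_n^{d,d'}$ and $\tilde\cB_n^{d,d'}$ you need \emph{every} element of $\phi_n^*\Lau_n^{d,d'}$ to be of the form $1+\tilde f$, i.e.\ you need surjectivity, not just the immersion.

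The point you are missing is that surjectivity is the cheap direction and needs no order count at all; this is the paper's Lemma~\ref{l:F^n-killed endomorphisms} and Corollary~\ref{c:a more understandable description}. If $f\in\EEnd G$ satisfies $(\Fr^n)^*f=0$, then
\[
F^n\circ f=\bigl((\Fr^n)^*f\bigr)\circ F^n=0, \qquad f\circ V^n=V^n\circ (\Fr^n)^*f=0,
\]
and Messing's exactness of $G\overset{F^n}\longrightarrow(\Fr^n_S)^*G\overset{V^n}\longrightarrow G\overset{F^n}\longrightarrow(\Fr^n_S)^*G$ (see \S\ref{sss:results of Messing}) gives $\im f\subset G^{(F^n)}$ and $\Ker f\supset\im V^n=G^{(F^n)}$, so $f$ factors as $G\epi G/G^{(F^n)}\to G^{(F^n)}\mono G$. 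Hence $(\AAut G)^{(F^n)}\iso(\EEnd G)^{(F^n)}\iso H'$ directly, and combining with Theorem~\ref{t:main}(i) yields your identity with no flatness or order input on $H'$ (its finiteness and local freeness are then inherited from $\phi_n^*\Lau_n^{d,d'}$ as an output, not required as an input). Since Corollary~\ref{c:a more understandable description} is already needed for the proof of Theorem~\ref{t:main}, which you invoke anyway, replacing your order-counting step by it closes the gap; your treatments of (i), (iii), and (ii) then go through essentially as in the paper (the paper performs the $n=1$ reduction in (ii) via $G_1=\Ker(p)$ and the isomorphism \eqref{e:AAut G_m as subgroup of AAut G}, while you pass to Frobenius kernels directly, but these are minor variants of the same reduction).
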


The proof will be given in \S\ref{ss:proof of Lie of Lau}. Note that Theorem~\ref{t:Lie of Lau}(ii) is really informative if $n=1$ (indeed, the group scheme $\Lau_n^{d,d'}$ is $1$-smooth, so it is uniquely determined by its restricted Lie algebra).

\subsection{Improving formula~\eqref{e:preliminary description of Lau_n}}  \label{sss:Improving}
Eventually, we will transform formula~\eqref{e:preliminary description of Lau_n} into an \emph{explicit} description of $\Lau_n^{d,d'}$ (see Theorem~\ref{t:Lau_n explicitly}) using two ingredients. The first one is an explicit description (due to E.~Lau and T.~Zink) of the group schemes $\cA _n^{d,d'}$ and $\cB_n^{d,d'}$ defined in Theorem~\ref{t:Lie of Lau}(i). The other ingredient is Proposition~\ref{p:Cartier dual of tensor product} for $n=1$ and Proposition~\ref{p:Zink functor of a tensor product} for an arbitrary $n$. In the case $n=1$ Proposition~\ref{p:Cartier dual of tensor product} allows one to rewrite the r.h.s of \eqref{e:preliminary description of Lau_n} as a tensor product of $\cA _1^{d,d'}$ and $\cB_1^{d,d'}$ in the sense of the tensor structure on $\Sm_1^*(R)$ defined in \S\ref{sss:tensor structure on 1-cosmooth ones}. For $n>1$ we will do something similar.

\section{Proofs of Theorems~\ref{t:main} and \ref{t:Lie of Lau}}    \label{s:proof of main theorem}
\subsection{Deducing Theorem~\ref{t:main} from the key lemma}  \label{ss:formulating key lemma}
Here is the key lemma, which will be proved in \S\ref{ss:proof of key lemma}.

\begin{lem}   \label{l:key lemma}
The Lie algebra of the fiber of $(\cI_n^{d,d'})^{(F^n)}$ over any geometric point of $\BBT_n^{d,d'}$ has dimension $d'(d-d')$.
\end{lem}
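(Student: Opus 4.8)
The plan is to work over a fixed geometric point $x=[G]$ of $\BBT_n^{d,d'}$, where $G$ is a $\BT_n$ group of height $d$ and dimension $d'$ over an algebraically closed field $k$. The fiber of $\cI_n^{d,d'}$ over $x$ is the automorphism group scheme $\AAut G$, so the fiber of $(\cI_n^{d,d'})^{(F^n)}$ is $(\AAut G)^{(F^n)}$. First I would observe that passing to a Frobenius kernel does not change the Lie algebra: since the relative Frobenius has zero differential at the identity, $\Ker(F)$ already contains the first infinitesimal neighborhood of the unit, and therefore $\Lie (\AAut G)^{(F^n)}=\Lie\AAut G$. Thus the task reduces to proving $\dim_k\Lie\AAut G=d'(d-d')$.

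Next I would exploit the canonical extension structure of $G$. Write $S:=G^{(F^n)}$ and $C:=G/G^{(F^n)}$; by \S\ref{sss:results of Messing}, $S$ is $n$-smooth of rank $d'$ and $C$ is $n$-cosmooth of rank $d-d'$, with $C^*=(G^*)^{(F^n)}$ $n$-smooth of rank $d-d'$. Since $S$ is characteristic, every automorphism preserves it, giving a homomorphism $\rho\colon\AAut G\to\AAut S\times\AAut C$. The automorphisms inducing the identity on both $S$ and $C$ are exactly those of the form $1+\tilde f$ with $f\in\Hom(C,S)$ (see \S\ref{sss:A subgroup of  Aut G}), so $\HHom(C,S)=\Ker\rho$ as group schemes, and by left-exactness of $\Lie$ we get $\Lie\HHom(C,S)=\Ker\Lie\rho\subseteq\Lie\AAut G$. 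A tangent vector to $\HHom(C,S)$ at the origin is a homomorphism $C\to\Lie(S)\otimes_k\BG_a$, whence $\Lie\HHom(C,S)=\Hom(C,\BG_a)\otimes_k\Lie S$. By Cartier duality $\Hom(C,\BG_a)=\Lie(C^*)$ has dimension $d-d'$, and $\dim\Lie S=d'$, so this subspace has dimension exactly $d'(d-d')$. This already yields the lower bound.

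The crux is the matching upper bound, namely the vanishing $\Lie\rho=0$: every infinitesimal automorphism of $G$ must restrict to the trivial infinitesimal automorphism of $S$ and of $C$. This is where I expect the only real subtlety, since $\Lie\AAut S$ is in general nonzero (e.g. for $S=\alpha_p$), so the vanishing genuinely uses the $\BT_n$ structure rather than a property of $S$ alone. The key observation is that for any tangent vector $\psi\in\Lie\AAut G$, viewed as an automorphism of $G_{k[\epsilon]}$ congruent to the identity modulo $\epsilon$, the twist $(\Fr^n)^*\psi$ equals the identity: the $p^n$-power map on $k[\epsilon]$ sends $\epsilon$ to $\epsilon^{p^n}=0$, so $(\Fr^n)^*\psi$ is pulled back along a map factoring through $\Spec k$ and hence coincides with $(\Fr^n)^*(\id)$. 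Now I would use the isomorphisms coming from the exact complex \eqref{e:the complexes for any n}: $F^n$ identifies $C=G/G^{(F^n)}$ with $\Ker(V^n)\subseteq(\Fr^n)^*G$, and $V^n$ identifies $(\Fr^n)^*G/\Ker(V^n)$ with $\im(V^n)=G^{(F^n)}=S$, both compatibly with automorphisms by naturality of $F^n$ and $V^n$. Under these identifications the induced maps $\psi_C$ and $\psi_S$ are precisely those induced by $(\Fr^n)^*\psi=\id$, so both vanish. Hence $\Lie\rho=0$, which forces $\Lie\AAut G=\Lie\HHom(C,S)$ and therefore $\dim_k\Lie\AAut G=d'(d-d')$, as claimed.
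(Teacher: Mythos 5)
Your proof is correct, and it splits into two halves relative to the paper's own argument. The first half — showing that every tangent vector $\psi$ of $\AAut G$ satisfies $(\Fr^n)^*\psi=\id$ and hence, by naturality of $F^n,V^n$ and Messing's exact sequence \eqref{e:the complexes for any n}, induces the identity on $S=G^{(F^n)}$ and on $C=G/G^{(F^n)}$ — is the same mechanism as the paper's Lemma~\ref{l:F^n-killed endomorphisms} (there applied to $f=\psi-1$, via $F^n\circ f=(\Fr^n)^*(f)\circ F^n=0$ and $f\circ V^n=V^n\circ (\Fr^n)^*(f)=0$), which feeds into Corollary~\ref{c:a more understandable description}. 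Where you genuinely diverge is the dimension count for $\HHom(C,S)$. The paper reduces to $n=1$ via the $p$-torsion isomorphism \eqref{e:AAut G_m as subgroup of AAut G} and then invokes its theory of $1$-cosmooth group schemes as a tensor category (Theorem~\ref{t:well known}, Corollary~\ref{c:Cartier dual of tensor product}, \S\ref{sss:the case m=2}(i)) to conclude that the relevant $\HHom$ is $1$-smooth of rank $d'(d-d')$. You instead compute $\Lie\HHom(C,S)$ directly for arbitrary $n$ from the standard deformation-theoretic identification $\Lie\HHom(C,S)=\Hom(C,\Lie(S)\otimes\BG_a)=\Hom(C,\BG_a)\otimes\Lie(S)=\Lie(C^*)\otimes\Lie(S)$ (a tangent vector amounts to a $k$-linear map from $\coLie(S)$ to the primitive elements of $\cO(C)$), which needs only the ranks of the $n$-smooth group schemes $S$ and $C^*$. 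This bypasses both the reduction to $n=1$ and all of \S\ref{ss:1-cosmooth group schemes}, so your argument is leaner and self-contained for the lemma itself; the one input you leave as a standard fact is the formula for $\Lie\HHom$, which is easily verified at the Hopf-algebra level. The price is that your route produces only a Lie-algebra statement, whereas the paper's heavier route yields group-scheme-level facts — the isomorphism $(\AAut G)^{(F^n)}\iso\HHom(C,S)^{(F^n)}$, and $1$-smoothness together with the restricted-Lie-algebra description of Lemma~\ref{l:The case n=1} — which are precisely what get reused in the proofs of Theorem~\ref{t:main}(ii) and Theorem~\ref{t:Lie of Lau}.
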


\subsubsection{Deducing Theorem~\ref{t:main} from the key lemma}  \label{sss:deducing main theorem}
$\phi_n^*\Lau_n^{d,d'}$ is a closed subgroup of the group scheme $\cI_n^{d,d'}$. Moreover, $\phi_n^*\Lau_n^{d,d'}\subset (\cI_n^{d,d'})^{(F^n)}$ by Theorem~\ref{t:Lau's theorem}(ii). 
$(\cI_n^{d,d'})^{(F^n)}$ is finite over $\BBT_n^{d,d'}$ because $\cI_n^{d,d'}$ has finite type. By Theorem~\ref{t:Lau's theorem}(ii), $\phi_n^*\Lau_n^{d,d'}$ is finite and locally free 
over~$\BBT_n^{d,d'}$ of order $p^{nd'(d-d')}$. So Theorem~\ref{t:main} follows from Lemma~\ref{l:key lemma} combined with Lemma~\ref{l:technical lemma} (the latter has to be applied to the pullbacks of $\phi_n^*\Lau_n^{d,d'}$ and $(\cI_n^{d,d'})^{(F^n)}$ to a scheme $S$ equipped with a faithfully flat morphism to $\BBT_n^{d,d'}$).

\subsection{A description of $(\cI_n^{d,d'})^{(F^n)}$}  \label{ss:a more understandable description}
The goal of this subsection is to prove Corollary~\ref{c:a more understandable description}, which can be regarded as a description of $(\cI_n^{d,d'})^{(F^n)}$.

Let $S$ be an $\BF_p$-scheme and $G\in\BT_n(S)$. 

\begin{lem}  \label{l:F^n-killed endomorphisms}
If $f\in\End G$ and $(\Fr^n)^*(f)=0$ then $f:G\to G$ factors as
\[
G\epi G/G^{(F^n)}\to G^{(F^n)}\mono G.
\]
In particular, $f^2=0$.
\end{lem}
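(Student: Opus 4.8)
The plan is to exploit the naturality of the relative Frobenius $F^n$ and of the Verschiebung $V^n$, together with the exactness of the complex \eqref{e:the complexes for any n} recalled in \S\ref{sss:results of Messing}.

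First I would show that $\im f\subseteq G^{(F^n)}$. The relative Frobenius is natural, so the square expressing $F^n\circ f=(\Fr^n_S)^*(f)\circ F^n$ commutes. Since $(\Fr^n_S)^*(f)=0$ by hypothesis, we get $F^n\circ f=0$, i.e.\ $f$ lands in $\Ker (G\overset{F^n}\longrightarrow (\Fr^n_S)^*G)=G^{(F^n)}$. As $G^{(F^n)}$ is a finite locally free closed subgroup scheme of $G$, this says exactly that $f$ factors as $G\overset{g}\longrightarrow G^{(F^n)}\mono G$.

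Next I would show that $f$ kills $G^{(F^n)}$, so that $g$ descends along $G\epi G/G^{(F^n)}$. Verschiebung is likewise natural, which gives $f\circ V^n=V^n\circ (\Fr^n_S)^*(f)=0$. By exactness of \eqref{e:the complexes for any n} at the middle term $G$, one has $\im (V^n\colon (\Fr^n_S)^*G\to G)=\Ker (F^n)=G^{(F^n)}$; equivalently, $V^n$ is the surjection $(\Fr^n_S)^*G\epi G^{(F^n)}$ appearing in \eqref{e:extension of n-smooth by n-cosmooth}. Hence $f\circ V^n=0$ forces the restriction of $f$ to $G^{(F^n)}$ to vanish, so $f$ factors through $G\epi G/G^{(F^n)}$. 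Combining the two factorizations yields the asserted decomposition $G\epi G/G^{(F^n)}\to G^{(F^n)}\mono G$.

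Finally, $f^2=0$ is immediate: writing $f=\iota\circ\bar g\circ\pi$ with $\pi\colon G\epi G/G^{(F^n)}$ and $\iota\colon G^{(F^n)}\mono G$, the middle composite $\pi\circ\iota\colon G^{(F^n)}\mono G\epi G/G^{(F^n)}$ vanishes, whence $f^2=\iota\circ\bar g\circ(\pi\circ\iota)\circ\bar g\circ\pi=0$. I do not expect a serious obstacle: the only points requiring care are invoking both the Frobenius and the Verschiebung naturality squares with the correct variance, and using exactness of \eqref{e:the complexes for any n} to identify $\im (V^n)$ with $G^{(F^n)}$. Everything else is formal, since all the groups involved are finite locally free, so the sheaf-theoretic factorizations are genuine morphisms of group schemes.
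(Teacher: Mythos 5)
Your proof is correct and follows exactly the paper's argument: both use the two naturality identities $F^n\circ f=(\Fr^n)^*(f)\circ F^n=0$ and $f\circ V^n=V^n\circ (\Fr^n)^*(f)=0$, and then invoke exactness of the complex \eqref{e:the complexes for any n} to identify $\im (V^n)$ with $G^{(F^n)}$, giving $\im f\subset G^{(F^n)}$ and $G^{(F^n)}\subset\Ker f$. The only difference is that you spell out the details (including why $f^2=0$ follows) that the paper's two-line proof leaves implicit.
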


\begin{proof} 
One has $F^n\circ f=(\Fr^n)^*(f)\circ F^n=0$ and  $f\circ V^n=V^n\circ (\Fr^n)^*(f)=0$.
So $\im f\subset G^{(F^n)}$ and $\Ker (f)\supset \im ((\Fr^n)^*G\overset{V^n}\longrightarrow G)=G^{(F^n)}$.
\end{proof}

\begin{cor}  \label{c:a more understandable description}
One has group isomorphisms
\[
(\AAut G)^{(F^n)}\iso (\EEnd G)^{(F^n)}\iso\HHom(G/G^{(F^n)}, G^{(F^n)})^{(F^n)},
\] 
where the first map is $h\mapsto h-1$. 
As usual, $\HHom(G/G^{(F^n)}, G^{(F^n)})^{(F^n)}$ denotes the kernel of 
$F^n:\HHom(G/G^{(F^n)}, G^{(F^n)})\longrightarrow (\Fr^n)^*\HHom(G/G^{(F^n)}, G^{(F^n)})$. \qed
\end{cor}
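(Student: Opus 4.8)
The plan is to argue functorially on $T$-points and reduce everything to the preceding \lemref{l:F^n-killed endomorphisms}. The first thing I would record is how relative Frobenius acts on the mapping schemes in play: since $(\Fr^n_S)^*$ commutes with the formation of $\AAut$, $\EEnd$ and $\HHom$ (it is a base change along $\Fr^n_S$), the relative Frobenius $F^n$ of any of these group schemes acts on a $T$-point, regarded as a morphism $f$, by the Frobenius twist $f\mapsto(\Fr^n)^*(f)$. Consequently, for $G\in\BT_n(S)$ one has, functorially in the $S$-scheme $T$,
\[
(\EEnd G)^{(F^n)}(T)=\{f\in\End(G_T): (\Fr^n)^*(f)=0\},\qquad
(\AAut G)^{(F^n)}(T)=\{h\in\Aut(G_T): (\Fr^n)^*(h)=\id\},
\]
and likewise $\HHom(G/G^{(F^n)},G^{(F^n)})^{(F^n)}(T)=\{\bar f:(\Fr^n)^*(\bar f)=0\}$. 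This is exactly the form in which \lemref{l:F^n-killed endomorphisms} applies.

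For the first isomorphism I would send $h\mapsto h-1$. Using that the Frobenius twist is additive, if $h\in(\AAut G)^{(F^n)}(T)$ then $f:=h-1$ satisfies $(\Fr^n)^*(f)=(\Fr^n)^*(h)-\id=0$, so $f\in(\EEnd G)^{(F^n)}(T)$; conversely, given such an $f$ we have $f^2=0$ by \lemref{l:F^n-killed endomorphisms}, whence $1+f$ is invertible with inverse $1-f$ and lies in $(\AAut G)^{(F^n)}(T)$. This yields a functorial bijection, hence an isomorphism of schemes. To see it is a \emph{group} isomorphism from the composition law on the left to the additive law on the right, I would take $f_1,f_2\in(\EEnd G)^{(F^n)}(T)$ and use that each factors as $G\epi G/G^{(F^n)}\to G^{(F^n)}\mono G$; since $\im f_2\subset G^{(F^n)}\subset\Ker f_1$, we get $f_1f_2=0$, so $(1+f_1)(1+f_2)=1+(f_1+f_2)$, i.e.\ the map intertwines composition with addition.

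For the second isomorphism I would invoke the canonical factorization of \lemref{l:F^n-killed endomorphisms}: an $f\in(\EEnd G)^{(F^n)}(T)$ determines $\bar f\in\Hom(G/G^{(F^n)},G^{(F^n)})$ with $f=\iota\circ\bar f\circ\pi$, where $\pi$ is the projection and $\iota$ the inclusion, and $f\mapsto\bar f$ is additive and a bijection onto the endomorphisms factoring in this way. It then remains to match the two Frobenius-kernel conditions: because base change preserves faithfully flat morphisms and closed immersions, $(\Fr^n)^*(\pi)$ stays an epimorphism and $(\Fr^n)^*(\iota)$ a monomorphism, so from $(\Fr^n)^*(f)=(\Fr^n)^*(\iota)\circ(\Fr^n)^*(\bar f)\circ(\Fr^n)^*(\pi)$ we read off that $(\Fr^n)^*(f)=0$ if and only if $(\Fr^n)^*(\bar f)=0$. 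Hence $f\mapsto\bar f$ restricts to the desired isomorphism onto $\HHom(G/G^{(F^n)},G^{(F^n)})^{(F^n)}$.

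The genuinely delicate point is the bookkeeping of the two group laws on $\EEnd G$ alongside the relative Frobenius: one must check that the composition-Frobenius-kernel of $\AAut G$ and the additive Frobenius-kernel of $\EEnd G$ are matched correctly by $h\mapsto h-1$, and that both the homomorphism property (via $f_1f_2=0$) and the invertibility of $1+f$ (via $f^2=0$) are genuinely supplied by the factorization and nilpotency of \lemref{l:F^n-killed endomorphisms}. Once the Frobenius identifications of the first paragraph are in place, everything else is formal descent along $T$-points.
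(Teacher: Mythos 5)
Your proposal is correct and follows the paper's intended route: the paper states this corollary with no written proof precisely because it is the immediate consequence of Lemma~\ref{l:F^n-killed endomorphisms} that you spell out (the factorization gives $f^2=0$ and $f_1f_2=0$, hence $h\mapsto h-1$ is a group isomorphism onto the additive Frobenius kernel, and the factorization map $f\mapsto\bar f$ matches the two kernels since $(\Fr^n)^*\pi$ stays epi and $(\Fr^n)^*\iota$ stays mono). Your extra bookkeeping of relative Frobenius on mapping schemes is the same identification the paper uses implicitly in the lemma and corollary.
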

Note that by \S\ref{sss:neither finite nor flat nor commutative}, the group scheme $\HHom(G/G^{(F^n)}, G^{(F^n)})$ is neither finite nor flat, in general.

\subsection{The case $n=1$}   \label{ss:The case n=1}
In this case, Lemma~\ref{l:key lemma} follows from the next one.

\begin{lem}  \label{l:The case n=1}
Let $G\in\BT_1(S)$. Then the group scheme $(\AAut G)^{(F)}$ is $1$-smooth (in particular, finite and locally free), and one has a canonical isomorphism
of (commutative) restricted Lie $\cO_S$-algebras
$$\Lie ((\AAut G)^{(F)})\iso \Lie (G^{(F)})\otimes\Lie ((G/G^{(F)})^*)=\Lie (G^{(F)})\otimes\Lie ((G^*)^{(F)}) .$$
\end{lem}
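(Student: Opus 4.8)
The plan is to identify $(\AAut G)^{(F)}$ explicitly using Corollary~\ref{c:a more understandable description} and then feed the result into the tensor-structure analysis of \S\ref{ss:1-cosmooth group schemes}. Specializing Corollary~\ref{c:a more understandable description} to $n=1$ gives a canonical group isomorphism
\[
(\AAut G)^{(F)} \iso \HHom(G/G^{(F)}, G^{(F)})^{(F)},
\]
so everything reduces to understanding the right-hand side. The two inputs are both $1$-cosmooth: by \S\ref{sss:Height and dimension} the group scheme $G^{(F)}$ is $1$-smooth of rank $d'$, so its Cartier dual $(G^{(F)})^*$ lies in $\Sm_1^*$ and has rank $d'$; and by \S\ref{sss:results of Messing} one has $(G/G^{(F)})^* = (G^*)^{(F)}$, which is $1$-smooth of rank $d-d'$, so $G/G^{(F)} \in \Sm_1^*$ with rank $d-d'$.

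Next I would apply the $m=2$ case of the tensor-structure results. Set $A_1 := G/G^{(F)}$ and $A_2 := (G^{(F)})^*$, both in $\Sm_1^*$; then $A_2^* = G^{(F)}$, so that $\HHom(G/G^{(F)}, G^{(F)})^{(F)} = \HHom(A_1, A_2^*)^{(F)}$. By Remark~\ref{sss:the case m=2}(i) together with Corollary~\ref{c:Cartier dual of tensor product}, the group scheme $\HHom(A_1, A_2^*)^{(F)}$ is $1$-smooth, its rank is the product of the ranks of $A_1$ and $A_2$, namely $(d-d')d' = d'(d-d')$, and there is a canonical isomorphism of commutative restricted Lie algebras
\[
\Lie\bigl((\AAut G)^{(F)}\bigr) \iso \Lie(A_1^*) \otimes \Lie(A_2^*).
\]
Finally I would rewrite the target: since $A_1^* = (G/G^{(F)})^* = (G^*)^{(F)}$ and $A_2^* = G^{(F)}$, the right-hand side is $\Lie((G^*)^{(F)}) \otimes \Lie(G^{(F)})$, which, by the symmetry of $\otimes$ and the equality $(G/G^{(F)})^* = (G^*)^{(F)}$, is precisely $\Lie(G^{(F)}) \otimes \Lie((G/G^{(F)})^*)$, as asserted.

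The only point requiring care is the passage from the affine base of Corollary~\ref{c:Cartier dual of tensor product} to an arbitrary $\BF_p$-scheme $S$. Since the formation of $G^{(F)}$, of Cartier duals, of $\HHom$, and of Frobenius kernels all commute with base change, and $1$-smoothness is Zariski-local (Lemma~\ref{l:reformulation of n-smoothness}), I would run the above argument Zariski-locally on $S$; the isomorphisms produced are canonical and hence glue. I expect the main (though modest) obstacle to be the bookkeeping of Cartier duals and Frobenius kernels—correctly matching $\HHom(G/G^{(F)}, G^{(F)})^{(F)}$ with the object to which Corollary~\ref{c:Cartier dual of tensor product} applies and tracking which factor is dualized—rather than any substantive difficulty, as the real content is already packaged in Corollary~\ref{c:a more understandable description} and in Propositions~\ref{p:tensor structure}--\ref{p:Cartier dual of tensor product}.
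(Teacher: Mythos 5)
Your proposal is correct and follows exactly the paper's own (one-line) proof, which likewise combines Corollary~\ref{c:a more understandable description}, Remark~\ref{sss:the case m=2}(i), and Corollary~\ref{c:Cartier dual of tensor product}; your write-up merely makes explicit the identifications $A_1 = G/G^{(F)}$, $A_2 = (G^{(F)})^*$ and the Zariski-local reduction to an affine base that the paper leaves implicit.
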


\begin{proof}
Use Corollary~\ref{c:a more understandable description}, Corollary~\ref{c:Cartier dual of tensor product}, and \S\ref{sss:the case m=2}(i).
\end{proof}

\subsection{On $\End G_m$, where $m\le n$ and $G_m:=\Ker (p^m:G\to G)$} 
As before, let $G\in\BT_n(S)$, where $S$ is an $\BF_p$-scheme.
Let $m\le n$ and 
\[
G_m:=\Ker (p^m:G\to G).
\]
The subgroup $G_m\subset G$ can also be regarded as a quotient of $G$ via $p^{n-m}:G\epi G_m$. We have an \emph{additive} homomorphism
\begin{equation}  \label{e:End G_m mono End G}
\End G_m\to\End G, \quad f\mapsto \tilde f,
\end{equation}
where $\tilde f:G\to G$ is the composition $G\,\overset{p^{n-m}}\epi \!\! G_m\overset{f}\longrightarrow G_m\mono G$. 
The map \eqref{e:End G_m mono End G} induces additive isomorphisms
\begin{equation}  \label{e:End G_m as subgroup of End G}
\End G_m\iso\Ker (\End G \overset{p^m}\longrightarrow\End G),
\end{equation}
\begin{equation}  \label{e:End G_m as subgroup of EEnd G}
\EEnd G_m\iso\Ker (\EEnd G \overset{p^m}\longrightarrow\EEnd G).
\end{equation}

Moreover, \eqref{e:End G_m as subgroup of EEnd G} induces an additive isomorphism
\begin{equation}  \label{e:EEnd G_m as subgroup of EEnd G}
(\EEnd G_m)^{(F^m)}\iso (\EEnd G)^{(F^m)}
\end{equation}
because $(\EEnd G)^{(F^m)}\subset\Ker (\EEnd G \overset{p^m}\longrightarrow\EEnd G)$.

Using the group isomorphism
\[
(\EEnd G)^{(F^n)}\iso (\AAut G)^{(F^n)}, \quad f\mapsto 1+f
\]
and a similar group isomorphism $(\EEnd G_m)^{(F^m)}\iso (\AAut G_m)^{(F^m)}$, we get from \eqref{e:EEnd G_m as subgroup of EEnd G}
a group isomorphism
\begin{equation}  \label{e:AAut G_m as subgroup of AAut G}
(\AAut G_m)^{(F^m)}\iso (\AAut G)^{(F^m)}.
\end{equation}

\subsection{Proof of Lemma~\ref{l:key lemma}} \label{ss:proof of key lemma}
The problem is to show that for any field $k$ of characteristic $p$ and any $G\in\BT_n(k)$, one has
\begin{equation}  \label{e:dim=d'(d-d')}
\dim\Lie ((\AAut G)^{(F^n)})=d'(d-d'),
\end{equation}
where $d,d'$ are the height and the dimension of $G$. Note that
$(\AAut G)^{(F^n)}$ and $(\AAut G)^{(F)}$ have the same Lie algebra. So applying \eqref{e:AAut G_m as subgroup of AAut G} for $m=1$, we reduce the proof of \eqref{e:dim=d'(d-d')} to the case $n=1$, which was treated in \S\ref{ss:The case n=1}.

\subsection{Proof of Theorem~\ref{t:Lie of Lau}} \label{ss:proof of Lie of Lau}
\subsubsection{Proof of Theorem~\ref{t:Lie of Lau}(i)}
By \S\ref{sss:essential image} and Theorem~\ref{t:main}(i), it suffices to show that the action of 
$(\cI_n^{d,d'})^{(F^n)}$ on $\tilde\cA_n^{d,d'}$ and $\tilde\cB_n^{d,d'}$ is trivial. This follows from Corollary~\ref{c:a more understandable description}. \qed

\subsubsection{Proof of Theorem~\ref{t:Lie of Lau}(ii)}
By Theorem~\ref{t:main}(i), $\phi_n^*\Lau_n^{d,d'}=(\cI_n^{d,d'})^{(F^n)}$.
So Lemma~\ref{l:fully faithful} implies that proving Theorem~\ref{t:Lie of Lau}(ii) amounts to constructing an isomorphism
$$\Lie ((\cI_n^{d,d'})^{(F^n)}) \iso\Lie (\tilde\cA_n^{d,d'})\otimes \Lie (\tilde\cB_n^{d,d'}).$$
For $n=1$, such an isomorphism is provided by Lemma~\ref{l:The case n=1}. It remains to note that the pullbacks of
$(\cI_1^{d,d'})^{(F)}$, $\tilde\cA_1^{d,d'}$, $\tilde\cB_1^{d,d'}$ via the morphism $\BBT_n^{d,d'}\to\BBT_1^{d,d'}$ are canonically isomorphic to
$ (\cI_n^{d,d'})^{(F)}$, $(\tilde\cA_n^{d,d'})^{(F)}$, $(\tilde\cB_n^{d,d'})^{(F)}$ (in the case of $(\cI_1^{d,d'})^{(F)}$ this is the iso\-mor\-phism~\eqref{e:AAut G_m as subgroup of AAut G}). \qed

\subsubsection{Proof of Theorem~\ref{t:Lie of Lau}(iii)}
Combining Theorem~\ref{t:main}(i) and Corollary~\ref{c:a more understandable description}, we get a canonical isomorphism
$\phi_n^*\Lau_n^{d,d'}\iso \HHom((\tilde\cB _n^{d,d'})^*, \tilde\cA _n^{d,d'})^{(F^n)}$.
It remains to use Lemma~\ref{l:fully faithful}.
\qed

\section{$n$-truncated semidisplays}   \label{s:n-truncated semidisplays}
Let $R$ be an $\BF_p$-algebra and $n\in\BN$. Let $\Disp_n(R)$ be the additive category  of {$n$-truncated displays in the sense of \cite[Def.~3.4]{Lau13}. 
In \S\ref{ss:n-truncated semidisplays}-\ref{ss:n-truncated displays} we will construct a diagram of additive categories
\[
\Disp_n(R)\mono\sDisp_n^{\strong}(R)\to\sDisp_n(R)\to\sDisp_n^{\weak}(R)
\]
in which the first functor is fully faithful and the other functors are essentially surjective;
$\sDisp_n(R)$ and $\sDisp_n^{\weak}(R)$ are defined in \S\ref{ss:n-truncated semidisplays},
$\Disp_n(R)$ and $\sDisp_n^{\strong}(R)$ are discussed in \S\ref{ss:n-truncated displays}.
Objects of $\sDisp_n(R)$ are called $n$-truncated \emph{semidisplays}.
In this paper $\sDisp_n(R)$ is the main player. Introducing $\sDisp_n^{\weak}(R)$ is motivated by \S\ref{sss:1-truncated semidisplays}.

Unlike $\Disp_n(R)$, the categories $\sDisp_n(R)$, $\sDisp_n^{\weak}(R)$, and  $\sDisp_n^{\strong}(R)$ are tensor categories, see~\S\ref{ss:tensoring truncated semidisplays} and \S\ref{sss:how to deal}.

The category $\Disp_n(R)$ is equipped with a duality functor due to E.~Lau (see \S\ref{sss:duality on DISP_n and Disp_n} below).
There is no such functor on $\sDisp_n(R)$, $\sDisp_n^{\weak}(R)$, or $\sDisp_n^{\strong}(R)$.

In \S\ref{ss:truncated Zink's functor}-\ref{ss:Cartier dual of Z_P} we discuss a functor $$\sDisp_n(R)\to\Sm_n(R),$$
where $\Sm_n(R)$ is the category of commutative $n$-smooth group schemes over $R$.
The corresponding functor $\Disp_n(R)\to\Sm_n(R)$ was defined in \cite{LZ}.

\subsection{$n$-truncated semidisplays}   \label{ss:n-truncated semidisplays}
\subsubsection{Notation}
Let $R$ be an $\BF_p$-algebra. Fix $n\in\BN$. We have natural epimorphisms 
$$W_n(R)\epi W_{n-1}(R)\epi\ldots \epi W_1(R)=R\epi W_0(R)=0.$$
Let $I_{n,R}:=\Ker (W_n (R)\epi W_1(R))$, $J_{n,R}:=\Ker (W_n (R)\epi W_{n-1}(R))$.

\subsubsection{Definition}  \label{sss:definition of sDisp_n}
An \emph{$n$-truncated semidisplay} over $R$ is a quadruple $(P,Q,F, F_1)$, where $P$ is a finitely generated projective $W_n (R)$-module, $Q\subset P$ a submodule,
$F:P\to P$ and $F_1:Q\to P/J_{n,R}\cdot P$ are semilinear with respect to $F:W(R)\to W(R)$, and the following conditions hold:

(i) $P/Q$ is a projective module over $W_1(R)=R$ (in particular, $Q\supset I_{n,R}\cdot P$);

(ii) for $a\in W(R)$, $x\in P$, one has $F_1(V(a)\cdot x)=a\cdot \bar F(x)$, where $\bar F(x)$ is the image of $F(x)$ in $P/J_{n,R}\cdot P$;

(iii) $F(x)=pF_1(x)$ for $x\in Q$ (the expression  $pF_1(x)$ makes sense because $pJ_{n,R}=0$).

\medskip

Let $\sDisp_n (R)$ be the additive category of $n$-truncated semidisplays over $R$. 

\subsubsection{Remarks}  \label{sss:2Zink's observations}
(a) By \S\ref{sss:definition of sDisp_n}(ii), one has
\begin{equation} \label{e:bar F via F_1}
\bar F(x)=F_1(V(1)\cdot x)=F_1(px). 
\end{equation}

(b) If $x\in Q$ then \eqref{e:bar F via F_1} implies that $F(x)-pF_1 (x)\in J_{n,R}\cdot P$, which is weaker than \S\ref{sss:definition of sDisp_n}(iii).

(c) We have 
\begin{equation}  \label{e:J_{n,R}Q is killed}
F_1(J_{n,R}\cdot Q)=0, \quad F(J_{n,R}\cdot Q)=0.
\end{equation}
Indeed, the first equality is clear because $P/J_{n,R}\cdot P$ is killed by $J_{n,R}$, and the second equality follows from the first one by \S\ref{sss:definition of sDisp_n}(iii).

(d) By \S\ref{sss:definition of sDisp_n}(iii), $F:P\to P$ induces a $p$-linear map $P/Q\to P/Q$ (so $P/Q$ is a commutative restricted Lie $R$-algebra).

\subsubsection{Normal decompositions}   \label{sss:Normal decompositions} 
By \S\ref{sss:definition of sDisp_n}(i), there exists a decomposition $P=T\oplus L$ such that $Q=I_{n,R}\cdot T\oplus L$. Following Th.~Zink and E.~Lau \cite{Zink, Lau13, LZ}, we call this a \emph{normal decomposition.}

Let us note that the notation $T,L$ for the terms of a normal decomposition is standard. Mnemonic rule: $T$ stands for ``tangent'' (in fact, the $R$-module $T/I_{n,R}\cdot T=P/Q$ is the Lie algebra of the $n$-smooth group scheme discussed in
\S\ref{ss:truncated Zink's functor} below, see Lemma~\ref{l:Lie of Zink}).

\subsubsection{Weak $n$-truncated semidisplays}   \label{sss:sDisp_weak}
Given $(P,Q,F, F_1)\in\sDisp_n (R)$, set 
$$M:=P/J_{n,R}\cdot Q, \quad \cQ:=Q/J_{n,R}\cdot Q.$$
 By \eqref{e:J_{n,R}Q is killed}, the maps $F:P\to P$ and $F_1:Q\to P/J_{n,R}\cdot P$ induce maps
$$F:M\to M, \quad F_1:\cQ\to M/J_{n,R}\cdot M.$$

The quadruple $(M,\cQ, F:M\to M, F_1:\cQ\to M/J_{n,R}\cdot M)$ has the following properties:

(a) $F$ and $F_1$ are semilinear with respect to $F:W_n(R)\to W_n(R)$ and satisfy the relations from \S\ref{sss:definition of sDisp_n}(ii)-(iii).

(b) The pair $(M,\cQ )$ is isomorphic to $(T,I_{n,R}\cdot T)\oplus
(\bar L, \bar L)$ for some finitely generated projective $W_n(R)$-module $T$ and some finitely generated projective $W_{n-1}(R)$-module $\bar L$.

Quadruples $(M,\cQ, F, F_1)$ satisfying (a)-(b) are called \emph{weak $n$-truncated semidisplays.} They form an additive category, denoted by $\sDisp_n^{\weak} (R)$. We have defined a functor 
$$\sDisp_n (R)\to\sDisp_n^{\weak} (R).$$
Using normal decompositions, one checks that it is essentially surjective.

\subsubsection{Example:  $n=1$}  \label{sss:1-truncated semidisplays}
$\sDisp_1 (R)$ is the category of triples $(P,Q,F)$, where $P$ is a finitely generated projective $R$-module, $Q\subset P$ is a direct summand of $P$, and $F:P/Q\to P$ is a $p$-linear map. On the other hand, $\sDisp_1^{\weak} (R)$ is the category of finitely generated projective $R$-modules equipped with a $p$-linear endomorphism. The functor $\sDisp_n (R)\to\sDisp_n^{\weak} (R)$ takes $(P,Q,F)$ to $P/Q$ equipped with the composite map $P/Q\overset{F}\longrightarrow P\epi P/Q$.

\subsection{$n$-truncated displays}   \label{ss:n-truncated displays}
As before, let $R$ be an $\BF_p$-algebra and $n\in\BN$. 
Let $\Disp_n(R)$ be the category of \emph{$n$-truncated displays} in the sense of \cite[Def.~3.4]{Lau13}. This category is studied in \cite[\S 3]{Lau13} and \cite[\S 1]{LZ}. (Note that the setting of \cite{LZ} is more general than we need: the ring $R$ is required there to be a $(\BZ/p^m\BZ)$-algebra for some $m$.)

\subsubsection{The functor $\Disp_n(R)\to\sDisp_n(R)$}
According to  \cite[Def.~3.4]{Lau13}, an object of $\Disp_n(R)$ is a collection
\begin{equation}  \label{e:P,Q,iota,epsilon,F,F_1}
(P,Q,\iota :Q\to P, \varepsilon :I_{n+1,R}\otimes_{W_n(R)}P\to Q,\, F:P\to P, \, F_1:Q\to P)
\end{equation}
with certain properties. We consider the functor 
\begin{equation}  \label{e:Disp_n to sDisp_n}
\Disp_n(R)\to\sDisp_n(R)
\end{equation}
that takes a collection \eqref{e:P,Q,iota,epsilon,F,F_1} to $(P,Q',F,F'_1)$, where $Q':=\iota (Q)$ and $F'_1:Q'\to P/J_{n,R}\cdot P$ is induced by $F_1:Q\to P$.

\subsubsection{Strong $n$-truncated semidisplays}
One of the properties of a collection \eqref{e:P,Q,iota,epsilon,F,F_1} required in \cite[Def.~3.4]{Lau13} is as follows: $P$ has to be generated by $F_1(Q)$ as a module. Skipping this property, one gets a generalization of the notion of $n$-truncated display, which we call \emph{strong $n$-truncated semidisplay.}  The category of strong $n$-truncated semidisplays over $R$ is denoted by $\sDisp_n^{\strong}(R)$.
The functor \eqref{e:Disp_n to sDisp_n} extends from 
$\Disp_n(R)$ to the bigger category $\sDisp_n^{\strong}(R)$.

In the last paragraph on p.141 of \cite{Lau13} it is explained that the pair $(F,F_1)$ from \eqref{e:P,Q,iota,epsilon,F,F_1} is described by a semilinear map $\Psi :L\oplus T\to P$, where
$(L,T)$ is a normal decomposition\footnote{In this context, the notion of normal decomposition is defined in \cite{Lau13} before Lemma~3.3.} of $(P,Q,\iota , \varepsilon )$. In the case of an $n$-truncated display, the linear map corresponding to $\Psi$ has to be invertible; in the case of a strong $n$-truncated semidisplay, this is not required. Roughly speaking, the difference between $n$-truncated displays and strong $n$-truncated semidisplays amounts to the difference between the group of invertible matrices and the semigroup of all matrices.

Using normal decompositions, one checks that the functor $\sDisp_n^{\strong}(R)\to\sDisp_n(R)$ is essentially surjective.

\subsubsection{Remark}  \label{sss:how to deal}
According to E.~Lau, a good way of dealing with $\Disp_n(R)$ is to replace it by the equivalent category $\DISP^{[0,1]}_n(R)$ (see \S\ref{sss:preDISP} and  \S\ref{sss:preDISP=sDisp-strong} below).
Similarly, one can work with $\sDisp_n^{\strong}(R)$ using the equivalence $\preDISP_n^{[0,1]}(R)\iso\sDisp_n^{\strong}(R)$ from \S\ref{sss:preDISP=sDisp-strong}; e.g., one can use it to define the structure of tensor category on $\sDisp_n^{\strong}(R)$ (see \S\ref{sss:tensor structure on the strong one} below).

\subsection{Tensor product in $\sDisp_n(R)$ and $\sDisp_n^{\weak}(R)$}    \label{ss:tensoring truncated semidisplays} 
\begin{lem}  \label{l:tensoring truncated semidisplays}
Let $(P,Q,F, F_1)$ and $(P',Q',F', F'_1)$ be objects of $\sDisp_n(R)$. Let 
\begin{equation}
P'':=P\otimes P', \quad Q'':=P\otimes Q'+Q\otimes P'=\Ker (P\otimes P'\epi (P/Q)\otimes (P'/Q')).
\end{equation}
Define $F'':P''\to P''$ by
\begin{equation}
F'':=F\otimes F'.
\end{equation}
Then there exists a (unique) additive map $F''_1:Q''\to P''/J_{n,R}\cdot P''$ such that
\begin{equation}
F''_1|_{P\otimes Q'}=F\otimes F'_1, \quad F''_1|_{Q\otimes P'}=F_1\otimes F'.
\end{equation}
Moreover, $(P'',Q'',F'', F''_1)\in \sDisp_n(R)$. 
\end{lem}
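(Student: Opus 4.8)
The plan is to treat the two assertions separately: first the existence and uniqueness of $F_1''$, then the verification that the resulting quadruple satisfies the four conditions of \S\ref{sss:definition of sDisp_n}.

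For $F_1''$, note first that since $P$ and $P'$ are projective (hence flat) over $W_n(R)$, the natural maps $P\otimes Q'\to P''$ and $Q\otimes P'\to P''$ are injective, so I may regard $P\otimes Q'$ and $Q\otimes P'$ as honest submodules of $P''$ whose sum is $Q''$. Uniqueness is then immediate: an additive map on $Q''$ is determined by its restrictions to the two submodules $P\otimes Q'$ and $Q\otimes P'$, whose sum is all of $Q''$. For existence, the two prescribed maps $F\otimes F_1'$ and $F_1\otimes F'$ are each $F$-semilinear, so it suffices to check that they agree on the overlap $(P\otimes Q')\cap(Q\otimes P')$; a semilinear map into a module vanishes as soon as it vanishes on a set of module generators, so I only need agreement on generators of the overlap.

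The main point — and the one place where a genuine computation is needed — is to pin down this overlap and check agreement there. I would use a normal decomposition $P=T\oplus L$, $Q=I_{n,R}T\oplus L$ and likewise $P'=T'\oplus L'$, $Q'=I_{n,R}T'\oplus L'$. A direct inspection of the four summands of $P''=(T\oplus L)\otimes(T'\oplus L')$ then gives
\[
(P\otimes Q')\cap(Q\otimes P')=I_{n,R}(T\otimes T')\oplus I_{n,R}(T\otimes L')\oplus I_{n,R}(L\otimes T')\oplus(L\otimes L').
\]
On the summand $L\otimes L'$ (where $L\subseteq Q$ and $L'\subseteq Q'$) the two maps agree by condition \S\ref{sss:definition of sDisp_n}(iii) applied to both factors, exactly as for $Q\otimes Q'$. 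On the three summands carrying a factor $I_{n,R}$ I expect agreement to force the use of condition \S\ref{sss:definition of sDisp_n}(ii): writing a generator as $V(b)(t\otimes t')$ etc., one side rewrites $F_1(V(b)t)=b\,\bar F(t)$ via (ii), while the other side absorbs $F(V(b))=pb$ by semilinearity and then uses (iii); the two outcomes coincide in $P''/J_{n,R}\cdot P''$. I regard isolating this overlap and seeing that (ii), not merely (iii), is what makes the $I_{n,R}(T\otimes T')$–type terms match up as the principal obstacle; it is the reason one cannot simply check agreement on $Q\otimes Q'$.

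It remains to verify $(P'',Q'',F'',F_1'')\in\sDisp_n(R)$. Here $P''$ is finitely generated projective as a tensor product of such modules, and condition (i) holds because $P''/Q''\cong(P/Q)\otimes_R(P'/Q')$ is a tensor product of finitely generated projective $R$-modules (this also identifies $Q''$ with the kernel stated in the lemma, using flatness). The map $F''=F\otimes F'$ is $F$-semilinear as a tensor product of $F$-semilinear maps, and $F_1''$ is $F$-semilinear by construction. Conditions (ii) and (iii) I would check on simple tensors $x\otimes x'$, which generate $P''$ additively: for (ii), $V(a)(x\otimes x')=V(a)x\otimes x'$ lies in $Q\otimes P'$, so $F_1''(V(a)(x\otimes x'))=F_1(V(a)x)\otimes F'(x')=a\,\bar F(x)\otimes F'(x')=a\,\bar F''(x\otimes x')$ by (ii) for the first semidisplay together with the identification $P''/J_{n,R}P''\cong(P/J_{n,R}P)\otimes_{W_{n-1}(R)}(P'/J_{n,R}P')$; for (iii), a generator of $P\otimes Q'$ (resp.\ $Q\otimes P'$) is handled by applying (iii) to the second (resp.\ first) factor and using that $p\cdot J_{n,R}=0$ in $W_n(R)$, so that $pF_1''$ is well defined on $P''$. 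All of these are routine bilinear bookkeeping once the overlap computation above is in hand.
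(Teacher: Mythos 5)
Your proposal is correct and takes essentially the same route as the paper: both reduce the existence of $F_1''$ to the agreement of $F\otimes F_1'$ and $F_1\otimes F'$ on the overlap $(P\otimes Q')\cap(Q\otimes P')$, verify that agreement using conditions (ii) and (iii) of \S\ref{sss:definition of sDisp_n}, and then check the semidisplay axioms for $(P'',Q'',F'',F_1'')$ on simple tensors. The only difference is bookkeeping: you compute the overlap as $I_{n,R}(T\otimes T')\oplus I_{n,R}(T\otimes L')\oplus I_{n,R}(L\otimes T')\oplus (L\otimes L')$ via normal decompositions, while the paper describes the same submodule coordinate-freely as $\im (Q\otimes Q'\to P\otimes P')+I_{n,R}\cdot (P\otimes P')$, checking agreement on $\im (Q\otimes Q')$ by (iii) and on $I_{n,R}\cdot (P\otimes P')$ by (ii).
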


\begin{proof}
The composite maps
\[
Q\otimes Q'\to Q\otimes P'\overset{F_1\otimes F'}\longrightarrow (P\otimes P')/J_{n,R}(P\otimes P'), \;
Q\otimes Q'\to P\otimes Q'
\overset{F\otimes F'_1}\longrightarrow P\otimes P'/J_{n,R}(P\otimes P') 
\]
are both equal to $pF_1\otimes F'_1$ by \S\ref{sss:definition of sDisp_n}(iii). If $x\in P$, $x'\in P'$, $a\in W_n(R)$ then
\begin{equation}   \label{e:restricting to IP''}
(F\otimes F'_1)(V(a)(x\otimes x'))=a\overline{F''} (x\otimes x')=
(F_1\otimes F')(V(a)(x\otimes x')).
\end{equation}
These facts imply the existence of $F''_1$ because
\[
(P\otimes Q')\cap (Q\otimes P')=\im (Q\otimes Q'\to P\otimes P')+I_R\cdot (P\otimes P').
\]
By \eqref{e:restricting to IP''}, $F''$ and $F''_1$ satisfy the relation from \S\ref{sss:definition of sDisp_n}(ii). Finally, it is easy to check that $F''|_{Q''}=pF_1''$.
\end{proof}

\subsubsection{$\sDisp_n(R)$ as a tensor category}  \label{sss:sDisp_n(R) as a tensor category} 
In the situation of Lemma~\ref{l:tensoring truncated semidisplays}, $(P'',Q'',F'', F''_1)$ is called the \emph{tensor product} of $(P,Q,F, F_1)$ and $(P',Q',F', F'_1)$; it is denoted by 
$$(P,Q,F, F_1)\otimes (P',Q',F', F'_1).$$ This tensor product makes $\sDisp_n(R)$ into a tensor category (by which we mean a symmetric monoidal additive category). The object
\begin{equation}   \label{e:unit object}
\bone_{n,R}:=(W_n(R), I_{n,R}, \, F, \: V^{-1}\!:\!I_{n,R}\to W_{n-1}(R))
\end{equation}
is the unit in $\sDisp_n(R)$.

Let us note that in the case $n=\infty$ the object \eqref{e:unit object} appears in \cite[Example 16]{Zink} under the name of ``multiplicative display'' (because it corresponds to the multiplicative formal group).

\subsubsection{$\sDisp_n^{\weak}(R)$ as a tensor category}  \label{sss:sDisp_n weak(R) as a tensor category}
If $(M,Q,F, F_1)$ and $(M',Q',F', F'_1)$ are objects of the category $\sDisp_n^{\weak}(R)$ defined in \S\ref{sss:sDisp_weak}, we set
$$(M,Q,F, F_1)\otimes (M',Q',F', F'_1):=(M'',Q'',F'', F''_1),$$
where
\[
M'':=M\otimes M', \quad Q'':=\Ker (M\otimes M'\epi (M/Q')\otimes (M'/Q')), \quad F'':=F\otimes F',
\]
and $F''_1:Q''\to M''/J_{n,R}\cdot M''$ is the unique additive map such that
\[
F''_1(x\otimes y)=F(x)\otimes F'_1 (y) \mbox{ if } x\in M, y\in Q', 
\]
\[
F''_1(x\otimes y)=F_1(x)\otimes F' (y) \mbox{ if } x\in Q, y\in M'.
\]
The existence of $F''_1$ follows from Lemma~\ref{l:tensoring truncated semidisplays} and essential surjectivity of the functor $\sDisp_n(R)\to\sDisp_n^{\weak}(R)$.

Thus $\sDisp_n^{\weak}(R)$ becomes a tensor category equipped with a tensor functor $$\sDisp_n(R)\to\sDisp_n^{\weak}(R).$$ 
The quadruple $\bone_{n,R}$ given by \eqref{e:unit object}  is the unit object of $\sDisp_n^{\weak}(R)$.

\subsection{$\sDisp_n$ and $\sDisp_n^{\weak}$ as stacks of tensor categories}
In \cite{Lau13} it is proved that the assignment $R\mapsto\Disp_n (R)$ is an fpqc stack of categories.\footnote{Given a morphism of $\BF_p$-algebras $f:R\to\tilde R$, there is an obvious notion of $f$-morphism from an object of $\Disp_n (R)$ to an object of $\Disp_n (\tilde R)$.
Existence of a (unique) base change functor is proved in \cite[Lemma~3.6]{Lau13}. Descent for $n$-truncated displays is proved in \cite[\S 3.3]{Lau13}.} The same is true for $\sDisp_n^{\strong}$, $\sDisp_n$, $\sDisp_n^{\weak}$ and proved in the same way.

For a morphism of $\BF_p$-algebras $f:R\to\tilde R$, the corresponding base change functor
$
\sDisp_n (R)\to\sDisp_n (\tilde R)
$
takes $(P,Q,F,F_1)$ to $(\tilde P,\tilde Q,\tilde F,\tilde F_1)$, where 
$$\tilde P=W_n(\tilde R)\otimes_{W_n(R)}P, \quad \tilde P/\tilde Q=W_n(\tilde R)\otimes_{W_n(R)}(P/Q),$$
$\tilde F:\tilde P\to\tilde P$ is the base change of $F$, and $\tilde F_1:\tilde Q\to\tilde P/J_{n,R}\cdot\tilde P$ is the unqiue $F$-linear map such that 
the diagram
\[
\xymatrix{
Q\ar[r]^-{F_1}\ar[d]&  \; P/J_{n,R}\cdot P\ar[d]\\
\tilde Q\ar[r]^-{\tilde F_1}&  \tilde P/J_{n,\tilde R}\cdot \tilde P
}
\]
commutes and $\tilde F_1(V(a)\otimes x)=a\otimes\bar F(x)$ for all $a\in W(\tilde R)$ and $x\in P$. The existence of $\tilde F_1$ can be proved using a normal decomposition of $(P,Q,F,F_1)$.

The above description of the base change functor $\sDisp_n (R)\to\sDisp_n (\tilde R)$ and the quite similar description of the functor $\sDisp_n^{\weak} (R)\to\sDisp_n^{\weak} (\tilde R)$ shows that these are \emph{tensor} functors.
Thus $\sDisp_n$ and $\sDisp_n^{\weak}$ as fpqc stacks of tensor categories

\subsection{Algebraicity of the stacks}  \label{ss:Algebraicity of sDisp_n}
It is easy to show that the stacks of categories $\Disp_n$, $\sDisp_n$, $\sDisp_n^{\weak}$, $\sDisp_n^{\strong}$ are algebraic c-stacks in the sense of \cite[\S 2.3]{Prismatization}.
For the purposes of this paper, it is enough to know that the corresponding stacks of groupoids are algebraic (in the usual sense). 
We will formulate a more precise statement, see Proposition~\ref{p:smoothness of sDisp_n} below.

Given integers  $d$ and $d'$ such that $0\le d'\le d$, let $\sDisp_n^{d,d'}(R)$ be the full subgroupoid of the underlying groupoid of $\sDisp_n(R)$ whose objects are quadruples $(P,Q,F,F_1)\in\sDisp_n(R)$ such that
$\rank P=d$ and $\rank (P/Q)=d'$.

Define $\sDisp_n^{d,d',\weak}(R)$, $\sDisp_n^{d,d',\strong}(R)$, and $\Disp_n^{d,d'}(R)$ similarly but with the following changes:

(i) in the case of  $\Disp_n^{d,d'}(R)$ and $\sDisp_n^{d,d',\strong}(R)$ replace $P/Q$ by $\Coker (Q\to P)$;

(ii) in the case of $\sDisp_n^{d,d',\weak}(R)$ the condition for $P$ is that $\rank (P/I_{n,R}P)=d$.

Note that $\sDisp_1^{d,d'}\ne\emptyset$ only if $d'=d$; this follows from condition (b) from \S\ref{sss:sDisp_weak}.

\begin{prop}  \label{p:smoothness of sDisp_n}
Each of the stacks $\sDisp_n^{d,d'}$, $\sDisp_n^{d,d',\weak}$, $\sDisp_n^{d,d',\strong}$, and $\Disp_n^{d,d'}$ is a smooth algebraic stack of finite type over $\BF_p$.
Moreover, $\sDisp_n^{d,d'}$ has pure dimension $-(d-d')^2$, and the other three stacks have pure dimension $0$.
\end{prop}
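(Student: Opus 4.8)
The plan is to present each of the four stacks as a quotient $[X/H]$ of a smooth affine $\BF_p$-scheme $X$ of finite type by a smooth affine group $\BF_p$-scheme $H$ of finite type, and to read off all the assertions from this presentation: algebraicity and finite type are then automatic, smoothness of the stack follows from smoothness of $X$, and since $X$ will be irreducible in each case the quotient has pure dimension $\dim X-\dim H$.

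To construct the presentation I would fix a standard pair. Using the existence of normal decompositions together with the fact that finitely generated projective modules are Zariski-locally free, every object of each stack is, locally on $R$, isomorphic to one whose underlying pair $(P,Q)$ is the standard $(P_0,Q_0)=(T_0\oplus L_0,\,I_{n,R}T_0\oplus L_0)$ with $T_0=W_n(R)^{d'}$ and $L_0=W_n(R)^{d-d'}$ (for the weak variant $L_0$ is instead a free $W_{n-1}(R)$-module of rank $d-d'$). I would then let $X$ parametrize the Frobenius data $(F,F_1)$ on this fixed pair and let $H$ be the automorphism group scheme of the pair, acting by transport of structure; the projection from $X$ to the stack is a smooth atlas whose fibres are $H$-torsors. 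After the normal decomposition is fixed the only free parameters are the $\sigma$-linear maps $F|_{T_0}\colon T_0\to P$ and $F_1|_{L_0}$ (valued in $P$ for the display and strong variants, in $P/J_{n,R}P$ for the plain and weak ones), everything else being forced by conditions (ii)--(iii) of \S\ref{sss:definition of sDisp_n}; thus $X$ is an affine space, except in the display case, where one restricts to the open locus on which the associated linearized map is invertible, i.e. $X=GL(d,W_n)$.

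The dimension computation is then Witt-vector bookkeeping, based on $\dim_{\BF_p}W_n=n$, $\dim_{\BF_p}I_{n,R}=\dim_{\BF_p}W_{n-1}=n-1$, and the identities $J_{n,R}\cdot I_{n,R}=0$ and $\Hom_{W_n}(W_{n-1},I_{n,R})=I_{n,R}$. For the display and strong variants one gets $\dim X=nd^2$ and $\dim H=nd^2$, hence dimension $0$; for the plain semidisplay $\dim X=nd^2-d(d-d')$ while $\dim H=nd^2-d'(d-d')$, hence $-(d-d')^2$; and for the weak variant the two dimensions turn out to be equal, again giving $0$.

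The delicate point, and the main obstacle, is the correct identification of the gauge group $H$. For the plain and weak semidisplays $H$ is the \emph{honest} automorphism group of the module pair, in which the block $L_0\to T_0$ is constrained to land in $I_{n,R}T_0$. For the display and strong semidisplays, however, $Q$ carries the extra window data $(\iota,\varepsilon)$ and an isomorphism is a compatible pair $(g_P,g_Q)$; here the divided Frobenius $F_1$ permits ``division by $p$'' and this block becomes unconstrained, so $H$ is larger by exactly $d'(d-d')$. That enlargement is precisely what reduces the quotient dimension from the value $d'(d-d')$ one would get with the naive group to the correct value $0$, and getting this $W_n$-versus-$W_{n-1}$-versus-$I_{n,R}$ accounting right is the subtle part. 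Smoothness by itself, however, can be obtained more cheaply and uniformly straight from the infinitesimal lifting criterion, since projective modules and $\sigma$-linear maps into them always lift along square-zero extensions of $\BF_p$-algebras.
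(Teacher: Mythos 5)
Your proposal is correct and is essentially the paper's own argument: the paper proves this proposition by citing the explicit presentations of Appendix~\ref{s:Explicit presentations}, where each stack is realized exactly as you describe --- as the quotient of the affine scheme of rigidified objects (block matrices of shape $(W_n,W_{n-1};W_n,W_{n-1})$, $(W_n,W_{n-1};W_{n-1},W_{n-1})$, $\Mat(d,W_n)$, resp.\ $GL(d,W_n)$) by the automorphism group scheme of the standard pair ($H_n^{d,d'}$ of shape $(W_n,I_n;W_n,W_n)$, its weak analogue, resp.\ the B\"ultel--Pappas group $\BP_n^{d,d'}$, which is indeed larger than the naive automorphism group by exactly $d'(d-d')$ because $h_{12}=V(g_{12})$ with $g_{12}$ free in $W_n$). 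Your dimension bookkeeping matches the paper's presentations in all four cases, so there is nothing to add.
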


In the case of $\Disp_n^{d,d'}$, this is \cite[Prop.~3.15]{Lau13}.

\begin{proof}
Follows from the explicit presentation of the four stacks given in Appendix~\ref{s:Explicit presentations}.
\end{proof}

\section{Zink's functor}  \label{s:Zink's functor}
\subsection{Zink's functor $\sDisp_n(R)\to\Sm_n(R)$}  \label{ss:truncated Zink's functor}
As before, $\Sm_n(R)$ stands for the category of commutative $n$-smooth group schemes over $R$, see \S\ref{ss:n-smooth group schemes bis}.
In this subsection we recall the functor $\sDisp_n(R)\to\Sm_n(R)$, which was essentially\footnote{The caveat is due to the fact that the authors of \cite{LZ} worked with displays rather than semidisplays.} constructed in \cite[\S 3.4]{LZ}.
In \S\ref{sss:Zink's functor on the weak category} we will decompose this functor as $\sDisp_n(R)\to\sDisp_n^{\weak}(R)\to\Sm_n(R)$.

\subsubsection{Format of the construction}  \label{sss:format of LZ}
To an object $\sP=(P,Q,F.F_1)\in\sDisp_n(R)$ one functorially associates a diagram of commutative group ind-schemes
\begin{equation}  \label{e:Zink's diagram}
C_\sP^{-1}\overset{\Phi}\longrightarrow C_\sP^0\, ,
\end{equation}
in which $C_\sP^{-1}$ is a closed subgroup of $C_\sP^0$. 
It is proved in \cite[Prop.~3.11]{LZ} that $\Ker (1-\Phi)=0$ and the functor 
$\tilde R\mapsto \Coker (C_\sP^{-1}(\tilde R )\overset{1-\Phi}\longrightarrow C_\sP^0 (\tilde R ))$
(where $\tilde R$ is an $R$-algebra) is representable by an $n$-smooth group scheme. We denote this group scheme by $\fZ_{\sP}$. Thus
\begin{equation}  \label{e:Zink functor}
\fZ_{\sP}:=\Coker (C_\sP^{-1}\overset{1-\Phi}\longrightarrow C_\sP^0).
\end{equation}
The functor $\sP\mapsto\fZ_{\sP}$ will be called the \emph{Zink functor}; in the case $n=\infty$ it was defined by Th.~Zink in \cite[\S 3]{Zink} (under the name of $BT_{\sP}$). The complex of group ind-schemes
\begin{equation} \label{e:Zink complex}
0\to C_\sP^{-1}\overset{1-\Phi}\longrightarrow C_\sP^0\to 0
\end{equation}
will be called the \emph{Zink complex} of $\sP$.

\subsubsection{Defining \eqref{e:Zink's diagram}}  \label{sss:Zink's diagram}
Let $\hat W^{(F^n)}:=\Ker (F^n:\hat W\to\hat W)$, where $\hat W$ is the formal Witt group (see Appendix A).
For any $\BF_p$-algebra $\tilde R$, the subgroup  $\hat W^{(F^n)}(\tilde R)\subset W(\tilde R)$ consists of Witt vectors $x\in W(\tilde R)$ such that $F^n(x)=0$ and almost all components of $x$ are zero. Note that $\hat W^{(F^n)}(\tilde R)$ is a $W(\tilde R)$-submodule of $W(\tilde R)$ and moreover, a $W_n(\tilde R)$-submodule.

The group ind-schemes $C_\sP^0,C_\sP^{-1}$ from \eqref{e:Zink's diagram} are as follows: for any $R$-algebra $\tilde R$,
\[
C_\sP^0 (\tilde R):=\hat W^{(F^n)}(\tilde R)\otimes_{W_n(R)}P,
\]
\[
C_\sP^{-1} (\tilde R):=\Ker (C_\sP^0 (\tilde R)\overset{\pi}\epi \BG_a^{(F^n)}(\tilde R)\otimes_R(P/Q)),
\]
where $\pi$ comes from the map $\hat W^{(F^n)}\to \BG_a^{(F^n)}$ that takes a Witt vector to its $0$-th component.

The additive homomorphism $\Phi :C_\sP^{-1}(\tilde R)\to C_\sP^0(\tilde R)$ is uniquely determined by the following properties:
\begin{equation}   \label{e:condition 1 for Phi}
\Phi (V(a)\otimes x)=a\otimes F(x) \quad\mbox{ for }\, a\in\hat W^{(F^n)}(\tilde R), \, x\in P,
\end{equation}
\begin{equation}  \label{e:condition 2 for Phi}
\Phi (a\otimes y)=F(a)\otimes F_1(y) \quad\mbox{ for }\, a\in\hat W^{(F^n)}(\tilde R), \, y\in Q.
\end{equation}
The r.h.s. of \eqref{e:condition 2 for Phi} makes sense (despite the fact that $F_1(y)$ is defined only modulo $J_{n,\tilde R}\cdot P$)
because for any $a\in\hat W^{(F^n)}(\tilde R)$ and $b\in W(\tilde R)$ one has $V^{n-1}(b)\cdot F(a)=V^{n-1}(b\cdot F^n(a))=0$.

\subsubsection{The ind-schemes $C_\sP^i$ are ``safe''}   \label{sss:safe}
Say that an ind-scheme over $R$ is \emph{safe} if it has the form $\Spf A^*$, where $A$ is a cocommutative $R$-coalgebra which is a projective $R$-module. For commutative group ind-schemes whose underlying ind-scheme is safe there is a reasonable notion of Cartier dual (see \S\ref{ss:generalities on Cartier duality} of Appendix~\ref{s:hat W}).

The ind-scheme underlying $C_\sP^i$ is safe for each $i$. In fact, it is easy to see that the group ind-scheme $C_\sP^i$ is isomorphic to $\hat W_R^{(F^n)}\otimes_{W_n(R)}P_i$ for some finitely generated projective $W_n(R)$-module~$P_i$ (to see this for $i=-1$, choose a normal decomposition of $\sP$ in the sense of \S\ref{sss:Normal decompositions}).

\subsubsection{On the proof of Proposition~3.11 of \cite{LZ}}
In \S\ref{sss:format of LZ} we formulated a result from \cite{LZ}. In \cite{LZ} it is deduced from Theorem~81 of \cite{Zink}, whose proof (given on p.80-81 of \cite{Zink}) does not use the surjectivity assumption\footnote{Part (ii) of \cite[\S 1, Def.~1]{Zink}.} from Zink's definition of display. So Theorem~81 of \cite{Zink} and Proposition~3.11 of \cite{LZ} are valid for 
\emph{semi}displays\footnote{More details about this can be found in the proof of  \cite[Prop.~11.13]{Lau25}.}.

The idea behind the proof of the result mentioned in \S\ref{sss:format of LZ} is roughly as follows: the group ind-schemes $C_\sP^0,C_\sP^{-1}$ are $n$-smooth in a certain sense\footnote{We defined $n$-smoothness only for group \emph{schemes.}}, and one checks that the map
\begin{equation}   \label{e:f=Lie(1-Phi)}
f:\Lie (C_\sP^{-1})\to\Lie (C_\sP^0)
\end{equation}
induced by $1-\Phi$ is a monomorphism whose cokernel is a finitely generated projective $R$-module. For completeness, let us prove these properties of $f$; we will also give an explicit description of $\Coker f=\Lie (\fZ_\sP)$.

\begin{lem}  \label{l:Lie of Zink}
(i) The map \eqref{e:f=Lie(1-Phi)} is injective.

(ii) One has a canonical isomorphism of restricted Lie $R$-algebras
\begin{equation}   \label{e:Lie of Zink}
\Lie (\fZ_{\sP})\iso P/Q,
\end{equation}
where the structure of restricted Lie algebra on $P/Q$ is as in \S\ref{sss:2Zink's observations}(d). In particular, $\rank (\fZ_{\sP})=\rank (P/Q)$.
\end{lem}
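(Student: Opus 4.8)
The plan is to apply the functor $\Lie$ to the short exact sequence of commutative group ind-schemes underlying the construction. By the discussion preceding the lemma one has $\Ker(1-\Phi)=0$ and $\Lie(\fZ_\sP)=\Coker(f)$, where $f:=\Lie(1-\Phi)\colon\Lie(C_\sP^{-1})\to\Lie(C_\sP^0)$. Thus part (i) is precisely the assertion that $f$ is injective, and part (ii) is the identification of $\Coker(f)$, as a restricted Lie algebra, with $P/Q$. Everything reduces to making $f$ explicit, so the first task is to compute the two Lie algebras.

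The derivative of $F^n\colon\hat W\to\hat W$ vanishes (over an $\BF_p$-algebra $F$ is the componentwise $p$-th power, so $\Lie(F)=0$), whence $\Lie(\hat W^{(F^n)})=\Lie(\hat W)=\bigoplus_{m\ge 0}R\,e_m$, where $e_m$ is the tangent vector supported in the $m$-th Witt component. The $W_n(R)$-module structure is pinned down by the identities $[c]\cdot e_m=c^{p^m}e_m$ and $V(b)\cdot w=V(b\cdot F(w))$; since $\Lie(F)=0$ the latter forces $V\,W_{n-1}(R)$ to act by $0$, so the action factors through $W_n(R)\to R$ and twists the $m$-th summand by $\mathrm{Frob}^m$. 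Writing $\overline P:=P/I_{n,R}P$ and $\overline P^{(m)}$ for its $m$-th Frobenius twist, this gives
\[
\Lie(C_\sP^0)=\bigoplus_{m\ge 0}\overline P^{(m)},\qquad
\Lie(C_\sP^{-1})=\overline Q\oplus\bigoplus_{m\ge 1}\overline P^{(m)},
\]
where $\overline Q:=Q/I_{n,R}P=\Ker(\overline P\to P/Q)$; the second formula comes from the defining kernel of $C_\sP^{-1}$, since on Lie algebras the projection $\pi$ only detects the $m=0$ summand.

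Next I would make $f=\mathrm{id}-\Lie(\Phi)$ explicit. The key simplification is that, of the two clauses defining $\Phi$, only $\Phi(V(a)\otimes x)=a\otimes F(x)$ survives on Lie algebras: the clause $\Phi(a\otimes y)=F(a)\otimes F_1(y)$ applies the Witt-Frobenius $F$ to $a$, and $\Lie(F)=0$. Hence $\Lie(\Phi)$ is the ``shift-and-$F$'' map carrying the summand $\overline P^{(m)}$ $(m\ge 1)$ to $\overline P^{(m-1)}$ via the $R$-linearization $\mathsf F$ of $F\colon P\to P$, and $f$ has the telescoping form whose $\overline P^{(m)}$-component is $\zeta_m-\mathsf F(\zeta_{m+1})$. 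Injectivity (part (i)) is then immediate from finite support: if $f(\zeta)=0$ then $\zeta_m=\mathsf F(\zeta_{m+1})$ for all $m$, and taking the top nonzero index gives a contradiction, forcing $\zeta=0$. For the cokernel, the relations $\zeta_m\equiv\mathsf F(\zeta_{m+1})$ collapse every higher summand onto $\overline P^{(0)}$ while the $m=0$ relation kills $\overline Q$, so the composite $\overline P^{(0)}=\overline P\hookrightarrow\Lie(C_\sP^0)\twoheadrightarrow\Coker(f)$ induces an isomorphism $\overline P/\overline Q=P/Q\iso\Coker(f)=\Lie(\fZ_\sP)$, yielding \eqref{e:Lie of Zink} and the rank statement $\rank(\fZ_\sP)=\rank(P/Q)$.

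It remains to identify the restricted structures, and this is the step I expect to be the main obstacle. The bracket is zero on both sides, so the issue is purely the $p$-semilinear operation: I must check that under $P/Q\iso\Lie(\fZ_\sP)$ the intrinsic $p$-operation of the group scheme $\fZ_\sP$ corresponds to the $p$-linear map induced by $F$ as in \S\ref{sss:2Zink's observations}(d). I would compute the $p$-operation on $\Lie(C_\sP^0)$ coming from the group-scheme Frobenius and track it through the telescoping identification; care is required because this $p$-operation is $p$-semilinear (it is not the vanishing $\Lie(F)$) and its interaction with the Frobenius twists $\overline P^{(m)}$ must be handled precisely. Alternatively, since $\fZ_\sP$ is $n$-smooth one has $\Lie(\fZ_\sP)=\Lie(\fZ_\sP^{(F)})$, and $\fZ_\sP^{(F)}$ is the height-$1$ group attached to the $1$-truncation of $\sP$; for $n=1$ the restricted structure is exactly the pair $(P/Q,\overline F)$ by Theorem~\ref{t:well known}(iii), which would give the compatibility directly. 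Either route completes part (ii).
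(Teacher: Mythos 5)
Your computation of the Lie algebras and of $f=\Lie(1-\Phi)$ is correct and is essentially the paper's own argument: the paper likewise identifies $\Lie(C_\sP^0)$ with $R[F]\otimes_R\bar P$ (your $\bigoplus_m \bar P^{(m)}$, with $\bar P:=P/I_{n,R}P$), notes that the clause $\Phi(a\otimes y)=F(a)\otimes F_1(y)$ dies on Lie algebras because $\Lie(F)=0$ on Witt vectors, and obtains $f(1\otimes x)=1\otimes x$ for $x\in\bar Q$ and $f(F\otimes x)=F\otimes x-1\otimes F(x)$ for $x\in\bar P$, from which injectivity and $\Coker f\cong P/Q$ follow exactly by your telescoping argument. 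The genuine gap is the point you flag yourself: the identification of the \emph{restricted} structures is never carried out, and neither of your proposed routes closes it as written. Route (b) is circular: the claim that $\fZ_\sP^{(F)}$ is the height-one group attached to the pair $(P/Q,\bar F)$ is not established anywhere earlier in the paper, and it is essentially equivalent to the restricted-structure assertion of the lemma itself (a compatibility of the Zink functor with $1$-truncation that would itself need proof). Route (a) contains a misconception: for a commutative group (ind-)scheme $G$, the $p$-operation on $\Lie(G)$ is induced not by Frobenius but by the Verschiebung $V:\Fr^*G\to G$ (precomposed with the canonical $p$-linear map $\Lie(G)\to\Lie(\Fr^*G)$); Frobenius induces zero on Lie algebras, as you yourself use elsewhere in the argument.

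The paper closes this gap with almost no extra work by keeping the restricted structure in the bookkeeping from the start: a commutative restricted Lie $R$-algebra is the same as a left $R[F]$-module, with $F$ acting through $\Lie(V)$; the isomorphism $R[F]\iso\Lie(\hat W_R)$ is an isomorphism of $R[F]$-modules because $V$ shifts Witt components (so $F\cdot e_m=e_{m+1}$ in your notation); and $f$ is automatically $R[F]$-linear, since $1-\Phi$ is a homomorphism of commutative group ind-schemes and every such homomorphism commutes with Verschiebung. With this in hand, the relation $f(F\otimes x)=F\otimes x-1\otimes F(x)$ says precisely that in $\Coker f$ one has $F\cdot\overline{(1\otimes x)}=\overline{1\otimes F(x)}$; that is, under $\Coker f\cong P/Q$ the $p$-operation is the map induced by $F:P\to P$, which is exactly the structure of \S\ref{sss:2Zink's observations}(d). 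If you run your route (a) with Verschiebung in place of Frobenius, this is the computation you get, and it completes part (ii).
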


\begin{proof}
A commutative restricted Lie $R$-algebra $\fg$ is the same as a left $R[F]$-module, where $Fa=a^pF$ for all $a\in R$.
If $G$ is a commutative group ind-scheme over~$R$ which is safe in the sense of \S\ref{sss:safe} and $\fg =\Lie (G)$, then $F:\fg\to\fg$ comes from $V:\Fr^*G\to G$.

Let us describe \eqref{e:f=Lie(1-Phi)} as a homomorphism of $R[F]$-modules. One has a canonical isomorphism of $R[F]$-modules 
$$R[F]\iso\Lie (\hat W_R)=\Lie (\hat W^{(F^n)}_R)$$ 
taking $1\in R[F]$ to $\alpha\in \Lie (\hat W_R)$, where $\alpha$ is the derivative of the Teichm\"uller map $\hat\BA^1_R\to\hat W_R$ at $0\in\hat\BA^1_R$.
Let $\bar P=P/I_{n,R}P$, $\bar Q=Q/I_{n,R}P$. Then $\Lie (C_\sP^0)=R[F]\otimes_R\bar P$, and $\Lie (C_\sP^1)$ is the $R[F]$-submodule of $R[F]\otimes_R\bar P$ generated by $1\otimes\bar Q$ and the elements $F\otimes x$, where $x\in\bar P$.
Our map $f:\Lie (C_\sP^{-1})\to\Lie (C_\sP^0)$ equals $\Lie (1-\Phi )$, where $\Phi$ is given by \eqref{e:condition 1 for Phi}-\eqref{e:condition 2 for Phi}. So
\[
f(1\otimes x)=1\otimes x \;\mbox{ for }x\in\bar Q, \quad   f(F\otimes x)=F\otimes x-1\otimes F(x) \;\mbox{ for }x\in\bar P.
\]
This description of $f$ implies the lemma.
\end{proof}

\subsection{The Cartier dual of $\fZ_\sP$}   \label{ss:Cartier dual of Z_P}
\subsubsection{Goal of this subsection}   \label{sss:Cartier dual of Z_P}
Let $\sP=(P,Q,F,F_1)\in\sDisp_n(R)$. Let $\fZ_\sP^*:=\HHom (\fZ_\sP, \BG_m)$, where $\fZ_\sP\in\Sm_n(R)$ is as in \S\ref{sss:format of LZ}-\ref{sss:Zink's diagram}. 
According to the general principle from \S\ref{ss:n-cosmooth easier than n-smooth}, the group scheme $\fZ_\sP^*\in\Sm_n^*(R)$ is easier to understand than $\fZ_\sP$ itself.
Namely, we are going to prove Proposition~\ref{p:equality of subgroups}, which establishes a canonical isomorphism
\begin{equation}   \label{e:Cartier dual of Z_P}
\fZ_\sP^*\iso\HHom (\sP,\bone_{n,R})
\end{equation}
where $\bone_{n,R}\in\sDisp_n(R)$ is given by \eqref{e:unit object}. Here $\HHom (\sP,\bone_{n,R})$ is the group $R$-scheme whose group of points over any $R$-algebra $R'$ is 
$\Hom (\sP',\bone_{n,R'})$, where $\sP'$ is the base change of $\sP$ to $R'$. Explicitly, $\Hom ( \sP',\bone_{n,R'})$ is the group of $W_n(R)$-linear maps $\eta :P\to W_n (R')$ such that
\begin{equation}   \label{e:Cond_x}
\eta (F(x))=F(\eta (x)) \quad \mbox{ for all }x\in P,
\end{equation}
\begin{equation}   \label{e:Cond_y}
\eta (y)=V(\eta (F_1(y)))\quad \mbox{ for all }y\in Q.
\end{equation}

Note that 
\begin{equation}  \label{e:first subgroup}
\HHom (\sP,\bone_{n,R})\subset\Hom_{W_n(R)}(P, W_{n,R}).
\end{equation}

\subsubsection{$\fZ_\sP^*$ as a subgroup of $\Hom_{W_n(R)}(P, W_{n,R})$}
By \S\ref{sss:format of LZ}-\ref{sss:Zink's diagram}, $\fZ_\sP$ is a quotient of the group ind-scheme 
$C_\sP^0:=\hat W_R^{(F^n)}\otimes_{W_n(R)}P$, so $\fZ_\sP^*$ is a subgroup of $(C_\sP^0)^*$.
 
We will be using the canonical nondegenerate pairing
\begin{equation}  \label{e:our pairing}
\hat W^{(F^n)}_R\times W_{n,R}\to (\BG_m)_R \, ,
\end{equation}
which comes from the usual Cartier duality between $W_R$ and $\hat W_R$, see Appendix~\ref{s:hat W}. This pairing induces an isomorphism
\[
(C_\sP^0)^*\iso\Hom_{W_n(R)}(P, W_{n,R}).
\]
Thus
\begin{equation}  \label{e:second subgroup}
\fZ_\sP^*\subset\Hom_{W_n(R)}(P, W_{n,R}).
\end{equation}

\begin{prop}   \label{p:equality of subgroups}
The subgroups of $\Hom_{W_n(R)}(P, W_{n,R})$ given by \eqref{e:first subgroup} and \eqref{e:second subgroup} are equal to each other.
\end{prop}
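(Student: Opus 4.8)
The plan is to unwind both subgroups explicitly in terms of the Cartier pairing \eqref{e:our pairing} and to match the defining conditions term by term. Recall that an element of $(C_\sP^0)^*$ is identified, via the pairing between $\hat W^{(F^n)}_R$ and $W_{n,R}$, with a $W_n(R)$-linear map $\eta:P\to W_{n,R}$. The subgroup \eqref{e:second subgroup} is cut out by the condition that such an $\eta$ kill the image of $C_\sP^{-1}\overset{1-\Phi}\longrightarrow C_\sP^0$ (this is what it means for $\eta$ to descend to the quotient $\fZ_\sP=\Coker(1-\Phi)$), together with the condition that $\eta$ annihilate $C_\sP^{-1}$'s defining kernel in the appropriate way — more precisely, $\eta\in\fZ_\sP^*$ iff the corresponding character of $C_\sP^0$ is trivial on the subgroup $(1-\Phi)(C_\sP^{-1})$. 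So the first step is: \emph{translate the vanishing of $\eta$ on $(1-\Phi)(C_\sP^{-1})$ into pointwise identities involving $F$ and $F_1$.}

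To carry this out, I would test the character against generators of $C_\sP^{-1}(\tilde R)$. Using \eqref{e:condition 1 for Phi}, an element of the form $V(a)\otimes x$ with $a\in\hat W^{(F^n)}(\tilde R)$ and $x\in P$ lies in $C_\sP^{-1}$ (its image under $\pi$ vanishes since $V(a)$ has zero $0$-th component), and $(1-\Phi)(V(a)\otimes x)=V(a)\otimes x - a\otimes F(x)$. Pairing $\eta$ against this and using the adjunction $\langle V(a),w\rangle = \langle a, F(w)\rangle$ built into the Cartier duality of Appendix~\ref{s:hat W}, the vanishing condition becomes exactly \eqref{e:Cond_x}, i.e. $\eta(F(x))=F(\eta(x))$. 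Similarly, for $y\in Q$ the element $a\otimes y$ lies in $C_\sP^{-1}$ (again $\pi(a\otimes y)=0$ because $y\in Q$ maps to $0$ in $P/Q$), and \eqref{e:condition 2 for Phi} gives $(1-\Phi)(a\otimes y)=a\otimes y - F(a)\otimes F_1(y)$; pairing with $\eta$ and applying $\langle F(a),w\rangle=\langle a,V(w)\rangle$ turns the vanishing into \eqref{e:Cond_y}, i.e. $\eta(y)=V(\eta(F_1(y)))$. These two families of generators suffice because, for any $R$-algebra $\tilde R$, the group $C_\sP^{-1}(\tilde R)$ is generated over $\hat W^{(F^n)}(\tilde R)$ by elements $V(a)\otimes x$ and $a\otimes y$ of these two shapes — this is essentially the decomposition of $C_\sP^{-1}$ coming from a normal decomposition $P=T\oplus L$, $Q=I_{n,R}T\oplus L$. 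Thus $\eta$ kills $(1-\Phi)(C_\sP^{-1})$ if and only if $\eta$ satisfies \eqref{e:Cond_x} and \eqref{e:Cond_y}, which is precisely the defining condition \eqref{e:Cond_x}-\eqref{e:Cond_y} for $\eta\in\HHom(\sP,\bone_{n,R})$ under the identification \eqref{e:first subgroup}.

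I expect the main obstacle to be bookkeeping around two points rather than any deep idea. First, one must verify that the generators $V(a)\otimes x$ and $a\otimes y$ really do generate $C_\sP^{-1}$ as a functor on $R$-algebras, and that it is enough to impose the vanishing on generators (this uses additivity of $\Phi$ and the $\hat W^{(F^n)}$-linearity of the pairing, and should be phrased functorially in $\tilde R$ to avoid field-by-field arguments). Second, one must be careful that the right-hand side of \eqref{e:condition 2 for Phi} and of \eqref{e:Cond_y} is well defined despite $F_1$ being a priori valued only modulo $J_{n,R}\cdot P$; the relevant vanishing $V^{n-1}(b)\cdot F(a)=0$ for $a\in\hat W^{(F^n)}$, already noted in \S\ref{sss:Zink's diagram}, is exactly what makes the pairing $\langle F(a)\otimes F_1(y),\,\eta\rangle$ independent of the lift of $F_1(y)$, and the same mechanism makes $V(\eta(F_1(y)))$ well defined in \eqref{e:Cond_y}. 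Once these two compatibilities are checked, the two subgroups of $\Hom_{W_n(R)}(P,W_{n,R})$ are cut out by literally the same equations, so they coincide and the proposition follows.
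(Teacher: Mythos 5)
Your proposal is correct and follows essentially the same route as the paper's own proof: identify $\eta\in\fZ_\sP^*$ with the condition that the corresponding character of $C_\sP^0$ kills $(1-\Phi)(C_\sP^{-1})$, test this on the two families of generators $V(a)\otimes x$ ($x\in P$) and $a\otimes y$ ($y\in Q$), and convert using the adjunction formulas \eqref{e:F dual to V}. The only point the paper makes more explicit is that the final passage from the character identities to the module identities \eqref{e:Cond_x}--\eqref{e:Cond_y} uses the nondegeneracy of the pairing \eqref{e:our pairing}, which you invoke only implicitly.
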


\begin{proof} 
Let $R'$ be an $R$-algebra and $\eta$ an $R'$-point of $\Hom_{W_n(R)}(P, W_{n,R})$, i.e., $\eta :P\to W_n (R')$ is a $W_n(R)$-linear map.
The problem is to show that $\eta\in\fZ_\sP^* (R')$ if and only if \eqref{e:Cond_x} and \eqref{e:Cond_y} hold.

Looking at formulas \eqref{e:Zink's diagram}-\eqref{e:condition 2 for Phi}, we see that $\eta\in\fZ_\sP^* (R')$ if and only if for every $R'$-algebra $R''$ and every $a\in\hat W^{(F^n)}(R'')$ the following conditions hold:
\begin{equation}   \label{e:2Cond_x}
\langle V(a), \eta (x)\rangle=\langle a, \eta (F(x))\rangle
\quad \mbox{ for all }x\in P,
\end{equation}
\begin{equation}   \label{e:2Cond_y}
\langle a, \eta (y)\rangle=\langle F(a), \eta (F_1(y))\rangle \quad \mbox{ for all }y\in Q.
\end{equation}
Here $\langle\, , \rangle$ stands for the pairing \eqref{e:our pairing}. 

Finally, conditions \eqref{e:2Cond_x}-\eqref{e:2Cond_y} are equivalent to \eqref{e:Cond_x}-\eqref{e:Cond_y}. This follows from the equalities
\[
\langle V(a), \eta (x)\rangle =\langle a, F(\eta (x))\rangle ,\quad \langle F(a), \eta (F_1(y))\rangle = \langle a, V(\eta (F_1(y))\rangle 
\]
(see formula~\eqref{e:F dual to V} from Appendix~\ref{s:hat W}) and the nondegeneracy of the pairing \eqref{e:our pairing}. 
\end{proof}

\begin{lem}   \label{l:nasty}
We have a commutative diagram
\[
\xymatrix{
&P\ar[r]\ar[ld]&  \Hom (\fZ_{\sP}^*,W_{n,R})\ar[d]\\ 
P/Q\ar[r]^-\sim &\Lie (\fZ_{\sP})\ar[r]^-\sim&  \Hom (\fZ_{\sP}^*,\BG_{a,R})\
}
\]
Its upper row comes from the embedding $\fZ_\sP^*\mono\Hom_{W_n(R)}(P, W_{n,R})$ (see \eqref{e:second subgroup}), the first lower horizontal arrow is 
\eqref{e:Lie of Zink}, and the second one is obtained by applying $\Lie$ to the isomorphism $\fZ_\sP\iso\Hom (\fZ_\sP^*,\BG_{m,R})$.
\end{lem}

\begin{proof}
By \S\ref{ss:truncated Zink's functor}, $\fZ_{\sP}$ is a quotient of $C_\sP^0:=P\otimes_{W_n(R)}\hat W^{(F^n)}_R$.
The map $P\to\Lie (\fZ_{\sP})$ from the proof of Lemma~\ref{l:Lie of Zink} comes from a map $P\to\Lie (P\otimes_{W_n(R)}\hat W^{(F^n)}_R)$. So it suffices to show that the composition
\[
P\to\Lie  (P\otimes_{W_n(R)}\hat W^{(F^n)}_R)\iso\Hom ((P\otimes_{W_n(R)}\hat W^{(F^n)}_R)^*,\BG_{a,R})=\Hom (P^*\otimes_{W_n(R)}W_{n,R},\BG_{a,R})
\]
is the tautological map coming from the pairing $P\times P^*\to W_n(R)$ and the usual projection $w_0:W_{n,R}\to\BG_{a,R}\,$. Indeed, the map $P\to\Lie (P\otimes_{W_n(R)}\hat W^{(F^n)}_R)$ was constructed using a canonical element
$\alpha\in\Lie (\hat W^{(F^n)}_R)$ (see the proof of Lemma~\ref{l:Lie of Zink}), and the isomorphism
\[
\Lie (\hat W^{(F^n)}_R)\iso \Hom ((\hat W^{(F^n)}_R)^*,\BG_{a,R})= \Hom (W_{n,R},\BG_{a,R})
\]
takes $\alpha$ to $w_0\in \Hom (W_{n,R},\BG_{a,R})$.
\end{proof}

\subsection{The functor $\sDisp_n^{\weak}(R)\to\Sm_n(R)$}  \label{ss:Zink's functor on the weak category} 
Recall that $\bone_{n,R}$ is the unit object in both $\sDisp_n(R)$ and $\sDisp_n^{\weak}(R)$ (see \S\ref{sss:sDisp_n(R) as a tensor category}-\ref{sss:sDisp_n weak(R) as a tensor category}).
For $\sP\in\sDisp_n^{\weak}(R)$ we define the group $R$-scheme $\HHom (\sP,\bone_{n,R})$ just as in \S\ref{sss:Cartier dual of Z_P}.

\begin{lem}   \label{l:comparing the two HHoms}
Let $\sP'\in\sDisp_n(R)$. Let $\sP\in\sDisp_n^{\weak}(R)$ be the image of $\sP'$ (see \S\ref{sss:sDisp_weak}). Then the natural map
$\HHom (\sP,\bone_{n,R})\to\HHom (\sP',\bone_{n,R})$ is an isomorphism.
\end{lem}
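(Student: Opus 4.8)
The plan is to make both group schemes explicit on $R'$-points and to recognize the natural map as restriction along the projection $\pi\colon P\epi M$, where $M=P/J_{n,R}\cdot Q$. By \S\ref{sss:Cartier dual of Z_P}, for an $R$-algebra $R'$ the group $\HHom(\sP',\bone_{n,R})(R')$ consists of the $W_n(R)$-linear maps $\eta\colon P\to W_n(R')$ satisfying \eqref{e:Cond_x} and \eqref{e:Cond_y}, while by \S\ref{ss:Zink's functor on the weak category} the group $\HHom(\sP,\bone_{n,R})(R')$ consists of the $W_n(R)$-linear maps $\bar\eta\colon M\to W_n(R')$ satisfying the \emph{same} two relations, now read off on $(M,\cQ)$. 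The natural map carries $\bar\eta$ to $\bar\eta\circ\pi$. Since $\pi$ is surjective this assignment is injective, and it lands in the target: because $F$ and $F_1$ on $(M,\cQ)$ are induced from those on $(P,Q)$ (see \S\ref{sss:sDisp_weak}), the relations \eqref{e:Cond_x}--\eqref{e:Cond_y} for $\bar\eta\circ\pi$ are exactly the pullbacks along $\pi$ of the corresponding relations for $\bar\eta$.

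The only step that requires an argument is surjectivity, that is, the claim that \emph{every} $\eta$ satisfying \eqref{e:Cond_x}--\eqref{e:Cond_y} factors through $\pi$, i.e.\ that $\eta$ annihilates $J_{n,R}\cdot Q$. By additivity it suffices to treat a generator $z=V^{n-1}(a)\cdot y$ with $a\in R$ and $y\in Q$; note $z\in Q$ since $Q$ is a $W_n(R)$-submodule. Applying \eqref{e:Cond_y} to $z$ gives $\eta(z)=V(\eta(F_1(z)))$, and since $z\in J_{n,R}\cdot Q$ the relation \eqref{e:J_{n,R}Q is killed} yields $F_1(z)=0$ in $P/J_{n,R}\cdot P$. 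Thus any lift of $F_1(z)$ to $P$ lies in $J_{n,R}\cdot P$, whose image under the $W_n(R)$-linear map $\eta$ is contained in the ideal $J_{n,R'}=V^{n-1}(W_1(R'))\subset W_n(R')$; as $V^{n}=0$ on $W_n(R')$ we have $V(J_{n,R'})=0$, so the lift-ambiguity is harmless and $\eta(z)=V(\eta(F_1(z)))=0$. Hence $\eta|_{J_{n,R}\cdot Q}=0$ and $\eta$ descends to some $\bar\eta\colon M\to W_n(R')$ with $\eta=\bar\eta\circ\pi$.

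It then remains to check that this descended $\bar\eta$ satisfies the weak relations, which is automatic: the defining relations for $\eta$ are compatible with $\pi$ and with the induced operators $F,F_1$ on $(M,\cQ)$, so they descend verbatim, the same $V$-nilpotence observation guaranteeing that all expressions involving $F_1$ are well defined modulo $J_{n,R}$. This exhibits $\bar\eta\mapsto\bar\eta\circ\pi$ as a functorial bijection on $R'$-points, hence an isomorphism of group schemes. The main obstacle is precisely the vanishing $\eta(J_{n,R}\cdot Q)=0$: everything else is formal once the two Hom-schemes are written out, whereas this factorization is the single point where one must invoke Zink's relation \eqref{e:J_{n,R}Q is killed} together with the nilpotence of $V$ on $W_n$.
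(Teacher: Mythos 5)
Your proof is correct, and it isolates exactly the same key point as the paper: after writing out both $\HHom$-schemes on $R'$-points, the whole lemma reduces to showing that every $\eta\in\Hom(\sP',\bone_{n,R'})$ kills $J_{n,R}\cdot Q$ (injectivity and well-definedness of restriction along $P\epi M$ being formal). Where you differ is in how that vanishing is derived. The paper argues on the \emph{target} side: by the definition of $\bone_{n,R}$ (formula \eqref{e:unit object}), a morphism must satisfy $\eta(Q)\subset I_{n,R'}$ (this is also immediate from \eqref{e:Cond_y}, since $V(\cdot)\in I_{n,R'}$), and then $W_n(R)$-linearity gives
\[
\eta(J_{n,R}\cdot Q)\subset J_{n,R}\cdot I_{n,R'}=0,
\]
the last equality being the Witt-vector identity $V^{n-1}(a)\cdot V(b)=V^n(\cdots)=0$ in $W_n$. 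You instead argue on the \emph{source} side: apply \eqref{e:Cond_y} to $z\in J_{n,R}\cdot Q$, invoke $F_1(J_{n,R}\cdot Q)=0$ from \S\ref{sss:2Zink's observations}(c), and use $\eta(J_{n,R}\cdot P)\subset J_{n,R'}$ together with $V(J_{n,R'})=0$ to see that the lift-ambiguity is irrelevant and $\eta(z)=0$. Both routes rest on the same elementary facts about $W_n$ ($V^n=0$ and the projection formula); the paper's version is a one-liner, while yours is slightly longer but has the side benefit of making explicit why the right-hand side of \eqref{e:Cond_y} is well defined at all, given that $F_1$ takes values only in $P/J_{n,R}\cdot P$. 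No gaps.
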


\begin{proof} 
Write $\sP'=(P,Q,F,F_1)$. The problem is to show that for every $f\in\Hom (\sP',\bone_{n,R})$ one has $f(J_{n,R}\cdot Q)=0$. 
By the definition of $\bone_{n,R}$ (see formula \eqref{e:unit object}), we have $f(Q)\subset I_{n,R}$, so $f(J_{n,R}\cdot Q)\subset J_{n,R}\cdot I_{n,R}=0$. 
\end{proof}

\begin{lem}   \label{l:n-cosmoothness}
For every $\sP\in\sDisp_n^{\weak}(R)$, the group scheme $\HHom (\sP,\bone_{n,R})$ is $n$-cosmooth.
\end{lem}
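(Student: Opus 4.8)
The plan is to reduce the assertion to the case of an honest $n$-truncated semidisplay, for which $\HHom(\sP,\bone_{n,R})$ has already been identified with the Cartier dual of a value of the Zink functor.

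First I would use the essential surjectivity of the functor $\sDisp_n(R)\to\sDisp_n^{\weak}(R)$, established in \S\ref{sss:sDisp_weak} by means of normal decompositions: given $\sP\in\sDisp_n^{\weak}(R)$, I choose an object $\sP_0=(P,Q,F,F_1)\in\sDisp_n(R)$ whose image $\bar\sP_0\in\sDisp_n^{\weak}(R)$ comes equipped with an isomorphism $\sP\cong\bar\sP_0$. Since $\HHom(-,\bone_{n,R})$ is a contravariant functor, this isomorphism induces an isomorphism of group $R$-schemes $\HHom(\sP,\bone_{n,R})\cong\HHom(\bar\sP_0,\bone_{n,R})$.

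Next I would apply Lemma~\ref{l:comparing the two HHoms} to $\sP_0$, which asserts that the natural map $\HHom(\bar\sP_0,\bone_{n,R})\to\HHom(\sP_0,\bone_{n,R})$ is an isomorphism. Composing the two isomorphisms gives a group-scheme isomorphism $\HHom(\sP,\bone_{n,R})\cong\HHom(\sP_0,\bone_{n,R})$. Finally, Proposition~\ref{p:equality of subgroups} identifies $\HHom(\sP_0,\bone_{n,R})$ with $\fZ_{\sP_0}^*$, the Cartier dual of the Zink functor $\fZ_{\sP_0}$. By the construction recalled in \S\ref{sss:format of LZ}, $\fZ_{\sP_0}$ is a commutative $n$-smooth group scheme, so $\fZ_{\sP_0}^*$ is $n$-cosmooth by the definition in \S\ref{sss:n-cosmooth group scheme}; since $n$-cosmoothness is invariant under isomorphism, $\HHom(\sP,\bone_{n,R})$ is $n$-cosmooth as well.

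The proof is thus a short chain linking three already-available facts, and carries essentially no new content beyond bookkeeping. The one genuine input is the essential surjectivity of $\sDisp_n(R)\to\sDisp_n^{\weak}(R)$, whose verification requires lifting both the projective module $\bar L$ and the Frobenius data along $W_n(R)\to W_{n-1}(R)$; this is where the only real work sits, but it is an input rather than part of the present argument. Accordingly, the sole point needing a little care here is that the lift $\sP_0$ is not canonical --- harmless, since we are asserting merely the property of being $n$-cosmooth rather than any canonical structure on $\HHom(\sP,\bone_{n,R})$.
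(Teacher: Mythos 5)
Your proof is correct and follows essentially the same route as the paper's own argument: reduce to an honest $n$-truncated semidisplay via essential surjectivity of $\sDisp_n(R)\to\sDisp_n^{\weak}(R)$ and Lemma~\ref{l:comparing the two HHoms}, then invoke the identification $\HHom(\sP',\bone_{n,R})=\fZ_{\sP'}^*$ of Proposition~\ref{p:equality of subgroups} together with the $n$-smoothness of $\fZ_{\sP'}$ from \cite{LZ}. The only difference is expository (you spell out the choice of a lift and the induced isomorphisms, which the paper leaves implicit).
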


\begin{proof} 
As noted in \S\ref{sss:sDisp_weak}, the functor $\sDisp_n^{\weak}(R)\to\sDisp_n(R)$ is essentially surjective.
So by Lemma~\ref{l:comparing the two HHoms}, it suffices to prove $n$-cosmoothness of 
$\HHom (\sP',\bone_{n,R})$ for $\sP'\in\sDisp_n(R)$. By \eqref{e:Cartier dual of Z_P}, $\HHom (\sP',\bone_{n,R})=\fZ_{\sP'}^*$.
Finally, $\fZ_{\sP'}$ is $n$-smooth by the result of \cite{LZ} mentioned in \S\ref{sss:format of LZ}.
\end{proof}

\subsubsection{The functor $\sDisp_n^{\weak}(R)\to\Sm_n(R)$}  \label{sss:Zink's functor on the weak category} 
Let $\sP\in\sDisp_n^{\weak}(R)$. By Lemma~\ref{l:n-cosmoothness}, $\HHom (\sP,\bone_{n,R})=\fZ_\sP^*$ for some $\fZ_\sP\in\Sm_n(R)$. 
The assignment $\sP\mapsto\fZ_\sP$ is a functor $\sDisp_n^{\weak}(R)\to\Sm_n(R)$. By Lemma~\ref{l:comparing the two HHoms}, precomposing this functor with the functor $\sDisp_n(R)\to\sDisp_n^{\weak}(R)$, we get the functor
$\fZ$ from~\S\ref{sss:format of LZ}.

\subsubsection{Example: $n=1$}   \label{sss:elementary case of Zink's functor}
An object of $\sDisp_n^{\weak}(R)$ is a quadruple $(M,\cQ, F, F_1)$ see \S\ref{sss:sDisp_weak}. Now suppose that $n=1$. Then $\cQ=0$, $F_1=0$, and $M$ is a projective $R$-module.
So an object $\sP\in\sDisp_1^{\weak}(R)$ is just a pair $(M,F)$, where $M$ is a projective $R$-module and $F:M\to M$ is a $p$-linear map. To such a pair we associated in \S\ref{ss:1-cosmooth group schemes} a 1-cosmooth group scheme $A_{M,F}$. It is easy to see that
\begin{equation}  \label{e:Zink's functor for n=1}
\fZ_\sP^*=A_{M,F}.
\end{equation}

\subsection{$\fZ_{\sP}^*$ via Dieudonn\'e modules}   \label{ss:Dual Witt via Dieudonne}
Let $R$ be an $\BF_p$-algebra.

\subsubsection{The ring $\fD_{n,R}$}
Let $\fD_{n,R}$ be the $n$-truncated Dieudonn\'e-Cartier ring of $R$.
It is generated by the ring $W_n(R)$ and elements $F,V$; the defining relations in $\fD_{n,R}$ are
\[
V^n=0, \quad FV=p,
\]
\[
F\cdot a=F(a)\cdot F, \quad a\cdot V=V\cdot  F(a), \quad V\cdot a\cdot F=V(a) \quad \mbox{ for all } a\in W_n(R).
\]
Note that $VF=V\cdot 1\cdot F=V(1)=p$. For every $a\in W_n(R)$ and every $i\in\{1,\ldots, n-1\}$ we have
$V^i\cdot (V^{n-i}(a))=V^n\cdot a\cdot F^{n-i}=0$; in particular,
\begin{equation} \label{e:VJ=0}
V\cdot J_{n,R}=0.
\end{equation}

\subsubsection{The goal}
Let $\fD_{n,R}\mbox{-mod}$ be the category of left $\fD_{n,R}$-modules. 
We will define a functor\footnote{Without $n$-truncation, the functor $D$ is well known: see \cite[Prop.~90]{Zink} and
formula (4) on p.141 of \cite{Zink-short} (this formula goes back to the article \cite{N}, which inspired Zink's notion of display).}
 $D:\sDisp_n^{\weak}(R)\to \fD_{n,R}\mbox{-mod}$ such that for every $\sP\in\sDisp_n^{\weak}(R)$ one has
\begin{equation}   \label{e:Dual Witt via Dieudonne}
\fZ_{\sP}^*=\Hom_{\fD_{n,R}}(D(\sP), W_{n,R}).
\end{equation}
The r.h.s. of \eqref{e:Dual Witt via Dieudonne} makes sense because $\fD_{n,R}$ acts on the group scheme $W_{n,R}$.

\subsubsection{Definition of $D (\sP )$}  \label{sss:Definition of D (sP)}
By \S\ref{sss:sDisp_weak}, an object $\sP\in\sDisp_n^{\weak}(R)$ is a quadruple 
$$(M,\cQ, F:M\to M, F_1:\cQ\to M/J_{n,R}\cdot M).$$
$D(\sP)\in \fD_{n,R}\mbox{-mod}$ is defined as follows: if $N\in \fD_{n,R}\mbox{-mod}$ then $\Hom_{\fD_{n,R}}(D(\sP),N)$ is the group of
$W_n(R)[F]$-homomorphisms $f:M\to N$ such that
\begin{equation}   \label{e:VF_1}
V(f(F_1(y))=f(y) \quad \mbox{ for all } y\in\cQ.
\end{equation}
Although $F_1(y)$ lives in $M/J_{n,R}\cdot M$ rather than in $M$, the l.h.s. of \eqref{e:VF_1} makes sense by virtue of \eqref{e:VJ=0}.

Formula~\eqref{e:Dual Witt via Dieudonne} immediately follows from the definition of $\fZ_{\sP}^*$ (see \S\ref{sss:Zink's functor on the weak category}) and the definition of $\bone_{n,R}$ (see formula~\eqref{e:unit object}).

\begin{prop}  \label{p:n-cosmoothness of Dieudonne}
The $\fD_{n,R}$-module $D(\sP)$ is $n$-cosmooth in the sense of \cite[Def.~1.0.2]{KM}.
\end{prop}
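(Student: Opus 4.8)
The plan is to check the axioms of \cite[Def.~1.0.2]{KM} directly from an explicit presentation of $D(\sP)$, using as a conceptual guide the fact, already established in \lemref{l:n-cosmoothness}, that the associated group scheme is $n$-cosmooth. First I would exploit that every object $\sP=(M,\cQ,F,F_1)\in\sDisp_n^{\weak}(R)$ already carries a normal decomposition: by property (b) of \S\ref{sss:sDisp_weak}, $(M,\cQ)\iso (T,I_{n,R}\cdot T)\oplus(\bar L,\bar L)$ with $T$ finitely generated projective over $W_n(R)$ and $\bar L$ finitely generated projective over $W_{n-1}(R)$. Since $n$-cosmoothness can be tested fpqc-locally on $\Spec R$, I would further assume $T$ and $\bar L$ free, so that the problem becomes one of explicit linear algebra over $\fD_{n,R}$.

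Next I would write $D(\sP)$ by hand. Its defining property (see \S\ref{sss:Definition of D (sP)}) presents $\Hom_{\fD_{n,R}}(D(\sP),N)$ as the set of $W_n(R)[F]$-homomorphisms $f\colon M\to N$ satisfying $V(f(F_1(y)))=f(y)$ for $y\in\cQ$, the left-hand side being meaningful because $V\cdot J_{n,R}=0$ by \eqref{e:VJ=0}. Co-representing this functor exhibits $D(\sP)$ as the cokernel of the map $\cQ\to\fD_{n,R}\otimes_{W_n(R)[F]}M$ sending $y$ to $y-V\cdot\widetilde{F_1(y)}$, where $\widetilde{F_1(y)}\in M$ is any lift of $F_1(y)$ (the choice of lift being irrelevant, again by $V\cdot J_{n,R}=0$). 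In the normal decomposition these relations are transparent: on the summand $T$ they identify $V(a)\cdot t$ with $V\cdot(a\cdot\bar F(t))$, reproducing the $V$-structure of the module attached to the unit $\bone_{n,R}$ of \eqref{e:unit object}, while on $\bar L$ they are governed by $F_1|_{\bar L}$. I expect this to exhibit $D(\sP)$ as finitely generated over $\fD_{n,R}$ and built, through the two-step filtration $\cQ\subset M$, out of the standard $n$-cosmooth modules associated with the projective modules $T$ and $\bar L$ — which is precisely the form demanded by \cite[Def.~1.0.2]{KM}.

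Finally I would verify the axioms of \cite[Def.~1.0.2]{KM} from this presentation, the key inputs being projectivity of $T$ and $\bar L$, the relations $V^n=0$ and $V\cdot J_{n,R}=0$, and the finite generation of $D(\sP)$ that is immediate from the cokernel description; and as both a sanity check and, depending on the exact formulation in \cite{KM}, the conceptual heart of the matter, I would invoke \eqref{e:Dual Witt via Dieudonne} together with \lemref{l:n-cosmoothness}, which say that $\Hom_{\fD_{n,R}}(D(\sP),W_{n,R})=\fZ_\sP^*$ is an $n$-cosmooth group scheme, so that $D(\sP)$ must lie in the subcategory of $n$-cosmooth modules under the anti-equivalence of \cite{KM}. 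The main obstacle I anticipate is the bookkeeping required to match my presentation of $D(\sP)$ to the precise axioms of \cite[Def.~1.0.2]{KM}: one must track the interaction of $F$, $V$, and the filtration coming from $\cQ\subset M$ through the normal decomposition, and confirm that the splitting into a $T$-part and an $\bar L$-part is compatible with the $\fD_{n,R}$-action in the way the definition requires.
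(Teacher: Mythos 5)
Your cokernel presentation of $D(\sP)$ is correct as far as it goes, but the proposal stops short of a proof at exactly the point where the real work lies. The paper does not check \cite[Def.~1.0.2]{KM} against a presentation involving both summands $T$ and $\bar L$; it first proves \propref{p:D(sP) economically}, which \emph{eliminates} the $\bar L$-component. Concretely, writing $f=(g,h)$ relative to the normal decomposition, the constraint on $h$ is \eqref{e:equation for h}, namely $h-V\circ h\circ\varphi_{\bar L\bar L}=V\circ g\circ\varphi_{T\bar L}$; since $V^n=0$ the operator $h\mapsto V\circ h\circ\varphi_{\bar L\bar L}$ is nilpotent, so $h$ is determined by $g$, and $D(\sP)$ becomes a quotient of $\fD_{n,R}\otimes_{W_n(R)}T$ by relations of the single shape \eqref{e:Fg}. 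It is precisely this ``economic'' shape that the criterion \cite[Prop.~4.3.1]{KM} accepts, and the proposition then follows by citation. Your presentation, keeping generators from both $T$ and $\bar L$ with relations $1\otimes y-V\otimes\widetilde{F_1(y)}$, is not ``precisely the form demanded'' by \cite[Def.~1.0.2]{KM}: that definition is intrinsic, and matching it from your presentation would require carrying out exactly the elimination you never perform. The sentence ``I expect this to exhibit $D(\sP)$ as \dots'' is where the proof should be, and it is missing.

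The fallback you offer as the possible ``conceptual heart'' is circular. From \eqref{e:Dual Witt via Dieudonne} and \lemref{l:n-cosmoothness} you know that $\Hom_{\fD_{n,R}}(D(\sP),W_{n,R})=\fZ_\sP^*$ is an $n$-cosmooth \emph{group scheme}; but the anti-equivalence of \cite{KM} is between $n$-smooth group schemes and $n$-cosmooth \emph{modules}, so invoking it for $D(\sP)$ presupposes that $D(\sP)$ lies in the essential image of the Dieudonn\'e functor, i.e.\ that it is $n$-cosmooth --- which is the statement to be proved. A priori a non-cosmooth $\fD_{n,R}$-module can have its Hom into $W_{n,R}$ representable by an $n$-cosmooth group scheme, so no conclusion about $D(\sP)$ itself follows. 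This is also why, in the paper, the Corollary identifying $D(\sP)$ with the module attached to $\fZ_\sP$ by the $n$-truncated Cartier theory is deduced \emph{from} Proposition~\ref{p:n-cosmoothness of Dieudonne} together with \eqref{e:Dual Witt via Dieudonne}, not the other way around. To repair your argument: solve for $h$ using nilpotence of $V$, arrive at \propref{p:D(sP) economically}, and then apply \cite[Prop.~4.3.1]{KM} (or, as the paper remarks, deduce the statement from \cite[Prop.~90]{Zink}).
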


The proposition will be proved in \S\ref{sss:Proof of $n$-cosmoothness of Dieudonne}.

\begin{cor}
$D(\sP)$ is the $n$-cosmooth $\fD_{n,R}$-module corresponding to $\fZ_\sP$ via the ``$n$-truncated Cartier theory'' developed in \cite{KM}.
\end{cor}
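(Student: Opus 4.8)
The plan is to deduce the corollary formally from the two facts already in hand --- Proposition~\ref{p:n-cosmoothness of Dieudonne} and formula~\eqref{e:Dual Witt via Dieudonne} --- together with the characterizing property of the correspondence set up in \cite{KM}. First I would recall precisely what the ``$n$-truncated Cartier theory'' of \cite{KM} provides: an equivalence between the category of $n$-cosmooth $\fD_{n,R}$-modules and the category $\Sm_n(R)$ of commutative $n$-smooth group schemes over $R$, whose inverse sends an $n$-cosmooth module $N$ to the group scheme $G_N\in\Sm_n(R)$ characterized by a natural isomorphism $(G_N)^*\iso\Hom_{\fD_{n,R}}(N,W_{n,R})$; here the $\fD_{n,R}$-action on $W_{n,R}$ is the one already fixed in the definition of $\fD_{n,R}$ (the same action that makes the right-hand side of \eqref{e:Dual Witt via Dieudonne} meaningful). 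Saying that $D(\sP)$ is ``the $n$-cosmooth $\fD_{n,R}$-module corresponding to $\fZ_\sP$'' amounts precisely to exhibiting an isomorphism $G_{D(\sP)}\iso\fZ_\sP$.

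Then the argument is short. By Proposition~\ref{p:n-cosmoothness of Dieudonne}, $D(\sP)$ lies in the source of the correspondence, so the group scheme $G_{D(\sP)}\in\Sm_n(R)$ is defined and satisfies $(G_{D(\sP)})^*\iso\Hom_{\fD_{n,R}}(D(\sP),W_{n,R})$ by the characterizing property above. By formula~\eqref{e:Dual Witt via Dieudonne}, the right-hand side is canonically $\fZ_\sP^*$, so we obtain a canonical isomorphism $(G_{D(\sP)})^*\iso\fZ_\sP^*$. Since $G_{D(\sP)}$ and $\fZ_\sP$ are both $n$-smooth, hence finite locally free and commutative, Cartier duality is an anti-equivalence on this class of group schemes, and therefore $(G_{D(\sP)})^*\iso\fZ_\sP^*$ dualizes back to $G_{D(\sP)}\iso\fZ_\sP$. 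This is the required identification.

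The only real content --- and the step I expect to be the main obstacle --- is matching conventions with \cite{KM}. Concretely, one must verify that the dualizing object and the $\fD_{n,R}$-action on $W_{n,R}$ implicit in the characterizing formula of \cite{KM} coincide with those used here, and that the isomorphism in \eqref{e:Dual Witt via Dieudonne} is functorial in $\sP$, so that the resulting identification $G_{D(\sP)}\iso\fZ_\sP$ is the one produced by the correspondence rather than a mere abstract isomorphism. If \cite{KM} instead normalizes its equivalence so that a module $N$ is sent directly to $\Hom_{\fD_{n,R}}(N,W_{n,R})$ (an $n$-cosmooth group scheme) rather than to its Cartier dual, then one simply inserts a single Cartier dual at the corresponding place; the logical structure is unchanged. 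Once the normalizations are aligned, everything else is formal.
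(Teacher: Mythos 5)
Your proposal is correct and follows exactly the paper's proof, which reads in full: ``Combine Proposition~\ref{p:n-cosmoothness of Dieudonne} with formula~\eqref{e:Dual Witt via Dieudonne}.'' You have simply spelled out the formal dualization step and the convention-matching with \cite{KM} that the paper leaves implicit.
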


\begin{proof}
Combine Proposition~\ref{p:n-cosmoothness of Dieudonne} with formula~\eqref{e:Dual Witt via Dieudonne}.
\end{proof}

\subsubsection{An economic presentation of $D(\sP)$}
Let $\sP,M,\cQ, F, F_1$ be as in \S\ref{sss:Definition of D (sP)}. Choose a normal decomposition
\[
M=T\oplus\bar L, \quad \cQ =I_{n,R}\cdot T\oplus\bar L,
\]
where $T$ is a finitely generated projective $W_n(R)$-module and $\bar L$ is a finitely generated projective $W_{n-1}(R)$-module. Then $F:T\to M$ is a pair $(\varphi_{TT}:T\to T, \varphi_{\bar L T}:T\to\bar L)$, and
$F_1:\bar L\to M$ is a pair $(\varphi_{T\bar L}:\bar L\to T/J_{n,R}\cdot T, \varphi_{\bar L\bar L}:\bar L\to\bar L)$. The following proposition represents $D(\sP)$ as a quotient of
$\fD_{n,R}\otimes_{W_n(R)}T$ by an explicit submodule.

\begin{prop}  \label{p:D(sP) economically}
If $N\in \fD_{n,R}\mbox{-mod}$ then $\Hom_{\fD_{n,R}}(D(\sP),N)$ is the group of $W_n(R)$-linear maps $g:T\to N$ such that
\begin{equation}   \label{e:Fg}
F(g(x))=g(\varphi_{TT}(x))+\sum_{i=1}^{n-1}V^i g(\varphi_{T\bar L}\varphi_{\bar L\bar L}^{i-1}\varphi_{\bar L T}(x)) \quad \mbox{ for all } x\in T.
\end{equation}
\end{prop}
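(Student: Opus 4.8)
The plan is to argue directly from the universal property defining $D(\sP)$ in \S\ref{sss:Definition of D (sP)}: for each $N\in\fD_{n,R}\mbox{-mod}$, an element of $\Hom_{\fD_{n,R}}(D(\sP),N)$ is a $W_n(R)[F]$-linear map $f:M\to N$ satisfying $V(f(F_1(y)))=f(y)$ for all $y\in\cQ$ (condition \eqref{e:VF_1}). Using the chosen normal decomposition $M=T\oplus\bar L$, I would write $f=g\oplus h$ with $g:=f|_T$ and $h:=f|_{\bar L}$, and show that restriction $f\mapsto g$ is a bijection onto the set of $W_n(R)$-linear maps $g:T\to N$ satisfying \eqref{e:Fg}. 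In that formula $g$ is understood to be applied to the $T/J_{n,R}T$-valued expression $\varphi_{T\bar L}\varphi_{\bar L\bar L}^{i-1}\varphi_{\bar L T}(x)$; this is legitimate because each summand carries a factor $V^i$ with $i\ge 1$, and $g(J_{n,R}\cdot T)\subseteq J_{n,R}\cdot N$ is annihilated by $V$ in view of \eqref{e:VJ=0}.

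For the forward direction I would first feed $y=\ell\in\bar L\subseteq\cQ$ into \eqref{e:VF_1}. Since $F_1(\ell)=\varphi_{T\bar L}(\ell)+\varphi_{\bar L\bar L}(\ell)$, this gives the recursion $h=Vg\varphi_{T\bar L}+Vh\varphi_{\bar L\bar L}$, which upon iterating and using $V^n=0$ yields the closed form $h=\sum_{i=1}^{n-1}V^i g\varphi_{T\bar L}\varphi_{\bar L\bar L}^{i-1}$; in particular $f$ is determined by $g$. Next, $F$-linearity of $f$ evaluated on $x\in T$, combined with $F(x)=\varphi_{TT}(x)+\varphi_{\bar L T}(x)$ and the closed form for $h$, produces exactly \eqref{e:Fg}. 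This establishes injectivity of $f\mapsto g$ and that $g$ satisfies \eqref{e:Fg}.

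For the converse I would start from a $W_n(R)$-linear $g:T\to N$ obeying \eqref{e:Fg}, define $h$ by the same closed formula, and set $f:=g\oplus h$. Then I must verify four things: (a) $W_n(R)$-linearity of $h$ (hence of $f$), which follows from the $F$-semilinearity of $\varphi_{T\bar L},\varphi_{\bar L\bar L}$ together with the identity $V^iF^i(a)=aV^i$ in $\fD_{n,R}$; (b) $F$-linearity on $T$, which is \eqref{e:Fg} read backwards; (c) the constraint \eqref{e:VF_1} on $\bar L$, which follows by the telescoping identity $Vg\varphi_{T\bar L}+Vh\varphi_{\bar L\bar L}=\sum_{j=1}^{n}V^j g\varphi_{T\bar L}\varphi_{\bar L\bar L}^{j-1}=h$ (the $j=n$ term dying since $V^n=0$); and (d) $F$-linearity on $\bar L$ together with the constraint \eqref{e:VF_1} on $I_{n,R}\cdot T$.

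The hard part is item (d): these two conditions are not used in reconstructing $f$ from $g$, so they must be shown to be automatic. For $F$-linearity on $\bar L$ I would use $F(\ell)=pF_1(\ell)$ (relation \S\ref{sss:definition of sDisp_n}(iii)) and $FV^i=pV^{i-1}$; comparing $Fh(\ell)$ with $pf(F_1(\ell))$ leaves a single discrepancy term $pV^{n-1}g\varphi_{T\bar L}\varphi_{\bar L\bar L}^{n-1}(\ell)$, which vanishes because $pV^{n-1}=FV^n=0$. For the constraint on $I_{n,R}\cdot T$ I would test it on generators $y=V(a)x$ with $a\in W_{n-1}(R)$, $x\in T$: relation \S\ref{sss:definition of sDisp_n}(ii) gives $F_1(V(a)x)=a\bar F(x)$, so $V(f(F_1(V(a)x)))=V(a\cdot F(g(x)))$ once \eqref{e:Fg} is invoked to recognize $g(\varphi_{TT}x)+h(\varphi_{\bar L T}x)=F(g(x))$; the Dieudonné--Cartier relation $V\cdot a\cdot F=V(a)$ then collapses this to $V(a)g(x)=f(V(a)x)$, as required. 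The one persistent source of care throughout is bookkeeping the two meanings of $F$ and $V$ (Witt operations on the source $M$ versus the $\fD_{n,R}$-action on the target $N$) and the well-definedness modulo $J_{n,R}$ of all $F_1$-terms.
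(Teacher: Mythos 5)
Your proof is correct and is essentially the paper's own argument: both decompose $f$ as a pair $(g,h)=(f|_T,f|_{\bar L})$, translate the defining conditions into the two equations $F\circ g=g\circ\varphi_{TT}+h\circ\varphi_{\bar L T}$ and $h-V\circ h\circ\varphi_{\bar L\bar L}=V\circ g\circ \varphi_{T\bar L}$, and then eliminate $h$ using nilpotency of $h\mapsto V\circ h\circ\varphi_{\bar L\bar L}$ (since $V^n=0$). The only difference is one of thoroughness: the paper asserts without comment that these two equations exhaust the conditions on $f$, whereas you explicitly verify the reductions this hides (well-definedness of the $V^i g(\cdots)$ terms via \eqref{e:VJ=0}, and that $F$-linearity on $\bar L$ and condition \eqref{e:VF_1} on $I_{n,R}\cdot T$ are automatic), and those verifications are correct.
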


Note that by \eqref{e:VJ=0}, the r.h.s. of \eqref{e:Fg} is well-defined even though $\varphi_{T\bar L}\varphi_{\bar L\bar L}^{i-1}\varphi_{\bar L T}$ is a map $T\to T/J_{n,R}\cdot T$.

\begin{proof}
Write the map $f:M\to N$ from \S\ref{sss:Definition of D (sP)} as a pair $(g,h)$, where $g\in\Hom_{W_n(R)}(T,N)$, $h\in\Hom_{W_n(R)}(\bar L,N)$. The conditions for $g,h$ are as follows\footnote{Here we use that a $W_n(R)$-linear map $f:M\to N$ satisfies $fF=Ff$ and $VfF_1=f$ if and only if the first relation holds on $T$ and the second one on $\bar L$.}:
\begin{equation}   \label{e:F circ g}
F\circ g=g\circ\varphi_{TT}+h\circ\varphi_{\bar L T},
\end{equation}
\begin{equation}  \label{e:equation for h}
h-V\circ h\circ\varphi_{\bar L\bar L}=V\circ g\circ \varphi_{T\bar L}.
\end{equation}
Since $V^n=0$, the map $h\mapsto V\circ h\circ\varphi_{\bar L\bar L}$ is nilpotent. So using \eqref{e:equation for h}, one can express $h$ in terms of $g$.
Then \eqref{e:F circ g} becomes condition \eqref{e:Fg}.
\end{proof}

\subsubsection{Proof of Proposition~\ref{p:n-cosmoothness of Dieudonne}} \label{sss:Proof of $n$-cosmoothness of Dieudonne}
Proposition~\ref{p:n-cosmoothness of Dieudonne} follows from Proposition~\ref{p:D(sP) economically} combined with \cite[Prop.~4.3.1]{KM}. (On the other hand, C.~Kothari noticed that one can deduce Proposition~\ref{p:n-cosmoothness of Dieudonne} from \cite[Prop.~90]{Zink}.)
\qed

\section{A formula for $\fZ_{\sP_1\otimes\ldots\otimes\sP_l}$, where $\sP_i\in\sDisp_n^{\weak}(R)$}  \label{s:n-cosmooth ones as tensor category}
Let $R$ be an $\BF_p$-algebra.

\subsection{Formulation of the result}
Recall that $\sDisp_n^{\weak}(R)$ is a tensor category, see \S\ref{ss:tensoring truncated semidisplays}.
By \S\ref{ss:Zink's functor on the weak category}, we have a functor
\begin{equation}  \label{e:3dual Zink}
\sDisp_n^{\weak}(R)^{\op}\to\Sm_n^*(R), \quad \sP\mapsto \fZ_{\sP}^*=\HHom (\sP,\bone_{n,R}).
\end{equation}
For each $\sP_1,\ldots, \sP_l\in\sDisp_n^{\weak}(R)$, we have the tensor product map
\begin{equation}  \label{e:tensor product map}
\HHom (\sP_1,\bone_{n,R})\times\ldots \times\HHom (\sP_l,\bone_{n,R})\to \HHom (\sP_1\otimes\ldots\otimes\sP_l,\bone_{n,R}),
\end{equation}
which is a poly-additive map
\begin{equation} \label{e:dual Zink as a pseuotensor functor}
\fZ_{\sP_1}^*\times\ldots\fZ_{\sP_l}^*\to \fZ_{\sP_1\otimes\ldots\otimes\sP_l}^*.
\end{equation} 
The map \eqref{e:dual Zink as a pseuotensor functor} induces a group homomorphism
\begin{equation} \label{e:the map to be proved to be an iso}
\fZ_{\sP_1\otimes\ldots\otimes\sP_l}\to \PPoly-add (\fZ_{\sP_1}^*\times \ldots \fZ_{\sP_l}^*,\BG_m)^{(F^n)};
\end{equation}
as before, the superscript $(F^n)$ means passing to the kernel of $F^n$.

\begin{prop}     \label{p:Zink functor of a tensor product}
The map \eqref{e:the map to be proved to be an iso} is an isomorphism.
\end{prop}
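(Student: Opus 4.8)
The overall strategy is to reduce to the case of an algebraically closed field and, there, to prove the statement by induction on $n$, the base case $n=1$ being \propref{p:Cartier dual of tensor product}. The reduction rests on the fact that both the Zink functor and the poly-additive construction commute with base change in $R$; granting that both sides are finite locally free with base-change-compatible formation, the coordinate-ring argument of part~(iii) of the proof of \propref{p:Cartier dual of tensor product} lets one check that $\Theta$ (the map \eqref{e:the map to be proved to be an iso}) is an isomorphism after passing to the residue field of each point.

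Before reducing I would pin down the two sides. The source $\fZ_{\sP_1\otimes\ldots\otimes\sP_l}$ is $n$-smooth because $\sP_1\otimes\ldots\otimes\sP_l\in\sDisp_n^{\weak}(R)$; by \lemref{l:Lie of Zink}(ii) and the description of the tensor product in \S\ref{sss:sDisp_n weak(R) as a tensor category}, its Lie algebra is canonically $\bigotimes_i\Lie\fZ_{\sP_i}$, so it has rank $\prod_i\rank\fZ_{\sP_i}$ and order $p^{n\prod_i\rank\fZ_{\sP_i}}$. The target $H:=\PPoly-add(\fZ_{\sP_1}^*\times\ldots\times\fZ_{\sP_l}^*,\BG_m)^{(F^n)}$ is finite and killed by $F^n$ by the finite-type argument of part~(i) of the proof of \propref{p:Cartier dual of tensor product}.

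Over $R=k=\bar k$ I would run the induction as follows. Since $\ker F\subset\ker F^n$, one has $H^{(F)}=\PPoly-add(\fZ_{\sP_1}^*\times\ldots\times\fZ_{\sP_l}^*,\BG_m)^{(F)}$, and by \S\ref{sss:results of Messing} the source fits in exact sequences built from its $1$-smooth Frobenius kernel and its $(n-1)$-smooth Frobenius image. Assuming $H$ carries the matching layering, the five lemma applied to $\Theta$ reduces the claim to the bottom layer (the $n=1$ case, via $\fZ_\sP^*=A_{M,F}$ of \S\ref{sss:elementary case of Zink's functor} and \corref{c:Cartier dual of tensor product}) and the top layer (the induction hypothesis). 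What must still be supplied is that $H$ is itself $n$-smooth of rank $\prod_i\rank\fZ_{\sP_i}$ and that $\Theta$ respects the Frobenius filtration.

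This is the main obstacle. It amounts to a Künneth-type statement for the Frobenius filtration on the poly-additive group scheme, and I expect it is cleanest to establish on the dual, linear-algebraic side: rewrite $\fZ_\sP^*=\HHom(\sP,\bone_{n,R})$ via \propref{p:equality of subgroups}, use the economic presentation of \propref{p:D(sP) economically} to make both the tensor product in $\sDisp_n^{\weak}(R)$ and the poly-additive pairing explicit, and verify the exactness hypothesis of the Messing criterion \propref{p:improved Messing} for one intermediate Frobenius power $m$. This simultaneously gives the $n$-smoothness of $H$ and closes the induction.
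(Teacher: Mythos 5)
Your outline contains a genuine gap, and it is the one you flag yourself: the step ``Assuming $H$ carries the matching layering'' is not a technical point to be supplied later, it is essentially the content of the proposition. A priori the target $H=\PPoly-add (\fZ_{\sP_1}^*\times\ldots\times\fZ_{\sP_l}^*,\BG_m)^{(F^n)}$ is only known to be a \emph{finite} group scheme killed by $F^n$ (the finite-type argument); its flatness, its order, and any compatibility of its Frobenius filtration with poly-additive constructions for lower truncations are all unknown, so the five lemma has nothing to act on. The same problem undercuts both of your proposed repairs. The reduction to residue fields is done ``granting that both sides are finite locally free,'' which grants exactly what is not known about $H$. And the appeal to \propref{p:improved Messing} cannot work as stated: that criterion has ``finite locally free'' as a \emph{hypothesis}, so it cannot be the tool that produces flatness of $H$. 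Finally, identifying the graded pieces of your filtration with the poly-additive groups attached to truncated data would require knowing that the Zink functor commutes with truncation and that the resulting comparison maps are isomorphisms --- a statement of the same order of difficulty as the proposition itself. So as written, the induction cannot start.

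It is worth contrasting this with the paper's argument, which avoids induction, filtrations, and any a priori knowledge of $H$ beyond finiteness. One proves only the Lie-algebra statement (Lemma~\ref{l:Lie algebra statement}): the map $\Hom (\fZ_{\sP_1\otimes\ldots\otimes\sP_l}^*,\BG_a)\to\Poly-add (\fZ_{\sP_1}^*\times\ldots\times\fZ_{\sP_l}^*,\BG_a)$ is an isomorphism. Here the reduction to $n=1$ is easy because it happens at the $\BG_a$-level: since $\BG_a$ is killed by $V$, replacing each $n$-cosmooth group by its quotient modulo the image of $V$ changes nothing (Lemma~\ref{l:again tensor structure}), and then the claim is the identification \eqref{e:tensor product of P_i/Q_i maps to P/Q} via diagram \eqref{e:commutativity to be used}. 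Since this holds after arbitrary base change, one has: $H':=\fZ_{\sP_1\otimes\ldots\otimes\sP_l}$ is finite locally free of order $p^{nr}$ with $r=\prod_i\rank\fZ_{\sP_i}$; $H$ is finite and killed by $F^n$; and $\dim\Lie (H_s)=r$ at every point $s$. Lemma~\ref{l:technical lemma} then forces $H'=H$, and $n$-smoothness of $H$ comes out as a \emph{conclusion} rather than being needed as an input. If you want to salvage your proposal, the missing ``K\"unneth statement for the Frobenius filtration'' should be replaced by exactly this: a fiberwise Lie-algebra computation plus Lemma~\ref{l:technical lemma}.
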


The proposition will be proved in \S\ref{ss:proving Zink functor of a tensor product}.

\subsubsection{Remarks}   \label{sss:tensor remarks}
(i) In the $n=1$ case  Proposition~\ref{p:Zink functor of a tensor product} is equivalent to Proposition~\ref{p:Cartier dual of tensor product}; this follows from \S\ref{sss:elementary case of Zink's functor}. 

(ii) Proposition~\ref{p:Zink functor of a tensor product} is a part of a bigger and ``cleaner'' picture, which is more or less described in \S 8 of an older version of this paper\footnote{See version 5 of the e-print arXiv:2307.06194.} (but without detailed proofs). The main point is that $\Sm_n^*(R)$ has a natural structure of a tensor category and the functor
\eqref{e:3dual Zink} has a natural structure of a tensor functor. In the case $n=1$ this was proved in \S\ref{ss:1-cosmooth group schemes}.

(iii) Let $\fD_{n,R}$ be the $n$-truncated Dieudonn\'e-Cartier ring of $R$, see \S\ref{ss:Dual Witt via Dieudonne}.
The ``$n$-truncated Cartier theory'' developed in \cite{KM} provides a fully faithful functor from $\Sm_n(R)$ to the category of $\fD_{n,R}$-modules; the functor is $G\mapsto \HHom (G^*, W_n)$. It is natural to expect that this is a tensor functor if $\Sm_n(R)$ is equipped with the tensor product mentioned in the previous remark and the category of $\fD_{n,R}$-modules is equipped with the Antieau-Nikolaus tensor product \cite[\S 4.2-4.3]{AN}. By \S\ref{ss:1-cosmooth group schemes}, this is true for $n=1$ (in this case a $\fD_{n,R}$-module is just an $R$-module equipped with a $p$-linear endomorphism, and the Antieau-Nikolaus tensor product is just the tensor product over $R$).

\subsection{Some lemmas}

\begin{lem}  \label{l:again tensor structure}
Let $G,\ldots ,G_l\in\Sm_n^*(R)$. Then the natural map
\[
\Hom (G_1,\BG_a)\otimes_R \ldots \otimes_R \Hom (G_l,\BG_a)\to\Poly-add (G_1\times \ldots G_l,\BG_a)
\]
is an isomorphism.
\end{lem}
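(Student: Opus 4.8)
The plan is to reduce the assertion to its $n=1$ instance, namely Proposition~\ref{p:tensor structure}(i); the mechanism is that every homomorphism to $\BG_a$ annihilates Verschiebung, so that neither side of the claimed isomorphism changes if each $G_i$ is replaced by a suitable $1$-cosmooth quotient. Throughout I use that the natural comparison map sends $f_1\otimes\ldots\otimes f_l$ to the poly-additive map $(x_1,\ldots ,x_l)\mapsto f_1(x_1)\cdots f_l(x_l)$ built from the ring structure of $\BG_a$. For the key vanishing, recall $V_{\BG_a}=0$ and that Verschiebung is functorial in commutative group schemes. I claim every poly-additive map $b\colon G_1\times\ldots\times G_l\to\BG_a$ annihilates $\im (V_{G_i})$ in each variable: freezing the remaining factors exhibits $b$ as a homomorphism of commutative group schemes over the base $T:=\prod_{j\ne i}G_j$, whence naturality of $V$ over $T$ gives $b\circ V_{(G_i)_T}=V_{(\BG_a)_T}\circ b^{(p)}=0$. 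Thus $b$ factors through $\prod_i\overline G_i$, where $\overline G_i:=\Coker (V_{G_i})$. Taking $l=1$ shows that the surjection $G_i\twoheadrightarrow\overline G_i$ induces an isomorphism $\Hom (\overline G_i,\BG_a)\iso\Hom (G_i,\BG_a)$; for general $l$, since $\prod_iG_i\twoheadrightarrow\prod_i\overline G_i$ is faithfully flat the pullback on poly-additive maps is injective, and the factorization makes it surjective, so $\Poly-add(\prod_i\overline G_i,\BG_a)\iso\Poly-add(\prod_iG_i,\BG_a)$.

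Next I would identify $\overline G_i$ as a $1$-cosmooth group scheme. Under Cartier duality $V_{G_i}=(F_{G_i^*})^*$, so $\overline G_i=\Coker (V_{G_i})=\bigl((G_i^*)^{(F)}\bigr)^*$ is the Cartier dual of the Frobenius kernel of $G_i^*$. As $G_i^*$ is $n$-smooth it is finite locally free, hence so is its Frobenius kernel $(G_i^*)^{(F)}$, which is moreover killed by $F$ and therefore $1$-smooth by \S\ref{sss:n=1 case bis}. Consequently $\overline G_i\in\Sm_1^*(R)$.

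To conclude, by Theorem~\ref{t:well known}(ii) each $\overline G_i$ is of the form $A_{P_i,\varphi_i}$, so Proposition~\ref{p:tensor structure}(i) yields an isomorphism $\bigotimes_i\Hom (\overline G_i,\BG_a)\iso\Poly-add(\prod_i\overline G_i,\BG_a)$. The comparison map is compatible with the identifications of the first paragraph, since both are built from multiplication in $\BG_a$ and $f_i(x_i)=\overline f_i(\pi_i(x_i))$ under $\Hom (\overline G_i,\BG_a)\iso\Hom (G_i,\BG_a)$. Hence the square relating the two comparison maps commutes, its top arrow and both vertical arrows are isomorphisms, and therefore so is the bottom arrow, which is the map asserted in the lemma.

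The step needing the most care is making the first paragraph and the identification $\Coker (V_{G_i})=((G_i^*)^{(F)})^*$ valid over an arbitrary $\BF_p$-algebra $R$ rather than merely fiberwise: one must know that $\im (V_{G_i})$ is a finite locally free subgroup of $G_i$ with $1$-cosmooth quotient. This is exactly what the functoriality of Verschiebung over the base $\prod_{j\ne i}G_j$ and the interchange of Frobenius kernels with Verschiebung cokernels under Cartier duality provide; once they are in place, the reduction to Proposition~\ref{p:tensor structure}(i) is purely formal.
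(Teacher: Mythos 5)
Your proof is correct and takes essentially the same route as the paper: both reduce to Proposition~\ref{p:tensor structure}(i) by replacing each $G_i$ with $\Coker (V:\Fr^*G_i\to G_i)\in\Sm_1^*(R)$ and observing that, since $\BG_a$ is killed by $V$, neither side of the comparison map changes under this replacement. Your freezing-the-other-variables argument over the base $\prod_{j\ne i}G_j$ is just the curried form of the paper's identification of bi-additive maps with $\Hom (G_1,\HHom (G_2,\BG_a))$, and your Cartier-duality identification $\Coker (V_{G_i})\iso ((G_i^*)^{(F)})^*$ merely supplies the justification for $1$-cosmoothness that the paper leaves implicit.
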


\begin{proof}
To simplify the notation, assume that $l=2$. Let $G'_i:=\Coker (V:\Fr^*G_i\to G_i)$, then $G'_i\in\Sm_1^*(R)$.
Since $\BG_a$ is killed by $V$, we have $\Hom (G_i,\BG_a)=\Hom (G'_i,\BG_a)$. Since
\[
\Poly-add (G_1\times G_2,\BG_a)=\Hom (G_1, \HHom (G_2,\BG_a))=\Hom (G_2, \HHom (G_1,\BG_a))
\]
and $\BG_a$ is killed by $V$, we see that $\Poly-add (G_1\times G_2,\BG_a)=\Poly-add (G'_1\times G'_2,\BG_a)$.
So the lemma reduces to the particular case $n=1$, which was treated in Proposition~\ref{p:tensor structure}(i). 
\end{proof}

\begin{lem}  \label{l:Lie algebra statement}
The map 
\begin{equation}  \label{e:the Lie algebra map}
\Hom (\fZ_{\sP_1\otimes\ldots\otimes\sP_l}^*,\BG_a)\to\Poly-add (\fZ_{\sP_1}^*\times\ldots\fZ_{\sP_l}^*,\BG_a)
\end{equation}
induced by \eqref{e:dual Zink as a pseuotensor functor} is an isomorphism.
\end{lem}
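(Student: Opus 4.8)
The plan is to identify both the source and the target of \eqref{e:the Lie algebra map} with one and the same explicit tensor product of Lie algebras, and then to verify that the map is the resulting canonical isomorphism. To set things up cleanly I would first reduce to the case where each $\sP_i$ lies in $\sDisp_n(R)$: the functor $\sDisp_n(R)\to\sDisp_n^{\weak}(R)$ (see \S\ref{sss:sDisp_weak}) is an essentially surjective tensor functor through which $\fZ$ factors (\S\ref{sss:Zink's functor on the weak category}, using Lemma~\ref{l:comparing the two HHoms}), so I may replace each weak semidisplay by a lift and compute everything in $\sDisp_n(R)$, writing $\sP_i=(P_i,Q_i,F,F_1)$.

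For the target, since $\fZ_{\sP_1}^*,\ldots,\fZ_{\sP_l}^*\in\Sm_n^*(R)$, I would invoke Lemma~\ref{l:again tensor structure} to identify $\Poly-add(\fZ_{\sP_1}^*\times\ldots\times\fZ_{\sP_l}^*,\BG_a)$ with $\Hom(\fZ_{\sP_1}^*,\BG_a)\otimes_R\ldots\otimes_R\Hom(\fZ_{\sP_l}^*,\BG_a)$, and then use $\Hom(\fZ_{\sP_i}^*,\BG_{a,R})=\Lie(\fZ_{\sP_i})\iso P_i/Q_i$ from \S\ref{sss:Lie of Cartier dual} and Lemma~\ref{l:Lie of Zink}(ii). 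For the source, the tensor product $\sP_1\otimes\ldots\otimes\sP_l$ computed via Lemma~\ref{l:tensoring truncated semidisplays} has $P''/Q''=(P_1/Q_1)\otimes_R\ldots\otimes_R(P_l/Q_l)$, so another application of Lemma~\ref{l:Lie of Zink}(ii) gives $\Hom(\fZ^*_{\sP_1\otimes\ldots\otimes\sP_l},\BG_a)=\Lie(\fZ_{\sP_1\otimes\ldots\otimes\sP_l})\iso P''/Q''$. Thus both sides become canonically the same tensor product $\bigotimes_R(P_i/Q_i)$, and it remains only to show that \eqref{e:the Lie algebra map} is the identity under these identifications; by $R$-linearity it suffices to test on a decomposable tensor $\bar m_1\otimes\ldots\otimes\bar m_l$ with $\bar m_i\in P_i/Q_i$.

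The crux, and the step I expect to be the main obstacle, is checking this compatibility of the various canonical identifications. I would trace through the explicit description of $\Lie(\fZ_\sP)\iso P/Q$ recorded in diagram~\eqref{e:commutativity to be used}: a lift $m\in P$ of $\bar m$ represents the homomorphism $\fZ_\sP^*\to\BG_a$ sending $\xi\mapsto w_0(\eta_\xi(m))$, where $\eta_\xi\in\Hom_{W_n(R)}(P,W_{n,R})$ is $\xi$ viewed through \eqref{e:second subgroup} and $w_0\colon W_n\to\BG_a$ is the $0$-th Witt component. Feeding a lift $m_1\otimes\ldots\otimes m_l$ of $\bar m_1\otimes\ldots\otimes\bar m_l$ through this description and pulling back along \eqref{e:dual Zink as a pseuotensor functor} — whose effect, by the definition of the pairing \eqref{e:tensor product map} and of $\bone_{n,R}$ (see \eqref{e:unit object}), is to send $(\xi_1,\ldots,\xi_l)$ to the functional $m_1\otimes\ldots\otimes m_l\mapsto\eta_{\xi_1}(m_1)\cdots\eta_{\xi_l}(m_l)$ (product in the ring $W_n$) — yields the poly-additive map $(\xi_1,\ldots,\xi_l)\mapsto w_0\bigl(\prod_i\eta_{\xi_i}(m_i)\bigr)$. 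The single fact that makes the argument go through is that $w_0$ is a \emph{ring} homomorphism, so this equals $\prod_i w_0(\eta_{\xi_i}(m_i))$; by \eqref{e:commutativity to be used} each factor $\xi_i\mapsto w_0(\eta_{\xi_i}(m_i))$ is exactly the homomorphism represented by $\bar m_i$, so the product is precisely the image of $\bar m_1\otimes\ldots\otimes\bar m_l$ under Lemma~\ref{l:again tensor structure}. Hence \eqref{e:the Lie algebra map} carries $\bar m_1\otimes\ldots\otimes\bar m_l$ to itself, proving it is an isomorphism. The real difficulty here is purely bookkeeping: correctly matching the Cartier-duality pairing \eqref{e:our pairing}, the Lie-algebra identification, and the tensor-product pairing so that the multiplicativity of $w_0$ can be applied.
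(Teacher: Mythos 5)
Your proof is correct and follows essentially the same route as the paper: reduce to $\sDisp_n(R)$ by essential surjectivity, identify source and target with $\bigotimes_i(P_i/Q_i)$ via Lemma~\ref{l:again tensor structure}, Lemma~\ref{l:Lie of Zink}(ii) and \S\ref{sss:Lie of Cartier dual}, and then conclude via diagram~\eqref{e:commutativity to be used}. Your explicit computation with the $0$-th Witt component $w_0$ (using that it is a ring homomorphism) is exactly the detail the paper compresses into its final sentence, so it is a faithful elaboration rather than a different argument.
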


\begin{proof}
To simplify the notation, assume that $l=2$. 
By essential surjectivity of the functor $\sDisp_n(R)\to\sDisp_n^{\weak}(R)$,
we can assume that $\sP_1, \sP_2$ are objects of $\sDisp_n(R)$ (rather than $\sDisp_n^{\weak}(R)$).
Then so is the tensor product $\sP:=\sP_1\otimes\sP_2$.

Recall that $\sP_i$ is a quadruple $(P_i,Q_i, F,F_1)$, see \S\ref{sss:definition of sDisp_n}. Then $\sP$ is a quadruple $(P,Q,\ldots )$ with
$P=P_1\otimes P_2$, $Q=(P_1\otimes Q_2)+(Q_1\otimes P_2)$. 

We have natural homomorphisms
\[
P_i\to\Hom (\fZ_{\sP_i}^*,W_n)\to\Hom (\fZ_{\sP_i}^*,\BG_a), \quad P\to\Hom (\fZ_{\sP}^*,W_n)\to\Hom (\fZ_{\sP}^*,\BG_a)
\]
and a commutative square
\begin{equation}   \label{e:Lau's diagram}
\xymatrix{
P=P_1\otimes P_2\ar[r]^-f \ar[d]_g & \Hom (\fZ_{\sP_1}^*,\BG_a)\otimes_R \Hom (\fZ_{\sP_2}^*,\BG_a)\ar[d]^\cong\\
\Hom (\fZ_{\sP}^*,\BG_a)\ar[r]^-h & \Poly-add (\fZ_{\sP_1}^*\times  \fZ_{\sP_2}^*,\BG_a)
}
\end{equation}
where the right vertical arrow is the isomorphism from Lemma~\ref{l:again tensor structure}. 
By Lemma~\ref{l:nasty}, $g$ is the canonical map $P\epi P/Q$, and $f$ is the canonical map  $P_1\otimes P_2\epi (P_1/Q_1)\otimes (P_2/Q_2)$.
Thus $f$ and $g$ are surjective maps with equal kernels. So $h$ is an isomorphism.
\end{proof}

\subsection{Proof of Proposition~\ref{p:Zink functor of a tensor product}}   \label{ss:proving Zink functor of a tensor product}
$\PPoly-add (\fZ_{\sP_1}^*\times \ldots \fZ_{\sP_l}^*,\BG_m)$ is a scheme of finite type over~$R$. So the scheme $H:=\PPoly-add (\fZ_{\sP_1}^*\times \ldots \fZ_{\sP_l}^*,\BG_m)^{(F^n)}$ is a finite group $R$-scheme killed by~$F^n$.
We have $\Lie (H)=\Poly-add (\fZ_{\sP_1}^*\times\ldots\fZ_{\sP_l}^*,\BG_a)$.

Let $H':=\fZ_{\sP_1\otimes\ldots\otimes\sP_l}$, then $\Lie (H')=\Lie (\HHom (\fZ_{\sP_1\otimes\ldots\otimes\sP_l}^*,\BG_m))=\Hom (\fZ_{\sP_1\otimes\ldots\otimes\sP_l}^*,\BG_a)$.

The homomorphism $\Lie (H')\to\Lie (H)$ corresponding to our homomorphism $f:H'\to H$ is the map 
\eqref{e:the Lie algebra map}, which is an isomorphism by Lemma~\ref{l:Lie algebra statement}. Moreover, for any $R$-algebra $\tilde R$, the map $\Lie (H'\otimes_R\tilde R)\to\Lie (H\otimes_R\tilde R)$ corresponding to $f$ is an isomorphism.

Thus $f:H'\to H$ is a closed immersion satitsfying the conditions of Lemma~\ref{l:technical lemma}. So $f$ is an isomorphism. \qed

\section{Explicit description of $\Lau_n^{d,d'}$}  \label{s:Explicit description}
\subsection{Formulation of the result}
\subsubsection{The goal}
Let $\Sm_n^r$ denote the stack of groupoids formed by commutative $n$-smooth group schemes of rank~$r$. 
Our goal is to describe the commutative group scheme $\Lau_n^{d,d'}$ on the stack $\Disp_n^{d,d'}$.
By Theorem~\ref{t:main}(ii), this group scheme is $n$-smooth of rank $d'(d-d')$. Thus $\Lau_n^{d,d'}$ corresponds to a morphism
$\Disp_n^{d,d'}\to\Sm_n^{d'(d-d')}$. Our goal is to describe this morphism explicitly.

\subsubsection{Remark}   \label{sss:rank of Zink}
By the second part of Lemma~\ref{l:Lie of Zink}(ii), Zink's functor defines a morphism
\[
\fZ :\sDisp_n^{d,d'}\to\Sm_n^{d'},
\]
where $\sDisp_n^{d,d'}$ is the stack of groupoids defined in \S\ref{ss:Algebraicity of sDisp_n}.

\subsubsection{A morphism $\Disp_n^{d,d'}\to\sDisp_n^{d^2,d'(d-d')}$}   \label{sss:the semidisplay on Disp}
There is a duality functor 
\begin{equation}   \label{e:the duality functor}
\Disp_n (R)^{\op}\iso\Disp_n (R), \quad \sP\mapsto\sP^t .
\end{equation}
For now, we use it as a ``black box''. Its definition (due to E.~Lau) will be given in \S\ref{sss:duality on DISP_n and Disp_n}.

The functor \eqref{e:the duality functor}  induces an isomorphism of stacks of groupoids
\[
\Disp_n^{d,d'}\iso\Disp_n^{d,d-d'}.
\]
Combining it with the morphisms $\Disp_n^{d,d'}\to\sDisp_n^{d,d'}$ and $\Disp_n^{d,d-d'}\to\sDisp_n^{d,d-d'}$, we get a morphism
\begin{equation}    \label{e:map to the product}
\Disp_n^{d,d'}\to\sDisp_n^{d,d'}\times\sDisp_n^{d,d-d'}.
\end{equation}
The tensor product from \S\ref{sss:sDisp_n(R) as a tensor category} gives a morphism
\begin{equation}  \label{e:the otimes morphism}
\sDisp_n^{d,d'}\times\sDisp_n^{d,d-d'}\overset{\otimes}\longrightarrow\sDisp_n^{d^2,d'(d-d')}.
\end{equation}
Composing \eqref{e:map to the product} and \eqref{e:the otimes morphism}, we get a morphism
\begin{equation}   \label{e:the semidisplay on Disp}
\Disp_n^{d,d'}\to\sDisp_n^{d^2,d'(d-d')}.
\end{equation}

\subsubsection{Remark}
In \S\ref{ss:the canonical semidisplay} we will give another description of the above morphism \eqref{e:the semidisplay on Disp}.

\begin{thm}   \label{t:Lau_n explicitly}
The $n$-smooth group scheme $\Lau_n^{d,d'}$ on $\Disp_n^{d,d'}$ corresponds to the composite morphism
\[
\Disp_n^{d,d'}\to\sDisp_n^{d^2,d'(d-d')}\overset{\fZ}\longrightarrow\Sm_n^{d'(d-d')},
\]
where the first arrow is \eqref{e:the semidisplay on Disp} and the second one is given by Zink's functor (see \S\ref{sss:rank of Zink}).
In other words, 
\[
\Lau_n^{d,d'}=\fZ (s(\sP_{\univ})\otimes s(\sP_{\univ}^t)),
\]
where $\sP_{\univ}$ is the universal $n$-truncated display, $\sP_{\univ}^t$ is its dual, $s$ is the functor from
$n$-truncated displays to $n$-truncated semidisplays, $\fZ$ is the Zink functor, and $\otimes$ is the tensor product from \S\ref{sss:sDisp_n(R) as a tensor category}.
\end{thm}

The proof will be given in \S\ref{ss:proof of explicit description of Lau_n}. It is based on the result of E.~Lau and T.~Zink described in the next subsection.

\subsection{The group schemes $\cA _n^{d,d'}$ and $\cB_n^{d,d'}$}
In \S\ref{ss:Lie of Lau} we defined commutative $n$-smooth group schemes $\cA _n^{d,d'}$ and $\cB_n^{d,d'}$ on $\Disp_n^{d,d'}$. The rank of 
$\cA _n^{d,d'}$ (resp.~$\cB_n^{d,d'}$) equals $d'$ (resp.~$d-d'$). Thus $\cA _n^{d,d'}$ corresponds to a morphism
\begin{equation}  \label{e:the map for cA_n}
\Disp_n^{d,d'}\to\Sm_n^{d'},
\end{equation}
and $\cB_n^{d,d'}$ corresponds to a morphism
\begin{equation}  \label{e:the map for cB_n}
\Disp_n^{d,d'}\to\Sm_n^{d-d'}.
\end{equation}

\begin{prop}    \label{p:LZ}
(i) The morphism \eqref{e:the map for cA_n} equals the composition 
$$\Disp_n^{d,d'}\to\sDisp_n^{d,d'}\overset{\fZ}\longrightarrow\Sm_n^{d'}.$$

(ii) The morphism \eqref{e:the map for cB_n} equals the composition 
$$\Disp_n^{d,d'}\iso\Disp_n^{d,d-d'}\to\sDisp_n^{d,d-d'}\overset{\fZ}\longrightarrow\Sm_n^{d-d'},$$
where the first arrow comes from the duality functor for $n$-truncated displays.
\end{prop}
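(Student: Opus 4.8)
The plan is to deduce both statements from the full faithfulness of $\phi_n^*$ (Lemma~\ref{l:fully faithful}), by comparing the pullbacks along $\phi_n\colon\BBT_n^{d,d'}\to\Disp_n^{d,d'}$, where everything can be rewritten in terms of the $\BT_n$ group $G$ itself. Write $\sP(G)\in\sDisp_n(R)$ for the image in $\sDisp_n(R)$ of Lau's $n$-truncated display $\phi_n(G)\in\Disp_n(R)$. Since $\phi_n$ is an fppf gerbe, $\phi_n^*$ is fully faithful, so any isomorphism between the pullbacks of two group schemes on $\Disp_n^{d,d'}$ descends uniquely to an isomorphism of the original group schemes; it therefore suffices to produce a functorial isomorphism of the relevant pullbacks. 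Recall that a morphism to $\Sm_n^{r}$ is the same datum as an $n$-smooth group scheme of rank $r$, so equality of the classifying morphisms in (i)--(ii) is equivalent to an isomorphism of the corresponding group schemes.

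For part (i), the pullback $\phi_n^*\cA_n^{d,d'}$ is, by construction (\S\ref{ss:Lie of Lau} and Theorem~\ref{t:Lie of Lau}(i)), the group scheme $\tilde\cA_n^{d,d'}$ sending $G$ to the $n$-smooth group $G^{(F^n)}$. On the other hand, the pullback along $\phi_n$ of the composite $\Disp_n^{d,d'}\to\sDisp_n^{d,d'}\overset{\fZ}\longrightarrow\Sm_n^{d'}$ sends $G$ to $\fZ_{\sP(G)}$; both sides are $n$-smooth of rank $d'$ (for the Zink functor this is Lemma~\ref{l:Lie of Zink}(ii) together with $\rank(P/Q)=d'$, which is also why the composite lands in $\Sm_n^{d'}$). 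It then remains to exhibit a canonical isomorphism
\[
\fZ_{\sP(G)}\iso G^{(F^n)},
\]
functorial in $G$. This is exactly the compatibility established by E.~Lau and T.~Zink in \cite{LZ} (building on \cite{Zink}) between Zink's functor and the covariant crystalline Dieudonn\'e display: the display $\phi_n(G)$ is normalized so that Zink's functor recovers the $F^n$-torsion (formal) part of $G$. Combined with Lemma~\ref{l:fully faithful}, this proves~(i).

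For part (ii), I would apply (i) to the Cartier dual $G^*$, which is a $\BT_n$ group of height $d$ and dimension $d-d'$ (\S\ref{ss:BT_n recollections}), so that $\phi_n^*\cB_n^{d,d'}=\tilde\cB_n^{d,d'}$ sends $G$ to $(G^*)^{(F^n)}$. The key input is that $\phi_n$ intertwines Cartier duality with the display duality $\sP\mapsto\sP^t$ of \cite[Rem.~4.4]{Lau13}, i.e.\ there is a canonical isomorphism $\phi_n(G^*)\iso\phi_n(G)^t$ in $\Disp_n^{d,d-d'}$. Granting this, the pullback along $\phi_n$ of the composite in (ii) sends $G$ to $\fZ_{\sP(G)^t}=\fZ_{\sP(G^*)}$, which by (i) applied to $G^*$ is canonically isomorphic to $(G^*)^{(F^n)}=\tilde\cB_n^{d,d'}$. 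Full faithfulness of $\phi_n^*$ then yields the claimed equality of morphisms $\Disp_n^{d,d'}\to\Sm_n^{d-d'}$.

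The main obstacle is the isomorphism $\fZ_{\sP(G)}\iso G^{(F^n)}$ of part (i): this is the substantive content, and everything hinges on matching the normalization of Lau's covariant Dieudonn\'e functor $\phi_n$ with the conventions for $\fZ$ fixed in \S\ref{s:Zink's functor}, so that the comparison of \cite{LZ} applies verbatim; in particular one must invoke that the \cite{LZ} functor $\Disp_n(R)\to\Sm_n(R)$ coincides with $\fZ$ precomposed with $\Disp_n(R)\to\sDisp_n(R)$, which holds by \S\ref{ss:truncated Zink's functor}. For part (ii) the remaining point to pin down is the duality compatibility $\phi_n(G^*)\iso\phi_n(G)^t$; this is part of the crystalline Dieudonn\'e formalism but must be checked against the sign and grading conventions of \cite{Lau13}.
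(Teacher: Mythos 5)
Your proposal is correct and follows essentially the same route as the paper: part (i) is reduced to the Lau--Zink comparison between Zink's functor and the display of a $\BT_n$ group (this is precisely \cite[Lemma~3.12]{LZ}, which the paper cites verbatim), and part (ii) is deduced from (i) via the compatibility of $\phi_n$ with Cartier duality on $\BBT_n^{d,d'}$ and display duality on $\Disp_n^{d,d'}$ --- exactly the commutative square the paper invokes (whose proof the paper itself attributes to a communication of E.~Lau, with the non-truncated case in \cite[Rem.~2.3]{Lau13}). Your extra scaffolding via full faithfulness of $\phi_n^*$ (Lemma~\ref{l:fully faithful}) just makes explicit the descent step that the paper leaves implicit in the definition of $\cA_n^{d,d'}$ and $\cB_n^{d,d'}$.
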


\begin{proof}
Statement (i) follows from \cite[Lemma 3.12]{LZ} combined with Lemma~\ref{l:fully faithful}. By Lemma~\ref{l:fully faithful}, to deduce (ii) from (i) it suffices to prove commutativity of the diagram
\begin{equation}  \label{e:compatibility with duality}
\xymatrix{
\BBT_n^{d,d'}\ar[r]^-\sim \ar[d] & \BBT_n^{d,d-d'}\ar[d]\\
\Disp_n^{d,d'}\ar[r]^-\sim & \Disp_n^{d,d-d'}
}
\end{equation}
whose horizontal arrows are given by the duality functors; in other words, it suffices to show that the truncated display functor $\Phi_{n,R}$  from \cite[Prop.~4.1]{Lau13} commutes with duality.
As explained to me by E.~Lau, this can be easily deduced from a similar statement for the non-truncated display functor, which is known to be true (see \cite[Rem.~2.3]{Lau13}). One uses the following fact:
if an $n$-truncated Barsotti-Tate group $G$ is represented as a kernel of an isogeny of $p$-divisble groups $H_1\to H_2$ then $\Ker (H_2^\vee\to H_1^\vee )=G^*$; here $H_i^\vee$ is the dual $p$-divisible group,
i.e., $H_i^\vee =H_i^*\otimes (\BQ_p/\BZ_p)$, where $H_i^*$ is the Cartier dual.
\end{proof}

\subsubsection{Remark}  \label{sss:shortcut for n=1}
Combining Proposition~\ref{p:LZ} with Lemma~\ref{l:Lie of Zink}(ii), one gets an explicit description of the restricted Lie algebras $\Lie (\cA _n^{d,d'})$ and $\Lie (\cB_n^{d,d'})$ from Theorem~\ref{t:Lie of Lau}(ii).

\subsection{Proof of Theorem~\ref{t:Lau_n explicitly}}   \label{ss:proof of explicit description of Lau_n}  
Theorem~\ref{t:Lie of Lau}(iii) provides an isomorphism
\[
\Lau_n^{d,d'}\iso \PPoly-add ((\cA_n^{d,d'})^*\times (\cB_n^{d,d'})^*, \BG_m)^{(F^n)}.
\]
Proposition~~\ref{p:LZ} provides isomorphisms $\cA _n^{d,d'}\iso\fZ_{\sP_1}$, $\cB _n^{d,d'}\iso\fZ_{\sP_2}$, where $\sP_1,\sP_2$ are certain $n$-truncated semidisplays over the stack $\Disp_n^{d,d'}$.
Finally, by Proposition~\ref{p:Zink functor of a tensor product}, $\PPoly-add ((\cA_n^{d,d'})^*\times (\cB _n^{d,d'})^*, \BG_m)^{(F^n)}=\fZ_{\sP_1\otimes\sP_2}$. \qed

\section{The morphism $\Disp_n^{d,d'}\to\sDisp_n^{d^2,d'(d-d')}$ via higher displays}   \label{s:higher displays}
\subsection{The goal and plan of this section}
\subsubsection{The goal and the idea}   \label{sss:The goal and the idea}
In \S\ref{sss:the semidisplay on Disp} we defined a canonical morphism 
\begin{equation}  \label{e:2the semidisplay on Disp}
\Disp_n^{d,d'}\to\sDisp_n^{d^2,d'(d-d')}.
\end{equation}
Our goal is to describe this morphism in ``Shimurizable'' terms, i.e., in a way which makes it clear how to generalize\footnote{For the actual generalization, see \S\ref{ss:2from G-displays to semidisplays} of Appendix~\ref{s:Disp_n and Lau_n}.} \eqref{e:2the semidisplay on Disp} to a morphism $\Disp_n^{G,\mu}\to\sDisp_n$, 
where $G$ is a smooth affine group scheme over $\BZ/p^n\BZ$ and $\mu :\BG_m\to G$ is a 1-bounded homomorphism.

The idea is roughly as follows. The definition of the morphism \eqref{e:2the semidisplay on Disp} given in \S\ref{sss:the semidisplay on Disp}  involves something like $\rho\otimes\rho^*$, where $\rho$ is the standard $d$-dimensional representation of $GL(d)$.
Since $\rho\otimes\rho^*$ is the adjoint representation, it has an analog for any $G$ (while the decomposition of the adjoint representation as a tensor product is specific for $GL(d)$).

To implement this idea, we will use the notion of higher display, which was developed by Langer-Zink \cite{Langer-Zink} and then by Lau \cite{Lau21}.

\subsubsection{The plan}  \label{sss:plan for higher displays}
In \S\ref{ss:Lau equivalence}-\ref{ss:higher (pre)displays} we recall some material from \cite{Lau21}:
the $n$-truncated Witt frame, the category of finitely generated projective graded modules over it, and the category of $n$-truncated higher (pre)displays.
In \S\ref{ss:preDISP to sDisp} we construct a tensor functor from finitely generated projective higher predisplays to semidisplays.
In \S\ref{ss:preDISP=sDisp-strong} we discuss the interpretation of $\Disp_n$ and $\sDisp_n^{\strong}$ via higher predisplays.
In \S\ref{ss:the canonical semidisplay} we implement the idea from \S\ref{sss:The goal and the idea}.

\S\ref{ss:Tensor structure on a-restricted Vect}-\ref{ss:Zink complex} can be skipped by the reader.
The goal of \S\ref{ss:Tensor structure on a-restricted Vect}-\ref{ss:Tensor structure on a-restricted preDISP} is to introduce the tensor structure on $\sDisp_n^{\strong}(R)$ promised in \S\ref{s:n-truncated semidisplays},
and \S\ref{ss:Zink complex} will be used in a single sentence in~\S\ref{ss:Lau_n,G,mu}.

\subsection{The Lau equivalence} \label{ss:Lau equivalence} 
In this subsection we retell a part of E.~Lau's paper \cite{Lau21} (but not quite literally).

\subsubsection{The category $\cC$}  \label{sss:triples A,t,u} \label{sss:cC non-economic}
Let $\cC$ be the category of triples $(A,t,u)$, where $A=\bigoplus\limits _{i\in\BZ}A_i$ is a $\BZ$-graded ring and $t\in A_{-1}$, $u\in A_1$ are such that

(i) multiplication by $u$ induces an isomorphism $A_i\iso A_{i+1}$ for $i\ge 1$;

(ii) multiplication by $t$ induces an isomorphism $A_i\iso A_{i-1}$ for $i\le 0$.

Because of (i) and (ii), $\cC$ has an ``economic'' description. To formulate it, we will define a category $\cC^{\ec}$ (where ``ec'' stands for ``economic'') and construct an equivalence $\cC\iso\cC^{\ec}$.

\subsubsection{The category $\cC^{\ec}$} \label{sss:cC economic}
Let $\cC^{\ec}$ be the category of diagrams 
\begin{equation}   \label{e:objects of C ec}
A_0\underset{V}{\overset{F}\rightleftarrows} A_1,
\end{equation}
where $A_0$ and $A_1$ are rings, $F$ is a ring homomorphism, and $V$ is an additive map such that
\begin{equation}   \label{e:aVb}
a\cdot V(a')=V(F(a)a') \quad  \mbox{ for } a\in A_0, \, a'\in A_1
\end{equation}
and for $a'\in A_1$ we have
\begin{equation}   \label{e:FV=p}
F(V(a'))={\mathbf p}a', \quad  \mbox{ where } {\mathbf p}:=F(V(1))\in A_1.
\end{equation}
Note that by \eqref{e:aVb} we have $VF=V(1)$, which implies \eqref{e:FV=p} if $a'\in F(A_0)$ (but not in general).

\subsubsection{The functor $\cC\to\cC^{\ec}$}   \label{sss:from cC to economic category}
Given a triple $(A,t,u)\in \cC$, we construct a diagram \eqref{e:objects of C ec} as follows:

(i)  $A_0$ is the $0$-th graded component of $A$;

(ii) $A_1$ is the first graded component of $A$, and the product of $x,y\in A_1$ is as follows: first multiply $x$ by $y$ in $A$, then apply the isomorphism $A_2\iso A_1$ inverse to $u:A_1\iso A_2$; equivalently,
the product in $A_1$ comes from the product in $A/(u-1)A$ and the natural map $A_1\to A/(u-1)A$, which is an isomorphism by virtue of \S\ref{sss:triples A,t,u}(i);

(iii) $F:A_0\to A_1$ is multiplication by $u$, and $V:A_1\to A_0$ is multiplication by $t$.

\begin{prop}    \label{p:Lau equivalence}
The above functor $\cC\to\cC^{\ec}$ is an equivalence. The inverse functor $\fL :\cC^{\ec}\to\cC$ takes a diagram $A_0\underset{V}{\overset{F}\rightleftarrows} A_1$ to a certain graded subring of the graded ring 
$$A_0[t,t^{-1}]\times A_1[u,u^{-1}], \quad \deg t:=-1, \; \deg u=1;$$
namely, the $i$-th graded component of the subring is the set of pairs $(at^{-i},a'u^i)$, where $a\in A_0$ and $a'\in A_1$ satisfy the relation
\begin{equation}  \label{e:Lau equivalence}
a'={\mathbf p}^{-i}F(a) \mbox{ if } i\le 0, \quad a=V({\mathbf p}^{i-1}a')  \mbox{ if } i>0.
\end{equation}
(As before, $\mathbf p:=F(V(1))\in A_1$.) \qed
\end{prop}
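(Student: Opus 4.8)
The plan is to exhibit the functor $\cC\to\cC^{\ec}$ of \S\ref{sss:from cC to economic category} and the candidate $\fL$ of the statement as mutually inverse. Since $\fL$ is prescribed by an explicit formula, there are two tasks. First I would check that $\fL$ is well defined: that the stated graded pieces $G_i$ form a graded subring of $A_0[t,t^{-1}]\times A_1[u,u^{-1}]$ carrying distinguished elements $t,u$ which satisfy conditions (i)--(ii) of \S\ref{sss:triples A,t,u}. Second I would produce natural isomorphisms showing that the two round-trips are the identity. Functoriality of $\fL$ on morphisms of $\cC^{\ec}$ is then immediate, since a morphism of diagrams induces compatible maps on each $G_i$.

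For well-definedness, each $G_i$ is cut out of $A_0 t^{-i}\times A_1 u^{i}$ by an additive condition, so it is an additive subgroup; the substance is closure under multiplication, $G_iG_j\subseteq G_{i+j}$. I would organize this by the signs of $i,j$ and of $i+j$, checking in each case that the product of the two normalized representatives is again in the normal form of \eqref{e:Lau equivalence}. The relations \eqref{e:aVb} and \eqref{e:FV=p} are exactly what reconcile the two regimes $i\le 0$ and $i>0$: the identity $F(V(a'))=\mathbf p\,a'$ lets one move a factor of $\mathbf p$ across the boundary $i=0$, while the projection formula \eqref{e:aVb} matches ``$V$ after multiplication'' with ``multiplication after $V$''. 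The distinguished elements are $u=(V(1)\,t^{-1},\,u)\in G_1$ and $t=(t,\,\mathbf p\,u^{-1})\in G_{-1}$, and the same two relations show that multiplication by $u$ (resp.\ $t$) gives $G_i\iso G_{i+1}$ for $i\ge 1$ (resp.\ $G_i\iso G_{i-1}$ for $i\le 0$), which is precisely conditions (i)--(ii).

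For the round-trip $\cC^{\ec}\to\cC\to\cC^{\ec}$, I would read off the economic data of $\fL(A_0\rightleftarrows A_1)$: its degree-$0$ piece $G_0=\{(a,F(a))\}$ is isomorphic as a ring to $A_0$, its degree-$1$ piece $G_1$ is isomorphic to $A_1$ (the twist by $u$ built into $\cC\to\cC^{\ec}$ recovers the given product on $A_1$), and multiplication by the elements $u,t$ above recovers $F$ and $V$; thus the economic data is canonically the original diagram. For the round-trip $\cC\to\cC^{\ec}\to\cC$ the conceptual input is localization. Conditions (ii) and (i) of \S\ref{sss:triples A,t,u} yield graded isomorphisms $A[t^{-1}]\iso A_0[t,t^{-1}]$ and $A[u^{-1}]\iso A_1[u,u^{-1}]$, where $A_0=A[t^{-1}]_0$ and $A_1=A[u^{-1}]_0$ are identified with the original graded pieces because the colimits $\colim_k A_{-k}$ and $\colim_k A_{k}$ stabilize. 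The map $A\to A[t^{-1}]\times A[u^{-1}]$ is injective (again because $t$ is injective in degrees $\le 0$ and $u$ in degrees $\ge 1$), and I would then compare in each degree $i$: here $\mathbf p=F(V(1))=u^2t$, so $\mathbf p^{k}=u^{k+1}t^{k}$, and a short computation shows the images of $x\in A_i$ in $A_0t^{-i}$ and $A_1u^{i}$ satisfy exactly the relation \eqref{e:Lau equivalence}. Hence $A_i\iso G_i$ naturally and $A\iso\fL(\text{economic data of }A)$ in $\cC$.

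The main obstacle is this last degreewise comparison: one must pin down the precise maps $A_i\to A_0$ and $A_i\to A_1$ coming from the two localizations and verify that their common image is cut out by \eqref{e:Lau equivalence} and nothing more. The delicacy is that $u\colon A_0\to A_1$ and $t\colon A_1\to A_0$ are \emph{not} isomorphisms — they become invertible only in degrees $\ge 1$, resp.\ $\le 0$ — and $\mathbf p=F(V(1))$ is exactly the element recording the overlap of the two localizations. Once the bookkeeping identity for the powers of $\mathbf p$ is in hand, every case reduces to a direct manipulation using \eqref{e:aVb} and \eqref{e:FV=p}; the sign-case analysis in the multiplicativity check of the first step is the most tedious but entirely mechanical part.
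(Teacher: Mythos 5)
Your proposal is correct and takes essentially the same route as the paper: the paper leaves the proof to the reader, but its motivating remark in \S\ref{sss:idea behind Lau equivalence}(i) — injectivity of $A\to A[1/t]\times A[1/u]$ together with the identifications $A[1/t]=A_0[t,t^{-1}]$ and $A[1/u]=A_1[u,u^{-1}]$ — is exactly the localization argument you develop, and your remaining steps (multiplicativity of the graded pieces $G_i$, the two round trips, the bookkeeping identity $\mathbf{p}^k=u^{k+1}t^k$) are precisely the mechanical verifications the paper delegates. In particular, your identification of the distinguished elements $u=(V(1)t^{-1},u)$, $t=(t,\mathbf{p}u^{-1})$ and of the unit $1_{A_1}=u$ is the correct bookkeeping, and your case analysis by signs does go through using only \eqref{e:aVb} and \eqref{e:FV=p}.
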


The functor $\fL :\cC^{\ec}\to\cC$ will be called the \emph{Lau equivalence.}

The proof of the proposition is left to the reader. However, let us make some remarks.

\subsubsection{Remarks}   \label{sss:idea behind Lau equivalence}
(i) The description of $\fL$ from Proposition~\ref{p:Lau equivalence} is motivated by the following observation:
if $(A,t,u)\in \cC$ then the natural map $A\to A[1/t]\times A[1/u]$ is injective, $A[1/t]=A_0[t,t^{-1}]$, and $A[1/u]=A_1[u,u^{-1}]$, where the ring structure on $A_1$ is as in \S\ref{sss:from cC to economic category}(ii).

(ii) If $(A,t,u)\in \cC$ then the nonpositively graded part of $A$ identifies with $A_0[t]$ and the positively graded one identifies with $uA_1[u]$, where the ring structure on $A_1$ is as in \S\ref{sss:from cC to economic category}(ii).
So instead of describing $A$ as a subring of $A_0[t,t^{-1}]\times A_1[u,u^{-1}]$, one could describe $A$ as the group $A[t]\oplus uA_1[u]$ equipped with a ``tricky'' multiplication operation.

(iii) There exists a natural situation in which ${\mathbf p}\ne p$ and moreover, ${\mathbf p}\not\in p\cdot A_1^\times$. Namely, in the case $p=2$ this happens for the triple $(\SW ,F,\hV)$ mentioned in \S\ref{sss:Idea of the definition}.

\subsubsection{The Witt frame}   \label{sss:Witt frame}
For any ring $R$ the maps $F,V:W(R)\to W(R)$ satisfy the properties from \S\ref{sss:cC economic} (with $\mathbf p=p$). Applying the Lau equivalence to the diagram $W(R)\underset{V}{\overset{F}\rightleftarrows} W(R)$, one gets an object of $\cC$.
Following \cite{Lau21}, we call it the \emph{Witt frame}. Following \cite{Daniels}, we denote it by $W (R)^\oplus$ (in \cite[ Example 2.1.3]{Lau21} it is denoted by $\underline{W}(R)$).

\subsubsection{The $n$-truncated Witt frame}   \label{sss:2truncated Witt frame}
Let $n\in\BN$ and let $R$ be an $\BF_p$-algebra. Then we have a map $F:W_n(R)\to W_n(R)$ (in addition to $V:W_n(R)\to W_n(R)$).
Applying the Lau equivalence to the diagram $W_n(R)\underset{V}{\overset{F}\rightleftarrows} W_n(R)$, one gets an object of $\cC$.
Following \cite{Lau21}, we call it the \emph{$n$-truncated Witt frame}. Following \cite{Daniels}, we denote it by $W_n(R)^\oplus$ (in Example~2.1.6 of \cite{Lau21} it is denoted by $\underline{W_n}(R)$).

Let us note that $W (R)^\oplus$ and $W_n (R)^\oplus$ are particular examples of ``higher frames'' in the sense of \cite[\S 2]{Lau21}. We will not use more general higher frames.

\subsection{Recollections on the $n$-truncated Witt frame}   \label{ss:truncated Witt frame}
Let $n\in\BN$ and let $R$ be an $\BF_p$-algebra. Let us recall the material from \cite{Lau21} about the $n$-truncated Witt frame $W_n(R)^\oplus$.

\subsubsection{}  \label{sss:truncated Witt frame}
The definition of $W_n(R)^\oplus$ given in \S\ref{sss:2truncated Witt frame} amounts to the following. We
equip the rings $W_n(R)[t,t^{-1}]$ and $W_n(R)[u,u^{-1}]$ with the $\BZ$-grading such that $\deg t=-1$, $\deg u=1$, and $\deg a=0$ for all $a\in W_n(R)$. Then $$W_n(R)^\oplus\subset W_n(R)[u,u^{-1}]\times W_n(R)[t,t^{-1}]$$
is the graded subring whose $i$-th graded component consists of all pairs $(au^i, a't^{-i})$, where $a,a'\in W_n(R)$ are related as follows:
\begin{equation} \label{e:a' via a}
a=p^{-i} F(a')\mbox{ if } i\le 0,
\end{equation}
\begin{equation}    \label{e:a via a'}
a'=p^{i-1}V(a) \mbox{ if } i>0.
\end{equation}
It is easy to check that $W_n(R)^\oplus$ is indeed a subring of $W_n(R))[u,u^{-1}]\times W_n(R))[t,t^{-1}]$; this is believable because \eqref{e:a via a'} is an ``avatar'' of \eqref{e:a' via a} (since $FV=VF=p$).

\subsubsection{}  \label{sss:positive,nonpositive}
(i) The projection $W_n(R)^\oplus\to W_n(R)[t,t^{-1}]$ identifies the nonpositively graded part of $W_n(R)^\oplus$ with $W_n(R)[t]$.
In particular, the $0$-th graded component of $W_n(R)^\oplus$ identifies with $W_n(R)$.

(ii) The projection $W_n(R)^\oplus\to W_n(R)[u,u^{-1}]$ identifies the positively graded part of $W_n(R)^\oplus$ with $u\cdot W_n(R)[u]$.

\subsubsection{}
By \S\ref{sss:positive,nonpositive}, we can view $t,u$ as elements of $W_n(R)^\oplus$. As such, they satisfy the relation $tu=p$. The $0$-th graded component of $W_n(R)^\oplus$  identifies with $W_n(R)$.
So we get a graded homomorphism $W_n(R)[t,u]/(tu-p)\to W_n(R)^\oplus$. If $R$ is perfect, this is an isomorphism.

\subsubsection{}  \label{sss:two localizations of the truncated Witt frame}
As before, consider $t,u$ as elements of $W_n(R)^\oplus$. Then we have canonical isomorphisms
\[
W_n(R)^\oplus [u^{-1}]\iso W_n(R)[u,u^{-1}], \quad W_n(R)^\oplus [t^{-1}]\iso W_n(R)[t,t^{-1}]
\]
induced by the projections $W_n(R)^\oplus\to W_n(R)[u,u^{-1}]$ and $W_n(R)^\oplus\to W_n(R)[t,t^{-1}]$.

\subsubsection{}  \label{sss:R as a quotient of our frame}
The quotient of $W_n(R)^\oplus$ by the ideal generated by its graded components of nonzero degrees identifies with $W_n(R)/V(W_n(R))=R$. Thus we get a canonical homomorphism $W_n(R)^\oplus\to R$.

\subsubsection{The homomorphisms $\sigma,\tau :W_n(R)^\oplus\to W_n(R)$}   \label{sss:sigma,tau}
Let $\tau: W_n(R)^\oplus\to W_n(R)$ be the compo\-site map
\[
W_n(R)^\oplus\to W_n(R)[t,t^{-1}]\overset{t=1}\longrightarrow  W_n(R).
\]
Let $\sigma: W_n(R)^\oplus\to W_n(R)$ be the compo\-site map
\[
W_n(R)^\oplus\to W_n(R)[u,u^{-1}]\overset{u=1}\longrightarrow  W_n(R).
\]
The homomorphisms $\sigma,\tau :W_n(R)^\oplus\to W_n(R)$ are surjective, and one has
\[
\Ker\tau=(t-1)W_n(R)^\oplus , \quad \Ker\sigma=(u-1)W_n(R)^\oplus .
\]

The restriction of $\tau$ to the $0$-th graded component $W_n(R)\subset W_n(R)^\oplus$ equals the identity map.
But the restriction of $\sigma$ to $W_n(R)\subset W_n(R)^\oplus$ equals $F:W_n(R)\to W_n(R)$. Presumably, this was the motivation for introducing the notation $\sigma$ in the paper \cite{Lau21}
(where the Witt vector Frobenius is also denoted by $\sigma$).

\subsubsection{Relation with the notation of Bhatt's lectures \cite{Bh}}
In \cite[\S 3.3]{Bh} the letters $t,u$ have essentially the same meaning as above. However, his grading  is opposite (i.e., $\deg t=1$, $\deg u=-1$).

\subsection{The category $\Vect_n(R)$}   \label{ss:graded projective modules}
\subsubsection{}   \label{sss:Vect_n(R)}
Let  $R$ be an $\BF_p$-algebra. Let $S:=W_n(R)^\oplus$. Let $S_i$ be the $i$-th graded component of $S$, so $S_0=W_n(R)$. If $M=\bigoplus\limits_j M_j$ is a graded $S$-module, we write\footnote{This notation is motivated by the following. A graded $S$-module is the same as an $\cO$-module on the quotient stack $(\Spec S)/\BG_m$ . If $R$ is perfect this stack is the Nygaard-filtered prismatization $R^\cN$ (see \cite{Bh}), and our notation $M\{i\}$ agrees with the notation for Breuil-Kisin twists.} $M\{ i\}$ for the graded $S$-module whose $j$-th graded component is $M_{i+j}$.

Let $\Vect_n(R)$ be the $S_0$-linear category of finitely generated projective graded $S$-modules. By \cite[Lemma 3.1.4]{Lau21}, every $M\in\Vect_n(R)$ can be represented as
\begin{equation}  \label{e:decomposing M}
M=\bigoplus_i L_i\otimes_{S_0}S\{ -i\},
\end{equation}
where $L_i$'s are finitely generated projective $S_0$-modules. As noted in \cite[\S 3]{Lau21}, the ranks of $L_i$'s do not depend on the choice of a decomposition \eqref{e:decomposing M}
(to see this, use the homomorphism $S\to R$ from \S\ref{sss:R as a quotient of our frame}).

\subsubsection{}  \label{sss:the effective subcategory}
For every $a\ge 0$, let $\Vect_n^{[0,a]}(R)$ be the full subcategory of all $M\in\Vect_n(R)$ such that $L_i\ne 0$ only if $0\le i\le a$. Let $\Vect_n^+(R)$ be the union of $\Vect_n^{[0,a]}(R)$ for all $a\ge 0$.
It is easy to see that an object $M\in\Vect_n(R)$ belongs to $\Vect_n^+(R)$ if and only if the map $t:M_i\to M_{i-1}$ is an isomorphism for all $i\le 0$; this condition is often called \emph{effectivity.}

\subsubsection{Tensor structure}    \label{sss:obvious tensor structure}
$\Vect_n(R)$ is a rigid tensor category. $\Vect_n^+(R)$ is a tensor subcategory of $\Vect_n(R)$, which is not rigid if $R\ne 0$.
The subcategory $\Vect_n^{[0,a]}(R)\subset \Vect_n^+(R)$ is not closed under tensor product if $a>0$ and $R\ne 0$. 
(Nevertheless, $\Vect_n^{[0,a]}(R)$ has a natural structure of tensor category, see \S\ref{sss:non-obvious tensor structure} below.)

\subsection{$n$-truncated higher (pre)displays}   \label{ss:higher (pre)displays}
Let us recall the notion of higher (pre)display from \cite[Def.~3.2.1]{Lau21}. 

\subsubsection{Definitions}   \label{sss:preDISP}
If $M$ is a $W_n(R)^\oplus$-module then we write $M^\sigma ,M^\tau$ for the $W_n(R)$-modules obtained from $M$ by base change via the homomorphisms
$\sigma,\tau :W_n(R)^\oplus\to W_n(R)$ from \S\ref{sss:sigma,tau}. Let $\PREDISP_n(R)$ be the category of pairs $(M,f)$, where $M$ is a graded $W_n(R)^\oplus$-module and $f\in\Hom (M^\sigma ,M^\tau)$.
Let $\preDISP_n(R)\subset\PREDISP_n(R)$ be the full subcategory defined by the condition $M\in\Vect_n(R)$. Let $\DISP_n(R)$ be the full subcategory of objects
$(M,f)\in\preDISP_n(R)$ such that $f$ is an isomorphism; thus $\DISP_n(R)$ is the category
whose objects are pairs 
\begin{equation}   \label{e:(M,f)}
(M,f), \quad M\in\Vect_n(R), \; f\in\Isom (M^\sigma ,M^\tau).
\end{equation}
Objects of $\PREDISP_n(R)$ (resp.~$\DISP_n(R)$) are called \emph{$n$-truncated
higher predisplays} (resp.~\emph{displays}) over $R$.

Let $\DISP_n^+(R)\subset\DISP_n(R)$ be the full subcategory defined by the condition $M\in\Vect_n^+(R)$. The categories
$\DISP_n^{[0,a]}(R), \preDISP_n^+(R), \preDISP_n^{[0,a]}(R)$ are defined similarly.

The functors $M\mapsto M^\sigma$ and $M\mapsto M^\tau$ are tensor functors. So $\preDISP_n(R)$, $\preDISP_n^+(R)$, $\DISP_n(R)$, $\DISP_n^+(R)$ are tensor categories.

\subsubsection{On $M^\sigma$ and $M^\tau$}   \label{sss:the 2 basechanges}
Given a graded $W_n(R)^\oplus$-module $M$, set
\[
M_{-\infty}:=M/(t-1)M= \underset{\longrightarrow}\lim (M_0\overset{t}\longrightarrow M_{-1}\overset{t}\longrightarrow\ldots), 
\]
\[
M_\infty:= M/(u-1)M=\underset{\longrightarrow}\lim (M_0\overset{u}\longrightarrow M_{1}\overset{u}\longrightarrow\ldots).
\]
Define a $W_n(R)$-structure on $M_{\pm\infty}$ via the embedding $W_n(R)\mono W_n(R)^\oplus$.
Then $M_{-\infty}=M^\tau$, and $M_\infty$ is obtained from $M^\sigma$ by restriction of scalars via $F:W_n(R)\to W_n(R)$.

$\Hom (M^\sigma ,M^\tau)$ identifies with the group of $\sigma$-linear maps $M\to M_{-\infty}$. 

\subsubsection{Remark}
One can identify $\Hom (M^\sigma ,M^\tau)$ with a \emph{subgroup} of the group of $F$-linear maps $M_\infty\to M_{-\infty}$. 
In fact, an $F$-linear map $M_\infty\to M_{-\infty}$ is the same as an additive map $g:M^\sigma \to M^\tau=M_{-\infty}$ such that $g(F(a)x)=F(a)g(x)$ for $a\in W_n(R)$, $x\in M^\sigma$.
This condition for $g$ is weaker than $W_n(R)$-linearity.

\subsection{The tensor functor $\preDISP_n^+(R)\to\sDisp_n(R)$}   \label{ss:preDISP to sDisp}
The definition of $\sDisp_n(R)$ was given in \S\ref{ss:n-truncated semidisplays}. 
As before, we will use the notation $S:=W_n(R)^\oplus$ and the notation $S_i$ for the $i$-th graded component of $S$, so $S_0=W_n(R)$.

\subsubsection{The functor $\preDISP_n^+(R)\to\sDisp_n(R)$}  \label{sss:preDISP to sDisp}
Given $(M,f)\in\preDISP_n^+(R)$, we will define an object $(P,Q,F,F_1)\in \sDisp_n(R)$.

Since $M\in\Vect_n^+(R)$, we have a decomposition
\begin{equation}  \label{e:2decomposing M}
M=\bigoplus_{i\ge 0} L_i\otimes_{S_0}S\{ -i\},
\end{equation}
where each $L_i$ is a finitely generated $S_0$-module and $L_i=0$ for $i$ big enough. Set $$P:=M_0, \quad Q:=tM_1\subset P.$$
By \eqref{e:2decomposing M}, $P$ is a finitely generated $S_0$-module and $P/Q$ is a finitely generated projective module over $S_0/tS_1=R$.

By \S\ref{sss:the 2 basechanges}, we can think of $f\in\Hom (M^\sigma ,M^\tau)$ as a $\sigma$-linear map $M\to M_{-\infty}=M_0=P$.
Restricting it to $M_0$ and $M_1$, we get $F$-linear maps $F:P\to P$ and $\tilde F_1:M_1\to P$.
We will show that
\begin{equation}  \label{e:the key inclusion}
\tilde F_1 (\Ker (M_1\overset{t}\longrightarrow M_0))\subset J_{n,R}\cdot P, \quad \mbox{ where } J_{n,R}:=\Ker (W_n(R)\overset{V}\longrightarrow W_n(R))
\end{equation}
and therefore $\tilde F_1$ induces an $F$-linear map $F_1:Q\to P/J_{n,R}\cdot P$.

By \eqref{e:2decomposing M}, $\Ker (M_1\overset{t}\longrightarrow M_0)=S'_1\cdot L_0$, where $S'_1:=\Ker (S_1\overset{t}\longrightarrow S_0)$. So to prove \eqref{e:the key inclusion}, it suffices to show that 
$\sigma (S'_1)=J_{n,R}\,$. To see this, apply \eqref{e:a via a'} for $i=1$.

One checks that $(P,Q,F,F_1)\in \sDisp_n(R)$.

\subsubsection{Tensor structure}  \label{sss:tensor structure on Phi}
Both $\preDISP_n^+(R)$ and $\sDisp_n(R)$ are tensor categories (in the case of $\sDisp_n$, see \S\ref{ss:tensoring truncated semidisplays}).
Let us upgrade the functor $\preDISP_n^+(R)\to\sDisp_n(R)$ constructed in  \S\ref{sss:preDISP to sDisp} to a tensor functor.

If $M,M'\in\Vect_n(R)$ and $M''=M\otimes M'$ then we have a morphism
\begin{equation}  \label{e:beta}
\beta :M_0\otimes M'_0\to M''_0.
\end{equation}
If $M,M'\in\Vect_n^+(R)$ then \eqref{e:beta} is an isomorphism and 
$$\beta ((tM_1)\otimes M'_0+M_0\otimes tM'_1)=tM_1''$$
(it suffices to check these statements if $M=S\{ -i\}, M'=S\{ -i'\}$, where $i,i'\ge 0$).

Now let $(M,f)\in\preDISP_n^+(R)$ and $(M',f')\in\preDISP_n^+(R)$. Let 
$$(M'',f''):=(M,f)\otimes (M',f').$$
Then we have $F$-linear maps $F:M_0\to M_0$ and $F_1:tM_1\to M_0/J_{n,R}\cdot M_0$ defined in \S\ref{sss:preDISP to sDisp}.
We also have similar maps
\[
F':M'_0\to M'_0, \quad F'_1:tM'_1\to M'_0/J_{n,R}\cdot M'_0, \quad F'':M''_0\to M''_0, \quad F''_1:tM''_1\to M''_0/J_{n,R}\cdot M''_0.
\]
It remains to check the following properties of the map \eqref{e:beta}:
\[
F''(\beta (x\otimes x'))=\beta (F(x)\otimes F(x')) \quad \mbox{ for } x\in M_0, \; x'\in M'_0,
\]
\[
F_1''(\beta (x\otimes x'))=\beta (F(x)\otimes F'_1(x')) \quad \mbox{ for } x\in M_0, \; x'\in tM'_1,
\]
\[
F_1''(\beta (x\otimes x'))=\beta (F_1(x)\otimes F'(x')) \quad \mbox{ for } x\in tM_1, \; x'\in M'_0.
\]
This is straightforward.

\subsection{The equivalence $\DISP_n^{[0,1]}(R)\iso\Disp_n(R)$}  \label{ss:preDISP=sDisp-strong}
The categories $$\Disp_n(R) \mbox { and }\sDisp_n^{\strong}(R)$$  were discussed in \S\ref{ss:n-truncated displays}.

\subsubsection{}     \label{sss:upgrading the functor}
In \S\ref{sss:preDISP to sDisp} we defined a functor $\preDISP_n^+(R)\to\sDisp_n(R)$.
One upgrades it to a functor
\begin{equation}   \label{e:preDISP to sDisp-strong}
\preDISP_n^+(R)\to\sDisp_n^{\strong}(R)
\end{equation}
by making the following modifications. First, instead of setting $Q:=tM_1\subset M_0$ we set $Q:=M_1$.
Second, the map $\varepsilon :I_{n+1,R}\otimes_{W_n(R)}P\to Q$ mentioned in formula~\eqref{e:P,Q,iota,epsilon,F,F_1} is defined
by identifying $I_{n+1,R}$ with $S_1$.

\subsubsection{}   \label{sss:preDISP=sDisp-strong}
It is known to the experts that 

(i) the functor \eqref{e:preDISP to sDisp-strong} induces equivalences
\begin{equation}     \label{e:preDISP=sDisp-strong}
\preDISP_n^{[0,1]}(R)\iso\sDisp_n^{\strong}(R),
\end{equation}
\begin{equation}  \label{e:DISP=Disp}
\DISP_n^{[0,1]}(R)\iso\Disp_n(R);
\end{equation}

(ii) the construction of \S\ref{sss:upgrading the functor} gives a functor from $\Vect_n^{[0,1]}(R)$ to the category of ``truncated pairs'' from \cite[Def.~3.2]{Lau13}; moreover, this functor is an equivalence.

The verification of (ii) is straightforward, and (i) easily follows.
In the case $n=\infty$ the equivalence \eqref{e:DISP=Disp} is constructed in  \cite[Lemma 2.25]{Daniels}.

\subsubsection{Tensor structure on $\sDisp_n^{\strong}(R)$} \label{sss:tensor structure on the strong one}
In \S\ref{ss:Tensor structure on a-restricted preDISP} we will see that $\preDISP_n^{[0,a]}(R)$ is a tensor category. So \eqref{e:preDISP=sDisp-strong} yields a structure of tensor category on $\sDisp_n^{\strong}(R)$.

\subsection{The morphism $\Disp_n^{d,d'}\to\sDisp_n^{d^2,d'(d-d')}$}    \label{ss:the canonical semidisplay} 
\subsubsection{Duality on $\DISP_n^{[0,1]}$ and $\Disp_n$}   \label{sss:duality on DISP_n and Disp_n}
The category $\DISP_n^{[0,1]}(R)$ is equipped with a duality functor
\begin{equation}     \label{e:duality via DISP}
\sP\mapsto  \sP^t, \quad \sP^t:=\sP^*\{ -1\},
\end{equation}
where $\sP^*:=\Hom_S (\sP,S)$ and $S:=W_n(R)^\oplus$. One has $(\sP^t)^t=\sP$.

Using \eqref{e:DISP=Disp}, we get from \eqref{e:duality via DISP} a duality functor on $\Disp_n (R)$; the latter was used in \S\ref{sss:the semidisplay on Disp} as a ``black box''.

The above definition of the duality on $\Disp_n (R)$ was communicated to me by E.~Lau. His original definition (briefly mentioned in \cite[Rem.~4.4]{Lau13} and inspired by  \cite[Def.~19]{Zink}) was equivalent to it.

\subsubsection{}
In \S\ref{sss:the semidisplay on Disp} we defined a morphism $\Disp_n^{d,d'}\to\sDisp_n^{d^2,d'(d-d')}$ by associating to a display $\sP\in\Disp_n^{d,d'}(R)$ the tensor product
of the semidisplays corresponding to $\sP$ and $\sP^t$, where $\sP^t$ is the dual display.

By \S\ref{sss:preDISP=sDisp-strong}, we can think of $\sP$ as an object of $\DISP_n^{[0,1]}(R)$. Then the construction of \S\ref{sss:the semidisplay on Disp} can be written as
\begin{equation}  \label{e:original formula}
\sP\mapsto \Phi (\sP )\otimes \Phi (\sP^t ),  \quad \sP^t=\sP^*\{ -1\}, 
\end{equation}
where $\Phi :\preDISP_n^+(R)\to\sDisp_n(R)$ is the functor from \S\ref{sss:preDISP to sDisp}.

By \S\ref{sss:tensor structure on Phi}, $\Phi$ is a tensor functor. So formula \eqref{e:original formula} can be rewritten as
\begin{equation}   \label{e:new formula}
\sP\mapsto \Phi (\sP \otimes \sP^*\{ -1\} ).
\end{equation}

\subsubsection{}
Suppose that the graded projective $W_n(R)^\oplus$-module underlying $\sP$ has rank $d$.
Such a module is the same as a $\BG_m$-equivariant $GL(d)$-torsor on $\Spec W_n(R)^\oplus$.
Thus $\sP$ can be viewed as a $\BG_m$-equivariant $GL(d)$-torsor on $\Spec W_n(R)^\oplus$ equipped with an additional structure (the latter corresponds to $f$ from formula \eqref{e:(M,f)}).
$\sP \otimes \sP^*$ is just the vector bundle corresponding to this $GL(d)$-torsor and the adjoint representation of $GL(d)$.

\subsection{Tensor structure on $\Vect_n^{[0,a]}(R)$}   \label{ss:Tensor structure on a-restricted Vect}
The reader may prefer to skip the remaining part of~\S\ref{s:higher displays}. 
The goal of \S\ref{ss:Tensor structure on a-restricted Vect}-\ref{ss:Tensor structure on a-restricted preDISP} is to define a tensor structure on the category $\preDISP_n^{[0,a]}(R)$
(and therefore on $\sDisp_n^{\strong}(R)$), and \S\ref{ss:Zink complex} will be used in a single sentence in \S\ref{ss:Lau_n,G,mu}.

\begin{lem}   \label{l:3 equivalent properties}
Let $f:M'\to M$ be a morphism in $\Vect_n^+(R)$. The following properties of $f$ are equivalent:

(i) $f$ induces an isomorphism $\Hom (M'', M')\iso \Hom (M'', M)$ for all $M''\in\Vect_n^{[0,a]}(R)$;

(ii) $f$ induces an isomorphism $\Hom (S\{ -i\}, M')\iso \Hom (S\{ -i\}, M)$ for $i=0,1,\ldots ,a$;

(iii) $f$ induces an isomorphism $M'_i\to M_i$ for all $i\le a$. \qed
\end{lem}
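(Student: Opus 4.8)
The plan is to reduce all three conditions to the single elementary computation of $\Hom(S\{-i\}, M)$ in terms of graded pieces. First I would record that, since $S\{-i\}$ is the free rank-one graded $S$-module whose canonical generator sits in degree $i$ (indeed $(S\{-i\})_i = S_0$), a degree-zero graded homomorphism $S\{-i\}\to M$ is determined by the image of that generator, which is an arbitrary element of $M_i$. This gives a natural isomorphism $\Hom(S\{-i\}, M)\iso M_i$ of $S_0$-modules, and under it the map $\Hom(S\{-i\}, M')\to\Hom(S\{-i\}, M)$ induced by $f$ is exactly $f_i\colon M'_i\to M_i$. Thus (ii) is literally the assertion that $f_i$ is an isomorphism for $i=0,1,\ldots,a$.

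Next I would establish (ii) $\Leftrightarrow$ (iii). Since (iii) includes (ii) as the range $0\le i\le a$, only (ii) $\Rightarrow$ (iii) needs an argument, and there only the pieces in degree $i<0$ are in question. Here I would invoke the effectivity criterion of \S\ref{sss:the effective subcategory}: for $M,M'\in\Vect_n^+(R)$ the multiplication maps $t\colon M_i\to M_{i-1}$ and $t\colon M'_i\to M'_{i-1}$ are isomorphisms for all $i\le 0$, and $f$ commutes with $t$. The resulting commutative ladder exhibits $f_i$ for $i\le 0$ as conjugate to $f_0$ by isomorphisms, so $f_0$ being an isomorphism forces $f_i$ to be one for every $i\le 0$. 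Combined with the range $0\le i\le a$ supplied by (ii), this yields $f_i$ an isomorphism for all $i\le a$, i.e.\ (iii).

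Finally I would prove (i) $\Leftrightarrow$ (ii). The implication (i) $\Rightarrow$ (ii) is immediate by taking $M''=S\{-i\}$, which lies in $\Vect_n^{[0,a]}(R)$ precisely for $0\le i\le a$. For (ii) $\Rightarrow$ (i), write an arbitrary $M''\in\Vect_n^{[0,a]}(R)$ as $\bigoplus_{i=0}^a L_i\otimes_{S_0}S\{-i\}$ with each $L_i$ finitely generated projective over $S_0$; realizing $L_i$ as a direct summand of some $S_0^{n_i}$ exhibits $M''$ as a direct summand of the finite sum $N:=\bigoplus_{i=0}^a S\{-i\}^{n_i}$. Both $\Hom(-,M')$ and $\Hom(-,M)$ are additive and the map induced by $f$ respects direct-sum decompositions, so (ii) makes $f_*$ an isomorphism on $N$; since $\Hom(M'',-)$ is a retract of $\Hom(N,-)$ naturally, and retracts of isomorphisms are isomorphisms, $f_*$ is an isomorphism on $M''$ as well.

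The lemma is essentially formal once the identification $\Hom(S\{-i\}, M)\iso M_i$ is in hand. The only points requiring care are the reduction of the negative-degree part of (iii) to the single map $f_0$ via effectivity, and the passage in (ii) $\Rightarrow$ (i) from the free generators $S\{-i\}$ to a general projective $M''$ using stability of isomorphisms under retracts; neither constitutes a genuine obstacle.
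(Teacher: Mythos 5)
Your proof is correct and complete. The paper itself offers no argument for this lemma (it is stated with a terminal \qed, i.e.\ left to the reader), and your proof supplies exactly the intended one: the identification $\Hom (S\{ -i\}, M)\iso M_i$ coming from the fact that $S\{-i\}$ is free of rank one on a generator in degree $i$, the downward induction via the effectivity criterion of \S\ref{sss:the effective subcategory} (conjugating $f_i$ to $f_0$ by the isomorphisms $t$) to handle degrees $i\le 0$, and the passage from the generators $S\{-i\}$ to a general $M''\in\Vect_n^{[0,a]}(R)$ by writing $M''$ as a retract of a finite direct sum $\bigoplus_i S\{-i\}^{n_i}$ and using that a natural retract of an isomorphism is an isomorphism.
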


\begin{lem}   \label{l:Pi_a}
(i) The inclusion $\Vect_n^{[0,a]}(R)\mono \Vect_n^+(R)$ has a right adjoint $$\Pi_a:\Vect_n^+(R)\to\Vect_n^{[0,a]}(R).$$

(ii) The functor $\Pi_a$ identifies $\Vect_n^{[0,a]}(R)$ with the localization of $\Vect_n^+(R)$ by the morphisms $M'\to M$ inducing an isomorphism $M'_i\to M_i$ for each $i\le a$.
\end{lem}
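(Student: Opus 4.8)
The plan is to construct the right adjoint $\Pi_a$ by hand from the decomposition \eqref{e:decomposing M}, to verify the adjunction using Lemma~\ref{l:3 equivalent properties}, and then to obtain part (ii) as a formal consequence of the general theory of coreflective subcategories. Write $S:=W_n(R)^\oplus$ and $S_0=W_n(R)$. For $M\in\Vect_n^+(R)$, first I would fix a decomposition $M=\bigoplus_{i\ge 0}L_i\otimes_{S_0}S\{-i\}$ as in \eqref{e:decomposing M} and set
$$\Pi_a(M):=\bigoplus_{i=0}^{a} L_i\otimes_{S_0}S\{-i\}\ \oplus\ \Big(\bigoplus_{i>a}L_i\Big)\otimes_{S_0}S\{-a\},$$
which lies in $\Vect_n^{[0,a]}(R)$. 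The counit $c_M\colon\Pi_a(M)\to M$ is the identity on the summands with $i\le a$, while on the summand $L_i\otimes_{S_0}S\{-a\}$ coming from an index $i>a$ it is $\id_{L_i}$ tensored with the map $S\{-a\}\to S\{-i\}$ corresponding to multiplication by $t^{\,i-a}\in S_{a-i}$.

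The step I expect to require the most care is the verification that this $c_M$ is the correct counit. The naive inclusion or projection between the truncated and full decompositions fails to be an isomorphism in degrees $\le a$ (the summands with $i>a$ contribute nontrivially to every low graded piece), so one must use the twisted map. The point is that in degree $j\le a$ the map $t^{\,i-a}\colon S_{j-a}\to S_{j-i}$ is a composite of maps $t\colon S_k\to S_{k-1}$ with $k\le 0$, each an isomorphism by property \S\ref{sss:triples A,t,u}(ii); hence $c_M$ induces an isomorphism on every graded component of degree $\le a$. Granting this, part (i) is formal: by the implication (iii)$\Rightarrow$(i) of Lemma~\ref{l:3 equivalent properties}, post-composition with $c_M$ gives a bijection $\Hom(N,\Pi_a(M))\iso\Hom(N,M)$ for every $N\in\Vect_n^{[0,a]}(R)$, so $(\Pi_a(M),c_M)$ is a terminal object of the comma category of $\Vect_n^{[0,a]}(R)$-objects over $M$. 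Existence of such a terminal object for each $M$ is exactly the assertion that the fully faithful inclusion $\Vect_n^{[0,a]}(R)\mono\Vect_n^+(R)$ has a right adjoint $\Pi_a$; uniqueness of adjoints then makes $\Pi_a$ a functor independent of the chosen decompositions, so no separate functoriality check is needed.

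For part (ii), let $W$ denote the class of morphisms $f$ in $\Vect_n^+(R)$ inducing an isomorphism $M'_i\to M_i$ for all $i\le a$. I would first show $W=\{f:\Pi_a(f)\ \text{is an isomorphism}\}$. In the naturality square for the counit, $c_{M'}$ and $c_M$ are isomorphisms in degrees $\le a$, so $f$ and $\Pi_a(f)$ agree up to these isomorphisms on degrees $\le a$; since $\Pi_a(f)$ is a morphism of $\Vect_n^{[0,a]}(R)$-objects, it is an isomorphism iff it is one in degrees $\le a$ (the equivalence (iii)$\Leftrightarrow$(i) of Lemma~\ref{l:3 equivalent properties} applied inside $\Vect_n^{[0,a]}(R)$). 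Combining these gives $\Pi_a(f)$ invertible $\iff f\in W$. Because the inclusion $\iota$ is fully faithful, the counit on objects of $\Vect_n^{[0,a]}(R)$ is an isomorphism, i.e.\ $\Pi_a\circ\iota\cong\id$. The claim is then an instance of the standard fact that for a coreflective full subcategory $\cA\subset\cB$ with right adjoint $R$ to the inclusion, $R$ exhibits $\cA$ as the localization $\cB[W^{-1}]$ at $W=\{f:Rf\ \text{invertible}\}$: by construction $\Pi_a$ inverts $W$, and any functor out of $\Vect_n^+(R)$ inverting $W$ factors uniquely through $\Pi_a$ via the counit. Thus all of the genuine work sits in part (i), and part (ii) follows formally.
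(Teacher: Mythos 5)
Your proof is correct and follows essentially the same route as the paper: the paper's proof of (i) reduces to $M=S\{-i\}$ with $i>a$, sets $\Pi_a(M)=S\{-a\}$ with counit $t^{i-a}\in S_{a-i}=\Hom(S\{-a\},S\{-i\})$, and verifies the universal property via Lemma~\ref{l:3 equivalent properties}, which is exactly your construction carried out summand by summand; part (ii) is likewise deduced formally from that lemma and the adjunction. Your write-up merely makes explicit the direct-sum bookkeeping and the standard coreflective-localization argument that the paper leaves implicit.
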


\begin{proof}
Let us prove (i). Given $M\in\Vect_n^+(R)$, we have to construct a pair $(M',f)$, where 
$M'\in\Vect_n^{[0,a]}(R)$ and $f\in\Hom (M',M)$ satisfies the equivalent conditions of Lemma~\ref{l:3 equivalent properties}.
We can assume that $M=S\{ -i\}$, $i>a$. In this case let $M'=S\{ -a\}$ and let $f$ be equal to $t^{i-a}\in S_{a-i}=\Hom (M',M)$.

Statement (ii) follows from Lemma~\ref{l:3 equivalent properties} and the fact that $\Pi_a$ is right adjoint to the inclusion $\Vect_n^{[0,a]}(R)\mono \Vect_n^+(R)$.
\end{proof}

\begin{lem}   \label{l:needed to define tensor product}
Let $M,M',N\in \Vect_n^+(R)$. Let $\tilde M:=M\otimes N$, $\tilde M':=M'\otimes N$.
If $f:M\to M'$ induces an isomorphism $M_i\to M'_i$ for all $i\le a$ then the morphism $\tilde M\to\tilde M'$ corresponding to $f$ induces an isomorphism
$\tilde M_i\to \tilde M'_i$ for all $i\le a$.
\end{lem}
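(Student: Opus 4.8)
The plan is to reduce the statement to an explicit computation of the graded components of a tensor product, where the effectivity hypothesis $j\ge 0$ in the decomposition of $N$ is exactly what makes the argument go through.

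First I would fix a decomposition $N=\bigoplus_{j\ge 0}L_j\otimes_{S_0}S\{-j\}$ as in \eqref{e:decomposing M}, with each $L_j$ a finitely generated projective $S_0$-module (here $S:=W_n(R)^\oplus$ and $S_0=W_n(R)$). Using the base-change identity $M\otimes_S(L_j\otimes_{S_0}S)\cong L_j\otimes_{S_0}M$ together with the grading convention $(M\{-j\})_i=M_{i-j}$, I would compute
\[
(M\otimes_S N)_i=\bigoplus_{j\ge 0}L_j\otimes_{S_0}M_{i-j},
\]
and likewise for $M'$. Under these identifications the degree-$i$ part of the morphism $\tilde M\to\tilde M'$ induced by $f$ becomes $\bigoplus_{j\ge 0}\id_{L_j}\otimes f_{i-j}$, where $f_k\colon M_k\to M'_k$ denotes the degree-$k$ component of $f$.

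The point is then immediate: for $i\le a$ and $j\ge 0$ one has $i-j\le a$, so by hypothesis $f_{i-j}$ is an isomorphism; since $L_j$ is projective, hence flat, over $S_0$, each $\id_{L_j}\otimes f_{i-j}$ is an isomorphism, and therefore so is their direct sum. This yields the desired isomorphism $\tilde M_i\iso\tilde M'_i$ for all $i\le a$.

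I do not expect a serious obstacle here. The only thing requiring genuine care is to get the grading/shift conventions right, so that the effective direction $j\ge 0$ produces the inequality $i-j\le a$ rather than $i+j\le a$; this is precisely where effectivity of $N$ is used and where the statement would fail for arbitrary $N\in\Vect_n(R)$. The identification of the graded components of the tensor product is routine once one reduces, by additivity in $N$ and flatness of the $L_j$, to the case $N=S\{-j\}$, in which $\tilde M=M\{-j\}$ and the claim is visible by inspection. Alternatively one could phrase everything as a reduction to $N=S\{-j\}$ via Lemma~\ref{l:3 equivalent properties} and its reformulation in terms of $\Hom(S\{-i\},-)$, but the direct computation above seems the most transparent route.
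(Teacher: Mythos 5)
Your proof is correct and is essentially the paper's own argument: the paper's proof simply says the claim is clear for $N=S\{-j\}$, $j\ge 0$, and that the general case follows, which is exactly the reduction via the decomposition \eqref{e:decomposing M} that you spell out. Your explicit computation $(M\otimes_S N)_i=\bigoplus_{j\ge 0}L_j\otimes_{S_0}M_{i-j}$ and the observation that $i\le a$, $j\ge 0$ forces $i-j\le a$ is just the detailed version of that one-line proof (note flatness of $L_j$ is not even needed, since any additive functor preserves isomorphisms).
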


\begin{proof}
This is clear if $N=S\{ -j\}$, $j\ge 0$. The general case follows.
\end{proof}

\subsubsection{Tensor structure on $\Vect_n^{[0,a]}(R)$}   \label{sss:non-obvious tensor structure}
Recall that $\Vect_n^+(R)$ is a tensor category. The subcategory $\Vect_n^{[0,a]}(R)\subset \Vect_n^+(R)$ is not closed under tensor product if $a>0$ and $R\ne 0$.
However, Lemma~\ref{l:Pi_a}(ii) combined with Lemma~\ref{l:needed to define tensor product} provides a structure of tensor category on $\Vect_n^{[0,a]}(R)$ and a structure of tensor functor
on the functor $\Pi_a:\Vect_n^+(R)\to\Vect_n^{[0,a]}$. 
Explicitly, for $M_1,M_2\in\Vect_n^{[0,a]}(R)$ one has
\begin{equation}  \label{e:truncated tensor product}
M_1\underset{a}\otimes M_2=\Pi_a (M_1\otimes M_2),
\end{equation}
where $\underset{a}\otimes$ (resp.~$\otimes$) denotes the tensor product in $\Vect_n^{[0,a]}(R)$ (resp.~in $\Vect_n^+(R)$).

\subsection{Tensor structure on $\preDISP_n^{[0,a]}(R)$}  \label{ss:Tensor structure on a-restricted preDISP}
\subsubsection{$\Pi_a$ as a functor $\preDISP_n^+(R)\to\preDISP_n^{[0,a]}(R)$} 
Let $M\in\preDISP_n^+(R)$. Define $M'\in\Vect_n^{[0,a]}(R)\subset\Vect_n^+(R)$ by $M':=\Pi_a(M)$, where $\Pi_a$ is as in Lemma~\ref{l:Pi_a}.
In $\Vect_n^+(R)$ we have a canonical morphism $M'\to M$. It induces isomorphisms $M'_i\iso M_i$ for $i\le a$ and therefore an isomorphism
$(M')^\tau=M'_{-\infty}\iso M_{-\infty}=M^\tau$. The composite map
\[
(M')^\sigma\to M^\sigma\to M^\tau\iso (M')^\tau
\]
makes $M'$ into an object of $\preDISP_n^{[0,a]}(R)$. Thus we have defined $\Pi_a$ as a functor $$\preDISP_n^+(R)\to\preDISP_n^{[0,a]}(R).$$
The functor $\Pi_a: \preDISP_n^+(R)\to\preDISP_n^{[0,a]}(R)$ is right adjoint to the inclusion $$\preDISP_n^{[0,a]}(R)\mono\preDISP_n^+(R).$$

\subsubsection{Tensor structure on $\preDISP_n^{[0,a]}(R)$} 
Now one gets a structure of tensor category on $\preDISP_n^{[0,a]}(R)$ and a structure of tensor functor
on the functor $$\Pi_a:\preDISP_n^+(R)\to\preDISP_n^{[0,a]}$$ just as in \S\ref{sss:non-obvious tensor structure}, see formula~\eqref{e:truncated tensor product}.

\subsection{The Zink complex of an object of $\preDISP_n(R)$}   \label{ss:Zink complex}
To any $M\in\preDISP_n(R)$ we will associate a complex $C_M^\bcdot$ of commutative group ind-schemes over $R$. If $M$ is in $\preDISP_n^+(R)$ then $C_M^\bcdot =C_\sP^\bcdot$, where
$\sP$ is the semidisplay corresponding to $M$ (in the sense of \S\ref{ss:preDISP to sDisp}) and $C_\sP^\bcdot$ is given by formula~\eqref{e:Zink complex}. We will use the notation of \S\ref{sss:preDISP}.

\subsubsection{The complex $K_N^\bcdot$}   \label{sss:d=alpha-fbeta}
Let $(N,f)\in\PREDISP_n(R)$; according to \S\ref{sss:preDISP}, this means that $N$ is a graded $W_n(R)^\oplus$-module and $f\in\Hom (N^\sigma ,N^\tau)$. 
We have canonical maps $\alpha:N_0\to N^\tau$ and $\beta:N_0\to N^\sigma$, where $\alpha$ is $W_n(R)$-linear and $\beta$ is $F$-linear.
Define $K_N^\bcdot$ to be the following complex of abelian groups: 

(i) $K_N^i=0$ for $i\ne 0,-1$, $K_N^{-1}=N_0$, $K_N^0=N^\tau$,

(ii) the differential $d:K_N^{-1}\to K_N^0$ is the additive map $\alpha -f\circ\beta:N_0\to N^\tau$.

\subsubsection{Remark}  \label{sss:the subgroup of Cartesian square}
Suppose that $(N,f)\in\PREDISP_n(R)$ satisfies the following conditions:

(i) $f$ is an isomorphism;

(ii) $\Ker (N\overset{t}\longrightarrow N)\cap \Ker (N\overset{u}\longrightarrow N)=0$.

\noindent
Then the map $(\alpha ,f\circ\beta) :N_0\to N^\tau\times N^\tau$ is injective, so it identifies $K_N^{-1}=N_0$ with a subgroup of $K_N^0\times K_N^0$. After this identification, $d:K_N^{-1}\to K_N^0$
is just the difference between the two projections $K_N^{-1}\to K_N^0$.

\subsubsection{Cohomological interpretation}
If $f:N^\sigma\to N^\tau$ is an isomorphism then $(N,f)$ can be interpreted as an $\cO$-module on a certain stack $\sY_R$, and $K_N^\bcdot [-1]$ computes the cohomology of this $\cO$-module.
The stack $\sY_R$ is obtained from the quotient stack $(\Spec W_n(R)^\oplus )/\BG_m$ by gluing together the disjoint open substacks $(\Spec W_n(R)^\oplus [t^{-1}])/\BG_m$ and
$(\Spec W_n(R)^\oplus [u^{-1}])/\BG_m$, which are both equal to $\Spec W_n(R)$ by \S\ref{sss:two localizations of the truncated Witt frame}.
If $R$ is perfect then $\sY_R=R^{\Syn}\otimes \BZ/p^n\BZ$, where $R^{\Syn}$ is the \emph{syntomification} stack from \cite{Bh}. 

\subsubsection{A canonical object of $\PREDISP_n(R)$}  \label{sss:canonical object of PREDISP_n}
Let $W(R)^\oplus$ be the non-truncated Witt frame, see \S\ref{sss:Witt frame}. To describe $W(R)^\oplus$ explicitly, one just replaces $W_n$ by $W$ in all formulas from \S\ref{sss:truncated Witt frame}.

As a group, each graded component of $W(R)^\oplus$ is isomorphic to $W(R)$. Replacing each $W(R)$ by $\hat W^{(F^n)}(R)$ (where $\hat W^{(F^n)}\subset W$ is as in \S\ref{sss:Zink's diagram}), one gets a graded ideal of $W(R)^\oplus$, which we denote by $\hat W^{(F^n)}(R)^\oplus$.
The $W(R)^\oplus$-module $\hat W^{(F^n)}(R)^\oplus$ is, in fact, a $W_n(R)^\oplus$-module. Moreover, one has a canonical isomorphism 
$$(\hat W^{(F^n)}(R)^\oplus)^\sigma\iso (\hat W^{(F^n)}(R)^\oplus)^\tau,$$
so $\hat W^{(F^n)}(R)^\oplus$ is an object of $\PREDISP_n(R)$. (Usually, it is not in $\preDISP_n(R)$.) 

\subsubsection{The Zink complex}  \label{sss:Zink complex}
Let $M\in\PREDISP_n(R)$. For any $R$-algebra $\tilde R$, we set 
\[
C_M^\bcdot (\tilde R):=K_{N(\tilde R)}^\bcdot ,
\]
where $N(\tilde R)$ is the tensor product of $M\{ 1\}$ and $\hat W^{(F^n)}(\tilde R)^\oplus$ over $W_n(R)^\oplus$ (the goal of shifting the grading of $M$ in the definition of $N(\tilde R)$ is to simplify the formulation of Lemma~\ref{l:comparing with the old Zink complex}). The complex $C_M^\bcdot$ will be called the \emph{Zink complex} of $M$. Each of its terms is a  functor from $R$-algebras to abelian groups.
According to the next lemma, the functors $C_M^i$ are nice if $M\in\preDISP_n(R)$ (i.e., if $M$ is fintely generated and projective as a $W_n(R)^\oplus$-module).

\begin{lem}
Let $M\in\preDISP_n(R)$. Let $r$ be the rank of $M$ viewed as a finitely generated projective $W_n(R)^\oplus$-module.

(i) For each $i$, the functor $C_M^i$  is an ind-scheme over $R$.

(ii) If $i\in\{0,-1\}$ then Zariski-locally on $\Spec R$, the group ind-scheme $C_M^i$ is isomorphic to the direct sum of $r$ copies of $\hat W^{(F^n)}_R$. If $i\not\in\{0,-1\}$ then $C_M^i=0$.
\end{lem}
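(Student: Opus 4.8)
The plan is to reduce the statement to the explicit graded structure of $\hat W^{(F^n)}(\tilde R)^\oplus$ together with a local trivialization of $M$. By the definition of the Zink complex in \S\ref{sss:Zink complex} and of $K_N^\bcdot$ in \S\ref{sss:d=alpha-fbeta}, one has $C_M^i=0$ for $i\notin\{0,-1\}$, and, functorially in the $R$-algebra $\tilde R$,
\[
C_M^{-1}(\tilde R)=N(\tilde R)_0,\qquad C_M^0(\tilde R)=N(\tilde R)^\tau ,
\]
where $N(\tilde R):=M\{1\}\otimes_{W_n(R)^\oplus}\hat W^{(F^n)}(\tilde R)^\oplus$. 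This already settles the case $i\notin\{0,-1\}$ of (ii) and reduces everything else to analysing the two functors $\tilde R\mapsto N(\tilde R)_0$ and $\tilde R\mapsto N(\tilde R)^\tau$.

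First I would work Zariski-locally on $\Spec R$ and invoke \eqref{e:decomposing M} (i.e.\ \cite[Lemma 3.1.4]{Lau21}): after shrinking we may assume $M\cong\bigoplus_i W_n(R)^\oplus\{-i\}^{\oplus r_i}$ with $\sum_i r_i=r$. Writing $H:=\hat W^{(F^n)}(\tilde R)^\oplus$ and using that $W_n(R)^\oplus\{-i\}\{1\}\otimes_{W_n(R)^\oplus}H=H\{1-i\}$ while $\otimes$ commutes with direct sums, this gives
\[
N(\tilde R)\cong\bigoplus_i H\{1-i\}^{\oplus r_i}.
\]
Since $(-)_0$ and $(-)^\tau$ both commute with finite direct sums, it remains to understand the $0$-th graded piece and the $\tau$-base change of a single shift $H\{k\}$.

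The key input is the explicit description of $\hat W^{(F^n)}(\tilde R)^\oplus$ from \S\ref{sss:canonical object of PREDISP_n}: each graded component $H_j$ is, naturally in $\tilde R$, isomorphic to $\hat W^{(F^n)}(\tilde R)$, and — exactly as for the frame $W_n(\tilde R)^\oplus$ in \S\ref{sss:positive,nonpositive} — the nonpositively graded part of $H$ is identified with $\hat W^{(F^n)}(\tilde R)[t]$, so that $t:H_j\to H_{j-1}$ is an isomorphism for every $j\le 0$. From the first fact, $(H\{1-i\})_0=H_{1-i}\cong\hat W^{(F^n)}(\tilde R)$, whence $N(\tilde R)_0\cong\hat W^{(F^n)}(\tilde R)^{\oplus r}$ naturally in $\tilde R$. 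For the other term, a grading shift does not affect base change along $\tau$, so $(H\{1-i\})^\tau=H^\tau$; and by \S\ref{sss:the 2 basechanges} together with the isomorphisms $t:H_j\to H_{j-1}$ for $j\le 0$, the colimit $H^\tau=\varinjlim(H_0\overset{t}\to H_{-1}\overset{t}\to\cdots)$ collapses to $H_0\cong\hat W^{(F^n)}(\tilde R)$. Hence $N(\tilde R)^\tau\cong\hat W^{(F^n)}(\tilde R)^{\oplus r}$, again naturally in $\tilde R$. These natural isomorphisms are isomorphisms of group ind-schemes over the chosen affine chart, which proves (ii); and since $\hat W^{(F^n)}_R$ is a group ind-scheme and being an ind-scheme is Zariski-local, (i) follows for all $i$.

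I expect the only genuine care to lie in the bookkeeping of the grading shifts and, more importantly, in checking that the identifications $H_j\cong\hat W^{(F^n)}(\tilde R)$ and the $t$-isomorphisms are natural in $\tilde R$, so that the termwise computations really assemble into isomorphisms of functors (equivalently, of group ind-schemes). The collapse of the colimit computing $H^\tau$ is the one step that is not purely formal, since it is precisely there that the nonpositive-degree structure of the Witt frame is used.
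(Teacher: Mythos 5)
Your proof is correct and is essentially the paper's own argument: the paper's entire proof reads ``Use the decomposition \eqref{e:decomposing M}'', which is exactly your reduction — localize on $\Spec R$ to make the modules $L_i$ free, so $M$ becomes a sum of shifts $W_n(R)^\oplus\{ -i\}$, then use that each graded component of $\hat W^{(F^n)}(\tilde R)^\oplus$ is a copy of $\hat W^{(F^n)}(\tilde R)$ with $t$ acting invertibly in nonpositive degrees, so that both $N(\tilde R)_0$ and $N(\tilde R)^\tau$ collapse to $\hat W^{(F^n)}(\tilde R)^{\oplus r}$. The only step you leave implicit (a Zariski sheaf that is Zariski-locally an ind-scheme is an ind-scheme) is harmless and can even be bypassed: applying \eqref{e:decomposing M} globally, with the $L_i$ merely projective, exhibits $C_M^i$ for $i\in\{0,-1\}$ as a functorial direct summand of a finite direct sum of copies of $\hat W^{(F^n)}_R$, which is an ind-scheme outright.
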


\begin{proof}
Use the decomposition \eqref{e:decomposing M}.
\end{proof}

\begin{lem}   \label{l:comparing with the old Zink complex}
Let $M\in\preDISP_n^+(R)$. Then one has a canonical isomorphism $$C_M^\bcdot\iso C_\sP^\bcdot,$$ where
$\sP$ is the semidisplay corresponding to $M$ (in the sense of \S\ref{ss:preDISP to sDisp}) and $C_\sP^\bcdot$ is given by formula~\eqref{e:Zink complex}. \qed
\end{lem}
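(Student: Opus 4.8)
The plan is to produce, naturally in the $R$-algebra $\tilde R$, an isomorphism between the two two-term complexes, by identifying the terms one at a time and then matching the two summands of each differential. Throughout, fix $\tilde R$ and write $N:=M\{1\}\otimes_{W_n(R)^\oplus}\hat W^{(F^n)}(\tilde R)^\oplus$, so that $C_M^\bcdot(\tilde R)=K_N^\bcdot$ is the complex $N_0\xrightarrow{\,\alpha-f_N\circ\beta\,}N^\tau$ of \S\ref{sss:d=alpha-fbeta}, while $C_\sP^\bcdot(\tilde R)$ is the complex $C_\sP^{-1}\xrightarrow{\,1-\Phi\,}C_\sP^0$ of \eqref{e:Zink complex}. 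Both the formation of $N$ and the functor $M\mapsto\sP$ are additive in $M$ and compatible with Zariski localization on $\Spec R$; the comparison morphism I build below is manifestly canonical, so that its bijectivity and its compatibility with the differentials — being properties additive in $M$ and local on $\Spec R$ — may be verified after reducing, via the decomposition \eqref{e:decomposing M}, to the free rank-one pieces $M=W_n(R)^\oplus\{-i\}$ with $i\ge 0$.

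First I would identify the degree-$0$ terms. Base change along $\tau$ commutes with the tensor product over $W_n(R)^\oplus$, and by the colimit description of $(-)^\tau$ in \S\ref{sss:the 2 basechanges} together with effectivity of $M$ (the maps $t\colon M_j\to M_{j-1}$ are isomorphisms for $j\le0$, see \S\ref{sss:the effective subcategory}) one gets $(M\{1\})^\tau=M_0=P$ and $(\hat W^{(F^n)}(\tilde R)^\oplus)^\tau=\hat W^{(F^n)}(\tilde R)$. This yields a canonical isomorphism
\[
K_N^0=N^\tau\;\cong\;\hat W^{(F^n)}(\tilde R)\otimes_{W_n(R)}P\;=\;C_\sP^0(\tilde R).
\]
For the degree-$(-1)$ terms I would use the canonical map $\alpha\colon N_0\to N^\tau$ to the $t$-colimit, and claim that it is injective with image the subgroup $C_\sP^{-1}=\Ker\!\big(C_\sP^0\xrightarrow{\pi}\BG_a^{(F^n)}(\tilde R)\otimes_R(P/Q)\big)$; equivalently, that $\pi$ identifies the cokernel of $\alpha$ with $\BG_a^{(F^n)}(\tilde R)\otimes_R(P/Q)$. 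On the rank-one pieces this is a short computation with \eqref{e:a' via a}--\eqref{e:a via a'}: one finds $P\cong W_n(R)$ in every case, and either ($i=0$) $Q=I_{n,R}$, $P/Q=R$, with $\alpha$ equal to $V\colon\hat W^{(F^n)}(\tilde R)\to\hat W^{(F^n)}(\tilde R)$, whose cokernel is the $0$-th component projection $\hat W^{(F^n)}(\tilde R)\epi\BG_a^{(F^n)}(\tilde R)=\pi$, or ($i\ge1$) $Q=P$, $P/Q=0$, with $\alpha$ an isomorphism. Being additive, this passes to general $M$, so $\alpha$ furnishes the identification $N_0\iso C_\sP^{-1}$ and simultaneously matches the inclusion $C_\sP^{-1}\mono C_\sP^0$ (the summand ``$1$'' of the differential of $C_\sP^\bcdot$) with the summand $\alpha$ of the differential of $K_N^\bcdot$.

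It then remains to match the second summands, i.e.\ to check that under the above identifications $f_N\circ\beta\colon N_0\to N^\tau$ becomes $\Phi$. Since the predisplay structure on $N$ is the tensor product of that of $M\{1\}$ with the canonical isomorphism $(\hat W^{(F^n)}(\tilde R)^\oplus)^\sigma\iso(\hat W^{(F^n)}(\tilde R)^\oplus)^\tau$, and since $\sigma$ restricts to $F$ and $\tau$ to the identity on $W_n(R)\subset W_n(R)^\oplus$ (\S\ref{sss:sigma,tau}), I would compute $f_N\circ\beta$ along a normal decomposition $P=T\oplus L$ arising from \eqref{e:decomposing M}. The tangent summand $T$ (the $i=0$ part, where $\alpha=V$) should reproduce $\Phi(V(a)\otimes x)=a\otimes F(x)$, that is \eqref{e:condition 1 for Phi}, and the summand $L$ (the $i\ge1$ part) should reproduce $\Phi(a\otimes y)=F(a)\otimes F_1(y)$, that is \eqref{e:condition 2 for Phi}, where $F$ and $F_1$ are exactly the maps extracted from $f$ in \S\ref{sss:preDISP to sDisp}.

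I expect this last step to be the main obstacle. Tracking the grading shift $\{1\}$ through the tensor product together with the relations \eqref{e:a via a'} is delicate, and the $F_1$-contribution must be handled with care because $F_1$ is only defined modulo $J_{n,R}\cdot P$; here one invokes the vanishing $V^{n-1}(b)\cdot F(a)=0$ for $a\in\hat W^{(F^n)}(\tilde R)$, which is precisely what makes the right-hand side of \eqref{e:condition 2 for Phi} well defined. Once the two formulas for $f_N\circ\beta$ are confirmed on the rank-one pieces, additivity in $M$ and naturality in $\tilde R$ upgrade the termwise identifications of the previous steps to an isomorphism of complexes $C_M^\bcdot\iso C_\sP^\bcdot$.
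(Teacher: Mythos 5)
The paper offers no written proof of this lemma: it is stated with an immediate \qed, i.e.\ treated as a routine unraveling of the definitions in \S\ref{ss:preDISP to sDisp} and \S\ref{ss:Zink complex}. Your proposal is precisely that unraveling, and its substance is correct. The term identifications are right: $N^\tau\iso\hat W^{(F^n)}(\tilde R)\otimes_{W_n(R)}P$ by base change along $\tau$, and $\alpha\colon N_0\to N^\tau$ is injective with image $C_\sP^{-1}$ (your rank-one computations, $\alpha=V$ for $i=0$ and $\alpha$ an isomorphism for $i\ge1$, check out, using that $\Ker(\hat W^{(F^n)}\to\BG_a^{(F^n)})=V(\hat W^{(F^n)})$). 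Moreover the step you flag as the main obstacle does go through: for $m\otimes w$ with $m\in M_0$ and $w$ of degree $1$ with ``$u$-coordinate'' $a$, one finds $\alpha(m\otimes w)=V(a)\otimes m$ and $f_N(\beta(m\otimes w))=a\otimes F(m)$, which is \eqref{e:condition 1 for Phi}; for $m\in M_1$ and $w$ of degree $0$ with ``$t$-coordinate'' $b$, one finds $\alpha(m\otimes w)=b\otimes tm$ and $f_N(\beta(m\otimes w))=F(b)\otimes\tilde F_1(m)$, which is \eqref{e:condition 2 for Phi}, the expression being well defined for exactly the reason you cite ($V^{n-1}(b')\cdot F(a)=0$ for $a\in\hat W^{(F^n)}$).

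One precision is needed, however. Your blanket reduction ``by additivity in $M$'' to the rank-one pieces $W_n(R)^\oplus\{-i\}$ is not literally valid for the comparison of the differentials: the decomposition \eqref{e:decomposing M} decomposes only the graded \emph{module}, not the predisplay $(M,f)$ — the map $f$ may mix the summands, and a general object of $\preDISP_n^+(R)$ is not a direct sum of rank-one predisplays. What the decomposition actually buys you is a generating family of $N_0$: moving powers of $t$ and $u$ across the tensor sign, every element of $N_0$ is a sum of elements of the two types above, and since \S\ref{sss:Zink's diagram} asserts that $\Phi$ is \emph{uniquely determined} by conditions \eqref{e:condition 1 for Phi}--\eqref{e:condition 2 for Phi}, verifying those two identities for $\alpha^{-1}$-transported $f_N\circ\beta$ — with the full maps $F$ and $\tilde F_1$ extracted from $f$, exactly as in your last paragraph — finishes the proof. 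So the plan is sound and your final paragraph in effect argues on generators; only the justification of the reduction should be phrased via generators of $N_0$ rather than via a direct-sum decomposition of $(M,f)$.
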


\begin{cor}
Let $M\in\preDISP_n(R)$.

(i) If $M\in\preDISP_n^+(R)$ then $C_M^\bcdot$ is quasi-isomorphic to $\fZ_\sP$, where $\sP$ is the semidisplay corresponding to $M$ and $\fZ_\sP\in\Sm_n(R)$ is defined by formula~\eqref{e:Zink functor}.

(ii) If $M\{1\}\in\preDISP_n^+(R)$ then $C_M^\bcdot$ is acyclic.

\end{cor}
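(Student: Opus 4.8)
The plan is to deduce both parts from Lemma~\ref{l:comparing with the old Zink complex} together with the computation of $\Ker(1-\Phi)$ and $\Coker(1-\Phi)$ recalled in \S\ref{sss:format of LZ}. For part (i), let $\sP$ be the semidisplay corresponding to $M$ under the functor of \S\ref{ss:preDISP to sDisp}. Lemma~\ref{l:comparing with the old Zink complex} supplies a canonical isomorphism $C_M^\bcdot\iso C_\sP^\bcdot$, and by \eqref{e:Zink complex} the target is the two-term complex $0\to C_\sP^{-1}\overset{1-\Phi}\longrightarrow C_\sP^0\to 0$. By the result of \cite{LZ} recalled in \S\ref{sss:format of LZ} (valid for semidisplays), the map $1-\Phi$ has vanishing kernel and its cokernel is the $n$-smooth group scheme $\fZ_\sP$ of \eqref{e:Zink functor}. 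Hence $C_\sP^\bcdot$, and therefore $C_M^\bcdot$, has $H^{-1}=0$ and $H^0=\fZ_\sP$; this is precisely the assertion that $C_M^\bcdot$ is quasi-isomorphic to $\fZ_\sP$.

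For part (ii), I would first observe that the hypothesis $M\{1\}\in\preDISP_n^+(R)$ forces $M\in\preDISP_n^+(R)$, so that part (i) applies. By the effectivity criterion of \S\ref{sss:the effective subcategory}, effectivity of $M\{1\}$ means that $t\colon (M\{1\})_i\to (M\{1\})_{i-1}$ is an isomorphism for every $i\le 0$. Since $(M\{1\})_i=M_{i+1}$, this says that $t\colon M_j\to M_{j-1}$ is an isomorphism for every $j\le 1$; in particular $M$ is itself effective and, crucially, $t\colon M_1\to M_0$ is an isomorphism.

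With $\sP=(P,Q,F,F_1)$ the semidisplay attached to $M$, recall from \S\ref{sss:preDISP to sDisp} that $P=M_0$ and $Q=tM_1$. Since $t\colon M_1\to M_0$ is an isomorphism we obtain $Q=tM_1=M_0=P$, whence $P/Q=0$. By Lemma~\ref{l:Lie of Zink}(ii) the rank of $\fZ_\sP$ equals $\rank(P/Q)=0$, and an $n$-smooth group scheme of rank $0$ is the trivial group (the model $\Spec A_{n,0}$ is $\Spec\BF_p$). Thus $\fZ_\sP=0$, and combining this with part (i) shows that $C_M^\bcdot$ is quasi-isomorphic to $0$, i.e.\ acyclic.

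I do not expect a genuine obstacle: once Lemma~\ref{l:comparing with the old Zink complex} and the \cite{LZ} computation are granted, the only real content is the bookkeeping with the grading shift $\{1\}$ — namely recognizing that effectivity of $M\{1\}$ is exactly the condition making $t\colon M_1\to M_0$ invertible, and hence $P/Q=0$ and $\fZ_\sP=0$. An alternative, more intrinsic route to (ii) via the cohomological interpretation on $\sY_R$ is available when $f$ is an isomorphism, but the argument through part (i) has the advantage of working for an arbitrary predisplay $M$, for which $f$ need not be invertible.
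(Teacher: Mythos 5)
Your proof is correct and follows essentially the same route as the paper: part (i) by combining Lemma~\ref{l:comparing with the old Zink complex} with the \cite{LZ} result recalled in \S\ref{sss:format of LZ}, and part (ii) by observing that effectivity of $M\{1\}$ forces $Q=P$ in the associated semidisplay, so that $\fZ_\sP=0$ by Lemma~\ref{l:Lie of Zink}(ii). Your explicit bookkeeping with the grading shift (showing $t\colon M_1\to M_0$ is an isomorphism, hence $Q=tM_1=P$) just spells out the step the paper leaves implicit.
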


Here the words ``acyclic'' and ``quasi-isomorphic'' are understood in the sense of complexes of functors $\{R\mbox{-algebras}\}\to\Ab$.

\begin{proof}
Statement (i) immediately follows from Lemma~\ref{l:comparing with the old Zink complex} and the definition of $\fZ_\sP$.
In the situation of (ii), the semidisplay $\sP=(P,Q,F,F_1)$ satisfies the condition $Q=P$. This condition implies that $\fZ_\sP=0$ by Lemma~\ref{l:Lie of Zink}(ii).
\end{proof}

\subsubsection{Remarks}   \label{sss:Ker d could be nonzero}
(i) If $M\not\in\preDISP_n^+(R)$ then $H^{-1}(C_M^\bcdot )$ can be nonzero, see formula~\eqref{e:lower bound} below.

(ii) Let $N(\tilde R)$ be as in \S\ref{sss:Zink complex}. If $M\in\DISP_n(R)$ then $N(\tilde R)$ satisfies the two conditions\footnote{To check condition (ii), note that $\Ker (L\overset{t}\longrightarrow L)\cap \Ker (L\overset{u}\longrightarrow L)=0$ if $L= (\hat W^{(F^n)})^\oplus$.}
from \S\ref{sss:the subgroup of Cartesian square}. So we can identify the group $K_N^{-1}$ with a subgroup of $K_N^0\times K_N^0$, after which $d:K_N^{-1}\to K_N^0$
is just the difference between the two projections $K_N^{-1}\to K_N^0$.
 
\subsubsection{The case $n=1$}  \label{sss:Zink complex for n=1}
By \cite[Example 3.6.4]{Lau21}, $\DISP_1(R)$ is canonically equivalent\footnote{Warning: in \cite[Example 3.6.4]{Lau21} the filtration $C^*$ is always descending and the filtration $D_*$ is always ascending, even if stated otherwise!
(The confusion in loc.\,cit. is purely terminological.)
} to the category of \emph{$F$-zips} over $R$.
An $F$-zip over $R$ is an $R$-module $L$ equipped with a descending filtration $\Fil^\bcdot L$, an ascending filtration $\Fil_\bcdot L$,  and an isomorphism
\begin{equation}   \label{e:relation between the gr's}
\gr_\bcdot L\iso\Fr^*\gr^\bcdot L;
\end{equation}
it is assumed that $\gr^\bcdot L$  is a finitely generated projective $R$-module and
\[
\Fil^iL=L \mbox{ for }i\ll 0,\quad \Fil^iL=0 \mbox{ for }i\gg 0,\quad \Fil_iL=0 \mbox{ for }i\ll 0,\quad \Fil_iL=L \mbox{ for }i\gg 0.
\]
Let $M\in\DISP_1(R)$ correspond to an $F$-zip $L$. Then one checks\footnote{Keep in mind the twist $\{ 1\}$ in \S\ref{sss:Zink complex}.} that the Zink complex $C_M^{\bcdot}$ is as follows. First, 
$C_M^0=L^{\hat\sharp}$, where $L^{\hat\sharp}:=L\otimes_R (\hat W^{(F)}_R)$. So by \S\ref{sss:Ker d could be nonzero}(ii), we can think of $C_M^{-1}$ as a subgroup of 
$L^{\hat\sharp}\times_{\Spec R}L^{\hat\sharp}$, and then $d:C_M^{-1}\to C_M^0=L^\hatsharp$ is just the difference between  the two projections $C_M^{-1}\to L^\hatsharp$.
 Second, this subgroup
is the fiber product of $(\Fil^0L)^{\hat\sharp}$ and $(\Fil_0L)^{\hat\sharp}$ over $(\gr^0L)^{\hat\sharp}$, where the map
$(\Fil_0L)^{\hat\sharp}\to (\gr^0L)^{\hat\sharp}$ is the composition
\begin{equation}  \label{e:the composition}
(\Fil_0L)^{\hat\sharp}\epi (\gr_0L)^{\hat\sharp}\iso \Fr^* (\gr^0L)^{\hat\sharp}\overset{V}\longrightarrow (\gr^0L)^{\hat\sharp}.
\end{equation}  
Note that $C_M^{-1}\supset (\Fil^1L)^{\hat\sharp}\times_{\Spec R}(\Fil_{-1}L)^{\hat\sharp}$, so 
\begin{equation}   \label{e:lower bound}
H^{-1}(C_M^{\bcdot})\supset (\Fil^1L)^{\hat\sharp}\cap (\Fil_{-1}L)^{\hat\sharp}.
\end{equation}
In particular, $H^{-1}(C_M^{\bcdot})$ can be nonzero.

Let $\tilde C_M^{\bcdot}$ be the quotient of $C_M^{\bcdot}$ by the acyclic subcomplex
$0\to (\Fil^1L)^\hatsharp\overset{\id}\longrightarrow (\Fil^1L)^\hatsharp\to 0$
(so the map $C_M^{\bcdot}\to\tilde C_M^{\bcdot}$ is a quasi-isomorphism). To describe $\tilde C_M^{\bcdot}$ more explicitly,
note that $\tilde C_M^{-1}\subset (\gr^0L)^\hatsharp\times (\Fil_0L)^\hatsharp$ is just the graph of the map \eqref{e:the composition}, so $\tilde C_M^{-1}$ identifies with 
$(\Fil_0L)^\hatsharp$ via the projection 
$(\gr^0L)^\hatsharp\times (\Fil_0L)^\hatsharp\to (\Fil_0L)^\hatsharp$. After this identification, $\tilde C_M^{\bcdot}$ becomes the complex
\begin{equation} \label{e:the simplified complex}
0\to (\Fil_0L)^\hatsharp\overset{\gamma-\delta}\longrightarrow (L/\Fil^1L)^\hatsharp\to 0 ,
\end{equation} 
where $\gamma$ is the composite map
\begin{equation}  \label{e:the longer composition}
(\Fil_0L)^{\hat\sharp}\epi (\gr_0L)^{\hat\sharp}\iso \Fr^* (\gr^0L)^{\hat\sharp}\overset{V}\longrightarrow (\gr^0L)^{\hat\sharp}\mono (L/\Fil^1L)^\hatsharp
\end{equation} 
and $\delta$ is the composite map $(\Fil_0L)^\hatsharp\mono L^\hatsharp\epi (L/\Fil^1L)^\hatsharp$. 

Note that if $M\in\DISP_n^+(R)$ then the first and last arrow of \eqref{e:the longer composition} are isomorphisms, so \eqref{e:the simplified complex} is isomorphic to the complex
\[
0\to\Fr^*(\gr^0L)^\hatsharp\overset{V-A^\hatsharp}\longrightarrow (\gr^0L)^\hatsharp\to 0,
\]
where $A:\Fr^*\gr^0L\to\gr^0L$ is the composition of the following linear maps:
\[
\Fr^*\gr^0L\iso\gr_0L\mono L\epi\gr^0L.
\]

\appendix

\section{Recollections on $\hat W$}   \label{s:hat W}
\subsection{The group ind-schemes $\hat W, \hat W^{\bbig}$}  \label{ss:hat W}
For a ring $R$, let $\hat W(R)$ be the set of all $x\in W(R)$ such that all components of the Witt vector $x$ are nilpotent and almost all of them are zero.
Then $\hat W(R)$ is an ideal in $W(R)$ preserved by $F$ and $V$. Quite similarly, one defines an ideal $\hat W^{\bbig}(R)$ in the ring $W^{\bbig}(R)$ of big Witt vectors, which is preserved by $F_n$ and $V_n$ for all $n\in\BN$.

The functors $\hat W$ and $\hat W^{\bbig}$ are group ind-schemes over $\BZ$. It is known that $\hat W^{\bbig}$ is Cartier dual to $W^{\bbig}$, and $\hat W_{\BZ_{(p)}}$ is dual to $W_{\BZ_{(p)}}$ (here $\BZ_{(p)}$ is the localization of $\BZ$ at $p$, and $\hat W_{\BZ_{(p)}}$ is the base change of $\hat W$). Details are explained below.

\subsection{General remarks on Cartier duality} \label{ss:generalities on Cartier duality}
\subsubsection{} \label{sss:generalities on Cartier duality}
The theory of Cartier duality for commutative finite locally free group schemes over an arbitrary ring $R$ is standard.
There is a quite similar theory of Cartier duality between commutative affine group $R$-schemes whose coordinate ring is a projective $R$-module and commutative group ind-schemes over $R$ whose underlying ind-scheme has the form $\Spf A^*$, where $A$ is a cocommutative $R$-coalgebra which is a projective $R$-module.
In particular, if $G$ is a group (ind-)scheme of this class then $(G^*)^*=G$. Note that the schemes $W, W^{\bbig}$ belong to the above class; the same is true for the ind-schemes $\hat W,\hat W^{\bbig}$.

\subsubsection{}
Here is a piece of good news (which is irrelevant for our paper): the class of ind-schemes from \S\ref{sss:generalities on Cartier duality} has a better description under a countability assumption, see \cite{AM}. The approach of \cite{AM} is based on the notion of \emph{Mittag-Leffler module} and the fact that a countably generated flat module is Mittag-Leffler 
if and only if it is projective. The notion and the fact are due to M~Raynaud and L.~Gruson; precise references are given in ~\cite{AM}.

\subsection{Duality between $W^{\bbig}$ and $\hat W^{\bbig}$}  
\subsubsection{The canonical character of $\hat W^{\bbig}$}   \label{sss:2lambda}
One has 
\[
W^{\bbig}(R)=\Ker (R[[t]]^\times\epi R^\times ), \quad \hat W^{\bbig}(R)=\Ker (R[t]^\times\epi R^\times ),
\]
where the maps $R[[t]]^\times\epi R^\times$ and $R[t]^\times\epi R^\times$ are given by evaluation at $t=0$. Define $\lambda :\hat W^{\bbig} (R)\to R^\times$ to be the map that takes 
$f\in \Ker (R[t]^\times\epi R^\times )$ to $f(1)$.

One has
\begin{equation} \label{e:lambda preserved by V_n}
\lambda\circ V_n=\lambda \quad \mbox{ for all } n\in\BN.
\end{equation}

\subsubsection{The pairing}   \label{sss:2pairing}
We have a pairing
\begin{equation} \label{e:big pairing}
W^{\bbig}(R)\times \hat W^{\bbig}(R)\to R^\times, \quad (x,y)\mapsto\langle x,y\rangle:=\lambda (xy).
\end{equation}
Formula \eqref{e:lambda preserved by V_n} implies that
\begin{equation}    \label{e:F_n dual to V_n}
\langle F_n (x),y\rangle =\langle x,V_n(y)\rangle , \quad \langle V_n(x),y\rangle =\langle x,F_n(y)\rangle .
\end{equation}
One can view \eqref{e:big pairing} as a paring
\begin{equation} \label{e:2 big pairing}
W^{\bbig}\times \hat W^{\bbig}\to \BG_m\, .
\end{equation}

\subsubsection{Nondegeneracy of the pairing}  \label{sss:Cartier's theorem}
P.~Cartier proved that \eqref{e:2 big pairing} induces an isomorphism 
\begin{equation} \label{e:Cartier's theorem}
W^{\bbig}\iso\HHom (\hat W^{\bbig},\BG_m),
\end{equation} 
see \cite[Thm.~2]{Ca}. A detailed exposition is given in \cite[\S 37.5]{H}; according to  \cite[\S E.6.2]{H}, it is based on some lecture notes of Cartier.
Key idea: $\hat W^{\bbig}$ identifies with the inductive limit of $\Sym^n\hat\BA^1$, where
the transition map $\Sym^n\hat\BA^1\to\Sym^{n+1}\hat\BA^1$ comes from $0\in\hat\BA^1$.

\subsubsection{Relation to Contou-Carr\`ere's symbol}  \label{sss:Contou-Carrere}
The pairing \eqref{e:2 big pairing} is closely related to Contou-Carr\`ere's tame symbol, see \cite{Co}, \cite[\S 2.9]{De}, and
\cite[Prop.~3.3(ii)(c)]{BBE}.

\subsection{Duality between $W_{\BZ_{(p)}}$ and $\hat W_{\BZ_{(p)}}$}  \label{ss:hat W dual to W}
\subsubsection{The pairing}   \label{sss:pairing}
Let $R$ be a $\BZ_{(p)}$-algebra. Then one has a canonical embedding $$W(R)\mono W^{\bbig}(R).$$
So \eqref{e:big pairing} induces a pairing
\begin{equation} \label{e:p-typical pairing}
W(R)\times \hat W(R)\to R^\times, \quad (x,y)\mapsto\langle x,y\rangle ,
\end{equation}
which can be viewed as a paring
\begin{equation} \label{e:2 p-typical pairing}
W_{\BZ_{(p)}}\times \hat W_{\BZ_{(p)}}\to (\BG_m)_{\BZ_{(p)}}.
\end{equation}

By \eqref{e:F_n dual to V_n}, we have
\begin{equation}    \label{e:F dual to V}
\langle Fx,y\rangle =\langle x,Vy\rangle , \quad \langle Vx,y\rangle =\langle x,Fy\rangle .
\end{equation}

\subsubsection{Nondegeneracy of the pairing}
It is known that the pairing \eqref{e:2 p-typical pairing} induces an isomorphism 
\begin{equation}
W_{\BZ_{(p)}}\iso\HHom (\hat W_{\BZ_{(p)}},(\BG_m)_{\BZ_{(p)}}).
\end{equation}
This follows from Cartier's theorem mentioned in \S\ref{sss:Cartier's theorem}: indeed, $\hat W_{\BZ_{(p)}}$ identifies with
$\hat W^{\bbig}_{\BZ_{(p)}}/\sum\limits_{(n,p)=1} V_n(\hat W^{\bbig}_{\BZ_{(p)}})$, so \eqref{e:Cartier's theorem} implies that
$\HHom (\hat W_{\BZ_{(p)}},(\BG_m)_{\BZ_{(p)}})$ identifies with
\[
\bigcap_{(n,p)=1} \Ker (F_n:  W^{\bbig}_{\BZ_{(p)}}\to W^{\bbig}_{\BZ_{(p)}})=W_{\BZ_{(p)}}.
\]

Let us note that in this article we only need nondegeneracy of the pairing $$W_{\BF_p}\times \hat W_{\BF_p}\to (\BG_m)_{\BF_p}\, ,$$ which is proved in \cite[Ch.~V, \S 4.5]{DG}. 
A related fact is proved in  \S 4 of Chapter~III of \cite{Dem}.

\section{Explicit presentations of the stacks $\sDisp_n^{d,d'}$, $\sDisp_n^{d,d',\weak}$, $\sDisp_n^{d,d',\strong}$, and $\Disp_n^{d,d'}$ }  \label{s:Explicit presentations}
\subsubsection{The goal}

Given integers  $d$ and $d'$ such that $0\le d'\le d$ and an integer $n\ge 0$, we defined in \S\ref{ss:Algebraicity of sDisp_n} 
the stacks $\sDisp_n^{d,d'}$, $\sDisp_n^{d,d',\weak}$, $\sDisp_n^{d,d',\strong}$, and $\Disp_n^{d,d'}$.
We are going to describe each of them as a quotient of an explicit scheme by an explicit group action. 

Let $R$ be an $\BF_p$-algebra. Recall that $\sDisp_n^{d,d'}(R)$ is the full subgroupoid of the underlying groupoid of $\sDisp_n(R)$ whose objects are quadruples $(P,Q,F,F_1)\in\sDisp_n(R)$ such that
$\rank P=d$ and $\rank (P/Q)=d'$. The groupoids $\sDisp_n^{d,d',\weak}(R)$, $\sDisp_n^{d,d',\strong}(R)$, and $\Disp_n^{d,d'}(R)$ are defined similarly but with the following changes:

(i) in the case of  $\Disp_n^{d,d'}(R)$ and $\sDisp_n^{d,d',\strong}(R)$ replace $P/Q$ by $\Coker (Q\to P)$;

(ii) in the case of $\sDisp_n^{d,d',\weak}(R)$ the condition for $P$ is that $\rank (P/I_{n,R}P)=d$.

Note that $\sDisp_1^{d,d'}\ne\emptyset$ only if $d'=d$.

\subsubsection{Rigidifications}
Let $T_0:=W_n(R)^{d'}$, $L_0:=W_n(R)^{d-d'}$.

By a rigidification of an object of $(P,Q,F,F_1)\in\sDisp_n^{d,d}(R)$ we mean an isomorphism $(P,Q)\iso (T_0\oplus  L_0, I_{n,R}\cdot T_0\oplus L_0)$.

By a rigidification of an object of $(M,\cQ,F,F_1)\in\sDisp_n^{d,d',\weak}(R)$ we mean an isomorphism $(M,\cQ)\iso (T_0\oplus \bar L_0, I_{n,R}\cdot T_0\oplus \bar L_0)$, where
$\bar L_0:=L_0/J_{n,R}\cdot L_0$.

By a rigidification of an object of $\Disp_n^{d,d'}(R)$ or $\sDisp_n^{d,d',\strong}(R)$ we mean a normal decomposition (in the sense of \cite[\S 3.2]{Lau13}) plus an isomorphism between the corresponding pair $(T,L)$ and the pair $(T_0,L_0)$.

Let $\sDisp_{n,\rig}^{d,d'}(R)$ be the set of isomorphism classes of pairs consisting of an object of $\sDisp_n^{d,d'}(R)$ and a rigidification of it.
Similarly, define functors $\sDisp_{n,\rig}^{d,d',\weak}$, $\sDisp_{n,\rig}^{d,d',\strong}$, and $\Disp_{n,\rig}^{d,d'}$. Each of the four functors can be described in terms of matrices (see \S\ref{sss:Rigidfications in terms of matrices} below).
This description shows that each of the functors is representable by an affine scheme.

\subsubsection{Rigidfications in terms of matrices}   \label{sss:Rigidfications in terms of matrices}
One has
\[
\Disp_{n,\rig}^{d,d'}(R)=GL(d, W_n(R)), \quad \sDisp_{n,\rig}^{d,d',\strong}(R)=\Mat (d, W_n(R)).
\]
The first equality is explained in \cite[\S 1.2]{LZ}; see also \cite[\S 2.3]{BP}, where only the case $n=\infty$ is considered.
The second equality is quite similar to the first one.

$\sDisp_{n,\rig}^{d,d'}(R)$ is the set of block matrices of the following shape:
\[
\begin{pmatrix}
W_n& W_{n-1}\\ W_n& W_{n-1}
\end{pmatrix}
\]
By this we mean that $\sDisp_{n,\rig}^{d,d'}(R)$ is the set of block matrices
\begin{equation}  \label{e:the matrix x}
\begin{pmatrix}
x_{11}& x_{12}\\ x_{21}& x_{22}
\end{pmatrix}
\end{equation}
where $x_{11}$ (resp.~$x_{22}$) is a square matrix of size $d'$ (resp.~$d-d'$) and the entries of the matrix $x_{ij}$ are in $W_n(R)$ if $j=1$ and in $W_{n-1}(R)$ if $j=2$.
The first $d$ columns of \eqref{e:the matrix x} are the images of the basis vectors of $T_0$ under $F:P\to P$; the other columns are the images of the basis vectors of $L_0$ under $F_1:Q\to P/J_{n,R}\cdot P$.

Similarly, $\sDisp_{n,\rig}^{d,d',\weak}(R)$ is the set of block matrices of the following shape:
\[
\begin{pmatrix}
W_n& W_{n-1}\\ W_{n-1}& W_{n-1}
\end{pmatrix}
\]

\subsubsection{$\sDisp_n^{d,d'}$ as a quotient stack}  \label{sss:sDisp_n as a quotient}
Let $H_n^{d,d'}(R):=\Aut (P_0,Q_0)$, where $P_0:=T_0\oplus  L_0$, $Q_0:=I_{n,R}\cdot T_0\oplus L_0$.
Then $H_n^{d,d'}$ is a group scheme acting on the scheme $\sDisp_{n,\rig}^{d,d'}$, and
\[
\sDisp_n^{d,d'}=\sDisp_{n,\rig}^{d,d'}/H_n^{d,d'}.
\]
In the language of \S\ref{sss:Rigidfications in terms of matrices}, $H_n^{d,d'}$ is the group of invertible block matrices of the following shape:
\begin{equation}  \label{e: the shape for H_n}
\begin{pmatrix}
W_n& I_n\\ W_n& W_n
\end{pmatrix}
\end{equation}
(here $I_n$ denotes the functor $R\mapsto I_{n,R}$). An element 
\[
h=
\begin{pmatrix}
h_{11}& h_{12}\\ h_{21}& h_{22}
\end{pmatrix}
\in H_n^{d,d'}(R)
\]
acts on $\sDisp_{n,\rig}^{d,d'}(R)$ by 
\begin{equation}  \label{e:action of h}
x\mapsto hx\Phi (h)^{-1}, \quad \mbox{ where } \Phi (h):=
\begin{pmatrix}
F(h_{11})& V^{-1}(h_{12})\\ pF(h_{21})& F(h_{22})
\end{pmatrix}.
\end{equation}
Informally,
\[
\Phi (h)=
\begin{pmatrix}
p^{-1}& 0\\ 0& 1
\end{pmatrix}
F(h)
\begin{pmatrix}
p& 0\\ 0& 1
\end{pmatrix}.
\]

Let us note that $\Phi$ is a homomorphism from the ring of block-matrices of shape \eqref{e: the shape for H_n} to the ring of block-matrices of shape
\[
\begin{pmatrix}
W_n& W_{n-1}\\ I_n& W_n
\end{pmatrix}
\]
So $\Phi (h)^{-1}=\Phi (h^{-1})$.

\subsubsection{$\sDisp_n^{d,d',\weak }$ as a quotient stack}
This is parallel to \S\ref{sss:sDisp_n as a quotient}, but instead of \eqref{e: the shape for H_n} one has to consider invertible matrices of the following shape:
\[
\begin{pmatrix}
W_n& I_n\\ W_{n-1}& W_{n-1}
\end{pmatrix}
\]

\subsubsection{$\Disp_n^{d,d'}$ and $\sDisp_n^{d,d',\strong}$ as quotient stacks}  \label{sss:Disp_n & sDisp_n as quotients}
One has 
\[
\Disp_n^{d,d'}=\Disp_{n,\rig}^{d,d'}/\BP_n^{d,d'},\quad \sDisp_n^{d,d',\strong}=\sDisp_{n,\rig}^{d,d',\strong}/\BP_n^{d,d'},
\]
where $\BP_n^{d,d'}$ is a certain group scheme acting on $\sDisp_{n,\rig}^{d,d',\strong}$ and preserving $\Disp_{n,\rig}^{d,d'}$ (which is an open subscheme of $\sDisp_{n,\rig}^{d,d',\strong}$). 
The group scheme $\BP_n^{d,d'}$ was introduced by O.~B\"ultel and G.~Pappas\footnote{See \cite[\S 2.3]{BP}. Let us note that the authors of \cite{BP} refer (after their Definition 1.0.1) to other works in which the subgroup was introduced and used; one of them is a 2008 e-print by B\"ultel.} in the case $n=\infty$ and then used (for arbitrary $n$) in \cite[\S 1.2]{LZ} to describe $\Disp_n^{d,d'}$. The definition of $\BP_n^{d,d'}$ and its action on $\sDisp_{n,\rig}^{d,d',\strong}$ is recalled below.

Let $GL(d,W_n)$ be the group scheme over $\BF_p$ representing the functor $R\mapsto GL(d,W_n(R))$. The group $\BP_n^{d,d'}$ is a certain subgroup of $GL(d,W_n)\times GL(d,W_n)$, and it acts on $\sDisp_{n,\rig}^{d,d',\strong}$ by two-sided translations; more precisely, a pair
$$(g,h)\in \BP_n^{d,d'}(R)\subset GL(d,W_n(R))\times GL(d,W_n(R))$$
acts by
\[
x\mapsto hxg^{-1}, \quad \mbox{ where }x\in \Mat (d,W_n(R))=\sDisp_{n,\rig}^{d,d',\strong}(R).
\]
The subgroup $\BP_n^{d,d'}(R)\subset GL(d,W_n(R)))\times GL(d,W_n(R))$ consists of pairs $(g,h)$ such that
\begin{equation} \label{e:g via h}
g_{ij}=p^{i-j}F(h_{ij}) \mbox{ for } 1\le j\le i\le 2,
\end{equation}
\begin{equation}    \label{e:h via g}
h_{12}=V(g_{12}),
\end{equation}
where $g_{ij},h_{ij}$ are the blocks of the block matrices $g,h$ (as before, the blocks $g_{11},h_{11}$ have size~$d'$).
Note that \eqref{e:h via g} is an ``avatar'' of \eqref{e:g via h} because $FV=VF=p$. 

Informally, $\BP_n^{d,d'}(R)$ is the graph of a \emph{multivalued} map $\Phi:H_n^{d,d'}(R)\to GL(d,W_n(R))$, where $H_n^{d,d'}\subset GL(d,W_n)$ is the group of invertible block matrices of the shape
\[
\begin{pmatrix}
W_n& I_n\\ W_n& W_n
\end{pmatrix}
\]
and $\Phi $ is essentially as in formula \eqref{e:action of h} (except that $\Phi$ is multi-valued now).

\section{$\Disp_n^{G,\mu}$ and $\Lau_n^{G,\mu}$}   \label{s:Disp_n and Lau_n}
\subsection{The setting and the plan}
\subsubsection{The setting}   \label{sss:my setting}
Let $G$ be a smooth affine group scheme over $\BZ/p^n\BZ$ equipped with a homomorphism $\mu :\BG_m\to G$.

\subsubsection{Key example}  \label{sss:the key example}
Let $d\ge d'\ge 0$. Let $G=GL(d)$ and 
\begin{equation}
\mu  (\lambda )=\diag (\lambda ,\ldots ,\lambda ,1,\ldots , 1),
\end{equation}
where $\lambda $ appears $d'$ times and $1$ appears $d-d'$ times. 

\subsubsection{Plan}
In the setting of \S\ref{sss:my setting}, there is a stack $\Disp_n^{G,\mu}$ over $\BF_p$ such that in the situation of \S\ref{sss:the key example} one has $\Disp_n^{G,\mu}=\Disp_n^{d,d'}$.
We will recall the definition of $\Disp_n^{G,\mu}$ in~\S\ref{ss:Disp_n,G,mu}. Assuming that $\mu$ is \emph{1-bounded} in the sense of \S\ref{sss:again 1-boundedness}, we will define in \S\ref{ss:2from G-displays to semidisplays}-\ref{ss:Lau_n,G,mu} a group scheme $\Lau_n^{G,\mu}$ over $\Disp_n^{G,\mu}$ such that in the situation of \S\ref{sss:the key example} one has $\Lau_n^{G,\mu}=\Lau_n^{d,d'}$. If $\mu$ is 1-bounded then according to \cite{GMM}, one has an algebraic stack $\BT_n^{G,\mu}\otimes\BF_p$. We conjecture that it is a gerbe over $\Disp_n^{G,\mu}$ banded by $\Lau_n^{G,\mu}$ (see \S\ref{ss:Lau_n,G,mu}).
In \S\ref{ss:the Cartier dual explicitly} we describe the Cartier dual of $\Lau_n^{G,\mu}$ very explicitly.

\subsubsection{On the setting of \S\ref{sss:my setting}}
The setting of \cite{BP,Lau21,GMM} is more general than that of  \S\ref{sss:my setting}: $\mu$ is there a cocharacter of $G\otimes O/p^nO$, where $O$ is the ring of integers of a finite unramified extension of $\BQ_p$.
This generalization is important.

\subsection{The stack $\Disp_n^{G,\mu}$}  \label{ss:Disp_n,G,mu}
Let us recall the definition of $\Disp_n^{G,\mu}$ from \cite{Lau21} (in the earlier article \cite{BP} this was a description rather than the definition). It is quite parallel to the description of $\Disp_n^{d,d'}$ given in \S\ref{sss:Disp_n & sDisp_n as quotients} of Appendix~\ref{s:Explicit presentations}.

\subsubsection{Outline}
Let $G(W_n)$ be the affine group scheme over $\BF_p$ representing the functor $R\mapsto G(W_n(R))$ on the category of $\BF_p$-algebras.
The group scheme  $G(W_n)\times G(W_n)$ acts on the scheme $G(W_n)$ as follows:
a pair $(g,h)$ in $G(W_n(R))\times G(W_n(R))$ acts by
\begin{equation}   \label{e:2-sided translations}
U\mapsto hUg^{-1}, \quad \mbox{ where }U\in G(W_n(R)).
\end{equation}
The stack $\Disp_n^{G,\mu}$ is defined to be the quotient of the scheme $G(W_n)$ by the action of a certain subgroup\footnote{In \cite[\S 5.1]{Lau21} it is called the \emph{display group.}} $\BP_n^{G,\mu}\subset G(W_n)\times G(W_n)$.
The subgroup is defined below.

\subsubsection{Definition of $\BP_n^{G,\mu}$}  \label{sss:2Definition of BP_n}
$\BG_m$ acts on $G$ and $H^0(G,\cO_G)$: namely, $\lambda \in\BG_m$ acts on $G$ by $g\mapsto \mu (\lambda )g\mu (\lambda )^{-1}$, and it acts on $H^0(G,\cO_G)$
by taking $\varphi\in H^0(G,\cO_G)$ to the function $$g\mapsto \varphi ( \mu (\lambda )g\mu (\lambda )^{-1}).$$
The action of $\BG_m$ on $H^0(G,\cO_G)$ induces a grading
\begin{equation}  \label{e:grading of Fun(G)}
H^0(G,\cO_G)=\bigoplus\limits_{k\in\BZ} H^0(G,\cO_G)_k\, .
\end{equation}

For a $\BZ$-graded ring $A$, let $G(A)^{\BG_m}\subset G(A)$ denote the subgroup of \emph{graded} ring ho\-mo\-morphisms $H^0(G,\cO_G)\to A$; this is indeed a subgroup because $G(A)^{\BG_m}=\Mor^{\BG_m}(\Spec A,G)$.
Finally,
\[
\BP_n^{G,\mu} (R):=G(W_n(R)^\oplus)^{\BG_m}, 
\]
where $W_n(R)^\oplus$ is the $n$-truncated Witt frame, see \S\ref{sss:truncated Witt frame}. Since $W_n(R)^\oplus$ is a graded subring of $W_n(R)[u,u^{-1}]\times W_n(R)[t,t^{-1}]$, we see that
$\BP_n^{G,\mu} (R)$ is a subgroup of the group
\[
G(W_n(R)[u,u^{-1}]\times W_n(R)[t,t^{-1}])^{\BG_m}=G(W_n(R)\times W_n(R))=G(W_n(R))\times G(W_n(R))
\]
(we have used the homomorphisms $W_n(R)[u,u^{-1}]\to W_n(R)$ and $W_n(R)[t,t^{-1}]\to W_n(R)$ given by evaluation at $u=1$ and $t=1$).

The following lemma describes $\BP_n^{G,\mu}$ in matrix terms.

\begin{lem}
Let $\rho :G\to GL(r)$ be a homomorphism such that 
$$\rho (\mu (\lambda ))=\diag (\lambda ^{m_1},\ldots ,\lambda ^{m_r}).$$
Let $\rho_{ij}\in H^0(G,\cO_G)$ be its matrix elements.

(i) If $(g,h)\in \BP_n^{G,\mu} (R)\subset G(W_n(R))\times G(W_n(R))$ then
\begin{equation}  \label{e:again g in terms of h}
\rho_{ij}(g)=p^{m_j-m_i}F(\rho_{ij}(h)) \quad\mbox{ if }\; m_i\le m_j ,
\end{equation}
\begin{equation}    \label{e:again h in terms of g}
\rho_{ij}(h)=p^{m_i-m_j-1}V(\rho_{ij}(g)) \quad\mbox{ if }\; m_i> m_j .
\end{equation}

(ii) If $\rho :G\to GL(r)$ is a closed immersion and $g,h\in G(W_n(R))$ satisfy \eqref{e:again g in terms of h}-\eqref{e:again h in terms of g} then  $(g,h)\in \BP_n^{G,\mu} (R)$.
\end{lem}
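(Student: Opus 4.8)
The plan is to simply unwind the definition of $\BP_n^{G,\mu}(R)=G(W_n(R)^\oplus)^{\BG_m}$ from \S\ref{sss:2Definition of BP_n} in terms of the matrix entries $\rho_{ij}$. The computation on which everything rests is that each $\rho_{ij}$ is homogeneous for the grading \eqref{e:grading of Fun(G)} on $H^0(G,\cO_G)$, of degree $m_i-m_j$: conjugating by $\mu(\lambda)$ and using $\rho(\mu(\lambda))=\diag(\lambda^{m_1},\ldots,\lambda^{m_r})$ gives $\rho_{ij}(\mu(\lambda)g\mu(\lambda)^{-1})=\lambda^{m_i-m_j}\rho_{ij}(g)$, so $\rho_{ij}\in H^0(G,\cO_G)_{m_i-m_j}$. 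For part (i), an element of $\BP_n^{G,\mu}(R)$ is by definition a graded ring homomorphism $\psi\colon H^0(G,\cO_G)\to W_n(R)^\oplus$, and the pair $(g,h)$ is recovered as $(\sigma\circ\psi,\tau\circ\psi)$ through the two homomorphisms $\sigma,\tau\colon W_n(R)^\oplus\to W_n(R)$ of \S\ref{sss:sigma,tau} (evaluation at $u=1$ and $t=1$). Since $\psi$ is graded it carries $\rho_{ij}$ into the $(m_i-m_j)$-th graded component of $W_n(R)^\oplus$; by the explicit description in \S\ref{sss:truncated Witt frame} the image is a pair whose $u$- and $t$-entries are $\rho_{ij}(g)$ and $\rho_{ij}(h)$, related by \eqref{e:a' via a} when $m_i\le m_j$ and by \eqref{e:a via a'} when $m_i>m_j$. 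Substituting $i\rightsquigarrow m_i-m_j$ reproduces exactly \eqref{e:again g in terms of h} and \eqref{e:again h in terms of g}.

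For part (ii) the idea is to run this backwards. Given $g,h\in G(W_n(R))$ satisfying the relations, I would let $U_{ij}$ be the element of the $(m_i-m_j)$-th graded component of $W_n(R)^\oplus$ with $u$- and $t$-entries $\rho_{ij}(g)$ and $\rho_{ij}(h)$; relations \eqref{e:again g in terms of h}--\eqref{e:again h in terms of g} are precisely what is needed for this pair to lie in that component, so $U:=(U_{ij})$ is a well-defined matrix over $W_n(R)^\oplus$ with $U_{ij}$ homogeneous of degree $m_i-m_j$, i.e.\ a $\BG_m$-equivariant point of $GL(r)$ over $W_n(R)^\oplus$. One checks $U$ is invertible there: $\det U$ is homogeneous of degree $0$, hence lies in $(W_n(R)^\oplus)_0=W_n(R)$, and under $\tau$ it maps to $\det h\in W_n(R)^\times$, so it is a unit. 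By construction $\sigma(U)=g$ and $\tau(U)=h$ in $GL(r,W_n(R))$.

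It remains to show $U\in G(W_n(R)^\oplus)$, which is the heart of the matter. Because $\rho$ is a closed immersion, $G$ is cut out inside $GL(r)$ by a single ideal $I\subset H^0(GL(r),\cO)$; because $\rho$ is moreover $\BG_m$-equivariant (conjugation on $G$ corresponds under $\rho$ to conjugation by $\diag(\lambda^{m_i})$), the subscheme $G$ is $\BG_m$-stable and $I$ is homogeneous. For a homogeneous $f\in I$ of degree $d$, the value $f(U)$ is homogeneous of degree $d$ in $W_n(R)^\oplus$ since $U$ is a graded point, while $\sigma(f(U))=f(g)=0$ and $\tau(f(U))=f(h)=0$ because $g,h\in G(W_n(R))$. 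The crucial observation is that $(\sigma,\tau)$ is injective on each homogeneous component of $W_n(R)^\oplus$ (a degree-$d$ element is determined by the pair of its $u$- and $t$-entries), so $f(U)=0$. As $I$ is generated by such homogeneous $f$, we conclude $U\in G(W_n(R)^\oplus)$; the induced graded homomorphism $H^0(G,\cO_G)\to W_n(R)^\oplus$ then witnesses $(g,h)=(\sigma(U),\tau(U))\in\BP_n^{G,\mu}(R)$.

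The main obstacle is exactly this last passage. The combined map $(\sigma,\tau)\colon W_n(R)^\oplus\to W_n(R)\times W_n(R)$ is \emph{not} injective on the whole frame, since it collapses the grading; thus knowing $f(g)=f(h)=0$ does not by itself force $f(U)=0$. What saves the argument is homogeneity: the $\BG_m$-stability of $G$ makes $I$ a graded ideal, and $(\sigma,\tau)$ is injective on homogeneous pieces. This is the precise point at which the closed-immersion hypothesis is used in an essential way, and it is worth isolating the injectivity-on-graded-pieces statement as the key technical lemma before assembling the proof.
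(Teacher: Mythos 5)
Your proposal is correct, and part (i) is exactly the paper's argument: both rest on the computation $\rho_{ij}\in H^0(G,\cO_G)_{m_i-m_j}$, so that a graded homomorphism $H^0(G,\cO_G)\to W_n(R)^\oplus$ carries $\rho_{ij}$ into the $(m_i-m_j)$-th graded component of the frame, where the defining relations \eqref{e:a' via a}--\eqref{e:a via a'} become \eqref{e:again g in terms of h}--\eqref{e:again h in terms of g}.

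For part (ii), however, you take a genuinely different route from the paper. The paper's proof is one line: since $\rho$ is a closed immersion, $H^0(G,\cO_G)$ is \emph{generated} by the functions $\rho_{ij}$ (strictly, together with $(\det\rho)^{-1}$), so one reverses (i): the tautological graded homomorphism $H^0(G,\cO_G)\to W_n(R)[u,u^{-1}]\times W_n(R)[t,t^{-1}]$ attached to $(g,h)$ sends each generator into the subring $W_n(R)^\oplus$ --- that is precisely what hypotheses \eqref{e:again g in terms of h}--\eqref{e:again h in terms of g} assert --- hence sends the whole coordinate ring into $W_n(R)^\oplus$, and the corestriction is the required element of $\BP_n^{G,\mu}(R)$. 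You instead exploit the closed immersion on the side of \emph{relations} rather than \emph{generators}: $G$ is the zero locus in $GL(r)$ of a $\BG_m$-homogeneous ideal $I$, you assemble the graded $GL(r)$-point $U$ over the frame, and you kill $I$ using the fact that $(\sigma,\tau)$ is injective on each homogeneous component of $W_n(R)^\oplus$. Both arguments are sound, and both must address the same small determinant point: the image of $\det\rho$ is a degree-zero element of the frame, identified via $\tau$ with the unit $\det\rho(h)\in W_n(R)^\times$, hence invertible in $W_n(R)^\oplus$. You make this explicit, whereas the paper's phrase ``generated by the $\rho_{ij}$'' silently elides $(\det\rho)^{-1}$. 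What the paper's route buys is brevity: no need to know that $I$ is homogeneous, and no need for the injectivity statement. What your route buys is a cleanly isolated, reusable technical lemma (a homogeneous element of $W_n(R)^\oplus$ is determined by its two ``legs'' under $\sigma$ and $\tau$, even though $(\sigma,\tau)$ is far from injective on the whole frame), together with a transparent explanation of why equations involving only the matrix entries $\rho_{ij}$ suffice to characterize membership in $\BP_n^{G,\mu}(R)$.
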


\begin{proof}
(i) Note that in terms of the grading \eqref{e:grading of Fun(G)}, we have $\rho_{ij}\in H^0(G,\cO_G)_{m_i-m_j}$. 

Let $(g,h)\in \BP_n^{G,\mu} (R)$. 
According to the definition of $\BP_n^{G,\mu}$ (see \S\ref{sss:2Definition of BP_n}), this implies that the pair
$(\rho_{ij}(g),\rho_{ij}(h))\in W_n(R)\times W_n(R)$ belongs to $f(S_{m_i-m_j})$, where
$$S:=W_n(R)^\oplus\subset W_n(R)[u,u^{-1}]\times W_n(R)[t,t^{-1}]$$
and the map $f:W_n(R)[u,u^{-1}]\times W_n(R)[t,t^{-1}]\to W_n(R)\times W_n(R)$ is given by evaluation at $u=t=1$. 
So formulas \eqref{e:again g in terms of h}-\eqref{e:again h in terms of g} follow from \eqref{e:a' via a}-\eqref{e:a via a'}.

(ii) If $\rho :G\to GL(r)$ is a closed immersion then the ring $H^0(G,\cO_G)$ is generated by the functions $\rho_{ij}$ and $(\det\rho )^{-1}$, so the above argument can be reversed.
\end{proof}

\begin{cor}
In the situation of \S\ref{sss:the key example} one has $\BP_n^{G,\mu}=\BP_n^{d,d'}$, $\Lau_n^{G,\mu}=\Lau_n^{d,d'}$. 
\end{cor}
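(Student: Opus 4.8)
The statement splits into two parts, and the plan is to treat them in order, deducing the second from the first together with Theorem~\ref{t:Lau_n explicitly}.

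\emph{The equality $\BP_n^{G,\mu}=\BP_n^{d,d'}$.} For $G=GL(d)$ one has $G(W_n)=GL(d,W_n)$, so both groups are subgroups of $GL(d,W_n)\times GL(d,W_n)$ and it suffices to compare their $R$-points. I would apply the Lemma preceding the Corollary to the tautological closed immersion $\rho=\id_{GL(d)}$, for which $\rho_{ij}(g)=g_{ij}$. Here $\rho(\mu(\lambda))=\diag(\lambda^{m_1},\dots,\lambda^{m_d})$ with $m_i=1$ for $i\le d'$ and $m_i=0$ for $i>d'$. Reading off \eqref{e:again g in terms of h}--\eqref{e:again h in terms of g} block by block (recall $F,V$ act entrywise on matrices over $W_n$): on the $(1,1)$- and $(2,2)$-blocks $m_i=m_j$ gives $g=F(h)$; on the $(2,1)$-block $m_i=0<1=m_j$ gives $g_{21}=pF(h_{21})$; and on the $(1,2)$-block $m_i=1>0=m_j$ gives $h_{12}=V(g_{12})$. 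Since $\rho=\id$ is a closed immersion, both parts of the Lemma apply and show that $\BP_n^{G,\mu}(R)$ consists exactly of the pairs satisfying these relations; as just computed, they are precisely the defining relations \eqref{e:g via h}--\eqref{e:h via g} of $\BP_n^{d,d'}$, so $\BP_n^{G,\mu}=\BP_n^{d,d'}$. As both groups act on $GL(d,W_n)$ by the same two-sided translation \eqref{e:2-sided translations}, this identifies the quotient stacks $\Disp_n^{G,\mu}=\Disp_n^{d,d'}$.

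\emph{The equality $\Lau_n^{G,\mu}=\Lau_n^{d,d'}$.} First I would check that $\mu$ is $1$-bounded, so that $\Lau_n^{G,\mu}$ is defined: the weights of $\BG_m$ on $\Lie GL(d)=\mathfrak{gl}(d)$ under $\Ad\circ\mu$ are the differences $m_i-m_j\in\{-1,0,1\}$, all $\le 1$. Next I would unwind the definition of $\Lau_n^{G,\mu}$ from \S\ref{ss:2from G-displays to semidisplays}--\ref{ss:Lau_n,G,mu}: it is obtained by passing from the universal $G$-display on $\Disp_n^{G,\mu}$ to the semidisplay attached to the adjoint representation of $G$ and then applying Zink's functor $\fZ$. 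For $G=GL(d)$ the adjoint representation is $\rho\otimes\rho^*$, where $\rho$ is the standard representation, and this is exactly the decomposition exploited in \S\ref{ss:the canonical semidisplay}: under the identification $\Disp_n^{G,\mu}=\Disp_n^{d,d'}$ the universal $G$-display becomes a display $\sP$, and its adjoint semidisplay is $\Phi(\sP\otimes\sP^*\{-1\})$ as in \eqref{e:new formula}; by \eqref{e:duality via DISP} together with the fact that $\Phi$ is a tensor functor (\S\ref{sss:tensor structure on Phi}), this equals $\Phi(\sP)\otimes\Phi(\sP^t)$ as in \eqref{e:original formula}, i.e. the morphism \eqref{e:the semidisplay on Disp} into $\sDisp_n^{d^2,d'(d-d')}$. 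Applying $\fZ$ and invoking Theorem~\ref{t:Lau_n explicitly}, which says precisely that $\Lau_n^{d,d'}$ is $\fZ$ of this semidisplay, gives $\Lau_n^{G,\mu}=\Lau_n^{d,d'}$.

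\emph{Main obstacle.} The first part is a direct computation and presents no difficulty. The real work is in the second part: matching the intrinsic, representation-theoretic definition of $\Lau_n^{G,\mu}$ with the $GL(d)$-specific tensor-product construction. Concretely, one must verify that ``form the semidisplay of the universal $G$-display associated with the adjoint representation'' agrees, for $G=GL(d)$, with ``tensor the semidisplay of $\sP$ with that of $\sP^t$.'' This is the compatibility of the associated-bundle construction with the factorization $\Ad=\rho\otimes\rho^*$, and carrying it out rigorously requires tracking the tensor structures through the higher-display formalism of \S\ref{s:higher displays} --- in particular checking that $\Phi$ and the passage to associated bundles agree as tensor functors. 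Once that bookkeeping is in place, the conclusion is immediate from Theorem~\ref{t:Lau_n explicitly}.
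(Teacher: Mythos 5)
Your proposal is correct and follows essentially the same route as the paper: the equality $\BP_n^{G,\mu}=\BP_n^{d,d'}$ is exactly the paper's one-line proof (compare \eqref{e:again g in terms of h}--\eqref{e:again h in terms of g}, specialized via $\rho=\id$, with \eqref{e:g via h}--\eqref{e:h via g}), and the equality $\Lau_n^{G,\mu}=\Lau_n^{d,d'}$ is justified in the paper, just as you do, by combining Theorem~\ref{t:Lau_n explicitly} with \S\ref{ss:the canonical semidisplay} (the identification $\Phi(\sP)\otimes\Phi(\sP^t)=\Phi(\sP\otimes\sP^*\{-1\})$ and the observation that $\sP\otimes\sP^*$ is the adjoint-representation twist). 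Your write-up is merely more explicit than the paper's, spelling out the block computation, the use of the closed-immersion clause of the lemma, and the $1$-boundedness check.
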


\begin{proof}
Compare equations \eqref{e:again g in terms of h}-\eqref{e:again h in terms of g} with \eqref{e:g via h}-\eqref{e:h via g}.
\end{proof}

\subsection{$\BP_n^{G,\mu}$ and $\Disp_n^{G,\mu}$ in terms of $G$-torsors} \label{ss:in terms of G-torsors}
For an $\BF_p$-algebra $R$, let $$X_R:=\Spec W_n(R)^\oplus.$$

\subsubsection{The $G$-torsor $\cP_R^\mu$}
The group $\BG_m$ acts on $X_R$. On the other hand, $\BG_m$ acts on $G$ by left translations via $\mu :\BG_m\to G$. Thus we get an action of $\BG_m$ on $X_R\times G$. It commutes with the action of $G$ on $X_R\times G$ by right translations. So
$X_R\times G$ is a $\BG_m$-equivariant $G$-torsor over $X_R$.  The corresponding $G$-torsor over the stack $X_R/\BG_m$ is denoted by $\cP_R^\mu$. 
(In other words, $\cP_R^\mu$ is the $G$-torsor over $X_R/\BG_m$ induced via $\mu$ by the $\BG_m$-torsor $X_R\to X_R/\BG_m$.)

One checks that
\begin{equation}   \label{e:the display group as Aut}
\Aut \cP_R^\mu=\BP_n^{G,\mu}(R).
\end{equation}

\subsubsection{$G$-torsors of type $\mu$}  \label{sss:G-torsors of type mu}
We say that a $G$-torsor over $X_R/\BG_m$ has \emph{type $\mu$} if it becomes isomorphic to $\cP_R^\mu$ after etale localization with respect to $R$.
By \eqref{e:the display group as Aut}, \emph{a $G$-torsor of type $\mu$ over $X_R/\BG_m$ is the same as an etale $\BP_n^{G,\mu}$-torsor over $\Spec R$.} In fact, ``etale'' can be replaced by ``fpqc'' because $\BP_n^{G,\mu}$ is smooth by \cite[Lemma~2.3.8]{Lau21}.

\subsubsection{Remarks}    \label{sss:on G-torsors of type mu}
(i)  If $\mu'$ and $\mu$ are conjugate then $\cP_R^{\mu'}\simeq\cP_R^\mu$. 

(ii) Several conditions equivalent to having type $\mu$ are formulated in \cite[Prop.~5.5.2]{GMM}.

\subsubsection{$\Disp_n^{G,\mu}$ in terms of $G$-torsors}   \label{sss:in terms of G-torsors}
By \S\ref{sss:G-torsors of type mu}, one can reformulate \cite[Lemma~5.3.8]{Lau21} as follows: $\Disp_n^{G,\mu}(R)$ identifies with the groupoid of $G$-torsors $\cF$ of type $\mu$ over $X_R/\BG_m$ equipped with an isomorphism $f:\cF^\sigma\iso\cF^\tau$. 
Here $\cF^\sigma ,\cF^\tau$ are the pullbacks of $\cF$ via the morphisms $\Spec W_n(R)\to X_R/\BG_m$ corresponding to the homomorphisms
$$\sigma: W_n(R)^\oplus\to W_n(R), \quad \tau: W_n(R)^\oplus\to W_n(R)$$
from \S\ref{sss:sigma,tau}. In this language, the canonical map $G(W_n(R))\to\Disp_n^{G,\mu}(R)$ takes an element $U\in G(W_n(R))$  to $(\cF,f)$, where $\cF=\cP^\mu_R$ and $f:\cF^\sigma\iso\cF^\tau$ is given by $U$.

\subsection{A morphism $\Disp_n^{G,\mu}\to\sDisp_n$}    \label{ss:2from G-displays to semidisplays}
\subsubsection{The 1-boundedness condition}   \label{sss:again 1-boundedness}
Let $\fg:=\Lie (G)$. Composing the adjoint representation of $G$ with $\mu :\BG_m\to G$, one gets an action of $\BG_m$ on $\fg$ or equivalently, a grading 
\begin{equation}  \label{e:grading on Lie(G)}
\fg=\bigoplus\limits_{i\in\BZ} \fg_i
\end{equation}
 compatible with the Lie bracket.
From now on, we assume that the $\BG_m$-action is \emph{1-bounded}, by which we mean that
\begin{equation}   \label{e:again 1-boundedness}
\fg_i=0 \;\mbox{ for all } i>1.
\end{equation}
Under this assumption, we will define in \S\ref{sss:from G-displays to semidisplays} a morphism $\Disp_n^{G,\mu}\to\sDisp_n$. In \S\ref{ss:Disp_n,G,mu to sDisp_n explicitly} we will describe it more explicitly.

\subsubsection{Twists of $\fg$}  \label{sss:twists of fg}
Let $\cF$ be a $G$-torsor of type $\mu$ over $X_R/\BG_m$. Let $\fg_\cF$ be the $\cF$-twist of the $G$-module $\fg$; this is a vector bundle on $X_R/\BG_m$. Pulling it back to $X_R$, we get a finitely generated projective graded $S$-module, where $S:=W_n(R)^\oplus$; by abuse of notation, we still denote it by $\fg_\cF$.
We claim that
\begin{equation}   \label{e:effectivity of the shifted twist}
\fg_\cF\{ -1\}\in\Vect_n^+(R),
\end{equation}
where $\{ -1\}$ denotes the shift of grading (see \S\ref{sss:Vect_n(R)}) and $\Vect_n^+(R)$ is as in \S\ref{sss:the effective subcategory}. It suffices to check this if $\cF\simeq\cP^\mu_R$. In this case $\fg_\cF\simeq\bigoplus\limits_i\fg_i\otimes S\{ i\}$, where $\{ i\}$ denotes the shift of grading
and the $\fg_i$'s are as in \eqref{e:grading on Lie(G)}. So \eqref{e:effectivity of the shifted twist} follows from the 1-boundedness assump\-tion~\eqref{e:grading on Lie(G)}.

\subsubsection{A morphism $\Disp_n^{G,\mu}\to\sDisp_n$}    \label{sss:from G-displays to semidisplays}
By~\S\ref{sss:in terms of G-torsors}, we can think of an object of $\Disp_n^{G,\mu}(R)$ as a pair $(\cF,f:\cF^\sigma\iso\cF^\tau)$, where $\cF$ is a $G$-torsor of type $\mu$ over $X_R/\BG_m$.
By \S\ref{sss:twists of fg}, we have an object $\fg_\cF\{ -1\}\in\Vect_n^+(R)$. Using $f:\cF^\sigma\iso\cF^\tau$, one upgrades it to an object of the category $\DISP_n^+(R)$ from \S\ref{ss:higher (pre)displays}. Thus we have constructed a functor
\begin{equation}  \label{e:the higher display on Disp_n}
\Disp_n^{G,\mu}(R)\to\DISP_n^+(R).
\end{equation}  
Composing it with the functor $\DISP_n^+(R)\to\sDisp_n(R)$ from \S\ref{ss:preDISP to sDisp}, one gets a functor
\begin{equation}    \label{e:2the semidisplay on Disp_n}
\Disp_n^{G,\mu}(R)\to\sDisp_n(R).
\end{equation}

\subsection{The group scheme $\Lau_n^{G,\mu}$}   \label{ss:Lau_n,G,mu}
\subsubsection{}    \label{sss:Lau_n,G,mu}
Let $\Lau_n^{G,\mu}$ be the commutative $n$-smooth group scheme over $\Disp_n^{G,\mu}$ corresponding to the composite morphism
\begin{equation}
\Disp_n^{G,\mu}\to\sDisp_n\overset{\fZ}\longrightarrow\Sm_n,
\end{equation}
where the first arrow is \eqref{e:2the semidisplay on Disp_n} and the second one is given by Zink's functor (see \S\ref{ss:truncated Zink's functor}).
By~Lemma~\ref{l:comparing with the old Zink complex}, the functor $\Disp_n^{G,\mu}(R)\to \Sm_n (R)$ can also be described as follows\footnote{This description shows that even without assuming $\mu$ to be 1-bounded,
$\Lau_n^{G,\mu}$ is defined as a \emph{complex of group ind-schemes} concentrated in degrees $0,-1$, but 1-boundedness ensures that this complex is a group scheme.}: it takes a pair $(\cF,f:\cF^\sigma\iso\cF^\tau)$ to the $0$-th cohomology of the Zink complex $C_{\fg_\cF\{ -1\}}^\bcdot $ (the latter is defined in \S\ref{sss:Zink complex}).

In the situation of \S\ref{sss:the key example} one has 
\begin{equation}   \label{e:Lau=Lau}
\Lau_n^{G,\mu}=\Lau_n^{d,d'};
\end{equation}
this follows from Theorem~\ref{t:Lau_n explicitly} combined with \S\ref{ss:the canonical semidisplay}.

\subsubsection{}
On the other hand, let $\BT_n^{G,\mu}$ be the stack defined in \cite{GMM}; the definition of $\BT_n^{G,\mu}$ uses the \emph{syntomification} functor $R\mapsto R^{\Syn}$ from \cite{Bh}. By \cite[Theorem~D]{GMM}, if $\mu$ is 1-bounded then the stack $\BT_n^{G,\mu}\otimes\BZ/p^m\BZ$ is algebraic for every $m$; moreover,
in the situation of \S\ref{sss:the key example} one has $\BT_n^{G,\mu}\otimes\BZ/p^m\BZ=\BTst_n^{d,d'}\otimes\BZ/p^m\BZ$. 

One has a canonical morphism $\phi_n:\BT_n^{G,\mu}\otimes\BF_p\to\Disp_n^{G,\mu}$, see \cite[Rem.~9.1.3]{GMM} (it is given by pullback with respect to a certain morphism $X_R/\BG_m\to R^\cN\otimes\BZ/p^n\BZ$, where $X_R$ is as in \S\ref{ss:in terms of G-torsors} and $R^\cN$ is the Nygaard-filtered prismatization\footnote{See  \cite[Def.~5.3.10]{Bh}. By \cite[\S 5.4]{Bh}, if $R$ is smooth over $\BF_p$ one can use \cite[Def.~3.3.13]{Bh}. } of $\Spec R$). In the situation of \S\ref{sss:the key example} this is hopefully the morphism mentioned in Theorem~\ref{t:Lau's theorem}.

\begin{conj}    \label{conjecture}
The morphism $\phi_n:\BT_n^{G,\mu}\otimes\BF_p\to\Disp_n^{G,\mu}$ is a gerbe banded by $\Lau_n^{G,\mu}$.
\end{conj}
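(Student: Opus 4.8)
The plan is to carry out, for a general $1$-bounded pair $(G,\mu)$, the same three-part strategy that proves the case $G=GL(d)$ (Theorems~\ref{t:main} and \ref{t:Lie of Lau}), but to execute it entirely on the syntomic side, since for general $G$ there is no crystalline Dieudonné theory to invoke. Both $\BT_n^{G,\mu}\otimes\BF_p$ and $\Disp_n^{G,\mu}$ are smooth algebraic stacks of finite type over $\BF_p$ (the former by \cite{GMM}, the latter by the quotient presentation of \S\ref{ss:Disp_n,G,mu}), so the two assertions ``gerbe'' and ``banded by $\Lau_n^{G,\mu}$'' are fppf-local on the base. I would therefore work after an fppf base change $\Spec R\to\Disp_n^{G,\mu}$ that trivializes the display, i.e. fix a pair $(\cF,f\colon\cF^\sigma\iso\cF^\tau)$ with $\cF\simeq\cP^\mu_R$.

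First I would establish that $\phi_n$ is a gerbe. Recall from \S\ref{ss:Lau_n,G,mu} that $\phi_n$ is pullback along the Witt chart $X_R/\BG_m\to R^\cN\otimes\BZ/p^n\BZ$, and that $\BT_n^{G,\mu}$ is built from the syntomification $R^\Syn$, whose finite model is the stack $\sY_R$ of \S\ref{sss:Zink complex}, obtained from $(\Spec W_n(R)^\oplus)/\BG_m$ by gluing its two localizations $\Spec W_n(R)$. The fiber of $\phi_n$ over $(\cF,f)$ parametrizes the ways of extending the $G$-torsor-with-$f$ from the Witt chart to $R^\Syn$. I would show that this extension problem is unobstructed and that its isomorphism classes and automorphisms are governed by the \emph{abelian} cohomology of the adjoint twist $\fg_\cF$ on the complementary locus — precisely the data packaged by the Zink complex $C^\bcdot_{\fg_\cF\{-1\}}$ of \S\ref{ss:Zink complex}. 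Smoothness of $G$ together with $1$-boundedness of $\mu$ (which guarantees $\fg_\cF\{-1\}\in\Vect_n^+(R)$, \S\ref{sss:twists of fg}) should make the relevant obstruction spaces vanish and exhibit the fiber as a neutral gerbe, hence $\phi_n$ as a gerbe.

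Next I would identify the band. For a commutatively banded fppf gerbe $\phi\colon\sX\to\sY$ the band, pulled back to $\sX$, is the relative inertia $\cI_{\sX/\sY}$ (cf.\ \S\ref{sss:essential image}); so it suffices to produce a canonical isomorphism $\cI_{\BT_n^{G,\mu}\otimes\BF_p/\Disp_n^{G,\mu}}\iso\phi_n^*\Lau_n^{G,\mu}$. The finite-versus-infinitesimal analysis of automorphisms underlying Corollary~\ref{c:a more understandable description} (where an $F^n$-killed endomorphism squares to zero) has a direct non-abelian analogue: an automorphism of the extended object over $R^\Syn$ restricting to the identity on the Witt chart is a section of $\fg_\cF$ over the complementary locus compatible with $f$, i.e.\ a Frobenius-bounded class, and these commute. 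Concretely I would show $\cI_{\sX/\sY}$ is represented by $H^0(C^\bcdot_{\fg_\cF\{-1\}})$, which is exactly the definition of $\Lau_n^{G,\mu}$ in \S\ref{sss:Lau_n,G,mu}; $1$-boundedness ensures this $H^0$ is an honest commutative $n$-smooth group scheme rather than merely a complex. Consistency with the known case \eqref{e:Lau=Lau}, via Theorem~\ref{t:Lau_n explicitly} and \S\ref{ss:the canonical semidisplay}, serves as a check.

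The hard part will be the gerbe property for general $G$. In the $GL(d)$ case this is Lau's theorem, proved by Dieudonné theory; here it must be replaced by a direct prismatic statement that $R^\Syn$ is, relative to its Witt chart, \emph{controlled by $\fg_\cF$} — a deformation-theoretic comparison of $R^\Syn$ with $R^\cN$. One tempting reduction is to choose a faithful $\rho\colon G\hookrightarrow GL(d)$ with $\rho\circ\mu$ of standard type and to descend the gerbe property along $\rho$, but this is delicate, since being a gerbe is neither created nor detected by a single faithful representation and the $\BT_n$-structure depends sensitively on $G$. I expect the cleanest route is the one through sheared prismatization alluded to in the acknowledgements, which should express both $\BT_n^{G,\mu}$ and the fiber of $\phi_n$ functorially in $\fg$, making the $H^0$-identification and the vanishing of obstructions simultaneous consequences of the cohomological interpretation of the Zink complex in \S\ref{sss:Zink complex}.
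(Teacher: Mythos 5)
You should first be aware that the statement you were asked to prove is, in the paper, exactly a conjecture: the paper contains no proof of it. What the paper does is (a) observe that the conjecture holds for $n=1$, citing \cite{GMM}; (b) prove it for $G=GL(d)$ and all $n$ --- this is in essence the main result of the paper, and the argument there is not syntomic at all: it combines E.~Lau's crystalline-Dieudonn\'e theorem (Theorem~\ref{t:Lau's theorem}) with the identification $\Lau_n^{G,\mu}=\Lau_n^{d,d'}$ of \eqref{e:Lau=Lau}, which rests on the explicit description of Theorem~\ref{t:Lau_n explicitly}; and (c) attribute a proof of the general case to E.~Lau's work in preparation \cite{Lau25}. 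So there is no proof in the paper to compare yours with; the only question is whether your text is itself a proof, and it is not.

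The gap sits exactly where you locate it, and acknowledging it does not close it. First, the gerbe property: you assert that extending a pair $(\cF,f:\cF^\sigma\iso\cF^\tau)$ from the Witt chart $X_R/\BG_m$ to $R^\Syn\otimes\BZ/p^n\BZ$ is ``unobstructed'' and controlled by the abelian cohomology of $\fg_\cF$, the obstructions vanishing by smoothness of $G$ and 1-boundedness of $\mu$. No argument is given for either claim; the morphism from the Witt chart to $R^\Syn\otimes\BZ/p^n\BZ$ is not a nilpotent thickening, so this is not an instance of standard deformation theory, and showing that the fibers of $\phi_n$ are governed by the Zink complex $C^\bcdot_{\fg_\cF\{-1\}}$ (fppf-local nonemptiness and local transitivity) is precisely the hard content. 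You then defer this step to sheared prismatization \cite{Sheared}, i.e.\ to unpublished work in progress, which amounts to restating the conjecture rather than proving it. Second, the band: identifying the relative inertia with $H^0(C^\bcdot_{\fg_\cF\{-1\}})=\Lau_n^{G,\mu}$ requires a linearization mechanism. In the $GL(d)$ case this is Lemma~\ref{l:F^n-killed endomorphisms} (an $F^n$-killed endomorphism squares to zero) feeding into Corollary~\ref{c:a more understandable description}; for general $G$ there is no endomorphism ring, and passing from ``automorphisms of the gauge killed by $F^n$'' to sections of the adjoint bundle, together with the commutativity you need even to speak of a banding, would require an exponential/Frobenius-kernel argument that you do not supply. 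Your negative remark about descending the gerbe property along a faithful $\rho:G\hookrightarrow GL(d)$ is correct, but it forecloses the one route for which the paper's results ($G=GL(d)$) would have provided input. As it stands, the proposal is a reasonable research outline, consistent with how the conjecture is expected to be settled, but not a proof.
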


\subsubsection{Remarks}     \label{sss:conjecture}
(i) Conjecture~\ref{conjecture} holds for $n=1$ (see \cite[Rem.~9.3.4]{GMM} and the related part of the proof of \cite[Thm.~9.3.2]{GMM}).

(ii) It is expected that in the situation of \S\ref{sss:the key example} the morphism $\phi_n:\BT_n^{G,\mu}\otimes\BF_p\to\Disp_n^{G,\mu}$ from \cite[Rem.~9.1.3]{GMM} is equal to the one mentioned in Theorem~\ref{t:Lau's theorem}. By formula~\eqref{e:Lau=Lau} combined with Lau's Theorem~\ref{t:Lau's theorem}, this would imply that
in the situation of \S\ref{sss:the key example} Conjecture~\ref{conjecture} holds for all $n$. 

(iii) As far as I understand, E.~Lau \cite{Lau25} has proved Conjecture~\ref{conjecture} in general.

\subsection{Explicit description of the morphism $\Disp_n^{G,\mu}\to\sDisp_n$}  \label{ss:Disp_n,G,mu to sDisp_n explicitly}
In \S\ref{sss:G(W_n) to sDisp_n explicitly}-\ref{sss:Disp_n,G,mu to sDisp_n explicitly} we give an explicit description of the morphism $\Disp_n^{G,\mu}\to\sDisp_n$ defined in \S\ref{ss:2from G-displays to semidisplays}.
This description is obtained by unraveling the definitions in a straightforward way, so we just formulate the answer.

\subsubsection{A preliminary remark}  \label{sss:preliminary remark}
The composition $\Disp_n^{G,\mu}\to\DISP_n^+\to\Vect_n^+$ was described in~\S\ref{sss:twists of fg}. Combining this description with \S\ref{sss:in terms of G-torsors}, we see that the composite map
\begin{equation}   \label{e:G(W_n) to Vec_n}
G(W_n)\to\Disp_n^{G,\mu}\to\DISP_n^+\to\Vect_n^+
\end{equation}
is \emph{constant.} More precisely, the map \eqref{e:G(W_n) to Vec_n} takes every $U\in G(W_n(R))$ to the object
\[
M=\bigoplus\limits_i \fg_i\otimes S\{ i-1\} \in\Vect_n^+(R), \quad \mbox{ where } S:=W_n(R)^{\oplus} .
\]
Thus $M_j=\bigoplus\limits_i S_{i+j-1}\otimes\fg_i$. Since $\fg_i=0$ for $i>1$, we get
\[
M_0= W_n(R)\otimes\fg , \quad M_1:= (W_n(R)\otimes\fg_{\le 0})\oplus (I_{n+1,R}\otimes\fg_1),
\]
and the map $t:M_1\to M_0$ comes from the canonical map $I_{n+1,R}\to W_n(R)$.

\subsubsection{The morphism $G(W_n)\to\sDisp_n$ in explicit terms}  \label{sss:G(W_n) to sDisp_n explicitly}
The composite morphism $$G(W_n)\to \Disp_n^{G,\mu}\to\sDisp_n$$ takes
an element $U\in G(W_n(R))$ to the quadruple\footnote{In this quadruple $\Phi_U :P_U\to P_U$ and $\Phi'_U:Q_U\to P_U$ play the role of $F$ and $F_1$ from \S\ref{sss:definition of sDisp_n}} 
$(P_R,Q_R,\Phi_U, \Phi'_U)\in\sDisp_n (R)$, where the $W_n(R)$-modules
$P_R$, $Q_R$,  and the $F$-linear maps $\Phi_U:P_U\to P_U$, $\Phi'_U:Q_U\to P_U/J_{n,R}P_U$ are as follows:

(i) in terms of \S\ref{sss:preliminary remark}, $P_R=M_0$ and $Q_R=\im (M_1\overset{t}\longrightarrow M_0)$; explicitly,
\[
P_R= W_n(R)\otimes\fg, \quad Q_R= (W_n(R)\otimes\fg_{\le 0})\oplus (I_{n,R}\otimes\fg_1)\subset P_R\, .
\]

(ii) $\Phi_U :P_R\to P_R$ and $\Phi'_U :Q_R\to P_R/J_{n,R}\cdot P_R$ are the $F$-linear maps such that
\begin{equation}  \label{e:defining Phi}
\Phi_U (a\otimes x)=p^{1-i}\Ad_U(F(a)\otimes x) \quad \mbox{ for } a\in W_n(R), x\in \fg_i,
\end{equation}
\begin{equation}   \label{e:defining Phi'}
\Phi'_U (a\otimes x)=p^{-i}\Ad_U(\bar F(a)\otimes x) \quad \mbox{ for } a\in W_n(R), x\in \fg_i \;\mbox{ if } i\le 0,
\end{equation}
\begin{equation}   \label{e:2defining Phi'}
\Phi'_U (a\otimes x)=\Ad_U(V^{-1}(a)\otimes x) \quad \mbox{ for } a\in I_{n,R}, x\in \fg_1.
\end{equation}
(here $\bar F(a)\in W_{n-1}(R)$ is the image of $F(a)$).

Thus $\Phi_U=\Ad_U\circ\Phi_1$, $\Phi'_U=\Ad_U\circ\Phi'_1$. Note that \eqref{e:2defining Phi'} is an ``avatar'' of \eqref{e:defining Phi'}.

\subsubsection{The morphism $\Disp_n^{G,\mu}\to\sDisp_n$ in explicit terms}    \label{sss:Disp_n,G,mu to sDisp_n explicitly}
Recall that the stack $\Disp_n^{G,\mu}$ is the quotient of the scheme $G(W_n)$ by the following action of $\BP_n^{G,\mu}$: an element $$(g,h)\in\BP_n^{G,\mu}(R)\subset G(W_n(R))\times G(W_n(R)$$
takes $U\in G(W_n(R))$ to $hUg^{-1}$. So descending the morphism $G(W_n)\to\sDisp_n$ described in \S\ref{sss:G(W_n) to sDisp_n explicitly} to a morphism $\Disp_n^{G,\mu}\to\sDisp_n$ amounts to specifying an isomorpism
\[
(P_R,Q_R,\Phi_U, \Phi'_U)\iso (P_R,Q_R,\Phi_{hUg^{-1}}, \Phi'_{hUg^{-1}}),
\]
where $(g,h)\in\BP_n^{G,\mu}(R)$. This isomorphism is $\Ad_h :P_R\iso P_R$.

\subsection{Explicit description of $(\Lau_n^{G,\mu})^*$}  \label{ss:the Cartier dual explicitly}
Let $(\Lau_n^{G,\mu})^*$ be the Cartier dual of $\Lau_n^{G,\mu}$. We will describe $(\Lau_n^{G,\mu})^*$ as a subgroup of a very simple group scheme $\tilde A$ over $\Disp_n^{G,\mu}$.

\subsubsection{The group scheme $\tilde A$}
Let $A$ be the group scheme over $\BF_p$ whose $R$-points are additive maps $\fg\to W_n(R)$ or equivalently, $W_n(R)$-linear maps $W_n(R)\otimes\fg\to W_n(R)$. We have the coadjoint action of $G(W_n(R))$ on $A(R)$.
So $G(W_n)$ acts on $A$. Precomposing this action with the homomorphism $\BP_n^{G,\mu}\mono G(W_n)\times G(W_n)\overset{\prr_2}\longrightarrow G(W_n)$, we get an action of $\BP_n^{G,\mu}$ on~$A$. On the other hand, we have the $\BP_n^{G,\mu}$-torsor 
$G(W_n)\to\Disp_n^{G,\mu}$. Define $\tilde A$ to be the twist of $A$ by this torsor. Thus $\tilde A$ is a smooth commutative group scheme over $\Disp_n^{G,\mu}$.

\subsubsection{Explicit description of $(\Lau_n^{G,\mu})^*$}
Recall that $\Lau_n^{G,\mu}$ corresponds to the composition $\Disp_n^{G,\mu}\to\sDisp_n\overset{\fZ}\longrightarrow\Sm_n$.
If $\sP =(P,Q,F,F_1)\in\sDisp_n (R)$ then by \S\ref{ss:Cartier dual of Z_P}, $\fZ_\sP^*$ is the subgroup of the group scheme $\Hom_{W_n(R)} (P,W_{n,R})$ defined by equations~\eqref{e:Cond_x}-\eqref{e:Cond_y}.
Combining this with \S\ref{ss:Disp_n,G,mu to sDisp_n explicitly}, we see that $(\Lau_n^{G,\mu})^*$ is a closed subgroup of $\tilde A$. It remains to describe the closed subscheme
\begin{equation} \label{e:Lau* as a subgroup of A}
(\Lau_n^{G,\mu})^*\times_{\Disp_n^{G,\mu}}G(W_n)\subset A\times G(W_n).
\end{equation}
Combining equations \eqref{e:Cond_x}-\eqref{e:Cond_y} with \eqref{e:defining Phi}-\eqref{e:2defining Phi'}, we get the following description of the subscheme \eqref{e:Lau* as a subgroup of A}.

Let $(\eta,U)$ be an $R$-point of $A\times G(W_n)$, so $\eta\in\Hom (\fg, W_n (R))$, $U\in G(W_n(R))$. Then
\emph{$(\eta,U)$ is an $R$-point of $(\Lau_n^{G,\mu})^*\times_{\Disp_n^{G,\mu}}G(W_n)$ if and only if $\eta$ and $U$ satisfy the equations}
\begin{equation} \label{e:the equation for i=1}
F(\eta (x))=\eta (\Ad_U(x)) \quad \mbox{ if } x\in\fg_1,
\end{equation}
\begin{equation} \label{e:the equation for i<1}
\eta (x)=p^{-i} V(\eta (\Ad_U(x))) \quad \mbox{ if } x\in\fg_i \mbox{ and } i\le 0.
\end{equation}

Note that \eqref{e:the equation for i=1} is an ``avatar'' of \eqref{e:the equation for i<1}.

\medskip

In the next remark we describe a way to transform the system of equations \eqref{e:the equation for i=1}-\eqref{e:the equation for i<1} by eliminating some of the unknowns
(although it is not clear whether this is worth doing).

\subsubsection{Remark}  \label{sss:eliminating some unknowns}
Recall that $A:=\Hom (\fg ,W_n)$. Let $A_1:=\Hom (\fg_1 ,W_n)$; this is a direct summand of $A$. Proposition~\ref{p:D(sP) economically} implies that the composite map
\[
(\Lau_n^{G,\mu})^*\times_{\Disp_n^{G,\mu}}G(W_n)\mono A\times G(W_n)\epi A_1\times G(W_n)
\]
is still a closed immersion. It also describes its image by rather explicit equations.

\section{A conjectural description of $\BT_n^{G,\mu}$}  \label{s:conjectural description of BT_n}
Just as in Appendix~\ref{s:Disp_n and Lau_n}, let $G$ be a smooth affine group scheme over $\BZ/p^n\BZ$ equipped with a homomorphism $\mu :\BG_m\to G$.
Let $\BT_n^{G,\mu}$ be the stack defined in \cite{GMM} along the lines of~\cite[Appendix~C]{Dr}.

\subsection{The goal}
Suppose that $\mu$ is 1-bounded. Then by \cite[Theorem~D]{GMM}, the stack $\BT_n^{G,\mu}\otimes\BZ/p^m\BZ$ is algebraic for every $m$, i.e., it can be represented as a quotient of a scheme by an action of a flat groupoid. 
One would like to have an \emph{explicit} presentation of $\BT_n^{G,\mu}\otimes\BZ/p^m\BZ$ as such a quotient. The goal of this Appendix is to formulate a conjecture in this direction, see Conjecture~\ref{conjecture in characteristic p}.

\subsection{Format of the conjecture}   \label{ss:the format}
The formulation of the conjecture uses ideas from the theory of prismatization \cite{Bh,Prismatization} and ``sheared prismatization'' \cite{BMVZ, Sheared}.

\subsubsection{The format}  \label{sss:the format}
We will define a stack of $\BZ/p^n\BZ$-algebras denoted by $\SR_n$. Using $\SR_n$, we will define a group stack $G(\SR_n)$. We will also define another group stack, denoted by $G(\SR_n^\oplus)^{\BG_m}$; 
its definition involves the homomorphism $\mu :\BG_m\to G$. One has a canonical homomorphism $G(\SR_n^\oplus)^{\BG_m}\to G(\SR_n)\times G(\SR_n)$. 
It defines an action of $G(\SR_n^\oplus)^{\BG_m}$ on the stack $G(\SR_n)$ by two-sided translations. We conjecture that the quotient with respect to this action is canonically isomorphic to $\BT_n^{G,\mu}$ (assuming that $\mu$ is 1-bounded).

\subsubsection{Analogy with the definition of $\Disp_n^{G,\mu}$}
The format from \S\ref{sss:the format} is similar to the format of the definition of  $\Disp_n^{G,\mu}$ given in \S\ref{ss:Disp_n,G,mu}: namely, $G(\SR_n)$ and $G(\SR_n^\oplus)^{\BG_m}$ from \S\ref{sss:the format} are analogs of $G(W_n)$ and $\BP_n^{G,\mu}$ from  \S\ref{ss:Disp_n,G,mu}.

\subsubsection{Remarks}   \label{sss:what this really means}
(i) Let $\BT_{n,\BF_p}^{G,\mu}:=\BT_n^{G,\mu}\times\Spec\BF_p$, $W_{n,\BF_p}:=W_n\times\Spec\BF_p$. Since $W_{n,\BF_p}$ is a scheme of $(\BZ/p^n\BZ)$-algebras, we have the group scheme $G(W_{n,\BF_p})$. The $(\BZ/p^n\BZ)$-algebra stack $\SR_{n,\BF_p}$ is a quotient of 
$W_{n,\BF_p}$ (see \S\ref{sss:the ring stacks over F_p} below), so the conjecture outlined in \S\ref{sss:the format} ultimately represents $\BT_{n,\BF_p}^{G,\mu}$ as a quotient of the scheme $G(W_{n,\BF_p})$ by a certain  
groupoid.

(ii) The situation in mixed characteristic is almost the same. The stack $\SR_n$ can still be represented\footnote{See \cite[\S 8]{future}. The presentation of $\SR_n$ as a quotient of $W_n$ given in \cite[\S 8]{future} depends on the choice of an integer $m\ge\delta_p$, where $\delta_p=0$ for $p>2$ and $\delta_2=1$; it is natural to set $m=\delta_p$. } rather naturally as a quotient of $W_n$, but the ring scheme $W_n$ is not a scheme of $(\BZ/p^n\BZ)$-algebras. However, if $G$ is lifted to a scheme over $\BZ_p$ then $G(W_n)$ is defined, and the conjecture ultimately represents $\BT_n^{G,\mu}$ as a quotient of the scheme $G(W_n)$ by a certain groupoid.

(iii) The idea of representing $\BT_{n,\BF_p}^{G,\mu}$ as a quotient of the scheme $G(W_{n,\BF_p})$ is natural in view of Conjecture~\ref{conjecture}, which says that $\BT_{n,\BF_p}^{G,\mu}$  is a gerbe over $\Disp_n^{G,\mu}$ banded by a group scheme killed by $\Fr^n$.
The pullback of such a gerbe via the map $\Fr^n:\Disp_n^{G,\mu}\to\Disp_n^{G,\mu}$ is trivial, so we get a morphism $\Disp_n^{G,\mu}\to\BT_{n,\BF_p}^{G,\mu}$ such that the composition $\Disp_n^{G,\mu}\to\BT_{n,\BF_p}^{G,\mu}\to\Disp_n^{G,\mu}$ equals $\Fr^n$. The composition $G(W_{n,\BF_p})\to\Disp_n^{G,\mu}\to\BT_{n,\BF_p}$ is faithfully flat (but not smooth), so it represents $\BT_{n,\BF_p}^{G,\mu}$ as a quotient of the scheme $G(W_{n,\BF_p})$ by some groupoid.

\subsection{Conventions}
A ring $R$ is said to be $p$-nilpotent if the element $p\in R$ is nilpotent.

The word ``stack'' will mean a fpqc-stack on the category opposite to that of $p$-nilpotent rings. The final object in the category of such stacks is denoted by $\Spf\BZ_p$; this is the functor that takes each $p$-nilpotent ring to a one-element set.

Ind-schemes and schemes over $\Spf\BZ_p$ are particular classes of stacks. The words ``scheme over $\Spf\BZ_p$'' are understood in the \emph{relative} sense (e.g., $\Spf\BZ_p$ itself is a scheme over $\Spf\BZ_p$).

$W$ will denote the functor $R\mapsto W(R)$, where $R$ is $p$-nilpotent. So $W$ is a ring scheme over $\Spf\BZ_p$. Same for $W_n$.

\subsection{Plan}
To formulate the conjecture, we need the ring stacks $\SR_n,\SR_n^\oplus$. In \S\ref{ss:sR_n and G(sR_n)}-\ref{ss:sR_n oplus} we discuss the easier ring stacks $\sR_n,\sR_n^\oplus$.
In \S\ref{sss:Idea of the definition} we explain the idea of the definition of $\SR_n$ and $\SR_n^\oplus$ (the details are explained in \cite{future}).
In \S\ref{sss:the ring stacks over F_p} we give a simple description of their reductions modulo $p$; this description can be used as a definition.
In \S\ref{ss:conjectural description of BT_n} we formulate the conjecture.

\subsection{The stacks $\sR_n$ and $G(\sR_n)$}  \label{ss:sR_n and G(sR_n)}
It is convenient to define various ring stacks as \emph{cones of quasi-ideals,} see \cite[\S 1.3]{Prismatization}. We will be using this approach.

\subsubsection{The case $n=1$}   \label{sss:sR_1 and G(sR_1)}
Let $n=1$ (so $G$ is a group scheme over $\BF_p$). Then $\sR_1=(\bA^1_{\BF_p})^\prism$ and $G(\sR_1)=G^\prism$, where the superscript $\prism$ stands for \emph{prismatization} in the sense of \cite{Bh, Prismatization}.
Thus $\sR_1=\Cone (W\overset{p}\longrightarrow W)$ (this is a stack of $\BF_p$-algebras), and $G(\sR_1)$ is defined via the procedure called \emph{transmutation} in \cite{Bh}. Namely, $G(\sR_1)$ is the functor that associates to a $p$-nilpotent ring $A$ the groupoid $G (\sR_1 (A))$;
here $\sR_1 (A)$ is an animated $\BF_p$-algebra, so the expression $G (\sR_1 (A))$ makes sense.

\subsubsection{The stacks $\sR_n$}   \label{sss:sR_n and G(sR_n)}
Now let $n$ be any positive integer. Similarly to \S\ref{sss:sR_1 and G(sR_1)}, $\sR_n$ is the stack of $\BZ/p^n\BZ$-algebras defined by
\begin{equation}  \label{e:sR_n}
\sR_n:=\Cone (W\overset{p^n}\longrightarrow W)
\end{equation}
and $G(\sR_n)$ is the functor that associates to a $p$-nilpotent ring $A$ the groupoid $G (\sR_n (A))$.
Since $\sR_n (A)$ is an animated $\BZ/p^n\BZ$-algebra and $G$ is a scheme over $\BZ/p^n\BZ$, the expression $G (\sR_n (A))$ makes sense.
Since $G$ is a group, $G(\sR_n)$ is a group stack. 

\subsubsection{Digression on $n$-prismatization}
Similarly to the case $n=1$ considered in \cite{Bh, Prismatization}, one can deform the ring stack \eqref{e:sR_n} by replacing $p^n$ with $\xi$, where $\xi$ is a primitive Witt vector of degree $n$, which matters only up to multiplication by $W^\times$.
Then one can define the functor of $n$-prismatization denoted by $X\mapsto X^{\prism_n}$ (where $X$ is a $p$-adic formal scheme) so that if $X$ is a scheme over $\BZ/p^n\BZ$ then $X^{\prism_n}=X(\sR_n)$.
We will not follow this path. The following property of $n$-prismatization is somewhat strange: if $X$ is a scheme over $\BZ/p^{n-1}\BZ$ then $X^{\prism_n}=\emptyset$. 

\subsubsection{An economic presentation of $\sR_n$}
The ring stack \eqref{e:sR_n} has the following ``economic'' description:
\begin{equation}  \label{e:sR_n economically}
\sR_n=\Cone (W^{(F^n)}\to W_n),
\end{equation} 
where $W^{(F^n)}:=\Ker (F^n:W\to W)$ and the map $W^{(F^n)}\to W_n$ is the tautological one. In the case $n=1$ this is \cite[Prop.~3.5.1]{Prismatization} or \cite[Cor.~2.6.8]{Bh}.
The argument for any $n$ is similar:
\[
\Cone (W^{(F^n)}\to W/V^nW)=\Cone (V^nW\to W/W^{(F^n)})=\Cone (V^nW\overset{F^n}\longrightarrow W),
\]
and $\Cone (V^nW\overset{F^n}\longrightarrow W)=\Cone (W\overset{F^nV^n}\longrightarrow W)=\Cone (W\overset{p^n}\longrightarrow W)$.

\subsection{The stacks $\sR_n^\oplus$}     \label{ss:sR_n oplus}
\subsubsection{The graded ring scheme $W^\oplus$}
Let $W^\oplus$ be the functor
\[
\{p\mbox{-nilpotent rings}\}\to\{\mbox{graded rings}\}, \quad R\mapsto W(R)^\oplus,
\]
where $W(R)^\oplus$ is the Witt frame (see  \S\ref{sss:Witt frame}). The functor $W^\oplus$ is a $\BZ$-graded ring ind-scheme over $\Spf\BZ_p$.
Each graded component of $W^\oplus$ is an affine scheme over $\Spf\BZ_p$, so by abuse of language we usually call $W^\oplus$ a ring scheme (rather than a ring ind-scheme).

By definition,  $W^\oplus$ is a graded ring subscheme of $W[u,u^{-1}]\times W[t,t^{-1}]$; this subscheme is defined by equations \eqref{e:a' via a}-\eqref{e:a via a'}.

\subsubsection{Remark}  \label{sss:not preserved by F,V}
Let $F,V:W[u,u^{-1}]\to W[u,u^{-1}]$ be the $\BZ_p[u,u^{-1}]$-linear maps extending $F,V:W\to W$. Similarly, one has $F,V:W[t,t^{-1}]\to W[t,t^{-1}]$ and $$F,V:W[u,u^{-1}]\times W[t,t^{-1}] \to W[u,u^{-1}]\times W[t,t^{-1}].$$
\emph{Warning:} the subscheme $W^\oplus\subset W[u,u^{-1}]\times W[t,t^{-1}]$ is \emph{not preserved} by $F$ or by $V$. To see this, look at equations \eqref{e:a' via a}-\eqref{e:a via a'} and recall that the maps $F,V:W\to W$ do not commute.

\subsubsection{The graded ring stack $\sR_n^\oplus$}
Let $\sR_n^\oplus$ be the stack of $\BZ$-graded $(\BZ/p^n\BZ)$-algebras defined by
\begin{equation}  \label{e:sR_n oplus}
\sR_n^\oplus:=\Cone (W^\oplus\overset{p^n}\longrightarrow W^\oplus).
\end{equation}

The embedding $W^\oplus\mono W[u,u^{-1}]\times W[t,t^{-1}]$ induces a homomorphism of $\BZ$-graded $\BZ/p^n\BZ$-algebra stacks
\begin{equation} \label{e:sR_n oplus to Laurent polynomials}
\sR_n^\oplus\to \sR_n[u,u^{-1}]\times \sR_n[t,t^{-1}].
\end{equation}

\subsubsection{An economic presentation of $\sR_{n,\BF_p}^\oplus$}
Let $\sR_{n,\BF_p}^\oplus:=\sR_n^\oplus\times\Spec\BF_p$ (i.e., $\sR_{n,\BF_p}^\oplus$ is the base change of $\sR_n^\oplus$ to $\BF_p$). Similarly, let
$W_{\BF_p}:=W\times\Spec\BF_p$, $W_{n,\BF_p}:=W_n\times\Spec\BF_p$. Good news:
\emph{the subscheme $W_{\BF_p}^\oplus\subset W_{\BF_p}[u,u^{-1}]\times W_{\BF_p}[t,t^{-1}]$ is preserved by $F$ and $V$} (the warning from \S\ref{sss:not preserved by F,V} does not apply because in characteristic $p$ one has $FV=VF=p$).
Thus we have the maps $F,V:W_{\BF_p}^\oplus\to W_{\BF_p}^\oplus$.

Let $W_{n,\BF_p}^\oplus$ be the $n$-truncated Witt frame, i.e., the functor
\[
\{\BF_p\mbox{-algebras}\}\to\{\mbox{graded }\BZ/p^n\BZ \mbox{-algebras}\}, \quad R\mapsto W_n(R)^\oplus,
\]
where $W_n(R)^\oplus$ is as in \S\ref{sss:truncated Witt frame} or \S\ref{sss:2truncated Witt frame}. We claim that
\begin{equation}  \label{e:reduction of sR_n oplus economically}
\sR_{n,\BF_p}^\oplus=\Cone ((W_{\BF_p}^\oplus)^{(F^n)}\to W_{n,\BF_p}^\oplus).
\end{equation}
The proof of \eqref{e:reduction of sR_n oplus economically} is similar to that of \eqref{e:sR_n economically}; it uses the maps $F,V:W_{\BF_p}^\oplus\to W_{\BF_p}^\oplus$.

\subsection{The $\BZ/p^n\BZ$-algebra stacks $\SR_n,\SR_n^\oplus$ and the related group stacks}  \label{ss:the ring stacks over F_p}

\subsubsection{Idea of the definition of $\SR_n,\SR_n^\oplus$}  \label{sss:Idea of the definition}
One can define $\SR_n,\SR_n^\oplus$ similarly to the definitions of $\sR_n,\sR_n^\oplus$ given in \S\ref{ss:sR_n and G(sR_n)}-\ref{ss:sR_n oplus} but replacing $W,W^\oplus$ by certain ring ``spaces'' $\SW,\SW^\oplus$ over $\Spf\BZ_p$. ($\SW$ is called the \emph{space of sheared Witt vectors,} whence the superscript $s$ in the notation for it.) 
One has 
\[
\SW:=\leftlimit{n} W/\hat W^{(F^n)}
\]
(the transition maps are equal to $F$). The quotient $W/\hat W^{(F^n)}$ is understood as an fpqc sheaf (this sheaf is not a scheme).
There is an obvious homomorphism $F:\SW\to\SW$ and a less obvious operator $\hV:\SW\to\SW$. These data have the properties from \S\ref{sss:cC economic}, and $\SW^\oplus$ is obtained from
$(\SW ,F,\hV)$ by applying the functor $\fL$ from Proposition~\ref{p:Lau equivalence}. Finally, let
\[
\SR_n:=\Cone (\SW\overset{p^n}\longrightarrow \SW), \quad \SR_n^\oplus:=\Cone (\SW^\oplus\overset{p^n}\longrightarrow \SW^\oplus);
\]
then $\SR_n$ is an $\BZ/p^n\BZ$-algebra stack, and $\SR_n^\oplus$ is a stack of $\BZ$-graded $\BZ/p^n\BZ$-algebras. Similarly to \eqref{e:sR_n oplus to Laurent polynomials}, we have a homomorphism of $\BZ$-graded $\BZ/p^n\BZ$-algebra stacks
\begin{equation} \label{e:hat sR_n oplus to Laurent polynomials}
\SR_n^\oplus\to\SR_n[u,u^{-1}]\times \SR_n[t,t^{-1}].
\end{equation}

The details are explained in \cite{future}.

\subsubsection{The ring stacks $\SR_{n,\BF_p},\SR_{n,\BF_p}^\oplus$}    \label{sss:the ring stacks over F_p}
The reductions of $\SR_n,\SR_n^\oplus$ modulo $p$ have the following simple descriptions,
which can be used as definitions:
\begin{equation}  \label{e:reduction of sheared sR_n economically}
\SR_{n,\BF_p}=\Cone (\hat W^{(F^n)}_{\BF_p}\to W_{n,\BF_p}),
\end{equation} 
\begin{equation}  \label{e:reduction of sheared sR_n oplus economically}
\SR_{n,\BF_p}^\oplus=\Cone ((\hat W_{\BF_p}^\oplus)^{(F^n)}\to W_{n,\BF_p}^\oplus).
\end{equation} 
Formulas \eqref{e:reduction of sheared sR_n economically}-\eqref{e:reduction of sheared sR_n oplus economically} are parallel to \eqref{e:sR_n economically} and \eqref{e:reduction of sR_n oplus economically}.
Let us note that $(\hat W_{\BF_p}^\oplus)^{(F^n)}$ already appeared in \S\ref{sss:canonical object of PREDISP_n}: for any $\BF_p$-algebra $R$, the group of $R$-points of 
$(\hat W_{\BF_p}^\oplus)^{(F^n)}$ was denoted there by $\hat W^{(F^n)}(R)^\oplus$.

\subsubsection{The group stack $G(\SR_n)$}
As before, let $G$ be a smooth affine group scheme over $\BZ/p^n\BZ$.
Similarly to \S\ref{sss:sR_n and G(sR_n)}, we define $G(\SR_n)$ to be the functor that associates to a $p$-nilpotent ring~$A$ the groupoid $G (\SR_n (A))$.
Since $\SR_n (A)$ is an animated $\BZ/p^n\BZ$-algebra and $G$ is a scheme over $\BZ/p^n\BZ$, the expression $G (\SR_n (A))$ makes sense.
Since $G$ is a group, $G(\SR_n)$ is a group stack. Note that
\begin{equation}  \label{e:G(hat sR_n (A))}
G (\SR_n (A))=\Hom (H^0(G,\cO_G), \SR_n (A)),
\end{equation}
where $\Hom$ is understood in the sense of animated $\BZ/p^n\BZ$-algebras.

\subsubsection{The group stack $G(\SR_n^\oplus)^{\BG_m}$}    \label{sss:group stack corresponding to the graded stack}
Similarly to \eqref{e:G(hat sR_n (A))}, we define the group stack $G(\SR_n^\oplus)^{\BG_m}$ to be the functor that takes a $p$-nilpotent ring $A$ to
$\Hom^{\BG_m} (H^0(G,\cO_G), \SR_n (A))$, where $\Hom^{\BG_m}$ stands for homomorphisms of $\BZ$-graded (animated) $\BZ/p^n\BZ$-algebras.
The grading on $H^0(G,\cO_G)$ is defined using $\mu:\BG_m\to G$ just as in \S\ref{sss:2Definition of BP_n}.

The map \eqref{e:hat sR_n oplus to Laurent polynomials} induces a group homomorphism
\begin{equation}  \label{e:group homomorphism to the product}
G(\SR_n^\oplus )^{\BG_m}\to G(\SR_n[u,u^{-1}])^{\BG_m}\times G(\SR_n[t,t^{-1}])^{\BG_m}=G(\SR_n)\times G(\SR_n).
\end{equation}

\subsection{The conjecture}  \label{ss:conjectural description of BT_n} 
\subsubsection{The quotient of a groupoid by an action of a 2-group}  \label{sss:quotient of a groupoid by a 2-group}
A \emph{$2$-group} is a monoidal category in which all objects and morphisms are invertible. Equivalently,  a  2-group is a 2-groupoid with a single object.

Recall that if a 2-group $\sG$ acts on a groupoid $\sX$ then one defines the  \emph{quotient 2-groupoid} $\tilde\sX=\sX/\sG$ as follows: 

(i) $\Ob\tilde\sX:=\Ob\sX$;

(ii) for $x_1,x_2\in\sX$ let $\MMor_{\tilde\sX} (x_1,x_2)$ be the following groupoid: its objects are pairs 
$$(g,f), \mbox{ where }g\in\sG,\quad f\in\Isom (x_2,gx_1),$$
and a morphism $(g,f)\to (g',f')$ is a morphism $g\to g'$ such that the corresponding morphism $gx_1\to g'x_1$ equals $f'f^{-1}$; 

(iii) the composition functor $\MMor_{\tilde\sX} (x_1,x_2)\times \MMor_{\tilde\sX} (x_2,x_3)\to \MMor_{\tilde\sX} (x_1,x_3)$ comes from the product in $\sG$.

\subsubsection{The quotient of a stack by an action of a group stack}  \label{quotient of a stack by a group stack}
Apply the construction from \S\ref{sss:quotient of a groupoid by a 2-group} at the level of presheaves, then sheafify the result.

\subsubsection{The 2-stack $\BT_n^{G,\mu,?}$}  \label{sss:the conjectural stack over F_p}
The group stack $G(\SR_n)\times G(\SR_n)$ acts on the stack $G(\SR_n)$ by two-sided translations; our convention is that the first copy of 
$G(\SR_n)$ acts by right translations and the second one by left translations, just as in formula~\eqref{e:2-sided translations}.
So the group stack $G(\SR_n^\oplus )^{\BG_m}$ (which depends on $\mu$, see \S\ref{sss:group stack corresponding to the graded stack}) acts on the stack $G(\SR_n)$ via the homomorphism 
\eqref{e:group homomorphism to the product}. Let $\BT_n^{G,\mu,?}$ be the quotient 2-stack, see \S\ref{quotient of a stack by a group stack}. 

The following conjecture is motivated by Conjecture~\ref{conjecture}.

\begin{conj} \label{conjecture in characteristic p}
Suppose that $\mu :\BG_m\to G$ is 1-bounded in the sense of \S\ref{sss:again 1-boundedness}. Then there is a canonical isomorphism $\BT_n^{G,\mu}\iso\BT_n^{G,\mu,?}$, where $\BT_n^{G,\mu}$ is the 1-stack defined in~\cite{GMM}.
\end{conj}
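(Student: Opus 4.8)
The plan is to realize \emph{both} $\BT_n^{G,\mu}$ and $\BT_n^{G,\mu,?}$ as classifying stacks for $G$-objects on one and the same geometric object — a ``sheared $n$-syntomification'' stack $\hat\sY_R$ built by gluing — and then to match the two descriptions. The whole argument is designed to parallel, one level up, the treatment of $\Disp_n^{G,\mu}$ in \S\ref{ss:Disp_n,G,mu}: there the Witt frame $W_n(R)^\oplus$ and $\BP_n^{G,\mu}=G(W_n(R)^\oplus)^{\BG_m}$ play the roles that the sheared objects $\hat\sR_n^\oplus$ and $G(\hat\sR_n^\oplus)^{\BG_m}$ play here. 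First I would make precise the gluing stack $\hat\sY_R$. Using \eqref{e:hat sR_n oplus to Laurent polynomials}, I would check — exactly as in \S\ref{sss:two localizations of the truncated Witt frame} for the Witt frame — that inverting $u$ (resp.\ $t$) on $\hat\sR_n^\oplus$ recovers $\hat\sR_n[u,u^{-1}]$ (resp.\ $\hat\sR_n[t,t^{-1}]$). Then $\hat\sY_R$ is obtained from $(\Spec\hat\sR_n^\oplus)/\BG_m$ by gluing the two open substacks $(\Spec\hat\sR_n[u^{-1}])/\BG_m$ and $(\Spec\hat\sR_n[t^{-1}])/\BG_m$, both equal to $\Spec\hat\sR_n$. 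This is the sheared replacement of the stack $\sY_R$ of \S\ref{sss:canonical object of PREDISP_n}, which for perfect $R$ is literally $R^{\Syn}\otimes\BZ/p^n\BZ$.

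Second, I would classify $G$-objects on $\hat\sY_R$. Imitating \S\ref{sss:in terms of G-torsors}, a $G$-torsor of type $\mu$ on $(\Spec\hat\sR_n^\oplus)/\BG_m$ is the same as a $G(\hat\sR_n^\oplus)^{\BG_m}$-torsor over $\Spec R$; adjoining the gluing datum $f\colon\cF^\sigma\iso\cF^\tau$ over $\Spec\hat\sR_n$ (the pullbacks via $u=1$ and $t=1$, as in \S\ref{sss:sigma,tau}) produces precisely the two-sided-translation quotient of $G(\hat\sR_n)$ by $G(\hat\sR_n^\oplus)^{\BG_m}$, namely the $2$-stack $\BT_n^{G,\mu,?}$ of \S\ref{sss:the conjectural stack over F_p}. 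This step is the verbatim analog of the passage from \eqref{e:2-sided translations} to \S\ref{sss:in terms of G-torsors}, and should be formal once the foundations sketched in \S\ref{sss:Idea of the definition} (to appear in \cite{future}) are in place.

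Third — the crux — I would compare $\hat\sY_R$ with the syntomification used to define $\BT_n^{G,\mu}$ in \cite{GMM}, proving that the sheared prismatization ring stacks $\hat\sR_n,\hat\sR_n^\oplus$ compute $R^{\Syn}\otimes\BZ/p^n\BZ$, and that under this identification the \cite{GMM} description of $\BT_n^{G,\mu}(R)$ as $G$-objects on $R^{\Syn}$ matches the classification of the previous step. I would first settle the characteristic-$p$ case, where $\hat\sR_{n,\BF_p}$ and $\hat\sR_{n,\BF_p}^\oplus$ carry the explicit cone presentations \eqref{e:reduction of sheared sR_n economically}--\eqref{e:reduction of sheared sR_n oplus economically} and the maps $F,V$ on $\hat W_{\BF_p}^{(F^n)}$ are available; the mixed-characteristic statement would then follow by the same argument with $\check W,\check W^\oplus$ replacing $W_{\BF_p},W_{\BF_p}^\oplus$, as indicated in \S\ref{sss:what this really means}(ii). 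Finally I would invoke $1$-boundedness to see that the a priori $2$-stack $\BT_n^{G,\mu,?}$ is in fact a $1$-stack: the $2$-categorical data of \S\ref{sss:quotient of a groupoid by a 2-group} are governed by the Zink complex $C_{\fg_\cF\{ -1\}}^\bcdot$, which by Lemma~\ref{l:comparing with the old Zink complex} and the discussion of \S\ref{sss:Lau_n,G,mu} is concentrated in degree $0$ precisely when $\fg_\cF\{ -1\}\in\Vect_n^+(R)$, i.e.\ under \eqref{e:again 1-boundedness}, so the automorphism $2$-groupoids collapse to $1$-groupoids.

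The main obstacle is the third step: the rigorous construction of $\check W,\check W^\oplus$ together with the theorem that the resulting sheared prismatization computes Bhatt's syntomification — equivalently, that $\hat\sY_R\simeq R^{\Syn}\otimes\BZ/p^n\BZ$ for all $p$-nilpotent $R$, not merely perfect ones. This is genuinely a bridge between two foundational frameworks (the $(G,\mu)$-displays of \cite{Lau21,GMM} built on syntomification, and the sheared Witt-frame picture of this Appendix), and it is exactly the content deferred to \cite{future,Akhil}. A secondary difficulty is to verify that the sheafified $2$-categorical quotient of \S\ref{sss:quotient of a groupoid by a 2-group} agrees, as a stack, with the $1$-stack $\BT_n^{G,\mu}$ of \cite{GMM}; here one must be careful that the $1$-boundedness collapse of the automorphism data is compatible with descent, rather than merely pointwise.
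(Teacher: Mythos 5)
The statement you set out to prove is Conjecture~\ref{conjecture in characteristic p} of the paper: the author explicitly leaves it open, offering only motivation (it is modeled on Conjecture~\ref{conjecture}, and the formulation arose from discussions with Arinkin and Rozenblyum), so there is no proof in the paper to compare yours against. Your proposal, by your own admission, is not a proof either: its central third step --- that the sheared ring stacks $\hat\sR_n,\hat\sR_n^\oplus$ compute Bhatt's syntomification, i.e.\ that your glued stack $\hat\sY_R$ agrees with $R^{\Syn}\otimes\BZ/p^n\BZ$ for all $p$-nilpotent $R$, not just perfect ones --- is precisely the foundational content the paper defers to \cite{future}, \cite{Akhil}, \cite{Sheared}, and without it nothing connects $\BT_n^{G,\mu,?}$ to the stack $\BT_n^{G,\mu}$ of \cite{GMM}. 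What you have written is a plausible research plan in the spirit the author intends (the analogy with the treatment of $\Disp_n^{G,\mu}$ in \S\ref{ss:Disp_n,G,mu} is the right one), but it does not constitute an argument.

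Beyond the acknowledged hole, two of the steps you present as formal are themselves substantive. First, your second step transposes \S\ref{sss:in terms of G-torsors} from the affine scheme $\Spec W_n(R)^\oplus$ to ``$\Spec \hat\sR_n^\oplus$'': but $\hat\sR_n^\oplus$ is a stack of graded (animated) $\BZ/p^n\BZ$-algebras, not a ring, so ``$\Spec$'' of it, the notion of a $G$-torsor of type $\mu$ on its quotient by $\BG_m$, and the local triviality needed to identify such torsors with $G(\hat\sR_n^\oplus)^{\BG_m}$-torsors all require transmutation-style definitions plus an analog of the smoothness input \cite[Lemma~2.3.8]{Lau21} that underlies the display case; none of this is currently available. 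Second, your claim that $1$-boundedness collapses the $2$-stack $\BT_n^{G,\mu,?}$ to a $1$-stack is stated in the paper only as a \emph{consequence} of the conjecture (Remark~(i) following it), not as an independent fact; your route via the Zink complex would have to identify the automorphism data of the sheafified $2$-categorical quotient of \S\ref{sss:quotient of a groupoid by a 2-group} with $H^{-1}(C_{\fg_\cF\{-1\}}^\bcdot)$ and show this vanishes compatibly with descent --- a genuine assertion requiring proof, as you yourself flag. So the proposal should be recast as a strategy with these three points named as open problems, rather than offered as a proof.
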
 

\subsubsection{Remarks}
(i) Conjecture~\ref{conjecture in characteristic p} implies that if $\mu$ is 1-bounded then $\BT_n^{G,\mu ,?}$ is a 1-stack.

(ii) By \cite[Thm.~D]{GMM}, if $\mu$ is 1-bounded then for every $m\in\BN$ the restriction of $\BT_n^{G,\mu}$ to the category of $(\BZ/p^m\BZ)$-algebras is a smooth algebraic stack over $\BZ/p^m\BZ$. So Conjecture~\ref{conjecture in characteristic p} implies
a similar statement for $\BT_n^{G,\mu ,?}$. This statement is somewhat surprising since the definition of $\BT_n^{G,\mu ,?}$ involves the ind-scheme $\hat W$. It becomes less surprising once you think about formula~\eqref{e:Zink functor} or about the simpler formula
$\Coker (\hat W_{\BF_p}^{(F)}\overset{V}\longrightarrow \hat W_{\BF_p}^{(F)})=\alpha_p$.

(iii) The above formulation of Conjecture~\ref{conjecture in characteristic p} appeared as a result of my conversations with D.~Arinkin and N.~Rozenblyum. My original formulation was more ``elementary'' (in the spirit of \S\ref{sss:original description})
but not elegant enough.

(iv) Two variants of Conjecture~\ref{conjecture in characteristic p} for $n=\infty$ are formulated on slides 12 and 13 of the talk \cite{Bonn-2025}.

\subsubsection{How explicit is $\BT_n^{G,\mu,?}$?}   \label{sss:original description}
Short answer: rather explicit (but not quite explicit). Here are some details.

(i) $\BT_{n,\BF_p}^{G,\mu ,?}$ can be represented  as a quotient of the scheme $G(W_{n,\BF_p})$ by a certain 2-groupoid\footnote{Conjecture~\ref{conjecture in characteristic p} would imply that if $\mu$ is 1-bounded then $\BT_n^{G,\mu ,?}$ is a 1-stack. If so then $\Gamma$ is a 1-groupoid, as said in \S\ref{sss:what this really means}(i).} $\Gamma$. More precisely, one has an explicit action of an explicit group ind-scheme on the scheme $G(W_{n,\BF_p})$ such that $\Gamma$ is a certain quotient of the groupoid corresponding to this action. 

The interested reader can reconstruct the details using \cite{nonsense}. The point is that as explained in the Appendix of \cite{nonsense}, the group stacks $G(\SR_{n,\BF_p}), G(\SR_{n,\BF_p}^\oplus )^{\BG_m}$ naturally come from certain crossed modules in the category of ind-schemes,  and the map \eqref{e:group homomorphism to the product} naturally comes from a certain homomorphism of crossed modules. One can apply \S 2 of~\cite{nonsense} to this homomorphism.

(ii) Assuming\footnote{The reason for this assumption was explained in \S\ref{sss:what this really means}(ii).} that $G$ is lifted to a group scheme over $\BZ_p$, one has a similar situation in mixed characteristic, i.e., a presentation of $\BT_n^{G,\mu,?}$ as a quotient of the scheme $G(W_n)$ by a rather explicit groupoid $\Gamma$. To get such a presentation, represent $\SR_n$ as a quotient of the ring scheme $C=W_n$ (see \cite[\S 8]{future}) and define $\tilde C$ as in \cite[\S 9.2.1]{future}. Then the group ind-scheme $G(\tilde C)^{\BG_m}$ acts on $G(W_n)$ by 2-sided translations, and $\Gamma$ is a certain quotient of the groupoid corresponding to this action.

\bibliographystyle{alpha}

\end{document}